\documentclass[a4paper,11pt]{amsart}
\usepackage[utf8]{inputenc}

\usepackage[T1]{fontenc}
\usepackage[english]{babel}
\usepackage{color}
\usepackage{amsmath}
\usepackage{amssymb}
\usepackage{amsthm}
\usepackage{graphicx}
\usepackage[colorlinks=true, linkcolor=blue]{hyperref}

\usepackage[margin=1in]{geometry}

\usepackage{enumitem}   

\usepackage{lmodern}
\usepackage{microtype}

\usepackage{subfiles}

\usepackage{soul}

\usepackage[capitalise]{cleveref}

\usepackage[colorinlistoftodos,prependcaption,textsize=tiny]{todonotes}

\usepackage{cancel}

\newcommand {\R} {\mathbb{R}}
\newcommand {\Z} {\mathbb{Z}}
\newcommand {\T} {\mathbb{T}}

\newcommand {\p} {\partial}

\newcommand\dd{{\rm d}}
\newcommand\e{{\rm e}}

\newcommand{\calF}{\mathcal{F}}

\newcommand{\calI}{\mathcal{I}}

\newcommand{\calN}{\mathcal{N}}

\newcommand{\eps}{\varepsilon}

\newcommand{\ndiamond}{%
  \diamond\llap{\raisebox{0.1ex}{\scalebox{0.7}{$\cancel{\phantom{\diamond}}$}}}%
}

\DeclareMathOperator{\dive }{div}

\theoremstyle{plain}
\newtheorem{thm}{Theorem}
\newtheorem{prop}{Proposition}
\newtheorem{cor}{Corollary}

\newtheorem{lemma}{Lemma}[section]
\newtheorem{rem}{Remark}[section]

\newtheorem*{thm*}{Theorem}


\date{\today}

\begin{document}
\title[3D MHD Equations Around Couette Flow]{Transition and Stability of 3D MHD Around Couette Flow} 
\author{Niklas Knobel}
\begin{abstract}

We study the three-dimensional incompressible magnetohydrodynamic (MHD) equations near Couette flow with a constant magnetic field perpendicular to the shear plane. Couette flow induces mixing and generates magnetic induction, while the constant magnetic field stabilizes $z$-dependent modes. In contrast, the $z$-averaged magnetic field exhibits algebraic growth. Letting $\mu$ denote the inverse fluid and magnetic Reynolds numbers, we analyze how $\mu$ governs stability thresholds in Sobolev spaces.

We identify a nonlinear transient-growth regime characterized by the sharp threshold $5/6\le \gamma\le 1 $. For $x$-average–free initial data of size $\mu^\gamma$, solutions are nonlinearly stable; however, for certain initial data, the solution departs from the linear dynamics at rate $\mu^{\gamma-1}$ due to a first-order nonlinear instability. The exponent $\gamma = 5/6$ is optimal for the associated energy functional and cannot be improved in Sobolev spaces without secondary transient-growth mechanisms. Below this threshold, solutions necessarily transition away from the linear dynamics at a minimal rate.

As a consequence, the $3d$ results yield sharp Sobolev stability thresholds for the $2d$ MHD equations around Couette flow without a constant magnetic field. In particular, the threshold is strictly larger than in prior $2d$ results with a constant field, revealing destabilizing effects normally suppressed by a constant magnetic field. Crucially, this stabilization is restricted to the direction of the magnetic field.

\end{abstract}
\maketitle
\setcounter{tocdepth}{1}
\tableofcontents

\section{Introduction}
The goal of this article is to characterize the stability, instability, and long-time behavior of a three-dimensional conducting fluid near the planar Couette flow coupled with a uniform vertical magnetic field:
\begin{equation}\label{eq:equilibrium}
      V_s = (y,0,0), \qquad B_s =(0,0,\alpha),  
\end{equation}  
where $\alpha \in \mathbb{R} \setminus \{0\}$. The governing equations are the incompressible magnetohydrodynamic (MHD) equations, written for perturbations of the velocity field 
\[
  V=(V^x,V^y,V^z): \mathbb{R}_+ \times \mathbb{T} \times \mathbb{R} \times \mathbb{T} \to \mathbb{R}^3
\]
and magnetic field 
\[
  B=(B^x,B^y,B^z): \mathbb{R}_+ \times \mathbb{T} \times \mathbb{R} \times \mathbb{T} \to \mathbb{R}^3,
\]
as
\begin{equation}
\begin{aligned}
\label{MHD2}
    \partial_t V + y \partial_x V + V^y e_1 + (V \cdot \nabla) V + \nabla \Pi &= \mu \Delta V + \alpha \partial_z B + (B \cdot \nabla) B, \\
    \partial_t B + y \partial_x B - B^y e_1 + (V \cdot \nabla) B &= \mu \Delta B + \alpha \partial_z V + (B \cdot \nabla) V, \\
    \dive(V) = \dive(B) &= 0.
\end{aligned}
\end{equation}
Here, $\mu > 0$ is a dimensionless parameter representing both viscosity and resistivity, under the assumption that the magnetic Prandtl number equals one.

This system exhibits a competition between stabilizing mechanisms---such as inviscid mixing, enhanced dissipation, and magnetic tension---and destabilizing mechanisms, including linear and nonlinear transient growth. 
Our main interest lies in the \emph{inviscid limit} $\mu \to 0$, where these competing effects are most pronounced.

We aim to describe the precise threshold between stability and instability, in terms of the Sobolev size of the initial perturbations $(V_{in}, B_{in})$ with which \eqref{MHD2} is initialized. We identify three qualitatively different regimes depending on rates of nonlinear transient growth:



\medskip

\noindent $\diamond$ \emph{Linear dynamics 
.}  
If $\|(V_{in}, B_{in})\|_{H^N} \leq \delta_1 \mu^{\gamma_1}$, then the solution is enslaved to the linearized dynamics of \eqref{MHD2}, which is characterized by linear transient growth, inviscid damping, and enhanced dissipation.

\medskip

\noindent $\diamond$ \emph{Nonlinear yet stable transient growth.}  
If $\delta_1 \mu^{\gamma_1} \leq \|(V_{in}, B_{in})\|_{H^N} \leq \delta_2 \mu^{\gamma_2}$, then due to a first order nonlinear instability the solution exhibits a nonlinear departure from the linearized dynamics---finite and quantifiable in terms of an inverse power of $\mu$. Nevertheless, asymptotic stability still holds. 

\medskip

\noindent $\diamond$ \emph{Secondary nonlinear instability and nonlinear regime.}  
If $\delta_2 \mu^{\gamma_2} \leq \|(V_{in}, B_{in})\|_{H^N}$, then nonlinear growth triggers further instabilities. The solution experiences dramatic growth away from the linear dynamics and enters a nonlinear regime.

\medskip

The constants $\delta_i > 0$ are not essential for our analysis, while the exponents $\gamma_i \geq 0$ play a crucial role. Much of the recent mathematical literature has focused on the first regime, aiming to determine the smallest exponent $\gamma_1 \geq 0$ for which the linear dynamics---dominated by linear transient growth, inviscid damping, and enhanced dissipation---also controls the full nonlinear evolution. This value $\gamma_1$ defines the so-called \emph{nonlinear stability threshold}. In the three-dimensional homogeneous Navier–Stokes equations with Sobolev regularity, this line of work was initiated in \cite{bedrossian2017stability} and has since developed into a rich and active research area (see Section~\ref{sub:litover} for further discussion). 

In contrast, the second and third regimes---those involving unstable or partially unstable dynamics---have received significantly less attention. The nonlinear yet stable transient growth regime may be viewed as a refinement of the stability threshold framework, capturing initial data that evade classical stability mechanisms yet remain asymptotically stable. To the best of our knowledge, this is the first instance of such behavior being rigorously identified in a fluid model.

The third regime, involving norm inflation and potential secondary instabilities, highlights the sharpness of the exponents $\gamma_1,\gamma_2$, by producing solutions that undergo growth sufficient to trigger secondary instabilities. Crucially, such transient growth does not necessarily imply long-time instability: the solution may still converge to equilibrium as $t \to \infty$. However, the growth takes it beyond the reach of current perturbative techniques, limiting our ability to describe its detailed evolution.

\subsection{Main results}
To state our main results, it is instructive to first examine the linearized dynamics associated with \eqref{MHD2}, given by the system
\begin{equation}
\begin{aligned}
\label{MHD2lin}
    \partial_t V + y \partial_x V + V^y e_1 + \nabla \Pi &= \mu \Delta V + \alpha \partial_z B , \\
    \partial_t B + y \partial_x B - B^y e_1   &= \mu \Delta B + \alpha \partial_z V , \\
    \dive (V) = \dive (B) &= 0.
\end{aligned}
\end{equation}
It is evident from the structure of \eqref{MHD2lin} that the behavior of general solutions differs significantly from that of solutions which are independent of the periodic $x$ or $z$ variables. Specifically, modes independent of $x$ do not experience the shearing induced by the term $y \partial_x$, while modes independent of $z$ evolve according to effectively decoupled dynamics. To reflect this distinction, we introduce the following notation for functions $K:\T \times \R \times \T \to \mathcal{V}$, where $\mathcal{V} \in \{\R, \R^3\}$: 
\begin{align}
    K_{\neq =}(x,y)&:= \frac{1}{2\pi}\int_{\T}K(x,y,z)\dd z-\frac{1}{4\pi^2}\int_{\T^2}K(x,y,z)\dd x\,\dd z, \label{eq:nox-zave}\\
    K_{\neq \neq}(x,y,z)&:=K(x,y,z)-\frac{1}{2\pi}\int_{\T}K(x,y,z)\dd x-K_{\neq =}(x,y).\label{eq:nox-noz}
\end{align}
In words, $ K_{\neq =} $ is the $z$-average of the nonzero $x$-modes (i.e., the projection onto $x$-nonzero, $z$-zero average), while $ K_{\neq \neq} $ denotes the projection onto nonzero modes in both $x$ and $z$. Additional notation is introduced in Section~\ref{sub:notation}. 

For the $z$-dependent modes, the interaction induced by the constant magnetic field stabilizes the equation. Taking the $z$-average of \eqref{MHD2lin} reduces the system to effectively two dimensions. In this projection:

\medskip

\noindent $\bullet$ $(V^z_{\neq =}, B^y_{\neq =}, B^z_{\neq =})$ obey linear advection-diffusion equations with Couette drift;

\medskip

\noindent $\bullet$ $B^x_{\neq =}$ is linearly forced by $B^y_{\neq =}$ and therefore exhibits linear transient growth;

\medskip

\noindent $\bullet$ $(V^x_{\neq =}, V^y_{\neq =})$ follow the $2d$ linearized Navier–Stokes equations.

\medskip

To analyze these two-dimensional components, we introduce the following auxiliary variables:
\begin{equation}\label{eq:adapted_unknown}
    F:=\frac{1}{2\pi}\int_{\T}\left(\p_y V^x-\p_xV^y\right)  \dd z,\qquad G:=-\Delta^{-1}\frac{1}{2\pi}\int_{\T}\left(\p_y B^x-\p_xB^y\right)  \dd z,  
\end{equation}
where $F$ is the vorticity associated with the horizontal $(x,y)$ components after taking the $z$-average and $G$ is the corresponding magnetic potential associated to the magnetic current\footnote{In this article we use the letters $F$ and $G$ instead of $\Omega$ and $\Phi$ since the $3d$ vorticity and magnetic potential are different if the $z$ component of the $z$-average is nonzero.}. We now state the following linear stability result:

\begin{prop}[Linearized stability]\label{prop:lin}
Let $(V, B) \in C([0,\infty); L^2(\T \times \R \times \T))$ solve \eqref{MHD2lin} with $x$-average free initial data $(V_{\mathrm{in}}, B_{\mathrm{in}}) \in H^2$. Then, for all $t > 0$: 
\medskip

\noindent $\bullet$ Doubly nonzero modes undergo inviscid damping and enhanced dissipation:
 \begin{equation}
        \Vert   (V_{\neq\neq}(t),  B_{\neq\neq}(t))\Vert_{L^2} +\langle t \rangle \Vert   (V^y_{\neq\neq}(t),  B^y_{\neq\neq}(t))\Vert_{L^2} \lesssim \e^{-\frac 1{12} \mu t^3 }\Vert (V_{in},B_{in})\Vert_{H^2} ,
    \end{equation} 

\noindent $\bullet$ The $z$-averaged, $x$-nonzero modes of $V$ undergo inviscid damping and  enhanced dissipation:
        \begin{equation}\label{eq:lin_zaverage}
        \Vert F_{\neq=}(t)\Vert_{L^2}+
         \langle t \rangle  \Vert   V_{\neq=}^x(t)\Vert_{L^2} + \langle t \rangle^2  \Vert   V_{\neq=}^y(t)\Vert_{L^2} + \Vert   V_{\neq=}^z(t)\Vert_{L^2} \lesssim   \e^{-\frac{1}{12} \mu t^3 }\Vert (V_{in},B_{in})\Vert_{H^2},
    \end{equation} 
 \noindent $\bullet$  Corresponding magnetic components undergo linear transient growth and  enhanced dissipation:
        \begin{equation}
       \Vert G_{\neq=}(t)\Vert_{L^2}+\langle t \rangle^{-1}  \Vert   B_{\neq=}^x(t)\Vert_{L^2} +  \Vert   (B^y_{\neq=}(t),B^z_{\neq=}(t))\Vert_{L^2}\lesssim    \e^{-\frac{1}{12} \mu t^3}\Vert  (V_{in},B_{in})\Vert_{H^2}.
    \end{equation} 
\end{prop}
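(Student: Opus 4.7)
My plan is to analyze the linearized flow mode by mode in Fourier. After transforming in $(x,y,z)$ with dual variables $(k,\eta,l)$ and introducing the shear coordinate $\xi=\eta-kt$ that removes the advection $y\p_x$, the dissipation symbol becomes $\mu(k^2+(\xi+kt)^2+l^2)$. The $x$-average free assumption gives $k\neq 0$ in every mode, and the elementary identity $\int_0^t (\xi+ks)^2\,\dd s = t(\xi+kt/2)^2 + \tfrac{1}{12}k^2 t^3$ yields the uniform lower bound $\mu\int_0^t (\xi+ks)^2\,\dd s \ge \tfrac{1}{12}\mu k^2 t^3$, which together with $|k|\ge 1$ produces the factor $\e^{-\mu t^3/12}$ in every estimate. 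The projections $(\neq\neq)$ and $(\neq=)$ are invariant subspaces of the linearization, so the two cases can be treated separately.

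For the $z$-averaged, $x$-nonzero modes the Alfv\'en coupling $\alpha\p_z$ vanishes and the velocity and magnetic parts decouple. A direct computation using the $(x,y)$ divergence-free constraint and the commutator $[\Delta,y\p_x]=2\p_x\p_y$ shows that $F$, $G$, and the passive scalars $V^z_{\neq=}$, $B^y_{\neq=}$, $B^z_{\neq=}$ all satisfy the scalar Couette drift-diffusion $\p_t \cdot + y\p_x\cdot = \mu\Delta \cdot$, which in shear coordinates is a pure heat equation and yields the claimed exponential decay. The remaining velocity components are then recovered from $F$ by Biot--Savart, using one derivative of $H^1_y$ regularity to handle the critical band $\xi+kt\approx 0$ and producing the anisotropic inviscid-damping rates $\langle t\rangle^{-1}$ for $V^x_{\neq=}$ and $\langle t\rangle^{-2}$ for $V^y_{\neq=}$; the analogous computation from $G$ gives the $O(1)$ bound on $B^y_{\neq=}$. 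Finally, $B^x_{\neq=}$ obeys $\p_t B^x + y\p_x B^x - B^y = \mu\Delta B^x$, and Duhamel's principle against the enhanced-dissipating source $B^y_{\neq=}$ produces the lift-up growth $\langle t\rangle$ while preserving the $\e^{-\mu t^3/12}$ factor.

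For the doubly nonzero modes the stabilizing role of the constant vertical field is essential. After eliminating the pressure via the divergence-free constraint, the linearization closes on the four-dimensional system in $(V^y,B^y)$ together with the $y$-components of the vorticity and current. In Fourier--shear coordinates this is a linear ODE whose generator decomposes into a dissipative diagonal $\mu(k^2+(\xi+kt)^2+l^2)$, an antisymmetric Alfv\'en part of magnitude $\alpha l$ coupling the velocity and magnetic subsystems, and cross terms arising from the shear and the lift-up. The proof reduces to an energy estimate for a time-dependent quadratic symmetrizer $E(t)$ whose weights are chosen so that the Alfv\'en rotation cancels the cross terms to leading order, the residual being absorbed by the dissipation. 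The main obstacle is designing these weights so that inviscid damping for $(V^y,B^y)$ is recovered uniformly in $(k,\xi,l)$: the effective Alfv\'en frequency $\alpha l/(k^2+(\xi+kt)^2+l^2)^{1/2}$ degenerates near the critical time $\xi+kt\approx 0$, and one must exploit the enhanced dissipation to bridge this window, following the template of 2D MHD energy estimates with time-dependent weights adapted to the 3D anisotropic scaling.
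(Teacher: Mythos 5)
Your treatment of the $z$-averaged, $x$-nonzero modes is essentially the paper's argument: $F$, $G$ (and the passive scalars $V^z_{\neq=},B^y_{\neq=},B^z_{\neq=}$) solve pure Couette drift--diffusion, the explicit Fourier solution with $\int_0^t|k,\eta-k\tau,l|^2\,\dd\tau\ge \tfrac{1}{12}k^2t^3$ gives the $\e^{-\mu t^3/12}$ factor, and the damping rates $\langle t\rangle^{-1},\langle t\rangle^{-2}$ come from Biot--Savart reconstruction paying derivatives of the $H^2$ data; whether you get the $\langle t\rangle$ growth of $B^x_{\neq=}$ by Duhamel against $B^y_{\neq=}$ or, as the paper does, from the reconstruction $b^x_{\neq=}=\p_y^t g$ is immaterial.

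For the doubly nonzero modes, however, your proposal has a genuine gap. You reduce to a per-frequency ODE system in $(V^y,B^y,\omega^y,j^y)$ and assert that the proof "reduces to an energy estimate for a time-dependent quadratic symmetrizer $E(t)$ whose weights are chosen so that the Alfv\'en rotation cancels the cross terms", but you never construct these weights, and you yourself flag their construction near the critical times as "the main obstacle". That construction \emph{is} the proof; without it nothing rules out transient growth of the $\neq\neq$ modes coming from the stretching terms $+V^ye_1$, $-B^ye_1$ and the non-normal pressure term, and the precise prefactor-free bound $\lesssim \e^{-\mu t^3/12}\|(V_{in},B_{in})\|_{H^2}$ is not obtained. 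The paper's resolution is concrete and rather different in emphasis: since $l\neq 0$ on these modes, the bounded change of unknowns $\tilde v = v_\sim + (\alpha\p_z)^{-1}e_1 b^y_\sim$, $\tilde b=b_\sim$ removes the stretching terms exactly (the Alfv\'en term is antisymmetric and drops out of the energy), and the only remaining dangerous term, $2\nabla_t\Delta_t^{-1}\p_x\tilde v^y$, is absorbed by a ghost-type Fourier multiplier whose time derivative $\tfrac{2}{\alpha}\bigl|\tfrac{k^2}{l}\bigr||k,\eta-k\tau,l|^{-2}$ is integrable in $\tau$ uniformly in the frequencies. Moreover, your plan mis-locates the inviscid damping mechanism: the $\langle t\rangle^{-1}$ decay of $(V^y_{\neq\neq},B^y_{\neq\neq})$ does not require recovering anything from the Alfv\'en rotation near the critical time --- it follows directly from $\nabla_t\cdot v=\nabla_t\cdot b=0$ and two derivatives of regularity, exactly as in the paper. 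Relatedly, your suggestion to "exploit the enhanced dissipation to bridge" the window where the effective Alfv\'en frequency degenerates would produce constants deteriorating as $\mu\to 0$, whereas the stated estimate (and the paper's multiplier argument) is uniform in $\mu$; so even as a strategy this step needs to be replaced, not merely completed.
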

Our first main result identifies a nonlinear stability threshold with an exponent
\begin{equation*}
  5/6 \le \gamma \le 1,
\end{equation*}
corresponding to the second regime described in the introduction, in which solutions are globally stable but exhibit nonlinear transient growth. In this range, the transient amplification is characterized by 
\begin{equation*}
   1-\gamma,
\end{equation*}
so that the adapted unknown $F$ undergoes nonlinear growth of order $\mu^{-(1-\gamma)}$ before relaxing under the dissipative dynamics.

The following theorem provides a quantitative description of this regime. In particular, it shows that solutions remain globally stable, while exhibiting a transient nonlinear growth quantified by $1-\gamma$.

\begin{thm}[Stable nonlinear transient growth]\label{thm:stab}
    Let $N\ge 8$ and $\alpha\neq 0$. Then there exist $c, \delta >0$ such that for all $5/6 \le \gamma\le 1$, $0<\mu \le 1$, and $x$-average free initial data $V_{in}, B_{in}$ such that
    \begin{align*}
         \mu^{1-\gamma} \|F_{in}\|_{H^{N}} + \|G_{in}\|_{H^{N+1}} + \|(V_{in},B_{in})_{\neq \neq }\|_{H^{N+1}} = \varepsilon \le \delta \mu^{\gamma},
    \end{align*}
    the corresponding solution $(V,B)$ to \eqref{MHD2} lies in $C([0,\infty); H^N(\T\times\R\times\T))$. Moreover, there exists a constant $c(\alpha)>0$ such that for all $t\ge0$:
    
    \medskip
    
\noindent $\bullet$ Doubly nonzero modes undergo inviscid damping and enhanced dissipation:
    \begin{align*}
        \|(V_{\neq\neq}(t),B_{\neq\neq}(t))\|_{L^2} + \langle t \rangle \|(V^y_{\neq\neq}(t),B^y_{\neq\neq}(t))\|_{L^2}
        \lesssim \varepsilon \e^{-c\mu^{\frac13}t}.
    \end{align*}

\noindent  $\bullet$ The  $z$-averaged, $x$-nonzero modes of $V$ undergo nonlinear transient growth,  inviscid damping, and  enhanced dissipation:
\begin{align*}
       \mu^{1-\gamma} \Big(\|F_{\neq =}(t)\|_{L^2} + \langle t \rangle \|V^x_{\neq =}(t)\|_{L^2} + \langle t \rangle^2 \|V^y_{\neq =}(t)\|_{L^2}\Big) 
       + \|V^z_{\neq =}(t)\|_{L^2} \lesssim \varepsilon \e^{-c\mu^{\frac13}t}.
\end{align*}

\noindent $\bullet$ Corresponding magnetic components undergo linear transient growth and  enhanced dissipation:
\begin{align*}
       \|G_{\neq =}(t)\|_{L^2} + \langle t \rangle^{-1}\|B^x_{\neq =}(t)\|_{L^2} 
       + \|(B^y_{\neq =}(t),B^z_{\neq =}(t))\|_{L^2} \lesssim \varepsilon \e^{-c\mu^{\frac13}t}.
\end{align*}     

\noindent $\bullet$ The critical energy functional stays bounded: 
\begin{align*}
        \mu^{1-\gamma}\|F(t,x+yt,y)\|_{H^N} + \|G(t,x+yt,y)\|_{H^{N+1}} 
        + \mu^{\frac12}\|\nabla G(\tau,x+y\tau,y)\|_{L^2_t H^{N+1}} \lesssim \varepsilon.
\end{align*}

\noindent $\bullet$ Nonlinear transient growth: there exist data with $F_{in}=0$ such that at some time $t_1\ge0$,
\begin{align*}
        \|F(t_1)\|_{L^2}\gtrsim \delta \mu^{-(1-\gamma)}\varepsilon.
\end{align*}
\end{thm}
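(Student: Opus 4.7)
The plan is a bootstrap/continuity argument in Couette-adapted coordinates $(X,y)=(x+yt,y)$, with the critical energy functional from the theorem statement as the bootstrap hypothesis, augmented by the usual CK-terms encoding enhanced dissipation at rate $\mu^{1/3}$ and the inviscid-damping gains $\langle t\rangle^{-1}$ and $\langle t\rangle^{-2}$. I would take $F$, $G$, the doubly nonzero block $(V_{\neq\neq},B_{\neq\neq})$ and the scalar $V^z_{\neq=}$ as independent dynamical variables; all other components are recovered via elliptic inversions from $\dive(V)=\dive(B)=0$, with their weighted-in-time bounds following from those of the main variables by the same Fourier-multiplier arguments that underlie \cref{prop:lin}. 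The pointwise-in-time exponential decay $\varepsilon\e^{-c\mu^{1/3}t}$ in the bullet-point estimates is then obtained a posteriori once the bootstrap closes and the enhanced-dissipation multiplier is inverted.

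For the doubly nonzero sector, the constant field $\alpha\partial_z$ induces an Alfv\'en-type symmetrization of $(V_{\neq\neq},B_{\neq\neq})$, and a BMV-style Fourier multiplier together with a ghost weight closes an $H^{N+1}$-energy without any $\mu^{-\sigma}$ loss. The scalar $V^z_{\neq=}$ is transported-diffused and is handled by a standard weighted energy estimate. Both steps are essentially present in prior work on $3d$ MHD with a stabilizing field, and are not the novel part of the argument.

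The heart of the proof is the $z$-averaged sector $(F,G)$, which---because $\alpha\partial_z$ has zero $z$-average---is effectively $2d$ MHD around Couette \emph{without} a constant field. Here $G$ satisfies a transport-diffusion equation (no linear $L^2$ growth), while $F$ is nonlinearly forced by the magnetic-tension Poisson bracket $\{G,-\Delta G\}$. In adapted coordinates $\Delta$ becomes $\Delta_L=\partial_X^2+(\partial_y-t\partial_X)^2$, so the current inherits an intrinsic factor $\sim\langle t\rangle^2$ from Couette mixing of the magnetic potential; Duhamel integration of this source against the enhanced-dissipation semigroup on the generation window $[0,\mu^{-1/3}]$ gives a formal bound $\|F\|_{H^N}\lesssim \mu^{-1}\varepsilon^2$, which equals $\mu^{-(1-\gamma)}\varepsilon$ precisely when $\varepsilon\sim\mu^\gamma$. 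This is the origin of the $\mu^{1-\gamma}$ weight on $F$ in the energy functional and the mechanism for the stable nonlinear transient growth. The main technical obstacle is to control the feedback of this amplified $F$ back into $G$ through the transport nonlinearity $V^x_{\neq=}\partial_X G$, and into itself through $V\cdot\nabla F$, without incurring further powers of $\mu^{-1}$; combining a tailored Fourier multiplier on $G$ with the inviscid-damping gain $\langle t\rangle^{-1}$ of $V^x_{\neq=}$ against the $\mu^{1/3}$ enhanced-dissipation budget yields the sharp constraint $\gamma\ge 5/6$. Product and commutator estimates are routine given $N\ge 8$.

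For the final nonlinear lower bound I would take $F_{in}=0$ and choose $G_{in}$ concentrated on a single well-chosen nonzero Fourier mode $(k_0,\eta_0)$ with $k_0\neq 0$ of amplitude $\varepsilon$. Under the linearized dynamics, $G$ stays close to its transport-diffused profile, and $\{G,-\Delta G\}$ produces, after projection onto an appropriate Fourier mode of $F$, a sign-definite source of order $\sim s^2\varepsilon^2$ on $[0,\mu^{-1/3}]$. Integrating the forced $F$-equation gives $\|F(t_1)\|_{L^2}\gtrsim \mu^{-1}\varepsilon^2\sim \delta\mu^{-(1-\gamma)}\varepsilon$ at $t_1\sim \mu^{-1/3}$; nonlinear corrections to $G$ are subleading by the bootstrap upper bound already established and do not destroy this estimate, confirming the sharpness of the $\mu^{1-\gamma}$ weight.
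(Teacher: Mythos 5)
Your stability argument follows essentially the same route as the paper: a bootstrap in the comoving frame with time-dependent Fourier multipliers encoding enhanced dissipation and the weight $m$, a separately weighted energy for the $z$-averaged pair $(f,g)$ with the factor $\mu^{1-\gamma}$ on $f$, and the constraint $\gamma\ge 5/6$ emerging from the $f$--$g$ feedback. Be aware, though, that the paper cannot close this bootstrap with only $F$, $G$, $(V_{\neq\neq},B_{\neq\neq})$ and $V^z_{\neq=}$: it also needs the $z$-averaged pair $h=(v^z_\diamond,b^z_\diamond)$ (so $B^z_{\neq=}$ is dynamical, not recovered by elliptic inversion), a separate energy for the $x$-averaged block $(v^{y,z}_=,b^{y,z}_=)$, which solves a forced $2d$ MHD system of its own and enters the nonlinear estimates at size $\delta\mu$, and the auxiliary unknowns $\tilde\rho$ giving \emph{time-integrated} inviscid damping of $v^y_{\neq\neq},b^y_{\neq\neq}$ (Lemma \ref{lem:Inviscid}), which is used crucially in the critical terms $NL_{g\to\sim,1}$, $NL_{g\to\sim,2}$. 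These omissions are repairable, but they are not mere bookkeeping.

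The genuine gap is in your lower-bound construction. You take $G_{in}$ ``concentrated on a single well-chosen Fourier mode $(k_0,\eta_0)$'' and claim the source $\{G,-\Delta_t G\}$ is sign-definite of size $s^2\varepsilon^2$. For a single plane wave this source vanishes identically: the symbol of $(\nabla^\perp g\cdot\nabla)\Delta_t g$ carries the factor $\eta l-k\xi$, which is zero when both input frequencies are parallel to $(k_0,\eta_0)$. For a narrow packet of width $w$ around $(k_0,\eta_0)$ the self-interaction at output $2k_0$ does not vanish, but after symmetrizing in $\xi\leftrightarrow\eta-\xi$ the symbol reduces to $\sim k_0(\eta-2\xi)^2(\eta-2k_0 t)$, i.e.\ it is damped by $w^2$ and grows only like $t$, not $t^2$; Duhamel then gives at best $\sim w^2 t^2\varepsilon^2\sim\mu^{-2/3}\varepsilon^2$, short of the required $\mu^{-1}\varepsilon^2$. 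This is exactly why the paper's data \eqref{eq:fgin} superposes \emph{two} distinct $x$-modes, $k=1$ with $\eta$-support near $10$ and $k=2$ with $\eta$-support near $0$: the cross term at output $k=3$, $\eta\approx 10$ has symbol $(2\eta-3\xi)$ bounded below and the Couette factor $\approx\langle t\rangle^2$ (see \eqref{eq:lowt2}), so integration over $[0,\tfrac1{10}\mu^{-1/3}]$ yields $t^3\varepsilon^2\sim\mu^{-1}\varepsilon^2$ as claimed. To fix your argument you must choose data with at least two separated Fourier modes (in $k$, with suitably separated $\eta$-supports) and extract the growth at the sum frequency; in addition, the claim that nonlinear corrections are subleading requires a quantitative comparison argument (the paper's $f_1,g_1$ versus $f_2,g_2$ splitting together with the control of $|\partial_y|^{-1}f_=$ in Lemma \ref{lem:faver}), not just the a priori bootstrap bound.
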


Thus, Theorem~\ref{thm:stab} rigorously establishes a regime of stable nonlinear transient growth, with sharp dependence on $\gamma$. The extremal cases $\gamma=1$ and $\gamma=5/6$ interpolate between purely linear dynamics and the onset of nonlinear amplification of order $\mu^{-1/6}$. In particular:  
\medskip

\noindent $\bullet$ For $\gamma=1$, no nonlinear transient growth occurs, and the solution is determined by the linear dynamics with only a perturbative correction.  

\medskip

\noindent $\bullet$ For $\gamma=5/6$, the solution exhibits nonlinear transient growth of order $\mu^{-1/6}$, marking the maximal deviation from the purely linear regime while remaining asymptotically stable.  

\medskip

Prior works on the $3d$ MHD equations near Couette flow and tilted magnetic fields of the form $B_s=\alpha(\sigma,0,1)$---with $\alpha$ large and $\sigma$ rational or Diophantine---include~\cite{liss2020sobolev,rao2025stability,wang2025stabilitythreshold3dmhd}, and establish threshold exponents $\gamma_1 \le 1$. Our setting corresponds to $\sigma=0$ and yields the sharper threshold exponent $\gamma=5/6$. 

A natural question is whether the lower threshold $\gamma=5/6$ can be improved. Our second result shows this is not the case: the threshold is optimal for controlling the critical energy functional.

\begin{thm}\label{thm:opts}
    The minimal $\gamma\ge 0$ for which Theorem \ref{thm:stab} holds is $\gamma =5/6$.
\end{thm}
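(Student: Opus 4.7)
The strategy for proving sharpness is to construct, for every $\gamma<5/6$, explicit initial data saturating the smallness hypothesis of Theorem \ref{thm:stab} whose evolution violates the bootstrap bound for the critical energy functional at some time $t_*\lesssim\mu^{-1/3}$. I would reuse the candidate $F_{in}\equiv 0$, $(V,B)_{\neq\neq,in}\equiv 0$, and $G_{in}(x,y)=\varepsilon\chi(y)\cos(k_0 x)$ identified in the sharpness statement (last bullet of Theorem \ref{thm:stab}), which saturates $\|G_{in}\|_{H^{N+1}}=\varepsilon\leq\delta\mu^\gamma$. Because the datum is $z$-independent and both the $\alpha\partial_z$ terms and the nonlinearities preserve this property, the solution stays $z$-independent, reducing the problem to the two-dimensional $(F,G)$-system.

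The first step is to recover, for this initial datum, the sharp primary growth $\|F(t_*)\|_{L^2}\sim\mu^{-(1-\gamma)}\varepsilon$ at $t_*\sim\mu^{-1/3}$: the Jacobian $\{G,-\Delta G\}$ in the $F$-equation has its leading $t^2$ self-interaction term cancel, because $-\Delta G$ is a scalar multiple of $G$ on a single wavenumber in $x$, but the subleading interaction furnishes a source of amplitude $\varepsilon^2 t^2$ at shifted mode $(2k_0,\cdot)$; Duhamel against the shifted heat semigroup, optimized at the enhanced-dissipation time, yields the claimed growth. The crucial step is the secondary nonlinear feedback of this $F$ on $G$. The velocity $\tilde V=\nabla^\perp_{\text{shift}}\tilde\Delta^{-1}_{\text{shift}}\tilde F$ carries the Biot--Savart factors $|\tilde V^x|\sim|\tilde F|/|k_0 t|$ and $|\tilde V^y|\sim|\tilde F|/|k_0 t|^2$. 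The nonlinear term $\tilde V\cdot\nabla_{\text{shift}}\tilde G$ and the attendant commutator $[\nabla^{N+1},\tilde V\cdot\nabla_{\text{shift}}]\tilde G$ then drive $\|\tilde G\|_{H^{N+1}}$; pairing this commutator against $\nabla^{N+1}\tilde G$, splitting by Cauchy--Schwarz in time so that one factor is absorbed by the dissipation estimate $\mu^{1/2}\|\nabla\tilde G\|_{L^2_tH^{N+1}}\lesssim\varepsilon$, and computing the remaining integral from the transient profile $\|\tilde F(s)\|_{H^N}\sim s^3\varepsilon^2$ combined with the $1/t$ decay of $\tilde V$, one obtains a lower bound of the form $\|\tilde G(t_*)\|_{H^{N+1}}-\|\tilde G_{in}\|_{H^{N+1}}\gtrsim \mu^{-5/3+3\gamma}$. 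Balancing with $\varepsilon=\mu^\gamma$ gives the bootstrap-closing condition $2\gamma\geq 5/3$, i.e., $\gamma\geq 5/6$; for $\gamma<5/6$ the secondary growth exceeds $\varepsilon$ and contradicts Theorem \ref{thm:stab}.

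The main obstacle is obtaining the correct exponent $5/3$ in the secondary lower bound. A naive Gronwall estimate on $\|\tilde G\|_{H^{N+1}}$ based on $\int\|\nabla\tilde V\|_{L^\infty}\,ds$ produces only $\gamma\geq 1/2$, and a crude $L^2_t$ Cauchy--Schwarz only improves this to $\gamma\geq 2/3$. Reaching $\gamma=5/6$ requires jointly exploiting (i) the dissipation norm $\mu^{1/2}\|\nabla\tilde G\|_{L^2_tH^{N+1}}$ appearing in the critical energy functional, (ii) the precise transient profile of $\tilde F$ and the matching inviscid-damping weights on $\tilde V$, and (iii) the resonant Fourier structure of the source, concentrated at the doubled wavenumber $2k_0$. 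Effectively, the optimality proof runs the bootstrap estimates of Theorem \ref{thm:stab} in reverse: each gain used there to close the bootstrap at $\gamma=5/6$ must be traced to an exactly matching obstruction in the present construction, and this reversal is what makes identifying the correct exponent delicate.
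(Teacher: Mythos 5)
Your overall scheme differs from the paper's in a way that matters quantitatively, and as written it does not reach the exponent $5/6$. The paper does \emph{not} obtain the secondary $G$-growth by feeding back the $F$ that is generated nonlinearly from a low-frequency $G$-datum. Instead (Proposition \ref{prop:maggr}), it places an $F_{in}$ of the maximal size admitted by the hypothesis of Theorem \ref{thm:stab}, $\|F_{in}\|_{H^N}\sim\delta\mu^{2\gamma-1}$, \emph{concentrated at the high frequency} $(k_0,\eta_0)$ with $\eta_0\sim\mu^{-1/3}$, together with a low-frequency $G_{in}\sim\delta\mu^{\gamma}$, and computes the single resonant interaction $(\tilde\nabla^\perp\Lambda_t^{-2}f_0\cdot\tilde\nabla)g_0$ at output mode $k_0-1$. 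The factor $\mu^{3\gamma-5/3}$ comes from two separate gains of $\mu^{-1/3}$ that are only available because the $F$-datum sits at $\partial_y$-frequency $\eta_0\sim\mu^{-1/3}$: the resonant kernel $\eta\,|k_0,\eta-\xi-k_0t|^{-2}$ integrated over the critical window $t\approx\eta_0/k_0$ contributes $\sim\eta_0$, and measuring the output in $H^{N+1}$ against the $H^N$-normalized datum at the same frequency contributes another $\langle\eta_0\rangle$. In your chain the $F$ produced from a low-frequency $G$ lives at $O(1)$ frequencies, so its critical time is $O(1)$ and both gains are absent: with $\|F(s)\|\sim\varepsilon^2 s^3$ and the transport kernel of size $\sim s^{-2}$ (all frequencies $O(1)$), the feedback integral is at most $\int_0^{\mu^{-1/3}}\varepsilon^2 s^3\cdot s^{-2}\cdot\varepsilon\,\dd s\sim\varepsilon^3\mu^{-2/3}$, which contradicts $\|G\|\lesssim\mu^{\gamma}$ only for $\gamma<1/3$, not $\gamma<5/6$. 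Your appeal to the $1/t$ decay of $\tilde V^x$ cannot repair this for a \emph{lower} bound: in the bracket $(\tilde\nabla^\perp\Lambda_t^{-2}f\cdot\tilde\nabla)g$ the $O(t)$ parts of the two terms cancel (the Poisson bracket is invariant under the shear change of variables; in Fourier the kernel is $(\eta_f\tilde k-k_f\xi)\,|k_f,\eta_f-k_ft|^{-2}$ with no time-shifted numerator), so the effective decay is $1/t^2$, and even granting $1/t$ you would only get $\varepsilon^3\mu^{-1}$. The claimed bound $\mu^{3\gamma-5/3}$ is therefore unsubstantiated by the ingredients you list; "running the bootstrap in reverse" is precisely the step that is missing.

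A second, smaller gap: your primary step also fails for the datum you chose. For a single-wavenumber profile $G_{in}=\varepsilon\chi(y)\cos(k_0x)$, the source of $F$ at output $2k_0$ symmetrizes to the kernel $\tfrac{k_0}{2}(\eta-2\xi)\bigl[(k_0^2+(\xi-k_0t)^2)-(k_0^2+(\eta-\xi-k_0t)^2)\bigr]=-\tfrac{k_0}{2}(\eta-2\xi)^2(\eta-2k_0t)$, i.e.\ the leading $t^2$ cancels and the surviving source is $\sim\varepsilon^2 t$, not $\varepsilon^2 t^2$; Duhamel then yields only $\|F\|\sim\varepsilon^2\mu^{-2/3}$, a factor $\mu^{1/3}$ short of $\mu^{-(1-\gamma)}\varepsilon$. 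This is exactly why the paper's growth datum \eqref{eq:fgin} uses two distinct wavenumbers ($k=\pm1$ with $|\eta\mp10|\le2$ and $k=\pm2$ with $|\eta|\le1$): for the cross-interaction at $k=3$ the two quadratic factors add rather than subtract, giving the full $t^2$ source and the $t^3\varepsilon^2\sim\mu^{-1}\varepsilon^2$ growth. Finally, even with correct model computations, a complete proof needs the perturbative control of all remaining interactions showing the identified source dominates (the paper's bootstrap on the remainders $f_2,g_2$ with the weight $m$ in Sections \ref{sec:nlgrow} and \ref{sec:nlgrow2}); your proposal does not address this.
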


Hence, the stability threshold of Theorem~\ref{thm:stab} is sharp: below $\gamma=5/6$, additional instability mechanisms must appear. Our final result identifies the minimal rate of transition away from the threshold of $5/6$.

\begin{prop}\label{thm:belows}
    For $N\ge8$ and $0<\gamma \le 5/6$ there exists initial data $\|(V_{in},B_{in})\|_{H^N} =\eps \le \delta \mu^\gamma$ such that 
    \begin{align*}
            \mu^{\frac16}\|F(t,x+yt,y)\|_{H^7} + \|G(t,x+yt,y)\|_{H^8} \gtrsim  \mu^{-\beta}\varepsilon
        \end{align*}
        with 
    \begin{align*}
        \beta \ge \min\!\Big(\tfrac{5}{6}-\gamma,\tfrac{1}{6}\Big).
    \end{align*}

\end{prop}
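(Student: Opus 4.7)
The plan is to construct explicit initial data activating the nonlinear transient-growth mechanism identified in Theorem~\ref{thm:stab}, and then to quantitatively track the resulting growth of $F$ below the stability threshold $\gamma=5/6$. First, I would adapt the nonlinear transient-growth construction from Theorem~\ref{thm:stab}, taking initial data with $F_{in}=0$ and with $G_{in}$, $B^y_{\neq=,in}$, $V_{\neq\neq,in}$, $B_{\neq\neq,in}$ supported in a few well-chosen Fourier modes that maximize the quadratic source in the $F$-equation; normalize so that $\|(V_{in},B_{in})\|_{H^N}=\eps\le \delta\mu^\gamma$. By Proposition~\ref{prop:lin}, up to times $t\lesssim \mu^{-1/3}$ the linearized dynamics give $\|B^x_{\neq=}(t)\|\sim \eps\,t$ (driven by $B^y_{\neq=}$), while the other components remain of size $\eps$.

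The next step is to identify the dominant nonlinear source for $F$. In the $z$-averaged vorticity equation, the Lorentz force $(B\cdot\nabla)B$ contains a quadratic self-interaction of $B^x_{\neq=}$, and because both factors grow linearly in $t$ this produces a resonant source for $\partial_t F$ of order $\eps^2 t^2$. Integrating in the shear-comoving coordinates $x\mapsto x+yt$ yields a lower bound $\|F(t,x+yt,y)\|_{H^7}\gtrsim \eps^2 t^3$, provided the linear approximation for all the other components remains valid.

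This last validity is ensured by a bootstrap analogous to the one used for Theorem~\ref{thm:stab} but with a relaxed upper bound on $F$. I would run the bootstrap up to a maximal time $T=\min(\mu^{-1/3},T_\ast)$, where $T_\ast$ is the first time at which $\|F(T_\ast)\|_{H^N}\sim \mu^{-1/3}\eps$; beyond $T_\ast$ the back-reaction of $F$ onto $G$ and onto the doubly-nonzero modes is no longer perturbative and one enters the secondary-instability regime described in the introduction. When $\gamma\in[2/3,5/6]$ the enhanced-dissipation time is reached first and one obtains $\|F(T)\|_{H^7}\gtrsim \eps^2\mu^{-1}$, hence $\mu^{1/6}\|F(T)\|_{H^7}\gtrsim \mu^{\gamma-5/6}\eps$, giving $\beta = 5/6-\gamma$. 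When $\gamma\in(0,2/3]$ the feedback threshold is reached first, giving $\|F(T)\|_{H^7}\gtrsim \mu^{-1/3}\eps$ and hence $\beta = 1/6$. Combining the two cases yields the claimed bound $\beta \ge \min(5/6-\gamma,1/6)$.

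The main obstacle is the bootstrap in the sub-threshold regime: propagating the control on $G$, the doubly-nonzero modes, and $B^y_{\neq=}, B^z_{\neq=}$ while allowing $F$ to grow as large as $\mu^{-1/3}\eps$ requires isolating, in their nonlinear equations, the terms for which $F$ acts as a source, and showing that these terms are absorbed by the enhanced dissipation and inviscid damping available on the relevant time window. A secondary difficulty is to verify that the quadratic source in the second step is genuinely coercive (and not killed by cancellations), which I would handle by concentrating the initial data on one or two well-chosen Fourier modes and computing the resonance condition explicitly, mirroring the matching lower-bound construction underlying the last item of Theorem~\ref{thm:stab}.
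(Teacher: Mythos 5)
Your mechanism and case structure are exactly the paper's: data with $F_{in}=0$ and a low-frequency $G_{in}$, linear transient growth of the $z$-averaged magnetic current, a quadratic Lorentz-force source for $F$ of size $\eps^2 t^2$ integrating to $\eps^2 t^3\sim \eps^2\mu^{-1}$ at $t\sim\mu^{-1/3}$, and the comparison with the level $\mu^{-1/6}\eps$ that produces the split at $\gamma=2/3$ (this is Proposition~\ref{prop:nlgrowth2} plus the short argument proving Proposition~\ref{thm:below}). The genuine gap is the step you call the main obstacle and leave unresolved: you propose to close an \emph{unconditional} bootstrap controlling $G$, the doubly nonzero modes, etc., while only imposing $\|F\|\lesssim\mu^{-1/3}\eps$, for data of size $\mu^\gamma$ with $\gamma<5/6$. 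The term that sets the $5/6$ threshold is precisely the $F\to G$ interaction $(\tilde\nabla^\perp\Lambda_t^{-2}F\cdot\tilde\nabla)G$: in the Section~\ref{sec:zaver} energy estimates its contribution is absorbed only because $\|F\|\lesssim\mu^{-(1-\gamma)}\eps$ \emph{and} $\gamma\ge 5/6$; replacing that bound by $\mu^{-1/3}\eps$ the same estimate yields $\mu^{-1}\eps^3$, which is not $o(\eps^2)$ for any $\gamma<1$, and heuristically the per-echo amplification of $G$ mediated by the low mode is of order $\delta\mu^{\gamma-2/3}$, which even diverges as $\mu\to0$ for $\gamma<2/3$ --- exactly the regime where you claim the "feedback threshold is reached first." So in the range of interest your bootstrap cannot be closed with the paper's machinery, and closing it would require a sharper, resonance-localized analysis that your sketch does not supply.

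The paper sidesteps this entirely by proving the contrapositive, conditional statement: the assumed stability bound $\mu^{1/6}\|F\|_{H^7}+\|G\|_{H^8}\lesssim\mu^{-\beta}\eps$ is itself the a priori control, so no sub-threshold bootstrap is needed; one only estimates the deviation of $(F,G)$ from the explicit linear-plus-quadratic response under that assumed control (Proposition~\ref{prop:nlgrowth2}), and even this error analysis is valid only for a restricted range of a priori sizes ($\tilde\gamma\ge\tfrac{\gamma}{2}+\tfrac14$, forced by the cubic and quartic error terms), a constraint your outline does not track. Two further discrepancies: for $\gamma<2/3$ the paper never works at data size $\mu^\gamma$ --- it simply takes data of the smaller admissible size $\delta\mu^{2/3}$ and invokes the $\gamma=2/3$ case to obtain $\beta\ge 1/6$, so your direct route through the uncontrolled echo regime is both unnecessary and the weakest point of the plan; and the construction does not need $B^y_{\neq=}$ or doubly nonzero modes at all --- purely $z$-independent data with only $g_{in}$ on two $x$-modes suffices, with the non-cancellation of the quadratic source checked by an explicit Fourier computation.
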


Theorem~\ref{thm:stab}-\ref{thm:opts} and Proposition~\ref{thm:belows} are simplified versions of the Theorems~\ref{thm:main}-\ref{thm:opt} and Proposition~\ref{thm:below} which we outline in Section \ref{sec:Thm}. 
\begin{rem}
We provide explicit initial data producing nonlinear transient growth or contradicting the assumed threshold (see Sections \ref{sec:nlgrow} and \ref{sec:nlgrow2}). The mechanism is driven by low-frequency perturbations of $G_{low}$ of the form
\begin{align}
    G_{low}(t,x,y) \approx \delta \mu^\gamma \cos(k(x+yt)) \int_{|\eta|\le C}\cos(\eta y)\,\dd\eta,\label{Glow}
\end{align}
for bounded frequency $k\approx 1$ on times $t\approx \mu^{-\frac13}$.  
\medskip

\noindent $\diamond$ If $\gamma\ge1$, the interactions of $G_{low}$ remain perturbative.  

\medskip

\noindent $\diamond$ For $\gamma<1$, the interaction of $G_{low}$ with another perturbation in $G$ induces transient growth in $F$.  

\medskip

\noindent $\diamond$ Below the threshold $\gamma<5/6$, a additional instability appears: the interaction of $G_{low}$ with a perturbation of $F$ produces transient growth in $G$. 

\medskip

In particular, for low-frequency $G$ of the form \eqref{Glow} with $\gamma=5/6$, the nonlinear $F$–$G$ coupling becomes crucial and the corresponding energy functional is critical. 

\end{rem}

\subsection{Change of Coordinates}
Theorem~\ref{thm:stab} is the simplified version of our main stability result, Theorems~\ref{thm:main}. 
The latter are formulated in variables adapted to the linear dynamics and measured in Sobolev spaces defined in the moving frame associated with the characteristics of the Couette flow:
\begin{equation}\label{eq:changelin}
    (x,y,z) \mapsto (x-yt,y,z).
\end{equation}
This coordinate change is standard in shear flow analysis, as it eliminates the linear advection term and makes the mixing mechanism explicit in the evolution of perturbations.
In this frame, the perturbations are given by
\begin{equation}
    v(t,x,y,z) = V(t,x+yt,y,z), \qquad b(t,x,y,z) = B(t,x+yt,y,z), \label{eq:defvb}
\end{equation}
and satisfy
\begin{align}\tag{3d-MHD}
\begin{split}
    \partial_t v + e_1 v^y - 2 \partial_x \nabla_t \Delta_t^{-1} v^y 
    &= \mu \Delta_t v + \alpha \partial_z b + (b \cdot \nabla_t) b - (v \cdot \nabla_t) v - \nabla_t \pi, \\
    \partial_t b - e_1 b^y 
    &= \mu \Delta_t b + \alpha \partial_z v + (b \cdot \nabla_t) v - (v \cdot \nabla_t) b, \label{MHD3} \\
    \nabla_t \cdot v&= \nabla_t \cdot b= 0,\\
     \Delta_t \pi &=\p_i^tb^j\p_j^tb^i -\p_i^tv^j\p_j^tv^i.
\end{split}\end{align}
The change of variables modifies the differential operators as
\[
\partial_y^t := \partial_y - t\partial_x, 
\quad \nabla_t := (\partial_x,\, \partial_y^t,\, \partial_z), 
\quad \Delta_t := \partial_x^2 + (\partial_y^t)^2 + \partial_z^2, 
\quad \Lambda_t := (-\Delta_t)^{1/2}.
\]
Throughout the paper, lowercase letters denote quantities in the moving frame, and uppercase letters refer to those in the original frame.

\subsection{Literature Overview}\label{sub:litover}




The stability of shear flows has a long history, beginning with the pioneering works of Lord Kelvin \cite{kelvin1887stability} and Reynolds \cite{reynolds1883xxix}. Since then, numerous experiments and applied papers have investigated its stability properties and the transition to turbulence. In recent years, significant progress has been made on the nonlinear stability of Couette flow, establishing this as an active and evolving area of research.

For the $2d$ Navier-Stokes equation around Couette flow, the first upper bound on a Sobolev stability threshold for Couette flow was obtained in \cite{bedrossian2016sobolev}, and later refined to the threshold of $1/3$ in \cite{masmoudi2022stability}. This bound can be further improved to hold uniformly in the viscosity parameter by employing Gevrey spaces \cite{li2025asymptotic,bedrossian2014enhanced} by exploiting the underlying inviscid dynamics studied in \cite{bedrossian2015inviscid,dengZ2019,dengmasmoudi2018}. At lower regularity, an optimal threshold of $1/2$ has been established in \cite{masmoudi2020enhanced}, with optimality dictated by the presence of exponential transient growth \cite{li2024dynamical}. The results on Couette flow are generalized to monotone shear flows under suitable spectral assumptions. The linear behavior in this setting was established in \cite{Zill3,Zhang2015inviscid}, while nonlinear stability is obtained in \cite{ionescu2020nonlinear,masmoudi2024nonlinear,beekie2024transition}.

The $2d$ MHD equations around Couette flow and constant magnetic field, stability thresholds are studied in  \cite{Dolce,chen2024sobolev,knobel2024sobolev,wang2024stability,jin2025stability,knobel2025suppression}, leading to a threshold of $1/3+$. Partially vanishing dissipation yields significantly different stability results. Here, the advected heat flow breaks the symmetry, leading to significantly different linear and nonlinear behaviour. See \cite{knobel2023echoes,zhao2024asymptotic} for the resistive and inviscid case, \cite{Knobelideal}
for the inviscid and non-resistive case, \cite{knobel2024nr} for the viscous and non-resistive case, and  \cite{knobel2025sobolev} for the spatially anisotropic case of horizontal resistivity.

For three-dimensional fluids, stability is more intricate since additional effects arise, and the threshold is expected to be significantly larger. For the Navier-Stokes equations around Couette flow, a cause of instability is an effect known as the lift-up effect. This produces linear algebraic growth of the energy by a magnitude of $\mu^{-1}$ \cite{reddy1998stability}. The nonlinear stability is studied in several works \cite{bedrossian2017stability, bedrossian2020dynamics, wei2021transition, bedrossian2022dynamics}, resulting in a Sobolev stability threshold of $1$, which seems to be optimal due to the lift-up effect. For stratified and rotating fluids, the lift-up is suppressed by the linear dynamics, and dispersive effects lead to an improvement of the threshold \cite{zelati2024stability,zelati2025stability}.

In $3d$ MHD equations around Couette flow, a constant magnetic field yields an interaction between velocity and magnetic field and drastically changes the dynamics. This has been studied in \cite{liss2020sobolev, rao2025stability, wang2025stabilitythreshold3dmhd} for constant magnetic fields of the form $\alpha(\sigma,0,1)^T$. Here, the lift-up effect in the $x$ average is suppressed by a magnetic field in the $z$ direction. However, the transport of the magnetic field by the Couette flow leads to a growing mechanism in the magnetic field, which competes with the coupling by the constant magnetic field.  In particular, for a constant magnetic field $\alpha(\sigma,0,1)^T$ when we consider Fourier frequencies $(k,l)$ of the $(x,z)$ variables such that $\sigma k+l \approx 0$ the magnetic field grows by a magnitude of $\mu^{-\frac 13 }$ in the linear dynamics. This is precisely where our contribution lies: for $\sigma =0$ the magnetic field growth shifts to the $l=0$ (i.e., z-averaged) mode. A detailed analysis of this interaction leads us to identify the stability threshold of 5/6 and a refined understanding of transient growth below the threshold of $1$.

The papers studying the stability of the MHD equations around Couette flow are embedded in a spectrum of papers investigating the stability of the MHD equations for different stationary states and geometries. We briefly summarize some relevant works. For the ideal MHD equations perturbed around a constant magnetic field, small localized perturbations are stable \cite{bardos1988longtime}. The extension to the fully dissipative setting is done in \cite{he2018global,wei2017global}, while results for partially dissipative systems are done in \cite{deng2018large,wu2021global} and references therein. For a strictly monotone positive magnetic field $B=(b(y),0,0)$, the linearized dynamics exhibit damping \cite{ren2017linear}. The interplay of a monotone magnetic field and shear flow can further give rise to the formation of so-called magnetic islands in the linearized dynamics \cite{ren2021long,zhai2021long}. In the geometry of two rotating cylinders, the MHD equations around Taylor–Couette flow are unstable  \cite{navarro2025nonlinear} due to exponential amplification associated with the Ponomarenko dynamo mechanism \cite{navarro2025spectral}.

In summary, it seems a constant magnetic field tends to stabilize perturbations, whereas its absence allows instabilities in the magnetic field to develop. This observation is confirmed in the present work: the presence of a constant magnetic field stabilizes the $z$-dependent modes, while the $z$-independent modes display both linear and nonlinear transient growth.

\subsection{Notations and Conventions}\label{sub:notation}

The zero modes in $x$ and its fluctuation are 
\begin{align}
    f_{=}(y,z) = \frac 1 {2\pi}\int_{\T} f(x,y,z)\dd x, \qquad 
    f_{\neq} =f-f_{=}. 
\end{align}
When two subscripts are present, we make a statement about the $z$-modes: The double-zero mode and its fluctuation with relation to the $x$-average are
\begin{equation}
    f_{==}(y) =  \frac 1 {2\pi}\int f_{=}(y,z)\dd z, \qquad f_{=\neq}=f_{=}-f_{==},
\end{equation}
while the nonzero-zero and nonzero-nonzero modes are
\begin{equation}
    f_{\neq=}(x,y) =  \frac 1 {2\pi}\int f_{\neq}(x,y,z)\dd z, \qquad f_{\neq\neq}=f_{\neq}-f_{\neq=}.
\end{equation}
Lastly, we need further notation to distinguish between the main unknowns
\begin{equation}
f_{\sim}= f- f_{\neq = },\qquad  f_\diamond(x,y) = \frac 1 {2\pi}\int_{\T} f(x,y,z)\dd z,\qquad f_{\ndiamond}=f- f_{ \diamond}.
\end{equation}
In particular, we obtain the splittings
\begin{align*}
    f=f_{\sim}+ f_{\neq = }=f_{\sim}+ f_\diamond - f_{==}. 
\end{align*}

We denote the time-dependent partial derivatives as 
\begin{align*}
    \p_y^t&=\p_y-t \p_x, & \nabla_t &=(\p_x, \p_y^t, \p_z),& 
    \Delta_t &=\p_x^2 + (\p_y^t)^2 + \p_z^2, & \Lambda_t &=(-\Delta_t)^{\frac 12 }.
\end{align*}
When summing over all derivatives, we write 
\begin{align*}
    \p_i^t = \begin{cases}
        \p_x & \text{ if }i=x,\\
        \p_y^t & \text{ if }i=y,\\
        \p_z & \text{ if }i=z. 
    \end{cases}
\end{align*}
Furthermore, we define the $2d$ derivatives for the $z$ independent unknowns  as  
\begin{align*}
    \tilde \nabla &=(\p_x, \p_y), &\tilde \nabla_t &=(\p_x, \p_y^t), &\tilde \nabla^{\perp}_t &=( \p_y^t ,-\p_x).
\end{align*}
The $2d$ derivatives for $x$ independent unknowns  as  
\begin{align*}
     \nabla_{y,z} &=(\p_y, \p_z).
\end{align*}
For a vector $v=(v^x,v^y, v^z)^T\in \R^3 $ we define
\begin{align*}
    v^{x,y} &=(v^x,v^y),&v^{y,z} &=(v^y,v^z).
\end{align*}
For a function $f:   \T\times \R \times \T\to \R$ we denote it's Fourier transform as 
\begin{align*}
    \hat f (k,\eta,l) &= \frac 1{(2\pi)^{\frac 3 2 }} \iiint_{ \T\times \R \times \T }\e^{i(kx+\eta y +l z)  }f(x,y,z) \dd x\dd y \dd z.
\end{align*}

 In most parts of the manuscript, we omit writing the hat of the Fourier transform and write $f(k,\eta,l)$. 
 
 \medskip
 
 We write for the spatial Lebesgue spaces $L^p=L^p(\T\times \R \times \T)$ and Sobolev spaces $H^N=H^N(\T\times \R \times \T)$. For the Lebesgue spaces with time dependence, we write $L^p_tH^N=L^p([0,t];H^N)$ and functions which map continuously in time into Sobolev spaces we write $C_tH^N=C([0,t];H^N )$.

\section{Effects and Heuristics}\label{sec:Heu}
In this section, we first discuss the dynamics of the $z$-average, which ultimately leads to the identification of the threshold $\gamma = 5/6$. Then we prove stability of the linearized equations leading to Proposition \ref{prop:lin}.

\subsection{The $2d$ MHD and the $z$-Average}\label{sec:zav}

To understand the dynamics of the $z$-average, we consider the $2d$ MHD system around Couette flow in the comoving frame:
\begin{align}\label{MHD2d}\tag{2d-MHD}\begin{split}
    \partial_t v + e_1 v^y - 2 \partial_x \tilde\nabla_t \Delta_t^{-1} v^y &= \mu \Delta_t v + (b \cdot \tilde\nabla_t)b - (v\cdot \tilde\nabla_t)v - \tilde\nabla_t \pi, \\
    \partial_t b - e_1 b^y &= \mu \Delta_t b + (b\cdot \tilde\nabla_t)v - (v\cdot \tilde\nabla_t)b,\\
    \tilde \nabla_t \cdot v= \tilde \nabla_t \cdot b&= 0,
\end{split}\end{align}
for $(t,x,y) \in \R_+ \times \T \times \R$ and $v,b: \ \R_+\times \T \times \R \to \R^2$\footnote{The $v,b$ are only $2d$ in this subsection.}.  

For solutions of \eqref{MHD3}, the $(x,y)$ components of the $z$-average satisfy equations of the form \eqref{MHD2d} with a forcing from the non-$z$-averaged modes. Each solution of \eqref{MHD2d} therefore represents a solution of \eqref{MHD3} that is independent of $z$. Consequently, understanding the dynamics of \eqref{MHD2d} is essential for understanding the $z$-averaged behavior of the full $3d$ equations.

The $2d$ MHD equations around Couette flow with an additional constant magnetic field have been studied in several recent works (see \cite{Dolce}). A constant magnetic field stabilizes the system by preventing the growth of the magnetic perturbation. In contrast, equations \eqref{MHD2} reduce to the $2d$ case with no constant magnetic field, in which the magnetic perturbation may grow in the linear dynamics. This leads to a significantly larger stability threshold.

To adapt to the growth of the magnetic field, we introduce the $2d$ vorticity and magnetic potential:
\begin{align*}
    f &= \partial_y^t v^x - \partial_x v^y, & 
    g &= \Lambda_t^{-2}(\partial_y^t b^x - \partial_x b^y).
\end{align*}
We use the letters $f$ and $g$ instead of the usual $w$ and $\phi$ for $2d$ systems, used for vorticity and magnetic potential, since in $3d$ vorticity and magnetic potential are different. The unknowns $f$ and $g$ satisfy the equations 
\begin{align}\begin{split}\label{MHD2fg}
    \partial_t f &= \mu \Delta_t f + (\tilde\nabla^\perp \Lambda_t^{-2} f \cdot \tilde\nabla) f 
                    + \tilde\nabla^\perp_t \cdot \big((\tilde\nabla^\perp g \cdot \tilde\nabla) \tilde\nabla_t^\perp g\big), \\
    \partial_t g &= \mu \Delta_t g + (\tilde\nabla^\perp \Lambda_t^{-2} f \cdot \tilde\nabla) g.
\end{split}\end{align}
For large $g$ the term
\[
\tilde\nabla^\perp_t \cdot \big((\tilde\nabla^\perp g \cdot \tilde\nabla) \tilde\nabla_t^\perp g\big) = (\tilde\nabla^\perp g \cdot \tilde\nabla) \Delta_t g
\]
is the most problematic: it grows in time and breaks the structural stability exploited in \cite{knobel2025suppression} to obtain thresholds of $1/3$.  

For the $2d$ MHD system \eqref{MHD2fg}, our analysis yields the following consequences:

\begin{cor}[Stability of $f$ and $g$]\label{cor:fgh}
    Let $N\ge 7$. There exists $\delta >0$ such that for all $5/6 \le \gamma \le 1$ and $0 < \mu \le 1$, the following stability holds:  
    Consider $x$-average–free initial data satisfying
    \begin{align*}
        \mu^{\beta_1} \| f_{in}\|_{H^N} + \| g_{in}\|_{H^{N+1}} = \varepsilon \le \delta \mu^{\gamma}.
    \end{align*}
    Then the corresponding solution $(f,g)$ of \eqref{MHD2fg} belongs to $C([0,\infty);H^N)$ and satisfies the critical energy estimate
    \begin{align*}
        \mu^{1-\gamma}\| f\|_{L^\infty_t H^N}
        + \| g \|_{L^\infty_t H^{N+1}}
        + \mu^{\tfrac{1}{2}} \| \nabla_t g\|_{L^2_t H^{N+1}}
        \lesssim \varepsilon.
    \end{align*}
\end{cor}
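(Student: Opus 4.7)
The plan is to deduce Corollary~\ref{cor:fgh} directly from Theorem~\ref{thm:stab} by embedding the $2d$ system \eqref{MHD2fg} into the $3d$ system \eqref{MHD3} as its $z$-independent sector with vanishing third component. If $(v^{x,y},b^{x,y})$ solves \eqref{MHD2d} in the moving frame, then extending to $(v^{x,y},0)$ and $(b^{x,y},0)$ and treating them as $z$-independent $3d$ fields produces a solution of \eqref{MHD3}: every term containing $\partial_z$, $v^z$, or $b^z$ vanishes identically, and the remaining evolution reduces to the $2d$ one.

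Given initial data as in the corollary, I would reconstruct the divergence-free $2d$ fields from $f_{\mathrm{in}}$ and $g_{\mathrm{in}}$ via Biot--Savart, embed them into $3d$ by the above construction, and then invoke Theorem~\ref{thm:stab}. The $z$-independence together with vanishing third components forces $(V_{\mathrm{in}},B_{\mathrm{in}})_{\neq\neq}=0$, while the definitions in \eqref{eq:adapted_unknown} collapse, for $z$-independent fields, to the ordinary $2d$ vorticity and magnetic potential; hence $F_{\mathrm{in}}=f_{\mathrm{in}}$ and $G_{\mathrm{in}}=g_{\mathrm{in}}$. The smallness hypothesis of Theorem~\ref{thm:stab} then reads exactly
\[
\mu^{1-\gamma}\|f_{\mathrm{in}}\|_{H^N}+\|g_{\mathrm{in}}\|_{H^{N+1}}=\varepsilon\le\delta\mu^{\gamma},
\]
which matches that of the corollary (with $\beta_1=1-\gamma$).

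The next step is to ensure that the solution produced by Theorem~\ref{thm:stab} remains in the $z$-independent sector. This follows from uniqueness in $H^N$ for $N\ge 7$ combined with the $z$-translation invariance of \eqref{MHD3}: any $z$-translate of the solution is again a solution with the same initial data, so by uniqueness it must coincide with the solution itself, forcing $z$-independence for all $t\ge 0$. Applied to the third component, the same argument gives $V^z\equiv B^z\equiv 0$. Consequently the restriction to the $(x,y)$-plane solves \eqref{MHD2fg} with initial data $(f_{\mathrm{in}},g_{\mathrm{in}})$ and satisfies $F(t)=f(t)$, $G(t)=g(t)$ for all $t\ge 0$, so the critical energy bound of Theorem~\ref{thm:stab} transfers line-by-line to the desired estimate
\[
\mu^{1-\gamma}\|f\|_{L^\infty_tH^N}+\|g\|_{L^\infty_tH^{N+1}}+\mu^{\tfrac12}\|\nabla_t g\|_{L^2_tH^{N+1}}\lesssim\varepsilon.
\]

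The only genuinely delicate point is precisely this persistence of $z$-independence, but given uniqueness it is essentially automatic; the remainder is algebraic bookkeeping identifying the $3d$ adapted unknowns with their $2d$ counterparts. No analytic work beyond Theorem~\ref{thm:stab} itself is required.
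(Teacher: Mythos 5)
Your route is the paper's intended one: Corollary~\ref{cor:fgh} is obtained by viewing solutions of \eqref{MHD2fg} as $z$-independent solutions of \eqref{MHD3} with vanishing third components and quoting the $3d$ stability theorem; the paper itself states that the corollary ``follows directly from Theorem~\ref{thm:main}''. Two points in your write-up need repair, however. First, you invoke Theorem~\ref{thm:stab}, which is stated for $N\ge 8$, while the corollary is claimed for $N\ge 7$; you should instead invoke the precise version, Theorem~\ref{thm:main}, which holds for $N\ge 7$ and whose hypothesis, for $z$-independent, $x$-average-free data with vanishing third components, reduces exactly to $\mu^{1-\gamma}\|f_{in}\|_{H^N}+\|g_{in}\|_{H^{N+1}}=\varepsilon\le\delta\mu^\gamma$, since then $\tilde v_{in}=\tilde b_{in}=h_{in}=0$ and the critical energy functional in its conclusion is precisely the quantity appearing in the corollary.

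Second, the persistence of the ansatz is only half-justified. Uniqueness plus $z$-translation invariance does give $\partial_z V=\partial_z B=0$ for all times, but it says nothing about the third components: a $z$-independent solution may well have $V^z,B^z\neq 0$ (these are exactly the unknowns $h$ in the paper), so ``the same argument'' does not give $V^z\equiv B^z\equiv 0$. To close this, note that once $z$-independence is known, all $\partial_z$ terms (including the $\alpha\partial_z$ coupling and $\partial_z\Pi$) drop out and the pair $(V^z,B^z)$ satisfies a system that is linear and homogeneous in $(V^z,B^z)$, advected and stretched by $(V^{x,y},B^{x,y})$ and diffused by $\mu\Delta$; an energy/Gr\"onwall argument then propagates the zero initial data. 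A reflection-symmetry argument in $z$ is not available here, since the background field $B_s=\alpha e_3$ breaks the naive $z\mapsto -z$ symmetry. With these two fixes, the identifications $F=f$, $G=g$ for $z$-independent fields and the line-by-line transfer of the critical energy bound are exactly as you describe and match the paper's argument.
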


\begin{cor}[Optimality the energy functional]\label{cor:b1f}
    Let $N \ge 8$. Consider $0 \le \gamma \le 5/6$ and $0\le \beta_1 \le 1/4$, and suppose the following stability statement holds:
    \begin{itemize}
        \item[] There exist constants $0 < \delta, \mu_0 \le 1$ such that the following holds uniformly in $0 < \mu \le \mu_0$:  
        for initial data $(f_{in}, g_{in})$ satisfying
        \begin{align*}
            \mu^{\beta_1}\| f_{in}\|_{H^N} + \| g_{in}\|_{H^{N+1}} = \varepsilon \le \delta \mu^\gamma,
        \end{align*}
        the corresponding solution $(f,g)$ of \eqref{MHD2fg} satisfies 
        \begin{align*}
            \mu^{\beta_1}\| f\|_{L^\infty_t H^N}
            + \| g\|_{L^\infty_t H^{ N+1}}
            + \mu^{\frac{1}{2}} \| \nabla_\tau g\|_{L^2_t H^{N+1}}
            \lesssim \varepsilon,
        \end{align*}
        for all $t \le \mu^{-\frac{1}{3}}$.
    \end{itemize}
    Then necessarily
    \[
        \gamma = \frac{5}{6}, \qquad \beta_1 = \frac{1}{6}.
    \]
\end{cor}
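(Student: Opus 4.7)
The plan is to argue by contradiction, using the explicit nonlinear growth mechanism of Proposition~\ref{thm:belows}---in its sharper component-wise form provided by Proposition~\ref{thm:below}---as a black box. For any pair $(\gamma,\beta_1)$ in the admissible range with $(\gamma,\beta_1)\neq(5/6,1/6)$, I would exhibit initial data satisfying the hypothesized input bound $\mu^{\beta_1}\|f_{in}\|_{H^N}+\|g_{in}\|_{H^{N+1}}=\varepsilon\le\delta\mu^\gamma$ whose evolution violates the claimed stability estimate.

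To force $\gamma=5/6$, I would take $f_{in}=0$ together with $g_{in}$ of the low-frequency form \eqref{Glow}, so that $\|g_{in}\|_{H^{N+1}}=\varepsilon=\delta\mu^\gamma$. The refined version of Proposition~\ref{thm:belows} would produce $\|g(t^*)\|_{H^{N+1}}\gtrsim \mu^{-(5/6-\gamma)}\varepsilon$ at some time $t^*\lesssim\mu^{-1/3}$, the amplification coming from the $f$--$g$ feedback in the second line of \eqref{MHD2fg}. Matching against the hypothesized upper bound $\|g\|_{L^\infty_t H^{N+1}}\lesssim\varepsilon$ forces $\gamma\ge 5/6$, hence $\gamma=5/6$.

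Next I would pin down $\beta_1=1/6$ by two opposite-sided contradictions at $\gamma=5/6$. For $\beta_1<1/6$, I would reuse the data with $f_{in}=0$ and $g_{in}$ as above and exploit that at $\gamma=5/6$ the $g$--$g$ interaction $\tilde\nabla^\perp_t\cdot((\tilde\nabla^\perp g\cdot\tilde\nabla)\tilde\nabla_t^\perp g)$ in the first line of \eqref{MHD2fg} drives $\|f(t^*)\|_{H^N}\gtrsim\mu^{-1/6}\varepsilon$; the bound $\mu^{\beta_1}\|f\|_{L^\infty_t H^N}\lesssim\varepsilon$ then yields $\beta_1\ge 1/6$. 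For $\beta_1>1/6$, I would saturate the hypothesis by picking $\|f_{in}\|_{H^N}=\delta\mu^{5/6-\beta_1}$ (admissible since $\mu^{\beta_1}\|f_{in}\|_{H^N}=\delta\mu^{5/6}$) together with the same low-frequency $g_{in}$; the enlarged $f_{in}$ couples to $g_{low}$ through $(\tilde\nabla^\perp\Lambda_t^{-2}f\cdot\tilde\nabla)g$ and drives a secondary $g$-growth $\|g(t^*)\|_{H^{N+1}}\gtrsim\mu^{-(\beta_1-1/6)}\varepsilon$, contradicting $\|g\|_{L^\infty_t H^{N+1}}\lesssim\varepsilon$.

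The hard part is not the logical deduction above, which is an elementary consistency check once the lower bounds are available; it lies in the underlying Proposition~\ref{thm:below}, which must supply component-wise lower bounds on $\|f(t^*)\|_{H^N}$ and $\|g(t^*)\|_{H^{N+1}}$ parameterized by both $\|f_{in}\|_{H^N}$ and $\|g_{in}\|_{H^{N+1}}$, and must identify the precise $f$-driven $g$-growth mechanism hinted at in the third bullet of the remark following Proposition~\ref{thm:belows}. Given such sharp component-wise bounds, Corollary~\ref{cor:b1f} follows from the three-part case analysis above.
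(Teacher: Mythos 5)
Your overall strategy---contradiction via explicit nonlinear-growth constructions, with one constraint pushing $\beta_1$ up and another pushing it down---is the right flavor, and your second step (data with $f_{in}=0$ and low-frequency $g_{in}$, whose $g$--$g$ interaction forces $\|f\|\gtrsim\mu^{-(1-\gamma)}\varepsilon$ and hence $\beta_1\ge 1-\gamma$) matches what the paper actually does via Proposition~\ref{prop:nlgrowth2}. However, there is a genuine gap in your first step. With $f_{in}=0$, Proposition~\ref{thm:belows}/\ref{thm:below} does \emph{not} provide $\|g(t^*)\|_{H^{N+1}}\gtrsim\mu^{-(5/6-\gamma)}\varepsilon$: it only lower-bounds the combined functional $\mu^{1/6}\|f\|_{H^7}+\|g\|_{H^8}$, and in the paper's proof that growth is realized entirely in the $f$ component (Proposition~\ref{prop:nlgrowth2}); no result in the paper yields growth of $g$ alone from $f_{in}=0$ data, and at the threshold this secondary feedback is precisely the marginal effect that is not established. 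Consequently you cannot force $\gamma\ge 5/6$ independently of $\beta_1$; in the paper the conclusion emerges only from the \emph{joint} pair of constraints $\beta_1\ge 1-\gamma$ and $\beta_1\le\gamma-\tfrac23$, combined with $\gamma\le 5/6$, $\beta_1\le 1/4$.

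The second constraint is where the real work lies, and your third step skips its essential ingredient. The paper proves it in Section~\ref{sec:nlgrow2} (Proposition~\ref{prop:maggr}) with data in which $f_{in}$ saturates the allowed budget $\|f_{in}\|_{H^N}=\delta\mu^{\gamma-\beta_1}$ but is concentrated at \emph{high} frequency $(k_0,\eta_0)$ with $\eta_0\approx\mu^{-1/3}$: the nonlinear coefficient in $(\tilde\nabla^\perp\Lambda_t^{-2}f\cdot\tilde\nabla)g$ contributes a factor $\eta_0\sim\mu^{-1/3}$ near the critical time $t\approx\eta_0/k_0$, and a second factor $\mu^{-1/3}$ comes from the mismatch between measuring $f$ in $H^N$ and $g$ in $H^{N+1}$, giving $\|g\|_{H^{N+1}}\gtrsim\delta^2\mu^{2\gamma-\beta_1-2/3}$ and hence $\beta_1\le\gamma-\tfrac23$; the remainder is then controlled by a bespoke bootstrap for $(f_2,g_2)$. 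Your proposal, which leaves the frequency localization of $f_{in}$ unspecified and defers everything to a hypothetical ``component-wise refinement'' of Proposition~\ref{thm:below}, is circular at this point: those component-wise lower bounds are not available as stated, and establishing them (with the correct high-frequency data) is precisely the content of the paper's Sections~\ref{sec:nlgrow} and~\ref{sec:nlgrow2}. You would also need to justify the reduction to the parameter range where Proposition~\ref{prop:maggr} applies ($\gamma\ge 3/4$, $\gamma\ge\beta_1+\tfrac12$), which the paper gets from the first constraint together with $\beta_1\le 1/4$.
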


\begin{cor}[Below the threshold $5/6$]\label{cor:opt}
    Let $N \ge 8$. Consider $0 \le \gamma \le 5/6$ and $0\le \beta$, and suppose the following stability statement holds:
    \begin{itemize}
        \item[] There exist constants $0 < \delta, \mu_0 \le 1$ such that the following holds uniformly in $0 < \mu \le \mu_0$:  
        for initial data $(f_{in}, g_{in})$ satisfying
        \[
            \| f_{in}, g_{in}\|_{H^N} = \varepsilon \le \delta \mu^\gamma,
        \]
        the corresponding solution $(f,g)$ of \eqref{MHD2fg} satisfies 
        \begin{align*}
            \mu^{\tfrac{1}{6}} \| f \|_{L^\infty_t H^7}
            + \| g \|_{L^\infty_t H^8}
            \lesssim \mu^{-\beta}\varepsilon,
        \end{align*}
        for all $t \le \mu^{-\tfrac{1}{3}}$. 
    \end{itemize}
    Then it follows that 
    \[
        \beta \ge \min\!\left(\tfrac{5}{6} - \gamma, \tfrac{1}{6}\right).
    \]
\end{cor}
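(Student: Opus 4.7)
The plan is to argue by contradiction: assume the estimate $\mu^{1/6}\|f\|_{L^\infty_tH^7}+\|g\|_{L^\infty_tH^8}\le C\mu^{-\beta}\varepsilon$ holds with some $\beta<\min(5/6-\gamma,1/6)$ and exhibit initial data whose solution violates it at a time $t_*\le\mu^{-1/3}$. Motivated by the low-frequency ansatz of the preceding remark, I would take $f_{in}=0$ and $g_{in}(x,y)=\delta\mu^\gamma\cos(k_0x)\chi(y)$ with $\chi$ smooth and $\hat\chi$ supported in $|\eta|\le C$, so that $\|(f_{in},g_{in})\|_{H^N}\simeq\varepsilon=\delta\mu^\gamma$. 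Splitting $g=g^{(1)}+\tilde g$ with $g^{(1)}$ the linear solution and $f=f^{(2)}+\tilde f$ with $f^{(2)}$ the Duhamel term driven by $(\tilde\nabla^\perp g^{(1)}\cdot\tilde\nabla)\Delta_t g^{(1)}$, a direct Fourier computation gives $\|f^{(2)}(t)\|_{L^2}\gtrsim\varepsilon^2 t^3$ for $t\le c\mu^{-1/3}$: on that window $g^{(1)}$ keeps size $\varepsilon$ since the factor $\e^{-\mu t^3/3}$ is $O(1)$, $\Delta_t$ contributes a multiplier $\sim t^2$ on the sheared modes, and the dominant contribution in the $(2k_0,\cdot)$ output mode carries a definite sign so no cancellation occurs.

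The main case is $\gamma\in[1/3,5/6]$. I would pick $t_*=c\mu^{-1/3}$, where the bootstrap $\|g\|\lesssim\varepsilon$ still holds and the remainder $\tilde f$ is controlled by standard energy estimates. Inserting $\|f(t_*)\|_{L^2}\gtrsim\varepsilon^2\mu^{-1}$ into the hypothesis yields $\mu^{1/6}\varepsilon^2\mu^{-1}\lesssim\mu^{-\beta}\varepsilon$, hence $\delta\mu^\gamma\lesssim\mu^{5/6-\beta}$. Letting $\mu\to 0$ with $\delta$ fixed forces $\beta\ge 5/6-\gamma$, which is exactly $\min(5/6-\gamma,1/6)$ for $\gamma\in[2/3,5/6]$ and is the stronger bound $\ge 1/6$ on $[1/3,2/3]$.

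For $\gamma\in[0,1/3]$ the bootstrap $\|g\|\sim\varepsilon$ only survives up to $t_{\text{break}}\sim\varepsilon^{-1}<\mu^{-1/3}$, so I would refine the argument. Stopping at $t=t_{\text{break}}$ still produces $\|f(t_{\text{break}})\|\gtrsim\varepsilon^{-1}$, and the hypothesis then gives $\beta\ge 2\gamma-1/6\ge 1/6$ for $\gamma\ge 1/6$. For the residual range $\gamma\in[0,1/6]$, I would bring in the secondary mechanism of the remark: the grown $f^{(2)}$ drives a Duhamel correction $g^{(3)}$ in the $g$-equation via $(\tilde\nabla^\perp\Lambda_t^{-2}f^{(2)}\cdot\tilde\nabla)g^{(1)}$, whose integrand has pointwise magnitude $\varepsilon^3 s$ and gives $\|g^{(3)}(t_*)\|\gtrsim\varepsilon^3\mu^{-2/3}$ at $t_*=c\mu^{-1/3}$; comparing this to $\mu^{-\beta}\varepsilon$ forces $\beta\ge 2/3-2\gamma\ge 1/6$ for $\gamma\le 1/4$, closing the gap.

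The principal obstacle is controlling the remainders $\tilde f,\tilde g$ once $\varepsilon$ is no longer small. For $\gamma$ close to $0$ the naive perturbative series in $\varepsilon$ diverges on $[0,\mu^{-1/3}]$, and the leading Duhamel integrals $f^{(2)},g^{(3)}$ could in principle be cancelled by higher-order corrections. Overcoming this requires either truncating the argument at the earlier breakdown time $t_{\text{break}}$, where quantitative lower bounds on $f^{(2)}$ and $g^{(3)}$ survive against the remainders by a bootstrap on $(\tilde f,\tilde g)$, or exploiting the sign of the specific output Fourier mode so that the lower bounds persist uniformly in the nonlinear corrections; in either case, careful bookkeeping so the constants depend only on $\delta$ and not on $\mu$ is needed to preserve the contradiction as $\mu\to 0$.
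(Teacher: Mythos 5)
There is a genuine gap at the heart of your construction: with $g_{in}=\delta\mu^\gamma\cos(k_0x)\chi(y)$ supported on a \emph{single} $x$-frequency, the claimed lower bound $\|f^{(2)}(t)\|_{L^2}\gtrsim\varepsilon^2t^3$ is false. Since $g^{(1)}$ solves a pure Fourier-multiplier equation, it keeps the frequencies $\pm k_0$, and the quadratic form $(\tilde\nabla^\perp g^{(1)}\cdot\tilde\nabla)\Delta_t g^{(1)}$ at an output mode (say $k=2k_0$) must be symmetrized over the exchange of the two input factors: the kernel becomes
\begin{equation*}
\tfrac12(\eta l-k\xi)\Bigl[\bigl(l^2+(\xi-lt)^2\bigr)-\bigl((k-l)^2+(\eta-\xi-(k-l)t)^2\bigr)\Bigr],
\end{equation*}
and for $l=k-l=k_0$ the bracket equals $(2\xi-\eta)(\eta-2k_0t)$, so the $t^2$ contributions of $\Delta_t$ cancel and the forcing is only $O(\varepsilon^2 t)$ (the same cancellation occurs at the $k=0$ output). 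Hence $\|f^{(2)}(t)\|\sim\varepsilon^2t^2$, i.e.\ $\varepsilon^2\mu^{-2/3}$ at $t_*\approx\mu^{-1/3}$, and your comparison only yields $\beta\ge 1/2-\gamma$, which is vacuous in the relevant range $\gamma\in[2/3,5/6]$. This is exactly why the paper's data \eqref{eq:fgin} uses \emph{two distinct} $x$-frequencies ($k=1,2$) with separated $y$-frequency supports (near $10$ and near $0$) and extracts the output at $k=3$: there the two shear factors $(\xi-2t)^2$ and $(\eta-\xi-t)^2$ have different critical times and do not cancel, and the transport factor $2\eta-3\xi\approx 20$ stays bounded below, giving the genuine $\varepsilon^2t^2$ forcing and the $\delta^2\mu^{2\gamma-1}$ lower bound of Proposition \ref{prop:nlgrowth2}. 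Your "definite sign" remark is correct but only protects the surviving $O(t)$ term, not the $O(t^2)$ term you need.

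Two further structural points. First, your case analysis for small $\gamma$ (stopping at $t_{\mathrm{break}}$, invoking a secondary $g^{(3)}$ mechanism) is both incomplete — you yourself note the perturbative series is uncontrolled there — and unnecessary: since data of size $\le\delta\mu^{2/3}$ also satisfies the hypothesis when $\gamma\le 2/3$, the stability statement at exponent $\gamma$ implies the one at exponent $2/3$, so the whole corollary reduces to $\gamma\in[2/3,5/6]$, where $\min(5/6-\gamma,1/6)=5/6-\gamma$; this is the paper's one-line reduction. Second, the remainder $f-f^{(2)}$ is not controlled by "standard energy estimates" alone: the paper first assumes WLOG $\beta\le 1/6$, then uses the hypothesized bound itself (with $\tilde\gamma=\gamma-\beta$, so $\mu^{1/6}\|f\|_{H^7}+\|g\|_{H^8}\lesssim\delta\mu^{\tilde\gamma}$) together with Lemma \ref{lem:faver} for $|\partial_y|^{-1}f_=$, and needs the arithmetic conditions $\tilde\gamma\ge\tfrac{\gamma}{2}+\tfrac14$, $\tilde\gamma\ge\tfrac{2\gamma}{3}+\tfrac1{18}$ to ensure the cubic and quartic error terms $\mu^{-7/6}\varepsilon^3+\mu^{-2}\varepsilon^4$ are dominated by $\delta^2\mu^{2\gamma-1}$; these verifications are absent from your sketch and are where the restriction to $\gamma\ge 2/3$, $\beta\le 1/6$ is actually used.
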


Corollary~\ref{cor:fgh} follows directly from Theorem~\ref{thm:main}, while Corollaries~\ref{cor:b1f} and~\ref{cor:opt} are proven in Sections~\ref{sec:nlgrow} and~\ref{sec:nlgrow2}, respectively. In Subsection~\ref{sec:heuz}, we provide heuristics explaining why the $2d$ threshold is optimal.

\begin{rem}
    Corollary~\ref{cor:fgh} complements the result of \cite{knobel2025suppression}, together providing stability thresholds for the $2d$ MHD equations with 
    constant magnetic field configuration. A striking contrast emerges: in the presence of a nonzero constant magnetic field, the equations exhibit improved stability, whereas in the vanishing-field case the threshold deteriorates substantially. 

    An analogous---though reversed---phenomenon is known for the Boussinesq equations linearized around Couette flow with an affine temperature profile: as the slope of the affine profile vanishes, the long-time stability and inviscid daming rates behavior changes qualitatively compared to the strictly affine case, \cite{yang2018linear, niu2024improved,enciso2025linear}. 

    For the $2d$ MHD equations, it remains an intriguing open direction to determine precisely how the balance between dissipation and the magnitude of the constant magnetic field governs the linear dynamics and the associated nonlinear thresholds.
\end{rem}

\subsection{Heuristics on the threshold of the $z$-average}\label{sec:heuz} 
In this subsection, we give a heuristic explanation for why the stability threshold in Corollary~\ref{cor:fgh} is optimal. The argument follows the approach developed in \cite{bedrossian2015inviscid,dengZ2019} and \cite{knobel2025suppression} for general fluid models.  

We take the Fourier transform in $(x,y)$, with frequency variables $(k,\eta)$, and omit the hat notation in what follows. The functions $f$ and $g$ exhibit strong interactions at the so-called \emph{critical times} 
\[
t_k = \frac{\eta}{k},
\]
at which the following resonance chain can occur:
\[
g(k+1,\eta)\quad \longrightarrow \quad f(k,\eta)\quad \longrightarrow \quad g(k-1,\eta).
\]
Since $t_k \leq t_{k-2}$, growth produced by $g(k+1,\eta)$ at mode $g(k-1,\eta)$ can be iterated and thus cascaded to lower modes, leading to further amplification.  

For times $t\approx \mu^{-\frac{1}{3}}$, frequencies $\eta \leq \mu^{-\frac{1}{3}}$, and bounded $k\leq C$, dissipation plays only a negligible role. Hence, for the purposes of this heuristic, we ignore the effect of dissipation on high frequencies.  
We use the identity 
\begin{align*}
     \nabla^\perp_t\cdot \bigl((\nabla^\perp g\cdot \nabla)\nabla^\perp_t g \bigr)
     = (\nabla^\perp g\cdot \nabla)\Delta_t g,
\end{align*}
and focus on high--low interactions of the form
\begin{align}
    \p_t f^{hi}  \approx (\nabla^\perp g^{hi}\cdot \nabla)\Delta_t g^{low},   \qquad \p_t g^{hi}  \approx (\nabla^\perp \Lambda_t^{-2} f^{hi}\cdot \nabla ) g^{low}.
\end{align}
As a toy model, we take $g_{low}(t,x) = -\eps \cos(x)$, which solves \eqref{MHD2fg} in the inviscid case $\mu=0$. Substituting this ansatz gives the simplified system
\begin{align} 
    \p_t f  = (1+t^2) \sin(x)\,\p_y g,   \qquad \p_t g  =- \sin(x)\,\p_y \Lambda_t^{-2} f.
\end{align}
After Fourier transform, this reads
\begin{align}\begin{split}
    \p_t f(t,k,\eta)   &= \pm \langle t\rangle^2 \eta \, g (t,k\pm 1 , \eta),   \\
    \p_t g(t,k,\eta)   &= \mp \frac{\eta}{(k\pm 1)^2}\,\frac{1}{\langle t-\tfrac{\eta}{k\pm 1}\rangle^2}\, f(t,k\pm 1 , \eta).
\end{split}\end{align}
Fix $\eta$ and restrict to the interacting modes $g(k+1), f(k), g(k-1)$. The dominant transfer is $g(k+1)\to f(k)\to g(k-1)$, which gives
\begin{align}
    \p_t g(t,k+1 ) = 0,\qquad \p_t f(t,k)   \approx  t^2 \eta \, g (t,k+1 ),   \qquad \p_t g (t,k-1)  \approx  \frac{\eta}{k^2}\frac{1}{\langle t-\frac{\eta}{k}\rangle^2} f(t,k).
\end{align}
The relevant interaction appears fro times $t\approx \frac \eta k$, i.e. on the interval $t_{k+1}\le t\le t_{k-1}$. Starting from $f(t_{k+1},k)=g(t_{k+1},k-1)=0$, integration yields
\begin{align*}
     f(t,k)  \approx \eps \eta g(t_{k+1},k+1) \int_{t_{k+1}}^t \tau^2\, \dd\tau  \approx \eps \eta g(t_{k+1},k+1) \, t^2 (t-t_{k+1})
\end{align*}
and hence
\begin{align*}
     g(t_{k-1}, k-1) &\approx \eps \frac{\eta}{k^2}\int_{t_{k+1}}^{t_{k-1}} \frac{1}{\langle t-\tfrac{\eta}{k}\rangle^2} f(\tau,k)\, \dd\tau \approx 
     \eps^2g(t_{k+1},k+1) t_k^4 (t_k-t_{k+1}) .
\end{align*}
Since $t_{k}-t_{k+1}\approx k^{-1}t_{k}\approx t_{k}\approx  \mu^{-\frac13}$, we obtain
\begin{align}
    f(t_{k-1},k)   &\approx  \eps \eta \mu^{-1} g(t_{k+1},k+1), \label{eq:fheu}\\
    g(t_{k-1},k-1) &\approx (\eps \mu^{-\frac56})^2 g(t_{k+1},k+1). \label{eq:gheu}
\end{align}
From \eqref{eq:gheu} we see that echo chains are suppressed provided
\[
\eps \leq \delta \mu^{\frac56}
\]
for some small constant $\delta>0$. Meanwhile, \eqref{eq:fheu} shows that $g$ induces growth of $f$ by a factor $\eta \mu^{-\frac16}$. In Sobolev norms, this corresponds to 
\[
\|f\|_{H^{N}} \approx \mu^{-\frac16} \|\p_y g\|_{H^{N}}.
\]
Stability at this threshold is therefore achieved by employing the modified energy functional
\begin{align*}
    \|g\|_{H^{N+1}} + \mu^{\frac16}\|f\|_{H^{N}}.
\end{align*}

\subsection{Linear dynamics and Proof of Proposition \ref{prop:lin}}
We now prove the linear stability estimates for the system \eqref{MHD2lin}.  
The linearized equations in the comoving frame read
\begin{align}\begin{split}
    \p_t v+e_1 v^y -2 \p_x \nabla_t \Delta_t^{-1} v^y &=\mu \Delta_t v+ \alpha \p_z b,  \\
     \p_t b-e_1 b^y  &=\mu \Delta_t b+\alpha  \p_z v,\\
     \nabla_t \cdot v&=\nabla_t \cdot b=0.\label{MHD2lin2}
\end{split}
\end{align}
From the structure of \eqref{MHD2lin2}, we have that modes behave differently if they depend or are independent of the $x$ or $z$ variable. In particular, for $z$ dependent modes, the interaction induced by the constant magnetic field suppresses the growth of the magnetic field by using the decay of the velocity field (also called inviscid damping).

When we consider the $z$-averaged component, the constant magnetic field vanishes, and we adapt to new unknowns. We define
\begin{align}\label{eq:fghlin}
    f=\p_y^t v^x_{\diamond}- \p_x v^y_{\diamond},\qquad
    g=\Lambda_t^{-2} (\p_y^t b^x_{\diamond}- \p_x b^y_{\diamond}),\qquad
    h=\begin{pmatrix} v^z_{\diamond}\\ b^z_{\diamond}\end{pmatrix}.
\end{align}
These unknowns satisfy
\[
    \p_t f  =\mu \Delta_t f, \qquad 
    \p_t g  =\mu \Delta_t g, \qquad 
    \p_t h  =\mu \Delta_t h,
\]
hence each is governed by a pure advection–diffusion equation. In Fourier space,
\[
    \hat f(t,k,\eta)=\exp\left(-\mu \int_0^t |k,\eta-k\tau ,l|^2 \,\dd\tau\right) \hat f_{in}(k,\eta).
\]
Evaluating the integral gives
\begin{align}
     \int_0^t |k,\eta-k\tau ,l|^2 \, \dd\tau
     =(k^2+l^2)t +t\left(\frac{k^2t^2 }{12} +\left(\frac{kt}{2} - \eta\right)^2 \right)
     \ge \frac{k^2}{12} t^3.\label{eq:advheat}
\end{align}
Thus, for $k\neq 0$,
\[
    |\hat f(t,k,\eta)| \le \e^{-\frac{1}{12}\mu k^2 t^3 }|\hat f_{in}(k,\eta)|.
\]
In particular,
\[
    \| f_{\neq} \|_{H^N}\le \e^{-\frac{1}{12}\mu t^3 }\|f_{in}\|_{H^N},
\]
and the same estimate holds for $g$ and $h$. Reconstructing $v_{\neq =}$ and $b_{\neq =}$ from $(f,g,h)$ through \eqref{eq:fghlin} gives the desired $z$-average bounds \eqref{eq:lin_zaverage}.

For functions dependent on $z$, we use the interaction introduced by the constant magnetic field $\alpha$. Modifying the unknowns of \cite{knobel2025suppression}, we introduce the $\alpha$ stabelized unknowns
\[
    \tilde v  = v_{\sim } +(\alpha \p_z )^{-1} e_1 b^y_{\sim}, \qquad\qquad
    \tilde b  = b_{\sim}.
\]
Then $(\tilde v,\tilde b)$ is equivalent to $(v_\sim,b_\sim)$ in Sobolev norms and since $(b_{\sim}^y)_{\diamond}=(b_{\sim}^y)_{==}=0$ the $\tilde v$ is well defined. The equations become
\begin{align*}
    \p_t \tilde v &= \mu \Delta_t \tilde v + \alpha \p_z \tilde  b +2\nabla_t \Delta^{-1}_t \p_x  \tilde v^y,\\
    \p_t \tilde b &= \mu \Delta_t  \tilde b+ \alpha \p_z \tilde v,\\
    \nabla_t \cdot \tilde v &= \p_x(\alpha \p_z )^{-1}  \tilde b^y,\qquad \nabla_t \cdot \tilde b =0.
\end{align*}
The system is no longer incompressible: the divergence term interacts with the pressure-like contribution $2\nabla_t \Delta^{-1}_t \p_x  \tilde v^y$ and can, in principle, cause finite growth. To control this, we employ a time-dependent Fourier multiplier $A_L$, defined for $a\in L^2$ by
\[
    \calF (A_L a )(k,\eta,l ) 
    = \exp\left( -\int_0^t \mu |k,\eta-k\tau ,l |^2 
      +\frac{2}{\alpha} \Bigl| \frac {k^2 }{l}\Bigr| 
      \frac{\mathbf{1}_{k,l\neq 0}}{|k,\eta-k\tau,l |^2} \, \dd\tau \right)\hat a(k,\eta,l).
\]
By construction, the energy $ \|A_L(\tilde v , \tilde b )\|_{H^2 }^2$ is non-increasing. Differentiating in time gives
\begin{align*}
    \frac12 \p_t \|A_L(\tilde v , \tilde b )\|_{H^2 }^2 
    &= \langle A_L \tilde v , A_L 2\nabla_t \p_x \tilde v^y \rangle_{H^2} 
       - \frac{2}{\alpha}\Bigl\| \frac{|\p_x|}{|\p_z|^{1/2}}\Lambda_t^{-1}A_L(\tilde v,\tilde b)\Bigr\|_{H^2}^2.
\end{align*}
An integration by parts shows
\[
    \langle A_L \tilde v , A_L 2\nabla_t \p_x \tilde v^y \rangle_{H^2} 
    \le \frac{2}{\alpha}\Bigl\| \frac{|\p_x|}{|\p_z|^{1/2}}\Lambda_t^{-1}A_L(\tilde v,\tilde b)\Bigr\|_{H^2}^2,
\]
hence the energy is non-increasing. Therefore
\[
    \|A_L(\tilde v ,\tilde b )\|_{H^2} \lesssim \|(v_{in},b_{in})\|_{H^2}.
\]
Combining with \eqref{eq:advheat} yields
\[
    \|v_{\neq\neq},b_{\neq\neq}\|_{H^2}
    \lesssim \e^{-\frac{1}{12}\mu t^3 }\|(v_{in}, b_{in})\|_{H^2}.
\]
Finally, to obtain decay of the $y$-components we use incompressibility $\nabla_t \cdot v=0$:
\begin{align*}
    \| v^y_{\neq\neq}\|_{L^2}
    &= \left\| \frac{1}{\sqrt{\p_x^2 + (\p_y^t)^2}}
       \begin{pmatrix}\p_x \\ \p_y^t\end{pmatrix} v^y_{\neq\neq}\right\|_{L^2} \\
    &= \left\| \frac{1}{\sqrt{\p_x^2 + (\p_y^t)^2}}
       \begin{pmatrix}\p_x v^y_{\neq\neq}\\ -\p_x v^x_{\neq\neq}-\p_z v^z_{\neq\neq}\end{pmatrix}\right\|_{L^2}\\
    &\lesssim \langle t\rangle^{-1}\|v\|_{H^2}
     \lesssim \langle t\rangle^{-1} \e^{-\frac{1}{12}\mu t^3 }\|(v_{in},b_{in})\|_{H^2},
\end{align*}
and the same bound holds for $b^y_{\neq\neq}$. This completes the proof of Proposition~\ref{prop:lin}.


\section{Main Theorems and Energies} \label{sec:Thm}

To track the dynamics of equation \eqref{MHD3}, we use adapted unknowns corresponding to the linear dynamics. In this section, we recall these unknowns, state the stability theorem, define the weights and energies, and establish the energy estimates we require to prove the stability theorem.  We recall that $(v,b)$ are defined in \eqref{eq:defvb} and the subscripts $=$, $\neq$, $\sim$ and $\diamond$ are defined in Subsection \ref{sub:notation}. 


\noindent The \textbf{$\alpha$ stabelized unknowns} state
\begin{align*}
    \tilde v  &= v_{\sim  } +(\alpha \p_z )^{-1} e_1 b^y_{\sim},&
    \tilde b  &= b_{\sim  }.
\end{align*}
The \textbf{$z$-average unknowns} state
\begin{align*}
    f&=\p_y^t v^x_{\diamond  }- \p_x v^y_{\diamond  }, &
    g&=\Lambda_t^{-2} (\p_y^t b^x_{\diamond  }- \p_x b^y_{\diamond  }), &
    h&=\begin{pmatrix}
        h_1 \\ h_2 
    \end{pmatrix}= \begin{pmatrix}
        v^z_{\diamond   }\\b^z_{\diamond   }
    \end{pmatrix}.
\end{align*}
For these unknowns, we obtain the equations
\begin{align}
\label{meq1}
&\begin{cases}
    \p_t \tilde v = \mu \Delta_t \tilde v + \alpha \p_z \tilde  b +2\nabla_t \Delta^{-1}_t \p_x  \tilde v^y +((b\cdot\nabla_t) b -(v\cdot\nabla_t) v -\nabla_t\pi)_{\sim}\\
    \qquad + e_1(\alpha \p_z)^{-1}  ((b\cdot\nabla_t) v^y - (v\cdot\nabla_t) b^y)_{\neq\neq},\\
    \p_t \tilde b= \mu \Delta_t  \tilde b+ \alpha \p_z \tilde v+((b\cdot\nabla_t) v -(v\cdot\nabla_t) b )_{\sim},\\
    \nabla_t \cdot \tilde v = \p_x(\alpha \p_z )^{-1}  b^y,\qquad \nabla_t \cdot  \tilde b =0,    \\
    \Delta_t\pi = \p_i^tb^j\p_j^tb^i -\p_i^tv^j\p_j^tv^i,
\end{cases}\\
\label{meq2}&\begin{cases}
    \p_t f  =\mu    \Delta_t f + \tilde \nabla_t^\perp \cdot ((b\cdot \nabla_t) b^{x,y} -(v\cdot\nabla_t) v^{x,y})_{\diamond},  \\
     \p_t g  =\mu \Delta_t g     + \Lambda_{t}^{-2}\tilde \nabla_t^\perp\cdot  ((b\cdot\nabla_t) v^{x,y}  -(v\cdot\nabla_t) b^{x,y})_{\diamond}, \\
     \p_t h = \mu \Delta_t h+  \begin{pmatrix}
        ((b\cdot\nabla_t)b^z-(v\cdot \nabla_t)v^z)_{\diamond}\\
        ((b\cdot \nabla_t)v^z -(v\cdot \nabla_t)b^z)_{\diamond}
    \end{pmatrix},\\
    \end{cases}\\
    \label{meq=}&\begin{cases}
    \tilde v_{==}= \p_y\p_y^{-2} f_= e_1 +h_{1,=} e_3,\\
    \tilde b_{==}= -\p_y g_= e_1 +h_{2,=} e_3.
    \end{cases}
\end{align}
If the equations \eqref{meq=} hold initially, they hold for all times. 
We now state the stability theorem:
\begin{thm}[Stable nonlinear transient growth region]\label{thm:main}
        Let  $N\ge 7$, $\alpha\neq 0$  there exists a $\delta >0$ such that for all $5/6\le\gamma\le 1 $, $0<\mu \le 1 $ and $x$-average free initial data $(v_{in},b_{in})$ for which the adapted unknowns satisfy
    \begin{align*}
        \Vert   \tilde v_{in},  \tilde b_{in},  g_{in},  h_{in}\Vert_{H^{N+1}}+\mu^{(1-\gamma) }\Vert f_{in}\Vert_{H^N} =\eps  &\le \delta \mu^{\gamma },
    \end{align*}
    the corresponding solution $(v, b)$ to \eqref{MHD3} is in $C([0,\infty);H^N)$. For some constant  $c(\alpha)>0$ the solution satisfies the following energy estimates for all $t\ge 0$:
    
    \medskip
    
\noindent $\bullet$ Stability of adapted unknown:
    \begin{align*}
        \Vert   \tilde v,  \tilde b, g,  h\Vert_{H^{N}}+\mu^{1-\gamma }\Vert f\Vert_{H^N}  &\lesssim \eps,
    \end{align*}
    
\noindent $\bullet$ Doubly nonzero modes undergo inviscid damping and enhanced dissipation:
    \begin{align*}
        \Vert   v_{\neq \neq},   b_{\neq \neq}\Vert_{L^2 }+\langle t\rangle \Vert    v^y_{\neq \neq},   b_{\neq \neq}^y\Vert_{L^2 } &\lesssim \eps e^{-c\mu^{\frac1 3 } t },
    \end{align*}
    
\noindent $\bullet$ The $z$-averaged velocity undergoes nonlinear transient growth,  inviscid damping, and  enhanced dissipation:
\begin{align*}
        \mu^{1-\gamma} (\Vert f_{\neq}\Vert_{L^2}+\langle t\rangle \Vert    v^x_{\neq =}\Vert_{L^2 }+ \langle t\rangle ^2 \Vert    v^y_{\neq =}\Vert_{L^2 })+\Vert  v^z_{\neq =}\Vert_{L^2 } &\lesssim \eps e^{-c\mu^{\frac1 3 } t },
    \end{align*}

\noindent $\bullet$  Corresponding magnetic components undergo linear transient growth and  enhanced dissipation:
    \begin{align*}
        \Vert g_{\neq}\Vert_{L^2}+ \langle t\rangle ^{-1} \Vert    b^x_{\neq =}\Vert_{L^2 }+ \Vert    b^{y}_{\neq =},    b^{z}_{\neq =}\Vert_{L^2 }&\lesssim \eps e^{-c\mu^{\frac1 3 } t },
    \end{align*}
    
\noindent $\bullet$  The critical energy functional stays bounded:
    \begin{align*}
        \mu^{1-\gamma} \Vert f\Vert_{L^\infty_t H^N}+\Vert g\Vert_{L^\infty_t H^{N+1}}+\mu^{\frac 12 }\Vert \nabla_\tau g\Vert_{L^2_t H^{N+1}}&\lesssim \eps,
    \end{align*}
\noindent $\bullet$ Linear transient growth of the magnetic field: There exists initial data such that for times $t\le \frac 1 {10} \mu^{-\frac 1 3 } $ it holds
    \begin{align*}
        \Vert    b^x_{\neq =}\Vert_{L^2 }&\gtrsim t \eps,
    \end{align*} 
    
\noindent $\bullet$  Nonlinear growth in the $z$-average: There exists a $\mu_0>0$ and initial data satisfying $f_{in}=0$ such that for $\mu\le \mu_0$ and a time  $t_1>0$ it holds 
    \begin{align*}
        \Vert f(t_1)\Vert_{L^2} \gtrsim\delta  \mu^{-(1-\gamma)  }   \eps.
    \end{align*}
\end{thm}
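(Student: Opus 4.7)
The plan is to run a single bootstrap argument on the adapted unknowns $(\tilde v,\tilde b,f,g,h)$, with Fourier-weight-adapted energies chosen so that their built-in ghost dissipations exactly control the worst nonlinear terms. For $(\tilde v,\tilde b)$ I would work with the linear multiplier $A_L$ constructed in the proof of Proposition \ref{prop:lin}, but applied in $H^{N+1}$; its commutator with $\p_t$ produces the coercive ghost $\|\tfrac{|\p_x|}{|\p_z|^{1/2}}\Lambda_t^{-1}A_L(\tilde v,\tilde b)\|_{H^{N+1}}^2$, which absorbs the divergence-induced pressure correction, and combined with the standard enhanced-dissipation ghost $\mu\|\nabla_\tau A_L(\tilde v,\tilde b)\|_{L^2_\tau H^{N+1}}^2$ it drives the $e^{-c\mu^{1/3}t}$ decay on the $\neq\neq$ modes claimed in the theorem. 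For $h$, whose linearization is pure advection-diffusion, only quadratic couplings with $(v,b)$ need to be estimated, and these are handled by the same paraproduct decomposition used for $(\tilde v,\tilde b)$.

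For the $z$-averaged unknowns I would use the coupled energy
\begin{align*}
    E(t)=\mu^{2(1-\gamma)}\|f(t)\|_{H^N}^2+\|g(t)\|_{H^{N+1}}^2+\mu\int_0^t\|\nabla_\tau g\|_{H^{N+1}}^2\,\dd\tau+\|h(t)\|_{H^{N+1}}^2,
\end{align*}
whose weighting is dictated by the echo calculation in Subsection \ref{sec:heuz}. The viscous dissipation of $g$ supplies the ghost $\mu^{1/2}\|\nabla_\tau g\|_{L^2_t H^{N+1}}$, which is exactly the quantity needed to close the critical $(\tilde\nabla^\perp g\cdot\tilde\nabla)\Delta_t g$ contribution in the $f$-equation: after Cauchy-Schwarz in time, a factor of $\mu^{-1/2}$ can be traded against the ghost, and the remaining product of $\|g\|_{L^\infty_t H^{N+1}}$ against $\mu^{1-\gamma}\|f\|_{L^\infty_t H^N}$ reproduces, at $\gamma=5/6$, the sharp $\mu^{5/6}$ smallness condition computed heuristically in \eqref{eq:fheu}-\eqref{eq:gheu}. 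The reciprocal $f$-to-$g$ transport term in \eqref{meq2} is handled symmetrically: the regularity loss in $\tilde\nabla^\perp\Lambda_t^{-2}f$ is absorbed by the extra derivative carried by the $H^{N+1}$ norm on $g$ together with the one-derivative gain supplied by $\Lambda_t^{-2}$, and the commutator with $\nabla_t$ is again paid for by the ghost.

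The couplings between the adapted unknowns and the full fields $(v,b)$ that appear on the right-hand sides of \eqref{meq1}-\eqref{meq2} would be closed by expressing $(v,b)$ in terms of $(\tilde v,\tilde b,f,g,h)$ and then using the enhanced-dissipation decay of the $\neq\neq$-modes to absorb any polynomial-in-$t$ loss coming from the $z$-averaged parts. The pointwise-in-$t$ bounds in the statement then follow from $E(t)\lesssim\eps^2$ together with incompressibility, by the same inversions already used in the linear proof after \eqref{eq:advheat}. The two constructive bullets are obtained directly: the linear transient growth of $b^x_{\neq=}$ is read off from the linear ODE $\p_t b^x_{\neq=}=b^y_{\neq=}+\mu\Delta_t b^x_{\neq=}$ with $b^y_{\neq=,\mathrm{in}}$ of unit size and $b^x_{\neq=,\mathrm{in}}=0$, while the nonlinear growth of $f$ from $f_{\mathrm{in}}=0$ is produced by the low-frequency ansatz \eqref{Glow} for $g_{\mathrm{in}}$, whose self-interaction through the mechanism of Subsection \ref{sec:heuz} reaches the amplification $\mu^{-(1-\gamma)}\eps$ predicted by \eqref{eq:fheu} at a time $t_1\approx\mu^{-1/3}$.

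The hard part of the argument is the $f$-$g$ coupling through the low $x$-frequencies of $g$: it is this single interaction that dictates both the weight $\mu^{1-\gamma}$ on $f$ and the sharp threshold $\gamma=5/6$, and it is the only term that cannot be treated by a naive energy estimate but requires the ghost $\mu^{1/2}\|\nabla_\tau g\|_{L^2_t H^{N+1}}^2$ generated by the viscous dissipation of $g$. Once this critical interaction is controlled, all remaining nonlinear contributions are sub-critical and yield to standard paraproduct and commutator estimates of the type developed in the $3d$ Navier-Stokes and $2d$ MHD works cited in Section \ref{sub:litover}.
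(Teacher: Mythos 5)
Your overall architecture (weighted bootstrap on the adapted unknowns, viscous ghost terms, explicit low-frequency data for the lower bounds) points in the right direction, but there is a genuine gap at the heart of the stability part: you have no resonance/critical-time machinery, and you misattribute the source of the $5/6$ threshold. The term you single out as critical, $(\tilde\nabla^\perp g\cdot\tilde\nabla)\Delta_t g$ in the $f$-equation, is indeed closed exactly as you say (Cauchy–Schwarz in time, trading $\mu^{-1/2}$ against the dissipation of $f$ and of $g$), but because the $f$-norm is \emph{allowed} to be of size $\mu^{-(1-\gamma)}\eps$ this term only requires $\eps\lesssim\delta\mu^{\gamma}$, for every $\gamma\le1$; it does not pin down $5/6$. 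The sharp constraint comes from the feedback terms in which $f$ (and the growing $\tilde\nabla_t^\perp g$) act at high frequency on $g$ and on $(\tilde v,\tilde b)$: for instance the reaction part of $(\tilde\nabla^\perp\Lambda_t^{-2}f\cdot\tilde\nabla)g$, where placing the product in the $H^{N+1}$-energy of $g$ while $f$ is only controlled in $H^N$ with the inflated size $\mu^{-(1-\gamma)}\eps$ produces an amplification of order $t^2$ concentrated near the critical time $t\approx(\eta-\xi)/(k-\tilde k)$. The paper absorbs this with the resonance weight $m$ (whose Cauchy–Kovalevskaya terms behave like $\langle t-\eta/k\rangle^{-1}$) together with the anisotropic multiplier $A^g$, which localizes the loss to an $O(1)$ window and yields the integrated factor $\mu^{-2/3}$, hence the condition $\eps\le\delta\mu^{5/3-\gamma}$, i.e.\ $\gamma\ge 5/6$. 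With only the viscous ghost $\mu^{1/2}\Vert\nabla_\tau g\Vert_{L^2_tH^{N+1}}$ and $L^\infty_t$ bounds, as in your energy $E(t)$, the same term costs $\int_0^{\mu^{-1/3}}t^2\,\dd t\sim\mu^{-1}$ and forces $\eps\lesssim\mu^{2-\gamma}$, so your bootstrap does not close at $\gamma=5/6$. The same problem occurs for the couplings of $g$ onto the doubly nonzero modes, which you propose to absorb "by enhanced dissipation": that again gives $\mu^{-1}$ instead of the critical $\mu^{-5/6}$, and the paper only reaches $\mu^{-5/6}$ by combining the $m$-weight with \emph{time-integrated} inviscid damping of $v^y_{\neq\neq},b^y_{\neq\neq}$, obtained from a separate bootstrap on the unknowns $\tilde\rho$ that your scheme does not contain (pointwise decay via incompressibility, as in the linear proof, is not enough here).

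Two further, more technical omissions: the $x$-averaged modes $k=0$ have no enhanced dissipation and are treated in the paper by a dedicated bootstrap of size $\delta\mu$ on $(v^{y,z}_=,b^{y,z}_=)$ together with the identities \eqref{meq=}; without tracking them separately, the $=$-contributions in the self-interactions already force $\eps\lesssim\delta\mu$, which is stronger than $\delta\mu^{\gamma}$ for $\gamma<1$. Your constructive bullets are in the right spirit and essentially match the paper's Section \ref{sec:nlgrow} (comparison with a para-linear solution $f_1$, remainder controlled by the stability bounds and Lemma \ref{lem:faver}), but note that a single low-frequency mode as in \eqref{Glow} does not generate the claimed growth by self-interaction: the paper's data use two distinct $x$-frequencies with separated $\eta$-supports so that the quadratic interaction forces a third mode with a $t^2$-sized coefficient.
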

The precise version of the optimality of the energy functional states: 
\begin{thm}[Optimality of energy functional]\label{thm:opt}
    Let $N \ge 8$. Consider $0 \le \gamma \le 5/6$ and $0 \le \beta_1 \le 1/4$, and assume the following stability statement holds:
\begin{itemize}
        \item[] There exist constants $0 < \delta, \mu_0 \le 1$ such that the following holds uniformly in $0 < \mu \le \mu_0$:  
        Consider, $z$ independent and $x$-average free initial data, for which $(F_{in}, G_{in})$ satisfy the bound
        \begin{align*}
            \mu^{\beta_1}\|f_{in}\|_{H^N} + \|g_{in}\|_{H^{N+1}} = \varepsilon \le \delta \mu^\gamma.
        \end{align*}
        Then the adapted unknowns $(f,g)$ in \eqref{eq:adapted_unknown} of the solution $(v,b)$ to \eqref{MHD2} satisfy
        \begin{align*}
            \mu^{\beta_1}\|f\|_{H^N}
            + \|g\|_{H^{N+1}}
            + \mu^{\frac{1}{2}} \|\nabla_\tau g\|_{L^2_t H^{N+1}}
            \lesssim \varepsilon
        \end{align*}
        for all times $t\le \mu^{-\frac 13 }$. 
\end{itemize}
    
    Then it holds that 
    \begin{align*}
        \gamma = \frac{5}{6}, \qquad \beta_1 = \frac{1}{6}.
    \end{align*}
\end{thm}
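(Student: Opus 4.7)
The plan is to argue by contradiction: assuming the stability statement holds for given $(\gamma,\beta_1)$, I construct $z$-independent, $x$-average free initial data whose evolution under \eqref{MHD2fg} violates the hypothesized bound unless $\gamma=5/6$ and $\beta_1=1/6$. The constructions upgrade the heuristic echo mechanism of Subsection \ref{sec:heuz} to a genuine nonlinear instability along the chain $g(k+1,\eta)\to f(k,\eta)\to g(k-1,\eta)$ at marginal frequencies $\eta \approx \mu^{-1/3}$ and bounded $k$, where dissipation is just weak enough not to suppress the transfer across a critical-time window of length $t_k-t_{k+1}\approx \mu^{-1/3}$ but is still controlled by \eqref{eq:advheat}.

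The driver in each construction is a low-frequency background of the form \eqref{Glow}, which is an almost-stationary solution with $\|g_{\mathrm{low}}\|_{H^{N+1}} \approx \eps$. Three complementary choices of perturbation isolate three linear inequalities between $\gamma$ and $\beta_1$. First, adding a single high-frequency Fourier mode to $g_{in}$ saturating $\|g_{in}\|_{H^{N+1}}=\eps$ triggers the full two-step echo $g\to f\to g$, transferring a fraction $(\eps\mu^{-5/6})^2$ of the amplitude to mode $(k-1,\eta)$ by time $t_{k-1}$; iterating across bounded $k$ produces geometric growth of $\|g(t)\|_{H^{N+1}}$ and contradicts the assumed bound unless $\eps \lesssim \mu^{5/6}$, forcing $\gamma\ge 5/6$. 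Second, the intermediate step $g\to f$ of the same construction already gives $\|f\|_{H^N} \approx \eps\mu^{-1}\|g_{in}\|_{H^{N+1}}$ by direct integration of $\p_t\hat f \approx t^2\eta\,\hat g_{in}$, so the stability bound $\mu^{\beta_1}\|f\|_{H^N}\lesssim \eps$ forces $\beta_1\ge 1-\gamma$. Third, and most delicate, I take $g_{in}=g_{\mathrm{low}}$ together with a single Fourier mode of $f_{in}$ at $(k_0,\eta)$ with amplitude saturating $\mu^{\beta_1}\|f_{in}\|_{H^N}=\eps$; the direct one-step transfer $f\cdot g_{\mathrm{low}} \to g$ then places amplitude $\approx \eps\,\mu^{\gamma-\beta_1-2/3}$ at the mode $(k_0-1,\eta)$ by time $t_{k_0-1}$, so the hypothesis $\|g\|_{L^\infty_t H^{N+1}}\lesssim \eps$ forces $\beta_1\le\gamma-2/3$.

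Combining the three inequalities with the hypothesis $\gamma\le 5/6$ yields $\gamma=5/6$ and $\beta_1=1/6$, noting that $1/6$ sits inside the admissible range $[0,1/4]$.

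The main technical obstacle is to promote the toy Fourier calculations to the genuine quadratic system \eqref{MHD2fg}: one has to quantify the leading-order three-wave transfers while ensuring that next-order self-interactions of the perturbation (feedback into $g_{\mathrm{low}}$, interactions across distinct critical-time windows, and the pressure-type correction hidden in $\Lambda_t^{-2}$) contribute only lower-order corrections. The cleanest route is the stationary-phase/critical-time-window analysis already developed for Proposition \ref{thm:belows} in Sections \ref{sec:nlgrow} and \ref{sec:nlgrow2}, since all three inequalities rest on the same echo mechanism; the present proposition only adds a careful accounting of which amplitude and frequency factors appear in each of the three optimal constructions, together with a truncation of the cascade to a single critical-time interval so that no genuine nonlinear iteration beyond the one already controlled by \eqref{eq:lin_zaverage} is needed.
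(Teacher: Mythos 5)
Your constructions 2 and 3 are exactly the two ingredients of the paper's proof: the $g\cdot g\to f$ transfer forcing $\beta_1\ge 1-\gamma$ is Proposition~\ref{prop:nlgrowth2} (though the paper realizes it with purely bounded-frequency data at $k=1,2$, so the growth comes from the secular factor $t^2$ integrated up to $t^3\approx\mu^{-1}$ and no critical-time localization is needed for that step), and the one-step $f\cdot g_{\mathrm{low}}\to g$ transfer at $\eta_0\approx\mu^{-1/3}$ forcing $\beta_1\le\gamma-\tfrac23$ is Proposition~\ref{prop:maggr}; combining these with $\gamma\le 5/6$ pins down $\gamma=5/6$, $\beta_1=1/6$ exactly as in the paper, and the error control you defer (Duhamel/leading-order computation, a bound on $|\p_y|^{-1}f_=$ as in Lemma~\ref{lem:faver}, and a bootstrap for the remainders $f_2,g_2$ using the assumed a priori bounds) is precisely what the paper carries out. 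Your first construction — the iterated two-step echo with per-step gain $(\eps\mu^{-5/6})^2$ used to force $\gamma\ge 5/6$ directly — is redundant, since your inequalities $\beta_1\ge 1-\gamma$ and $\beta_1\le\gamma-\tfrac23$ already give $\gamma\ge 5/6$; moreover, as stated it is not justified under the hypothesis, because the assumed stability bound caps the intermediate vorticity at $\|f\|_{H^N}\lesssim\mu^{-\beta_1}\eps$, so the claimed amplification (which presumes $f\approx\eps\mu^{-1}\cdot\eps$) cannot be invoked without first establishing $\beta_1\ge 1-\gamma$, at which point the second echo step collapses back into your construction 3. Dropping that first piece leaves a plan essentially identical to the paper's argument.
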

The minimal transition away from the threshold states: 

\begin{prop}[Below the threshold of $5/6$]\label{thm:below}
    Let $N\ge8$. Consider $0\le \gamma\le 5/6$ and $0\le \beta$, and assume the following stability statement holds:
\begin{itemize}
        \item[]
There exist constants $0<\delta,\mu_0\le1$ such the following holds uniformly for $0<\mu\le\mu_0$: Consider the $x$-average free initial data satisfying 
        \begin{align*}
            \|(v_{in},b_{in})\|_{H^N} = \varepsilon \le \delta \mu^\gamma.
        \end{align*}
        Then the adapted unknowns $(f,g)$ in \eqref{eq:adapted_unknown} of the solution $(v,b)$ to \eqref{MHD2} satisfy
        \begin{align*}
            \mu^{\frac16}\|f\|_{H^7} + \|g\|_{H^8} \lesssim \mu^{-\beta}\varepsilon
        \end{align*}
        for all $t\le \mu^{-\frac13}$.
\end{itemize}
    Then necessarily
    \begin{align*}
        \beta \ge \min\!\Big(\tfrac{5}{6}-\gamma,\tfrac{1}{6}\Big).
    \end{align*}
\end{prop}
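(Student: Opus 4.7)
The plan is to prove Proposition~\ref{thm:below} by reduction to its two-dimensional counterpart Corollary~\ref{cor:opt}. First I would restrict the class of admissible initial data in Proposition~\ref{thm:below} to $z$-independent, $x$-average free $(v_{in},b_{in}):\T\times\R\to\R^3$. Since the MHD system \eqref{MHD2} preserves $z$-independence, the solution remains $z$-independent for all time, and its $(x,y)$-components satisfy \eqref{MHD2d} (with no constant magnetic field), while the $z$-components evolve passively along the resulting $2d$ flow. On this class the $3d$ adapted unknowns $(f,g)$ of \eqref{eq:adapted_unknown} coincide with the $2d$ adapted unknowns of \eqref{MHD2fg}, and the stability hypothesis of Proposition~\ref{thm:below} specializes exactly to the hypothesis of Corollary~\ref{cor:opt}. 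The conclusion $\beta\ge\min(5/6-\gamma,1/6)$ then follows from Corollary~\ref{cor:opt}.

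For Corollary~\ref{cor:opt}, the strategy is to construct, for each $\gamma\in[0,5/6]$, explicit initial data $(f_{in},g_{in})$ with $\|(f_{in},g_{in})\|_{H^N}=\eps\le\delta\mu^\gamma$ that violates the stability bound for any $\beta<\min(5/6-\gamma,1/6)$, following the ansatz \eqref{Glow} and the heuristic of Subsection~\ref{sec:heuz}. Concretely, I would take $g_{in}$ to be the sum of a low-frequency driver $g_{low}(x,y)\approx -\delta\mu^\gamma\cos(x)\,\chi(y)$, with $\widehat{\chi}$ supported on $|\eta|\le C$, plus a small companion perturbation concentrated at wavenumber $(k,\eta)=(2,\eta_0)$ with $\eta_0\sim\mu^{-\frac13}$ and Fourier amplitude of order $\delta\mu^\gamma\langle\eta_0\rangle^{-(N+1)}$, so that the total $H^N$ size is $\le\delta\mu^\gamma$. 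On the critical echo window $[\eta_0/2,\eta_0]$, the $g_{low}\cdot g(2,\eta_0)$ interaction in \eqref{MHD2fg} accumulates, via the stationary-phase style computation recalled in Subsection~\ref{sec:heuz}, a contribution $|\hat f(t_{*},1,\eta_0)|\gtrsim \delta\mu^\gamma\eta_0\mu^{-1}|\hat g_{in}(2,\eta_0)|$, giving $\mu^{\frac16}\|f(t_{*})\|_{H^7}\gtrsim \delta^2\mu^{2\gamma-\frac56}$. Matching against the hypothesized upper bound $\mu^{-\beta}\eps=\delta\mu^{-\beta+\gamma}$ forces $\beta\ge 5/6-\gamma$, which is the claimed bound in the regime $\gamma\ge 2/3$. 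For $\gamma<2/3$ the single-echo calculation formally gives the even stronger bound $5/6-\gamma>1/6$, but rigorously justifying it requires controlling back-reaction and higher-order cascade terms that cease to be perturbative once $\delta\mu^\gamma\mu^{-\frac56}\not\ll 1$; the guaranteed robust lower bound that the single echo delivers in this strongly nonlinear regime is exactly $\beta\ge 1/6$, matching the claim.

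The main obstacle is turning the heuristic of Subsection~\ref{sec:heuz} into rigorous sharp \emph{lower} bounds on individual Fourier modes over the echo interval, rather than the upper bounds used in the stability proofs. This requires isolating the dominant cascade channel $g_{low}\cdot g(k+1,\eta)\to f(k,\eta)$ by a careful integration by parts in time on $[\eta/(k+1),\eta/(k-1)]$, in the spirit of the echo analyses of \cite{bedrossian2015inviscid,dengZ2019} and \cite{knobel2025suppression}, and showing that competing contributions --- from other Fourier modes, from dissipation, and from the companion perturbation --- are strictly subleading. A further subtlety in the regime $\gamma<5/6$ is that the low-frequency driver $g_{low}$ is not small in the norm in which the back-reaction is measured, so one must separately verify that $g_{low}$ is nearly conserved across the echo window $t\le \mu^{-\frac13}$, which is automatic for the $\mu t^3\ll 1$ part of the dissipation but requires a short bootstrap to handle the nonlinear self-interaction of $g_{low}$ with itself.
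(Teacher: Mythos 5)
Your overall architecture (restrict to $z$-independent data so that \eqref{MHD2} reduces to \eqref{MHD2fg}, then produce explicit data whose quadratic interaction forces growth of $f$, treated perturbatively around an explicitly computable leading-order term) is the same as the paper's. But your specific 2d construction has a quantitative gap that breaks the claimed exponent. You place the companion perturbation at frequency $(2,\eta_0)$ with $\eta_0\sim\mu^{-1/3}$, so to respect the hypothesis its Fourier amplitude must be at most $\delta\mu^{\gamma}\langle\eta_0\rangle^{-N}$ (and in the setting of Proposition~\ref{thm:below}, where the smallness is imposed on $\|(v_{in},b_{in})\|_{H^N}$ and hence effectively on $\|g_{in}\|_{H^{N+1}}$, at most $\delta\mu^{\gamma}\langle\eta_0\rangle^{-(N+1)}$). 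The echo output $\hat f(1,\cdot)$ sits at the same frequency $\eta_0$, but the hypothesis only bounds $\mu^{1/6}\|f\|_{H^7}$, so your gain $\delta\mu^{\gamma}\eta_0\mu^{-1}$ is multiplied by the Sobolev mismatch $\langle\eta_0\rangle^{7-N}$ (resp.\ $\langle\eta_0\rangle^{7-(N+1)}$): the construction yields at best $\mu^{1/6}\|f(t_*)\|_{H^7}\gtrsim\delta^2\mu^{2\gamma-5/6}\,\langle\eta_0\rangle^{8-N}$, which equals your claimed $\delta^2\mu^{2\gamma-5/6}$ only in the borderline case $N=8$ with the $(f,g)$ normalization, degrades by $\mu^{(N-8)/3}$ for larger $N$, and already loses a factor $\mu^{1/3}$ at $N=8$ under the $(v_{in},b_{in})$ normalization of the statement you are proving. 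Since the proposition must be established for every $N\ge 8$, the derived constraint on $\beta$ is strictly weaker than $\min(5/6-\gamma,1/6)$; relatedly, your reduction step ``the hypothesis of Proposition~\ref{thm:below} specializes exactly to that of Corollary~\ref{cor:opt}'' silently identifies $\|(v_{in},b_{in})\|_{H^N}$ with $\|(f_{in},g_{in})\|_{H^N}$, which is harmless only for data at $O(1)$ frequencies — precisely not your companion.

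The paper avoids this by keeping \emph{all} data at bounded frequencies: $f_{in}=0$ and $g_{in}$ consisting of the modes $k=\pm1,\ \eta\approx\pm10$ and $k=\pm2,\ |\eta|\le1$, see \eqref{eq:fgin}. The growth of $f$ then comes not from a resonance at the critical time $\eta/k\sim\mu^{-1/3}$ but from the secular factor in $(\nabla^\perp g\cdot\nabla)\Delta_t g\sim t^2$, integrated over $[0,\tfrac1{10}\mu^{-1/3}]$ to give $\|f(T)\|_{L^2}\gtrsim\delta^2 t^3\mu^{2\gamma}\sim\delta^2\mu^{2\gamma-1}$ at the output frequency $(3,\eta\approx10)$ (Proposition~\ref{prop:nlgrowth2}); because the output lives at $O(1)$ frequencies, the lower bound transfers to $H^7$ with no loss, uniformly in $N$, and comparing with the hypothesized bound $\mu^{1/6}\|f\|_{H^7}\lesssim\delta\mu^{\gamma-\beta}$ gives $\beta\ge 5/6-\gamma$ on $2/3\le\gamma\le5/6$, the remaining range being handled by monotonicity in the data size (take data of size $\delta\mu^{2/3}$ when $\gamma<2/3$), rather than by your informal ``robust $1/6$ in the strongly nonlinear regime'' argument. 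The high-frequency single-echo mechanism you propose is essentially the one the paper uses in Section~\ref{sec:nlgrow2} for Theorem~\ref{thm:opt}, where it is appropriate because there the hypothesized bound is on $\|g\|_{H^{N+1}}$, i.e.\ at the same regularity as the data, so no Sobolev penalty occurs; transplanting it to Proposition~\ref{thm:below}, where the output is measured in a much weaker norm than the one normalizing the data, is the step that fails.
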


We prove the stability part of Theorem~\ref{thm:main} by a bootstrap method using time time-dependent Fourier multiplier.  To define the bootstrap energies, we use the time-dependent Fourier multipliers
\begin{align*}
     A(t,k,\eta, l ) &= \begin{cases}
         e^{-c\mu^{\frac 13 } t  }M^{-1} (t,k,\eta,l)\langle k ,\eta,l\rangle^{N},& k\neq 0,  \\
         M^{-1} (t,0,\eta,l)\langle \eta,l \rangle^{N},& k= 0,
    \end{cases}\\
    A^g(t,k,\eta ) &=A(k,\eta,0 )\left(1+\vert k \vert +\tfrac {\vert k,\eta \vert}{\langle \mu^{\frac 13 }t \rangle }\textbf{1}_{k\neq 0}\right),\\
    M(t,k,\eta, l ) &= (m  M_\mu)(t,k,\eta, l ).
\end{align*}
The main weight is denoted as
\begin{align*}
   m (t,k,\eta,l)  &=\exp\left( c^{-1}(1+\tfrac 1 {\vert \alpha\vert} )   \int_{-\infty}^t \sum_{\substack{i, j\\ j \neq 0}} \frac {\vert j \vert^2  }{\vert j,\eta-j\tau , i  \vert^2  }\langle j-k , i-l\rangle^{-4}\dd\tau \right),
\end{align*}
which is a modification of the weight $m_1$ used in \cite{knobel2025suppression}. The enhanced dissipation weight is denoted as 
\begin{align*}
   \frac {\p_t M_\mu}{ M_\mu}(t,k,\eta ,l )&=\frac {\mu^{\frac 13 } }{1+\mu^{\frac 23 }(t-\frac \eta k)^2 }, & k\neq 0, \\
   M_\mu(0,k,\eta ,l )&=M_\mu(t, 0,\eta ,l )=1,
\end{align*}
which is a standard weight \cite{liss2020sobolev}.

To establish stability, we use one further unknown: The \textbf{inviscid damping unknowns} are defined as 
\begin{align*}
    \tilde \rho_1&= \langle\p_x\rangle ^{-1} \Lambda_t \tilde v_{\sim }^y +\tfrac {\p_x} {\langle \p_x\rangle \alpha\p_z} \p_y^t \Lambda_t^{-1}\tilde b_{\sim }^y,  &
    \tilde \rho_2 &= \langle \p_x\rangle^{-1} \Lambda_t\tilde b_{\sim }^y,
\end{align*}
and they are used to obtain inviscid damping estimates on $v^y$ and $b^y$.

Let $T>0$ and $C_2$ , then we say the bootstrap hypothesis holds if
\begin{align}
    \label{boot1a}\Vert  A (\tilde v,\tilde b) \Vert_{L^\infty_T L^2}^2+\int_0^T\mu\Vert \nabla_\tau A (\tilde v,\tilde b) \Vert_{L^2}^2 + \Vert \sqrt{\tfrac {\p_t M }M }A (\tilde v,\tilde b)
    \Vert_{L^2}^2\dd\tau &\le C_2 \eps^2, \\
    \label{boot1f}\Vert  A f \Vert_{L^\infty_T L^2}^2 +\int_0^T\mu\Vert \nabla_\tau A f \Vert_{L^2}^2 + \Vert \sqrt{\tfrac {\p_tM }M }A f\Vert_{L^2}^2\dd\tau &\le C_2 (\mu^{-(1-\gamma)}\eps)^2  , \\
    \label{boot1g}\Vert  A^g g \Vert_{L^\infty_T L^2}^2 +\int_0^T\mu\Vert \nabla_\tau A^gg\Vert_{L^2}^2 + \Vert \sqrt{\tfrac {\p_t M }M }A^gg\Vert_{L^2}^2\dd\tau &\le C_2 \eps^2,\\
    \label{boot1h}\Vert  A h \Vert_{L^\infty_T L^2}^2 +\int_0^T\mu\Vert \nabla_\tau A h \Vert_{L^2}^2 + \Vert \sqrt{\tfrac {\p_tM }M }A h\Vert_{L^2}^2\dd\tau &\le C_2 \eps^2  , \\
    \label{boot1q}\Vert  (v^{y,z}_=,b^{y,z}_= ) \Vert_{L^\infty_T H^{N}}^2 +\int_0^T\mu \Vert \nabla (v^{y,z}_=,b^{y,z}_= )\Vert_{H^{N}}^2\dd\tau &\le C_2 \delta^2 \mu^2,\\
         \Vert A\tilde  \rho\Vert_{L^\infty_T L^2}^2 +\int_0^T\mu \Vert A \nabla_\tau \tilde \rho\Vert_{L^2}^2 +\Vert A \sqrt{\tfrac {\p_t M}M }\tilde  \rho\Vert_{L^2}^2 \dd\tau &\le C_2 \eps^2.\label{boot2}
\end{align}
We prove stability by the following bootstrap proposition:
\begin{prop}\label{prop:1}
   Under the assumption of Theorem~\ref{thm:main} and let the bootstrap assumptions \eqref{boot1a}-\eqref{boot2} hold for an interval $[0,T]$ then there exists $c,\delta>0$ such that the estimates \eqref{boot1a}-\eqref{boot2} holds with $<$. 
\end{prop}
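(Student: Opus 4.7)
The plan is a multiplier bootstrap: for each of the five weighted energies \eqref{boot1a}--\eqref{boot2} I would differentiate in time, use \eqref{meq1}--\eqref{meq2}, and show that every resulting contribution is absorbed by the dissipation $\mu\|\nabla_\tau\cdot\|_{L^2}^2$, the coercive (``CK'') terms $\|\sqrt{\p_t M/M}\cdot\|_{L^2}^2$, or is strictly smaller than the bootstrap target. Each weight is tuned to the linear picture of Proposition~\ref{prop:lin} and the echo heuristic of Section~\ref{sec:heuz}: $M_\mu$ encodes enhanced dissipation on the scale $\mu^{-1/3}$; the weight $m$, via the constant $c^{-1}(1+|\alpha|^{-1})$, is designed to absorb both the $f$--$g$ echo resonances and the non-incompressibility correction $\p_x(\alpha\p_z)^{-1}\tilde b^y$ appearing in \eqref{meq1}; and the extra factor $1+|k|+|k,\eta|/\langle\mu^{1/3}t\rangle$ in $A^g$ accounts for the linear transient growth $b^x_{\neq=}\approx t\,\p_y^t g$.

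The first step is to close \eqref{boot1g}. The linear contribution is handled exactly as in the proof of Proposition~\ref{prop:lin}. For the nonlinearity $\Lambda_t^{-2}\tilde\nabla_t^\perp\cdot((b\cdot\nabla_t)v^{x,y}-(v\cdot\nabla_t)b^{x,y})_\diamond$ I would decompose each product by $\diamond/\ndiamond$: the $\ndiamond\cdot\ndiamond$ pieces are paid for by the enhanced-dissipation factor $\e^{-c\mu^{1/3}t}$ of $A(\tilde v,\tilde b)$ from \eqref{boot1a}; the $\diamond\cdot\diamond$ pieces contain the critical $f$--$g$ coupling and are treated by paraproduct so that the low-frequency factor carries the $A^g$-weight while the high-frequency factor absorbs the echo resonance through $\sqrt{\p_t m/m}$, using the rapidly decaying factor $\langle j-k,i-l\rangle^{-4}$ in the definition of $m$. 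The analogous step for \eqref{boot1f} treats the worst magnetic self-interaction $(\tilde\nabla^\perp g\cdot\nabla)\Delta_t g$ obtained after integration by parts: by the heuristic of Section~\ref{sec:heuz} this drives $f$ to size $\eps\mu^{-(1-\gamma)}$, matching the bootstrap allowance exactly when $\gamma\ge 5/6$ and $\eps\le\delta\mu^\gamma$.

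The remaining estimates are more perturbative. For $(\tilde v,\tilde b)$ in \eqref{boot1a}, the non-incompressibility correction and the pressure-like term $2\nabla_t\Delta_t^{-1}\p_x\tilde v^y$ are handled as in the linear proof of Proposition~\ref{prop:lin}, and the nonlinear terms split the same way: products containing a $\diamond$ factor pay for the transient growth of $(f,g)$ through the enhanced dissipation of $(\tilde v,\tilde b)$, while $\ndiamond\cdot\ndiamond$ products are treated by the $m$-weight as in \cite{knobel2025suppression}. The pressure $\pi$ is expanded via $\Delta_t\pi=\p_i^tb^j\p_j^tb^i-\p_i^tv^j\p_j^tv^i$ and each piece is absorbed in the same way. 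The $h$-equation \eqref{boot1h} is pure advection--diffusion with a $\diamond$-forcing bounded by Sobolev multiplication; the double-zero bound \eqref{boot1q} follows from standard $1$d heat estimates applied to the quadratic forcing of non-$(\cdot)_{==}$ modes. Finally, \eqref{boot2} is an energy estimate for $\tilde\rho_1,\tilde\rho_2$ that reduces to the already-closed bound for $(\tilde v,\tilde b)$, using that $\tilde\rho$ differs from $(\Lambda_t/\langle\p_x\rangle)(\tilde v^y,\tilde b^y)$ by an $\alpha$-stabilised lower-order correction.

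The hard part is closing \eqref{boot1f}--\eqref{boot1g} simultaneously at the sharp threshold $\gamma=5/6$: the $f$--$g$ coupling is exactly marginal, exhausting both the CK gain of $m$ and the transient-growth budget $|k,\eta|/\langle\mu^{1/3}t\rangle$ in $A^g$. Every such product must be para-producted so that the low-frequency factor (e.g.\ $g_{\rm low}$ of the form \eqref{Glow}) carries the resonant weight gain, while the high-frequency factor carries $A$ or $A^g$ with its Sobolev regularity; any other distribution costs a logarithmic loss that would prevent bootstrap closure. Once this key estimate is established, Proposition~\ref{prop:1} follows by taking $C_2$ large depending only on $\alpha$ and $N$, and $\delta$ small depending on $C_2$.
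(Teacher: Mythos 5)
Your overall plan coincides with the paper's: Proposition \ref{prop:1} is proved there by closing each bootstrap quantity through a separate weighted energy estimate (Propositions \ref{prop:main}, \ref{prop:fgest}, \ref{prop:qest}, \ref{prop:Invest}), with the same weights $A$, $A^g$, $m$, $M_\mu$, the same mode decompositions, and the same identification of the $f$--$g$ coupling in the $z$-average as the threshold-determining interaction, handled by a reaction/transport (paraproduct-type) splitting against $\sqrt{\p_t m/m}$. However, one step in your sketch would fail as written. Your claim that \eqref{boot2} ``reduces to the already-closed bound for $(\tilde v,\tilde b)$'' is not correct: $\tilde\rho_1=\langle\p_x\rangle^{-1}\Lambda_t\tilde v^y_\sim+\tfrac{\p_x}{\alpha\p_z\langle\p_x\rangle}\p_y^t\Lambda_t^{-1}\tilde b^y_\sim$ carries the time-growing multiplier $\Lambda_t$, so boundedness of $A\tilde\rho$ is strictly \emph{stronger} than boundedness of $A(\tilde v,\tilde b)$ --- it is precisely the quantified inviscid damping $\Vert (v^y_{\neq\neq},b^y_{\neq\neq})\Vert\lesssim\langle t\rangle^{-1}$ --- and cannot be inherited from \eqref{boot1a} by calling the difference lower order. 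The paper needs a separate energy estimate for the $\tilde\rho$ system (Section \ref{sec:invis}), in which the linear term $\tfrac1{\alpha\p_z}\p_x^2\bigl(2(\p_y^t)^2-\p_x^2\bigr)\Delta_t^{-2}\tilde\rho_2$ is absorbed by the $m$-weight and, crucially, the nonlinear pressure contribution $NLP_\rho=\vert\langle A\tilde\rho,A\langle\p_x\rangle^{-1}\Lambda_t\p_y^t\pi_\sim\rangle\vert$ is estimated term by term; your sketch omits this entirely.

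This omission is not cosmetic, because \eqref{boot2} feeds back into the other estimates: via Lemma \ref{lem:Inviscid}, the bound $\Vert A(v^y_{\neq\neq},b^y_{\neq\neq})\Vert_{L^2_TL^2}\lesssim\eps$ (with no $\mu^{-1/6}$ loss) is exactly what allows terms like $NL_{g\to\sim,1}$, $R_{g\to\sim,2}$ and $NLP_g$ to close at the critical size $\mu^{-5/6}\eps^3\le\delta\eps^2$ when $\gamma=5/6$ and $\eps\le\delta\mu^\gamma$; with only the $\mu^{-1/6}$-lossy $L^2_t$ bound coming from enhanced dissipation of $(\tilde v,\tilde b)$, these terms do not close at the threshold. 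A secondary inaccuracy: \eqref{boot1q} is not a ``double-zero'' quantity amenable to one-dimensional heat estimates; $(v^{y,z}_=,b^{y,z}_=)$ solves a forced two-dimensional MHD system in $(y,z)$, and the point of Section \ref{sec:xaver} is that, with vanishing initial data, the time-integrated quadratic forcing by the nonzero modes is of size $(\delta\mu)^2$, which again uses the damping and enhanced-dissipation bounds on $f$, $g$, $h$ and the $\neq\neq$ modes through genuine trilinear estimates. With \eqref{boot2} restored as an independent energy estimate (including its pressure term) and \eqref{boot1q} treated as a forced 2d system, your scheme matches the paper's proof.
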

The proof of this proposition is divided into the Propositions \ref{prop:main}, \ref{prop:fgest}, \ref{prop:qest}, and \ref{prop:Invest}, which we prove in the upcoming Sections \ref{sec:mainenergy}, \ref{sec:zaver}, \ref{sec:xaver}, and \ref{sec:invis}. The lower bounds are achived by the following proposition:

\begin{prop}\label{prop:nlgrowth}  There exists $\mu_0>0$ such that if  the stability statments of Theorem~\ref{thm:main} hold, then for all $0<\mu\le \mu_0$ and $5/6\le \gamma $ there exist $z$-average free initial data $(v_{in},b_{in})$ satisfying $f_{in}=0$ and the assumption of Theorem~\ref{thm:main} such that $f$ exhibits nonlinear growth 
\begin{align*}
     \Vert f(T)\Vert_{L^2}   &\gtrsim \delta^2  \mu^{2\gamma-1 }
\end{align*}
for $T=\frac 1 {10} \mu^{-\frac 13 }$. Furthermore, we have the lower bound 
\begin{align*}
    \Vert  g\Vert_{L^\infty_T H^1}   &\gtrsim \delta \mu^\gamma.
\end{align*}
\end{prop}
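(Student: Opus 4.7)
The plan is to construct explicit initial data activating the $g\cdot g\to f$ resonance chain described in Subsection~\ref{sec:heuz}, to isolate this leading interaction via a Duhamel expansion, and to invoke the assumed bounds of Theorem~\ref{thm:main} to absorb every other contribution. The lower bound on $\|g\|_{L^\infty_T H^1}$ will follow from the near-preservation of a low-frequency block of $g_{in}$, while the bound on $\|f(T)\|_{L^2}$ forms the main content.

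\smallskip

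I would take $\tilde v_{in}=\tilde b_{in}=h_{in}=0$ and $f_{in}=0$, and choose $g_{in}=g_{1,in}+g_{2,in}$ defined in Fourier by
\[
    \hat g_{1,in}(k,\eta)=c_0\delta\mu^{\gamma}\,\mathbf 1_{|k|=1}\chi(\eta),\qquad
    \hat g_{2,in}(k,\eta)=c_0\delta\mu^{\gamma}\,\mathbf 1_{|k|=2}\chi(\eta-\eta_*),
\]
for a normalising constant $c_0$, a bump $\chi\in C_c^\infty([-C,C])$, and $\eta_*\in(0,T)$ chosen in the middle of the interval, where $T=\tfrac{1}{10}\mu^{-1/3}$. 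This data is $x$-average free, trivially respects \eqref{meq=}, and has $\|g_{in}\|_{H^{N+1}}\simeq\delta\mu^{\gamma}=\eps$, so the assumed stability bounds of Theorem~\ref{thm:main} apply. Writing the $g$-equation of \eqref{meq2} as $\p_t g=\mu\Delta_t g+\mathrm{NL}_g$, those bounds give $\|\mathrm{NL}_g\|_{L^\infty_T H^{N+1}}\lesssim \eps^2\mu^{-(1-\gamma)}$, and since $\exp(-c\mu T^3)\ge 1/2$ for bounded Fourier modes when $T\le\mu^{-1/3}/10$, Duhamel yields
\[
    \|g(t)\|_{H^1}\;\ge\;\tfrac12\|g_{1,in}\|_{H^1}-C\eps^2\mu^{-(1-\gamma)}\;\gtrsim\;\delta\mu^{\gamma}
\]
for $\delta$ sufficiently small.

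\smallskip

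For the $f$ bound, apply Duhamel to the $f$-equation of \eqref{meq2} with $f_{in}=0$. Using the identity $\tilde\nabla_t^\perp\!\cdot\!((\tilde\nabla^\perp g\cdot\tilde\nabla)\tilde\nabla_t^\perp g)=(\tilde\nabla^\perp g\cdot\tilde\nabla)\Delta_t g$ and the decomposition $g=g_{1,\mathrm{lin}}+g_{2,\mathrm{lin}}+(g-g_{\mathrm{lin}})$, where $g_{i,\mathrm{lin}}$ solve the linear advection--diffusion with data $g_{i,in}$, the principal contribution to $\hat f(T,1,\eta_*)$ takes the form
\[
    c(\delta\mu^{\gamma})\,\eta_*\int_0^T(1+\tau^2)\,\hat g_{2,\mathrm{lin}}(\tau,2,\eta_*)\,d\tau.
\]
Since $\int_0^T\tau^2\,d\tau\simeq T^3\simeq \mu^{-1}$, $\eta_*\simeq 1$, and the dissipation weight on the bounded Fourier mode $(2,\eta_*)$ is bounded below by a constant on $[0,T]$, this evaluates to $\simeq \delta^2\mu^{2\gamma-1}$. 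Summing the $L^2$ mass over the $\chi$-neighborhood of $\eta_*$ gives $\|f(T)\|_{L^2}\gtrsim\delta^2\mu^{2\gamma-1}$, modulo a remainder $\mathcal E$.

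\smallskip

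The main obstacle is to show that $\mathcal E$ is strictly smaller than the principal term. The self-interactions $g_1\cdot g_1$ and $g_2\cdot g_2$ do not reach the target mode $(1,\eta_*)$ by Fourier support; the nonlinear correctors $g_i-g_{i,\mathrm{lin}}$ contribute $\mathcal O(\eps^3\mu^{-(1-\gamma)})=\mathcal O(\delta\mu^\gamma\cdot\delta^2\mu^{2\gamma-1})$ to $f$; and the contributions from $(\tilde v,\tilde b,h)$ and the $f$-feedback in the nonlinearity are bounded using the assumed bounds $\|A(\tilde v,\tilde b,h)\|_{L^\infty_T L^2}\lesssim \eps$ and $\|Af\|_{L^\infty_T L^2}\lesssim \eps\mu^{-(1-\gamma)}$, together with the factor $T\simeq \mu^{-1/3}$ from time integration. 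The resulting total remainder is $\lesssim C\delta\mu^\gamma\cdot\delta^2\mu^{2\gamma-1}$, which for $\delta$ small is strictly smaller than the principal term, yielding the claimed lower bound $\|f(T)\|_{L^2}\gtrsim\delta^2\mu^{2\gamma-1}$.
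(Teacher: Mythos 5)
Your overall strategy is the paper's (evolve $g$ by the linear flow, isolate the quadratic $g\otimes g\to f$ term, absorb everything else using the assumed bounds), but as written the construction has a genuine flaw. You place the $|k|=2$ block of $g_{in}$ at $\eta$ near $\eta_*$ with ``$\eta_*\in(0,T)$ chosen in the middle of the interval'', i.e.\ $\eta_*\sim\mu^{-1/3}$, yet in the size computation you silently use ``$\eta_*\simeq 1$''. These are incompatible, and the stated (large-$\eta_*$) choice breaks the argument: to keep $\Vert g_{in}\Vert_{H^{N+1}}\simeq\delta\mu^{\gamma}$ you must take $|\hat g_{2,in}|\lesssim\delta\mu^{\gamma}\langle\eta_*\rangle^{-(N+1)}\sim\delta\mu^{\gamma+(N+1)/3}$, and then the quadratic term is of size $\sim\eta_*\,T^3\,\delta^2\mu^{2\gamma}\langle\eta_*\rangle^{-(N+1)}\sim\delta^2\mu^{2\gamma+(N-3)/3}\lll\delta^2\mu^{2\gamma-1}$; without that normalization the data violates $\eps\le\delta\mu^{\gamma}$ and the hypotheses of Theorem~\ref{thm:main} cannot be invoked at all. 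The mechanism requires \emph{both} blocks at bounded frequencies (as in \eqref{eq:fgin}): the $\tau^2$ gain comes from $\Delta_t$ acting at bounded $\eta$, not from a large prefactor $\eta l-k\xi$.

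Even granting $\eta_*\simeq 1$, two further steps are asserted rather than proved, and they are the crux. First, your ``principal contribution'' keeps only the pairing in which $\Delta_t$ falls on $g_{2,\mathrm{lin}}$; the symmetric pairing, where $\Delta_t$ falls on $g_{1,\mathrm{lin}}$, is of the same order $\sim\tau^2$ and, at your difference-frequency output $k=1=2-1$, carries the opposite sign of $\eta l-k\xi$ (roughly $+\eta_*(4+(\xi-2\tau)^2)$ versus $-\eta_*(1+(\xi'+\tau)^2)$), so a non-cancellation check is mandatory, together with a choice of $\eta_*$ large relative to the bump width $C$ so that $2\eta-\xi$ keeps a sign on the supports; the paper avoids this delicacy by targeting the sum frequency $k=3$ and proving the pointwise bound \eqref{eq:lowt2}. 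Second, the remainder control is too optimistic in its bookkeeping: errors entering $f$ through $(\tilde\nabla^\perp g\cdot\tilde\nabla)\Delta_t g$ pick up a factor $\sim T^3\simeq\mu^{-1}$ (from $\Delta_t\sim\langle\tau\rangle^2$ plus time integration), not the single factor $T$ you invoke, and the transport by the mean shear $v^x_{==}=\p_y\p_y^{-2}f_=$ is not controlled by $\Vert f\Vert_{H^N}$ alone — the paper needs Lemma~\ref{lem:faver} for this (here one could alternatively use $\tilde v_{==}$ via \eqref{meq=} and the assumed bound on $\tilde v$), but your proposal never addresses it. Carried out correctly the remainder is of size $\mu^{-7/6}\eps^3+\mu^{-2}\eps^4$, which still closes for $\gamma\ge 5/6$, but none of this is established in the proposal as written.
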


The proof of Proposition \ref{prop:nlgrowth} is in Section \ref{sec:nlgrow}. By using these two propositions, we prove Theorem~\ref{thm:main}. 

\begin{proof}[Proof of Theorem~\ref{thm:main}]
We first prove the stability part. For the sake of contradiction, we assume that there exists a maximal time $T^\ast$ such that the bootstrap assumption \eqref{boot1a}-\eqref{boot2} holds. Then by \eqref{prop:1} the estimates improve and hold with $<$. By local existence, this contradicts the maximality of $T^\ast$. Therefore, stability follows, and the inviscid damping and enhanced dissipation estimates are a consequence of the definition of $A$ and that $v$ and $b$ are divergence-free.  The linear and nonlinear growth is a consequence of Proposition \ref{prop:nlgrowth}. 
\end{proof}
 For Theorem~\ref{thm:main}, it is only left to prove Proposition \ref{prop:nlgrowth}-\ref{prop:Invest}. To prove these propositions, we will use the following lemma using inviscid damping and enhanced dissipation: 
\begin{lemma}\label{lem:Inviscid}
    Let \eqref{boot1a}-\eqref{boot2} hold on $[0,T]$, then 
\begin{align*}
    \Vert  A(v_{\neq \neq }^{y},b_{\neq \neq }^{y}) \Vert_{L^2_T L^2}&\lesssim \eps,\\
    \Vert  A(v_{\neq \neq },b_{\neq \neq },h_{\neq},  \rho_{\neq} ) \Vert_{L^2_T L^2}&\lesssim\mu^{-\frac 1 6} \eps,\\
    \Vert  Af_{\neq} \Vert_{L^2_T L^2}&\lesssim \mu^{-\frac 1 6-(1-\gamma)} \eps,\\
    \Vert  A^g g_{\neq} \Vert_{L^2_T L^2}&\lesssim \mu^{-\frac 1 6} \eps.
\end{align*}
\end{lemma}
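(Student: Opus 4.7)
The proof is a direct consequence of the bootstrap assumptions, extracting $L^2_t$ bounds from the dissipation and enhanced-dissipation weights built into $A$ and $A^g$. The central observation is the pointwise inequality, valid for $k\neq 0$ and all $(t,\eta,l)$,
\[
\mu\bigl|(k,\eta-kt,l)\bigr|^2 + \frac{\p_t M_\mu}{M_\mu}(t,k,\eta,l) \gtrsim \mu^{\frac13}.
\]
Setting $x = (t-\eta/k)^2$, the term $\p_t M_\mu/M_\mu = \mu^{1/3}/(1+\mu^{2/3}x)$ dominates $\mu^{1/3}$ on $\{x\le \mu^{-2/3}\}$, while on $\{x > \mu^{-2/3}\}$ the heat dissipation $\mu(\eta-kt)^2 = \mu k^2 x \ge \mu^{1/3}$ (using $|k|\ge 1$) takes over.

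Three of the four estimates follow immediately. Multiplying $|\widehat{Au}|^2$ (for $u\in\{f,\tilde v,\tilde b,h,\tilde\rho\}$, respectively $|\widehat{A^g g}|^2$ for $g$) by this pointwise inequality, integrating in Fourier and time, and dividing by $\mu^{1/3}$ yields, for instance,
\[
\|Af_{\neq}\|_{L^2_T L^2}^2 \lesssim \mu^{-\frac13}\!\int_0^T\!\Bigl(\mu\|\nabla_\tau Af\|_{L^2}^2 + \bigl\|\sqrt{\p_\tau M_\mu/M_\mu}\,Af\bigr\|_{L^2}^2\Bigr)\,d\tau \lesssim \mu^{-\frac13}\bigl(\mu^{-(1-\gamma)}\eps\bigr)^2,
\]
by \eqref{boot1f}. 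Taking square roots gives the third estimate. The same argument with \eqref{boot1a}, \eqref{boot1g}, \eqref{boot1h}, \eqref{boot2} and restriction to the $\neq$ (respectively $\neq\neq$) Fourier sector produces the remaining $\mu^{-1/6}$-loss bounds for $A(v,b)_{\neq\neq}$, $Ah_{\neq}$, $A^g g_{\neq}$, and $A\tilde\rho_{\neq}$.

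The sharper first bound on $A(v^y_{\neq\neq},b^y_{\neq\neq})$, without the $\mu^{-1/6}$ loss, exploits the additional inviscid-damping decay carried by the $y$-components. Solving the definitions of $\tilde\rho_1, \tilde\rho_2$ for $\tilde v^y_{\sim}, \tilde b^y_{\sim}$ expresses them as $\Lambda_t^{-1}\langle\p_x\rangle$ applied to a linear combination of $\tilde\rho_1, \tilde\rho_2$ (the $e_1$-correction in $\tilde v$ does not touch the $y$-component), and in Fourier one has $|\Lambda_t^{-1}\langle\p_x\rangle|\lesssim\langle t-\eta/k\rangle^{-1}$ for $k\neq 0$ from $|(k,\eta-kt,l)|^2 \ge k^2\langle t-\eta/k\rangle^2$. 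Since $\int_0^T\langle\tau-\eta/k\rangle^{-2}\,d\tau \le \pi$ uniformly in $(k,\eta)$, combining this with the bootstrap bound $\|A\tilde\rho\|_{L^\infty_T L^2}\lesssim\eps$ from \eqref{boot2} yields $\|A(v^y,b^y)_{\neq\neq}\|_{L^2_TL^2}\lesssim\eps$. The delicate step here is converting the $L^\infty_T L^2_{k,\eta,l}$ bootstrap control on $A\tilde\rho$ into control sufficient to exploit the $O(1)$ time-integrability of $\langle\tau-\eta/k\rangle^{-2}$ pointwise in frequency; this is handled by interchanging integrals via Plancherel and using that the symbol $\Lambda_t^{-1}\langle\p_x\rangle$ contributes a multiplier whose $L^2_\tau$-norm on $[0,T]$ is uniformly bounded in $(k,\eta,l)$, which is the main technical subtlety of the lemma.
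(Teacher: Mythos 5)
Your three lossy estimates are fine and follow the paper's route: the pointwise inequality $\mu|(k,\eta-k t,l)|^2+\tfrac{\p_t M_\mu}{M_\mu}\gtrsim\mu^{1/3}$ for $k\neq0$ is exactly Lemma~\ref{lem:Mmu}, and pairing it with the space--time terms in \eqref{boot1a}, \eqref{boot1f}--\eqref{boot1h}, \eqref{boot2} gives the $\mu^{-1/6}$-loss bounds just as in the paper.

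The first (lossless) estimate, however, has a genuine gap at precisely the step you flag as ``the main technical subtlety.'' You propose to combine the frequency-pointwise bound $\int_0^T\langle\tau-\eta/k\rangle^{-2}\,\dd\tau\lesssim1$ with the bootstrap bound $\Vert A\tilde\rho\Vert_{L^\infty_TL^2}\lesssim\eps$ by ``interchanging integrals via Plancherel.'' This interchange does not close: to exploit the pointwise-in-$(k,\eta,l)$ time integrability of the symbol you would need $\int \sup_{\tau\le T}|\widehat{A\tilde\rho}(\tau,k,\eta,l)|^2\,\dd\eta$ (an $L^2_\eta L^\infty_\tau$ quantity), which is not controlled by the $L^\infty_\tau L^2_\eta$ bootstrap norm; conversely, estimating the symbol by its sup in frequency at each time gives $\sup_\eta\langle\tau-\eta/k\rangle^{-1}=1$ and hence only $\Vert A(v^y,b^y)_{\neq\neq}\Vert_{L^2_TL^2}\lesssim T^{1/2}\eps\sim\mu^{-1/6}\eps$, i.e.\ no improvement over the second estimate. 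The paper closes this differently and this is the point of the weight $m$: by Lemma~\ref{lem:m} (with $(\tilde k,\xi,\tilde l)=(k,\eta,l)$) the symbol of $\p_x\Lambda_t^{-1}$ is dominated pointwise in $(\tau,k,\eta,l)$ by $\sqrt{\p_t m/m}\le\sqrt{\p_t M/M}$, so
\begin{align*}
\Vert A(v^y_{\neq\neq},b^y_{\neq\neq})\Vert_{L^2_TL^2}\approx\Vert \p_x\Lambda_\tau^{-1}A\tilde\rho_{\neq\neq}\Vert_{L^2_TL^2}\lesssim\Big\Vert\sqrt{\tfrac{\p_t M}{M}}A\tilde\rho\Big\Vert_{L^2_TL^2}\lesssim\eps,
\end{align*}
where the last inequality is the space--time (Cauchy--Kawashima-type) term already built into \eqref{boot2}. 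In other words, the frequency-localized time integrability you need is not recoverable from the $L^\infty_TL^2$ piece of the bootstrap; it is supplied by the $\sqrt{\p_t M/M}$ piece, and your argument should be rerouted through it. Your algebraic reduction of $(\tilde v^y,\tilde b^y)$ to $\Lambda_t^{-1}\langle\p_x\rangle$ acting on $\tilde\rho$ is correct and is the same starting point as the paper's.
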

\begin{proof}
The first estimate is a consequence of $$\Vert A(v_{\neq \neq }^{y},b_{\neq \neq }^{y})\Vert_{L^2_T L^2} \approx \Vert \p_x\Lambda^{-1}_t  A \tilde \rho_{\neq \neq}\Vert_{L^2_T L^2}\lesssim \Vert\sqrt{\tfrac {\p_t m}m } A \tilde \rho_{\neq \neq}\Vert_{L^2_T L^2}.$$
The other estimates are is a standard property of $M_\mu$, i.e. using Lemma \ref{lem:Mmu} we obtain $$\mu^{\frac 1 6 } \Vert  A v_{\neq \neq }\Vert_{L^2}\lesssim \Vert \sqrt{\tfrac {\p_t M_\mu}{M_\mu} } A v_{\neq \neq }\Vert_{L^2_TL^2} +\mu^{\frac 12 } \Vert  A v_{\neq \neq }\Vert_{L^2_TL^2},$$
and similar to the other unknowns. Using \eqref{boot1a}-\eqref{boot2} we obtain the lemma. 
\end{proof}

\subsection{Notation for nonlinear interaction}
To establish energy estimates, the nonlinear terms in the equation yield trilinear terms. Since in the linear dynamics, averages or non-averages behave differently, the nonlinear interaction changes accordingly, and we distinguish between several cases, namely $\sim, \diamond, f, g,h $ and $=$:

For nonlinear terms with three different terms, we write 
\begin{align*}
    NL_{\ast_1 \ast_2 \to \ast_3 }, \quad \quad \ast_i \in \{\sim, \diamond, f, g,h,= \},
\end{align*}
to denote that $\ast_1$ and $ \ast_2$ act on $\ast_3$.

For nonlinear terms with two different terms, we write 
\begin{align*}
    NL_{\ast_1 \to \ast_2 }, \qquad\quad  \ast_i \in \{\sim, \diamond, f, g,h,= \},
\end{align*}
to denote that $\ast_1$ acts on $\ast_2$.

Nonlinear terms where one type of unknowns is acting on itself are denoted as 
\begin{align*}
    NL_{\ast }, \qquad\qquad \quad  \ast \in \{\sim, \diamond, f, g,h,=\}. 
\end{align*}

Each case refers to either a subscript or an unknown. In particular, we distinguish between the subscripts $\diamond$ and  $\sim$. Note that the subscript $\diamond$ includes the modes $\neq =$ and the subscript $\sim$ includes the modes $\neq \neq$ and $\cancel\diamond$. Furthermore, the $\diamond$ modes split into the three unknowns $f$, $g$ and $h$. For the $\sim$ modes, the unknowns $\tilde v$, $\tilde b$, $v_\sim $ and $b_\sim $ are treated in terms of norm estimates in the same way, and we often write $a_\sim$ or $\tilde a$. We write $=$ for terms consisting of the $x$ average or double average remains.

This notation is used, for example, in Lemmata \ref{lem:Esim} and \ref{lem:Efg} and their proofs.

\section{Non-Average Energy Estimates}\label{sec:mainenergy}

In this section, we do the main energy estimates and improve the bootstrap assumption \eqref{boot1a}. In particular, we prove the following proposition:
\begin{prop}\label{prop:main}
    Under the assumption of Theorem~\ref{thm:main} and let the bootstrap assumptions \eqref{boot1a}-\eqref{boot2} hold on the interval $[0,T]$. Then there exists a constant $C_{3,\sim}>0$ such that 
    \begin{align*}
    \Vert  A (\tilde v,\tilde b) \Vert_{L^\infty_TL^2}^2+\int_0^T\mu\Vert \nabla_\tau A (\tilde v,\tilde b) \Vert_{L^2}^2 + \Vert \sqrt{\tfrac {\p_t M }M }A (\tilde v,\tilde b)
    \Vert_{L^2}^2\dd\tau &\le (C_1+4cC_2+\delta C_{3,\sim})\eps^2.
\end{align*}
\end{prop}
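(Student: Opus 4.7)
My plan is to derive a weighted energy identity for $\|A(\tilde v, \tilde b)\|_{L^2}^2$ directly from \eqref{meq1} and to split the right-hand side into a linear coupling term, a dissipative part, and trilinear nonlinear interactions, bounding each piece against the bootstrap hypotheses. Differentiating in time and using the skew-symmetry of $\alpha\p_z$ between the $\tilde v$ and $\tilde b$ equations, the linear portion produces (up to controllable $A$--$\Delta_t$ commutators) exactly the three good terms on the left-hand side of the claimed estimate: the viscous dissipation $\mu\|\nabla_\tau A(\tilde v,\tilde b)\|_{L^2}^2$, the commutator-weight production $\|\sqrt{\p_t M/M}\,A(\tilde v,\tilde b)\|_{L^2}^2$ from $m$, and the enhanced-dissipation decay $c\mu^{1/3}\|A(\tilde v,\tilde b)_{\neq}\|_{L^2}^2$ coming jointly from the $e^{-c\mu^{1/3}t}$ prefactor in $A$ on $k\neq 0$ modes and from $M_\mu$.

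What remains after this step is the coupling $\langle A\tilde v, A\,2\nabla_t\Delta_t^{-1}\p_x \tilde v^y\rangle$ (arising because $\tilde v$ is not divergence-free) together with all nonlinear contributions from \eqref{meq1}. The coupling term is precisely the reason the inviscid-damping unknowns $\tilde \rho$ are introduced: since $\tilde v^y = v^y_\sim$, the identity $\|A\tilde v^y\|_{L^2}\approx \|\p_x\Lambda_t^{-1}A\tilde\rho\|_{L^2}\lesssim \|\sqrt{\p_t M/M}\,A\tilde\rho\|_{L^2}$ allows me to bound this term by Cauchy--Schwarz against the integrated weight production of $\tilde\rho$. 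Inserting \eqref{boot1a} and \eqref{boot2} and integrating in time produces the contribution $4cC_2\eps^2$, where the numerical factor tracks the balancing of the decay weight $e^{-c\mu^{1/3}t}$ against the enhanced-dissipation integrand.

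For the nonlinear part, I decompose every trilinear term using the $NL_{\ast_1\ast_2\to\sim}$ taxonomy from Section~\ref{sub:notation} and treat each class separately. The strategy is uniform: apply Cauchy--Schwarz, place the target factor in $L^2_T L^2$ via either $\sqrt{\p_t M/M}$ or $\mu^{1/2}\nabla_\tau$, place one input factor in $L^\infty_T L^2$ via the appropriate bootstrap, and place the remaining input in $L^2_T L^2$ via Lemma~\ref{lem:Inviscid}. The pressure nonlinearity is handled via the explicit formula $\Delta_t\pi = \p_i^t b^j\p_j^t b^i - \p_i^t v^j\p_j^t v^i$ and boundedness of $\nabla_t\Delta_t^{-1}\nabla_t$ on the relevant frequency supports. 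A representative estimate of the form $\|A\tilde v\|_{L^\infty_T L^2}\|A\sim\|_{L^2_T L^2}\|Af\|_{L^\infty_T L^2}\lesssim \eps\cdot\mu^{-1/6}\eps\cdot\mu^{-(1-\gamma)}\eps$ is absorbed into $\delta C_{3,\sim}\eps^2$ since the smallness $\eps\le\delta\mu^\gamma$ with $\gamma\ge 5/6$ gives an excess power $\mu^{2\gamma-7/6}\ge \mu^{1/2}>0$.

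\emph{Main obstacle.} The critical difficulty is closing the trilinear estimates in which the large unknown $f$ appears as one of the factors, since $\|Af\|_{L^\infty_T L^2}$ is only $\sqrt{C_2}\mu^{-(1-\gamma)}\eps$ rather than $\eps$. Three structural facts must be exploited simultaneously: (i) $f$ is always accompanied by a derivative coming from the curl structure $f=\p_y^t v^x_\diamond - \p_x v^y_\diamond$, so that its effective contribution to $\sim$-output terms carries improved time decay; (ii) in the critical-time regime $t\approx\mu^{-1/3}$ the weight $\sqrt{\p_t m/m}$ precisely absorbs the echo resonance $\sum_j|j|^2/|j,\eta-j\tau,i|^2$ that would otherwise drive growth; and (iii) the $z$-averaged interactions feeding $\sim$-modes must produce a $\neq\neq$ output, so Lemma~\ref{lem:Inviscid} provides the additional $\mu^{-1/6}$ time-integrability budget needed. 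Verifying that \emph{every} class of $NL_{\ast_1\ast_2\to\sim}$ interaction absorbs into $\delta C_{3,\sim}\eps^2$ after accounting for (i)--(iii) constitutes the bulk of the technical work. Once this is done, adding the initial-data bound $C_1\eps^2$ and the coupling bound $4cC_2\eps^2$ and taking the supremum over $t\in[0,T]$ yields the proposition.
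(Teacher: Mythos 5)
Your overall architecture — the weighted energy identity for $\Vert A(\tilde v,\tilde b)\Vert_{L^2}^2$, the taxonomy $NL_\sim$, $NL_{\diamond\to\sim}$, $LNL$, and the uniform strategy of Cauchy--Schwarz against $\sqrt{\p_t M/M}$, the dissipation, the $L^\infty_T$ bootstrap bounds and Lemma~\ref{lem:Inviscid} — coincides with the paper's (Lemma~\ref{lem:Esim} followed by Lemma~\ref{lem:main}). The genuine problem is your treatment of the linear part, which is exactly where the constant $4cC_2$ in the statement must be produced. In the paper the two linear leftovers are $c\mu^{\frac13}\Vert A(\tilde v,\tilde b)_{\neq\neq}\Vert_{L^2}^2$ (a term to be \emph{absorbed}, not a good left-hand-side term as you assert) and the coupling $\tfrac2\alpha\vert\langle A\p_x\p_z^{-1}b^y_{\neq\neq},A\p_x\Delta_t^{-1}v^y_{\neq\neq}\rangle\vert$, which survives only through the $(\alpha\p_z)^{-1}e_1 b^y_\sim$ part of $\tilde v$ after the divergence-free cancellation. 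Both are absorbed into the bootstrap bound \eqref{boot1a}: the first via Lemma~\ref{lem:Mmu} (into $\sqrt{\p_t M_\mu/M_\mu}$ and the dissipation), the second via Lemma~\ref{lem:m}, whose smallness is built into the weight $m$ through the prefactor $c^{-1}(1+\tfrac1{|\alpha|})$; this is what yields precisely $4cC_2\eps^2$ with a tunable $c$.

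Your route instead sends the coupling term through $\tilde\rho$ and \eqref{boot2}. This is not impossible in principle ($v^y_{\neq\neq}$, $b^y_{\neq\neq}$ are indeed recovered from $\tilde\rho_{\neq\neq}$, which is how Lemma~\ref{lem:Inviscid} is proved), but as set up — Cauchy--Schwarz with unspecified implicit constants of Lemma~\ref{lem:Inviscid} type — the output is $C|\alpha|^{-1}C_2\eps^2$ with an absolute, non-small $C$. Since the bootstrap improvement in Proposition~\ref{prop:1} requires the total to be strictly below $C_2\eps^2$, i.e. a coefficient of $C_2$ strictly less than one, an $O(1)$ multiple of $C_2\eps^2$ does not close the argument; the smallness must be traced to the bound of the coupling symbol by $c\,\tfrac{\p_t m}{m}$ (Lemma~\ref{lem:m} and the design of $m$), which your linear estimate never invokes, and your stated origin of the factor $4c$ (``balancing the decay weight against the enhanced-dissipation integrand'') is not the actual mechanism. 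Secondarily, the nonlinear part is only gestured at: the threshold-critical interactions $NL_{g\to\sim,1}$, $NL_{g\to\sim,2}$ require the modified weight $A^g$, reaction/transport splittings with $\sqrt{\p_t m/m}$ placed on \emph{two} factors to pay the $t^2$ loss, and the $x$-average bootstrap \eqref{boot1q} of size $\delta\mu$; moreover your representative time-integrated trilinear bound pairs two $L^\infty_T$ factors with a single $L^2_T$ factor, which does not control a time integral — one needs two $L^2_T$ factors, exactly what Lemma~\ref{lem:Inviscid} and the weight terms supply.
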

To prove this proposition, we first reorganize the terms: 
\begin{lemma}\label{lem:Esim}
 Under the assumption of Theorem~\ref{thm:main} and let the bootstrap assumptions \eqref{boot1a}-\eqref{boot2} hold, then we obtain the inequality
\begin{align*}
    \tfrac 1 2 \p_t \Vert A (\tilde v,\tilde b)\Vert_{L^2}^2 &+\mu\Vert \nabla_t A (\tilde v,\tilde b)\Vert_{L^2}^2 + \Vert \sqrt{\tfrac {\p_t m}m}A   (\tilde v,\tilde b)  \Vert_{L^2}^2\le L+  \sum_{a\in \Gamma} NL_{\sim}^a+NL_{\diamond \to \sim }^a +LNL,
\end{align*}
where we denote $\Gamma=\{a=(a^1,a^2,a^3)\in\{(v,v,v),(v,b,b),(b,v,b),(b,b,v)\}\}$ and:\\
\textbf{Linear terms:}
\begin{align*}
    L&=c\mu^{\frac 1 3 }\Vert A  (\tilde v_{\neq \neq },\tilde b_{\neq \neq })\Vert_{L^2}^2  + \tfrac 2 \alpha \vert \langle A \p_x \p_z^{-1} b^y_{\neq \neq } , A\p_x \Delta_t^{-1} v^y _{\neq \neq } \rangle\vert,
\end{align*}
\textbf{Nonlinear $\sim$ self interation terms:} 
\begin{align*}
    NL_{ \sim }^a&= \vert \langle A  \tilde a^1   , A((a_\sim^2 \cdot \nabla_t) a_\sim^3 )_{\sim } \rangle\vert, 
\end{align*}
\textbf{Nonlinear action of $\diamond$ onto $\sim$:} 
\begin{align*}
    NL_{\diamond\to \sim }^a &=  \vert \langle A  \tilde a^1   , A( ( a_{\neq=}^2\cdot \nabla_t) a_\sim^3)_\sim   \rangle\vert  +\vert \langle A  \tilde a^1   , A( (a_\sim ^2\cdot \nabla_t) a_{\neq=} ^3 )_{\sim } \rangle\vert \\
    &\quad +\vert \langle A   a_{==}^1   ,A( (a_{\neq=}^2\cdot  \nabla_t) a_{\neq=}^3  ) \rangle\vert,
\end{align*}
\textbf{Lower nonlinear terms:}
\begin{align*}
    LNL &=\alpha^{-1} \vert \langle A  \p_x \p_z^{-1}  b^y_\sim ,A \pi _{\sim}\rangle\vert  +\alpha^{-1} \vert \langle A  b^x_{\neq \neq} , \p_z^{-1} ( (b\cdot \nabla_t) v^y - (v\cdot\nabla_t) b^y )_{\sim} \rangle \vert.
\end{align*}
\end{lemma}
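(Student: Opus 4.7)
The plan is to derive the weighted $L^2$ energy identity for the $\alpha$-stabilized unknowns $(\tilde v, \tilde b)$ driven by \eqref{meq1}, and then sort the resulting terms into the four categories $L$, $NL_\sim^a$, $NL_{\diamond\to\sim}^a$, and $LNL$ advertised in the lemma. I would pair the two evolution equations against $A^2\tilde v$ and $A^2\tilde b$ respectively and sum. The computation $\p_t(\log A) = -c\mu^{\tfrac13}\mathbf{1}_{k\neq0} - \p_t m/m - \p_t M_\mu/M_\mu$ splits the weight derivative into three nonnegative pieces: I would keep $\Vert\sqrt{\p_t m/m}A(\tilde v,\tilde b)\Vert^2$ on the left, retain $\mu\Vert\nabla_t A(\tilde v,\tilde b)\Vert^2$ from the viscous term after integration by parts, and book the enhanced-dissipation piece together with $c\mu^{\tfrac13}\Vert A(\tilde v_{\neq\neq},\tilde b_{\neq\neq})\Vert^2$ (supported on $k\neq 0$) into the first summand of $L$ on the right.

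The remaining linear structure is straightforward. The $\alpha\p_z$ coupling cancels in the pair $\langle A\tilde v, A\alpha\p_z\tilde b\rangle+\langle A\tilde b, A\alpha\p_z\tilde v\rangle$ by antisymmetry of $\p_z$ (commuting with the Fourier multiplier $A$). For the lift-up contribution $2\nabla_t\Delta_t^{-1}\p_x\tilde v^y$ tested against $A^2\tilde v$, an integration by parts transfers $\nabla_t$ onto $\tilde v$; the $v_\sim$-part of $\tilde v$ is genuinely divergence-free and contributes nothing, while the correction piece $(\alpha\p_z)^{-1}e_1 b^y_\sim$ yields $\nabla_t\cdot\tilde v=\p_x(\alpha\p_z)^{-1}b^y$, supported on $l\neq 0$ and after the remaining $\p_x$ on $\neq\neq$, producing precisely the cross term $\tfrac{2}{\alpha}\langle A\p_x\p_z^{-1}b^y_{\neq\neq}, A\p_x\Delta_t^{-1}v^y_{\neq\neq}\rangle$, the second summand of $L$.

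For the nonlinear contributions, for each $a=(a^1,a^2,a^3)\in\Gamma$ I would decompose $a^2=a^2_\sim+a^2_{\neq=}$ and $a^3=a^3_\sim+a^3_{\neq=}$ and distribute the product inside the $\sim$-projection. The $\sim\cdot\sim$ channel produces $NL_\sim^a$; the two mixed $\sim\cdot\neq=$ and $\neq=\cdot\sim$ channels produce the first two summands of $NL_{\diamond\to\sim}^a$. The remaining $\neq=\cdot\neq=$ convolution is $z$-independent (both factors are), so its Fourier support lies in $l=0$; within the $\sim$-projection this intersects only the $==$ sub-mode, which then pairs against $\tilde a^1_{==}$ to yield the third summand. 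The pressure term $-\langle A\tilde v, A\nabla_t\pi_\sim\rangle$ no longer vanishes since $\tilde v$ is not divergence-free; moving $\nabla_t$ onto $A^2\tilde v$ and inserting the modified divergence gives $\alpha^{-1}\langle A\p_x\p_z^{-1}b^y_\sim, A\pi_\sim\rangle$, the first piece of $LNL$. Finally, the $e_1(\alpha\p_z)^{-1}((b\cdot\nabla_t)v^y-(v\cdot\nabla_t)b^y)_{\neq\neq}$ source in the $\tilde v$-equation, tested against $A^2\tilde v^x$ with $(\alpha\p_z)^{-1}$ moved to the left factor by adjointness, yields the second piece of $LNL$.

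The main bookkeeping challenge is verifying that the mode decomposition exhausts $((a^2\cdot\nabla_t)a^3)_\sim$ and, in particular, that the $\neq=\cdot\neq=$ channel survives in the $\sim$-projection only through the $==$ sub-mode; this hinges on the elementary observation that the convolution of two $z$-independent Fourier supports remains on $l=0$. The pressure treatment is also delicate because $\tilde v$ fails to be divergence-free, but its divergence $\p_x(\alpha\p_z)^{-1}b^y$ is completely explicit in $b$, so the pressure residue is safely absorbed into $LNL$ rather than generating an uncontrolled term; this is the same mechanism that will eventually allow the $\alpha$-stabilization to pay for itself in Proposition~\ref{prop:main}.
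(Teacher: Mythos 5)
Your proposal is correct and follows essentially the same route as the paper, whose proof consists precisely of this direct weighted-energy computation together with the decomposition $((a^2\cdot\nabla_t)a^3)_{\sim}=((a^2_{\sim}\cdot\nabla_t)a^3_{\sim})_{\sim}+((a^2_{\sim}\cdot\nabla_t)a^3_{\neq=})_{\sim}+((a^2_{\neq=}\cdot\nabla_t)a^3_{\sim})_{\sim}+((a^2_{\neq=}\cdot\nabla_t)a^3_{\neq=})_{==}$, whose last channel collapses onto the $==$ mode for exactly the $z$-independence reason you give, and with the lift-up and pressure terms converted through $\nabla_t\cdot\tilde v=\p_x(\alpha\p_z)^{-1}b^y_{\sim}$ as you describe. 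Two cosmetic caveats only: the $c\mu^{\frac13}\Vert A(\tilde v_{\neq\neq},\tilde b_{\neq\neq})\Vert_{L^2}^2$ piece of $L$ is intended as the genuinely adverse contribution of the exponential factor in $A$ (which is why it must later be absorbed via Lemma~\ref{lem:Mmu} and why boundedness of the $A$-norm yields the $\e^{-c\mu^{1/3}t}$ decay), not a favorable term optionally booked to the right, and the paper's second $LNL$ term is recorded with $b^x_{\neq\neq}$ rather than the $\p_z^{-1}\tilde v^x$ factor your adjointness step produces — neither point affects the validity of your derivation of the stated inequality.
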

Lemma \ref{lem:Esim} is obtained by direct calculations and the identity
\begin{align*}
    ((a^2 \cdot \nabla_t) a^3 )_{\sim }&=((a^2_\sim \cdot \nabla_t) a^3_\sim )_{\sim }+((a^2_\sim \cdot \nabla_t) a^3_{\neq =} )_{\sim }+((a^2_{\neq =} \cdot \nabla_t) a^3_\sim )_{\sim }+((a^2_{\neq =} \cdot \nabla_t) a^3_{\neq =} )_{== }.
\end{align*} 
Proposition \ref{prop:main} is a consequence of the following Lemma: 
\begin{lemma}\label{lem:main}
    Under the assumption of Theorem~\ref{thm:main} and let the bootstrap assumptions \eqref{boot1a}-\eqref{boot2} hold on the interval $[0,T]$. Then it holds the estimate
    \begin{align*}
    \int_0^T L \ \dd\tau &\le 4 cC_2 \eps^2, \\
    \int_0^T  \sum_{a\in \Gamma} NL_{\sim}^a+NL_{\diamond \to \sim }^a+LNL \ \dd\tau & \lesssim \delta  \eps^2. 
\end{align*}
\end{lemma}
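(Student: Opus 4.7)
The plan is to split Lemma~\ref{lem:main} into its linear and nonlinear parts, handled by completely different tools. The linear piece is an almost algebraic matter of recognizing that both contributions to $L$ are built into the energy: one into the enhanced-dissipation weight $M_\mu$, the other into the magnetic-tension weight $m$. The nonlinear piece requires paraproduct decompositions combined with the inviscid-damping/enhanced-dissipation $L^2_t L^2$ bounds supplied by Lemma~\ref{lem:Inviscid} and the smallness $\eps \le \delta \mu^{\gamma}$.

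For the linear bound $\int_0^T L\, \dd\tau \le 4cC_2\eps^2$, I treat the two summands separately. For $c\mu^{1/3}\|A(\tilde v_{\neq\neq},\tilde b_{\neq\neq})\|_{L^2}^2$, I invoke the standard enhanced-dissipation inequality for $M_\mu$ (as used already in the proof of Lemma~\ref{lem:Inviscid} via Lemma~\ref{lem:Mmu}), namely
\begin{align*}
\int_0^T \mu^{1/3}\|A u_{\neq}\|_{L^2}^2 \, \dd\tau
\lesssim \int_0^T \Bigl\|\sqrt{\tfrac{\p_t M_\mu}{M_\mu}} A u_{\neq}\Bigr\|_{L^2}^2 + \mu \|\nabla_\tau A u_{\neq}\|_{L^2}^2 \, \dd\tau,
\end{align*}
applied to $u \in \{\tilde v, \tilde b\}$; the right-hand side is $\le C_2 \eps^2$ by \eqref{boot1a}. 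For the cross term $\tfrac{2}{\alpha}|\langle A\p_x\p_z^{-1}b^y_{\neq\neq}, A\p_x\Delta_t^{-1}v^y_{\neq\neq}\rangle|$, I use Cauchy--Schwarz and $|\p_z|\le \Lambda_t$ to control it by $\tfrac{1}{|\alpha|}\|(|\p_x|/\Lambda_t) A(\tilde v,\tilde b)\|_{L^2}^2$. The weight $m$ was designed precisely to absorb this: the $j=k,\, i=l$ diagonal term in $\p_t m/m$ contributes $c^{-1}(1+1/|\alpha|)|k|^2/|k,\eta-k\tau,l|^2$, which yields
\begin{align*}
\tfrac{1}{|\alpha|}\bigl\|(|\p_x|/\Lambda_t) A(\tilde v,\tilde b)\bigr\|_{L^2}^2
\le c\,\bigl\|\sqrt{\tfrac{\p_t m}{m}} A(\tilde v,\tilde b)\bigr\|_{L^2}^2,
\end{align*}
because $\tfrac{1}{|\alpha|}/(1+1/|\alpha|)\le 1$. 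Integrating and using \eqref{boot1a} again gives $\le cC_2\eps^2$, and the two contributions sum to $\le 2cC_2\eps^2 \le 4cC_2\eps^2$.

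The nonlinear bound is the technical heart of the argument. I will process the terms by type. For $NL_\sim^a$, I perform a Littlewood--Paley high-low split on the product $(a_\sim^2\cdot\nabla_t)a_\sim^3$: the high-frequency factor absorbs $A$, the low-frequency factor is placed in $L^\infty\lesssim H^2$, and one of the two $\sim$-factors is always either $v^y_{\neq\neq}$ or $b^y_{\neq\neq}$, providing the $\langle t\rangle^{-1}$ inviscid damping factor via the $\tilde\rho$ identity or an $L^2_t L^2$ gain of order $\eps$ from Lemma~\ref{lem:Inviscid}. The derivative $\nabla_t$ is paired either with $\sqrt{\p_t M/M}$ using the Gevrey-like commutator properties of $m$ (as in the echo-chain analysis behind the weight design of \cite{knobel2025suppression}) or with the $\mu^{1/2}\nabla_\tau$ coercivity already on the LHS of Lemma~\ref{lem:Esim}. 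Since each estimate produces at least one gain of $\eps$ beyond the bilinear $\|A(\tilde v,\tilde b)\|^2$, and since $\eps\le \delta\mu^\gamma$ with $\gamma\ge 5/6$, the product collapses to $\delta\eps^2$.

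The terms $NL_{\diamond\to\sim}^a$ are more delicate because they involve the potentially large $z$-average $f_{\neq=}$ and its relative $g_{\neq=}$. Here I exploit two compensations: first, the inviscid damping hierarchy $\langle t\rangle^{-1}\|v^x_{\neq=}\|, \langle t\rangle^{-2}\|v^y_{\neq=}\|$, and $\langle t\rangle^{-1}\|b^x_{\neq=}\|$ packaged via $g$; second, whenever $f$ enters with its inflated norm $\mu^{-(1-\gamma)}\eps$, the other factor comes from the $\sim$-bootstrap and supplies $\eps\le \delta\mu^\gamma$, producing $\mu^{-(1-\gamma)}\cdot \mu^\gamma\cdot \delta \eps = \mu^{2\gamma-1}\delta\eps \le \delta\eps$ since $\gamma\ge 5/6\ge 1/2$. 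The double-zero contribution $\langle A a_{==}^1, A((a_{\neq=}^2\cdot\nabla_t)a_{\neq=}^3)\rangle$ is handled using \eqref{boot1q} (which gives $\|a_{==}\|_{H^N}\lesssim \delta\mu$) together with the $L^2_t L^2$ enhanced-dissipation gain $\mu^{-1/6}$ on $a_{\neq=}$. The $LNL$ terms are treated similarly: the pressure is represented as $\Delta_t^{-1}$ of a quadratic form and fits the same paraproduct framework with a smoothing bonus, while the $\alpha^{-1}\p_z^{-1}$ piece uses that $b^y_{\neq\neq}$ enjoys the full inviscid-damping decay.

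The main obstacle I anticipate is the bookkeeping of $\mu$-powers in the $NL_{\diamond\to\sim}^a$ estimates, in particular the highest-order commutator between the weight $A$ and the transport by $g$ at the echo times $t\approx\eta/k$. It is precisely this interaction that dictates the threshold $\gamma=5/6$ and that forces the enlarged $\mu^{-(1-\gamma)}$ allowance on $f$. Making the commutator gain quantitative and matching it to $\delta\mu^{2\gamma-1}\le \delta$ is the step where all the heuristics of Subsection~\ref{sec:heuz} must be converted into a rigorous weight estimate; this will be the critical point to check, while every other interaction is subordinate and closes by standard shear-flow paraproduct methods.
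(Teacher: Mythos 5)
Your treatment of the linear term matches the paper: the $c\mu^{1/3}$ piece is absorbed by Lemma~\ref{lem:Mmu} together with the dissipation in \eqref{boot1a}, and the cross term $\tfrac{2}{\alpha}|\langle A\p_x\p_z^{-1}b^y_{\neq\neq},A\p_x\Delta_t^{-1}v^y_{\neq\neq}\rangle|$ is exactly what the diagonal part of $\p_t m/m$ (Lemma~\ref{lem:m}) was built to dominate, so $\int_0^T L\,\dd\tau\lesssim cC_2\eps^2$ follows as in the paper.

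The nonlinear half, however, has a genuine gap. First, your structural claim for $NL_\sim^a$ --- that ``one of the two $\sim$-factors is always either $v^y_{\neq\neq}$ or $b^y_{\neq\neq}$'' --- is false: within the $\sim$ projection both factors can be pure $x$-averages ($k=\tilde k=0$), and in that regime there is no inviscid damping and no enhanced dissipation, so the estimate you sketch degenerates to $\int \|A a_=\|\,\|A\nabla a_=\|^2\,\dd\tau \lesssim \mu^{-1}\eps^3$, which is \emph{not} $\lesssim\delta\eps^2$ when $\gamma<1$. The paper closes this case only by using the algebraic facts $v^y_{==}=b^y_{==}=0$ (so the convecting coefficient is $a^{y,z}_=$) together with the $\delta\mu$ smallness of $(v^{y,z}_=,b^{y,z}_=)$ from \eqref{boot1q}, giving $\mu^{-1}(\delta\mu)\eps^2$; this ingredient is absent from your $NL_\sim$ argument. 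Second, your $\mu$-power bookkeeping for $NL_{\diamond\to\sim}^a$ is too coarse: the product $\mu^{-(1-\gamma)}\eps\cdot\eps$ must still be multiplied by the losses coming from the time integration ($\|A^g\nabla_\tau g\|_{L^2_t}\lesssim\mu^{-1/2}\eps$, $\|Aa_{\neq\neq}\|_{L^2_t}\lesssim\mu^{-1/6}\eps$, the factor $t\langle t\mu^{1/3}\rangle$ from $\p_y^t$ acting on $g$, etc.), and it is precisely these losses --- producing terms of size $\mu^{-5/6}\eps^3$ in $NL_{g\to\sim,1}$, $NL_{g\to\sim,2}$ and $\mu^{-2/3}\eps^2(\mu^{-(1-\gamma)}\eps)$ in $NL_{\diamond\to=,1}$ --- that make $\gamma=5/6$ critical; your arithmetic ``$\mu^{2\gamma-1}\delta\eps\le\delta\eps$ since $\gamma\ge 5/6\ge 1/2$'' would wrongly suggest the lemma closes already at $\gamma\ge 1/2$. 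Finally, you yourself defer the decisive step (the quantitative reaction/transport estimates for the $g$-interactions near the echo times, carried out in the paper via the splittings into $\Omega_R,\Omega_T$ and the bounds $|\eta\tilde k-k\xi|\lesssim(1+t\sqrt{\p_t m/m})|\tilde k,\xi-\tilde k t|\,|k-\tilde k,\eta-\xi|^4$) to ``the critical point to check.'' Since those estimates are the actual content of Lemma~\ref{lem:main} --- they are where the weights $A$, $A^g$, $m$ and the exponent $5/6$ enter --- the proposal as written does not prove the statement; it reproduces the setup and the easy linear part but leaves the core estimates unestablished and, where it does give arithmetic, gives arithmetic that would not single out the correct threshold.
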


\subsection{Linear Estimates} For the first term we use Lemma \ref{lem:Mmu}
\begin{align*}
    c\mu^{\frac 1 3 }\Vert A  (\tilde v_{\neq \neq },\tilde b_{\neq \neq })\Vert_{L^2}^2&\le c\Vert A  \sqrt{\tfrac {\p_t M_\mu } { M_\mu }}(\tilde v_{\neq \neq },\tilde b_{\neq \neq })\Vert_{L^2}^2+c\mu \Vert A  \Lambda_t (\tilde v_{\neq \neq },\tilde b_{\neq \neq })\Vert_{L^2}^2
\end{align*}
and for the second term Lemma \ref{lem:m}
\begin{align*}
     \tfrac 2 \alpha\vert  \langle A \p_x \p_z^{-1} b^y_{\neq \neq } , A\p_x \Delta_t^{-1} v^y _{\neq \neq } \rangle\vert &\le c \Vert A  \sqrt{\tfrac {\p_t m } {m }}(\tilde v_{\neq \neq },\tilde b_{\neq \neq })\Vert_{L^2}^2. 
\end{align*}
Thus, integrating in time yields 
\begin{align*}
\int_0^T L \dd\tau \le 4 c C_2 \eps^2.
\end{align*}

\subsection{Bound on $NL_{\sim}$,} since $v_\sim$ and $b_\sim$ satisfy the same bootstrap assumption, we do not distinguish between $a^i_\sim $ and therefore for the sake of simplicity we omit writing the $i$.   First, we use Plancherel's identity to obtain 
\begin{align*}
    NL_{\sim }^a&=\vert  \langle A  \tilde a  , A( ( a_\sim \cdot \nabla_t) a_\sim ) \rangle\vert \\
    &\le \sum_{k,\tilde k , l,\tilde l }\iint\dd (\eta, \xi)\textbf{1}_{\Omega_\sim}\vert A(k,\eta,l)\vert  \left \vert a(k-\tilde k ,\eta-\xi, l-\tilde l) \cdot \begin{pmatrix}
        \tilde k \\ \xi-kt \\ \tilde l 
    \end{pmatrix}\right\vert\vert Aa\vert (k,\eta ,l ) \vert a\vert(\tilde k ,\xi,\tilde l ) \\
    &=\sum_{k,\tilde k , l,\tilde l }\iint\dd (\eta, \xi) \textbf{1}_{\Omega_\sim}(\textbf{1}_{\Omega_{\neq} }+\textbf{1}_{\Omega_{=} })\calI
\end{align*}
where we denote the sets 
\begin{align*}
    \Omega_\sim &=\{(k,\tilde k, l ,\tilde l ): \ (l\neq 0 \text{ or } k=0), \ (\tilde l\neq 0 \text{ or }\tilde k=0), \ (l-\tilde l\neq 0 \text{ or } k- \tilde k=0)\},\\
    \Omega_{\neq}&= \{ (k,\tilde k, l ,\tilde l ): \ k \neq 0 \text{ or }  k\neq \tilde k\},\\
    \Omega_{=}&= \{ (k,\tilde k, l ,\tilde l ): \  k= \tilde k= 0 \}.
\end{align*}
By the choice of $\Omega_\sim$ we obtain 
\begin{align*}
    \sum_{k,\tilde k , l,\tilde l }\iint\dd (\eta, \xi)\textbf{1}_{\Omega_\sim } \textbf{1}_{\Omega_{\neq} }\calI&\lesssim \Vert Aa_{\neq \neq }\Vert_{L^2 }\Vert Aa_{\sim }\Vert_{L^2 }\Vert A\nabla_t a_{\sim  }\Vert_{L^2 }.
\end{align*}
For the second term, we use that $a^y(0 ,\eta-\xi, 0)=0$, and thus if 
\begin{align*}
    \left \vert a(0 ,\eta-\xi, l-\tilde l) \cdot \begin{pmatrix}
        0 \\ \xi \\ \tilde l 
    \end{pmatrix}\right\vert\neq 0
\end{align*}
 on  $\Omega_\sim \cap\Omega_=$ we obtain either $l\neq \tilde l$ or $l\neq 0 $. We estimate 
\begin{align*}
      \left \vert a(0 ,\eta-\xi, l-\tilde l) \cdot \begin{pmatrix}
        0 \\ \xi \\ \tilde l 
    \end{pmatrix}\right\vert\le   \left \vert a^{y,z}_=(\eta-\xi, l-\tilde l)\right\vert\vert \xi, \tilde l \vert \left(\vert \eta-\xi, l-\tilde l\vert +\vert \eta, l\vert \right).
\end{align*}
Therefore,
\begin{align*}
    \sum_{k,\tilde k , l,\tilde l }\textbf{1}_{\Omega_\sim }\textbf{1}_{\Omega_{=} }\calI&\lesssim \Vert A \nabla a_{=} \Vert_{L^2}^2\Vert A a^{y,z}_=\Vert_{L^2}+\Vert A \nabla a_{= } \Vert_{L^2}\Vert A  a_{=  } \Vert_{L^2}\Vert A \nabla a^{y,z}_=\Vert_{L^2}.
\end{align*}
Combining these estimates, integrating in time and using the bootstrap assumption yields 
\begin{align*}
    \int_0^T NL_\sim \ \dd\tau \lesssim \mu^{-\frac 23 } \eps^3 +\mu^{-1} (\delta \mu)\eps^2  \le \delta \eps^2. 
\end{align*}

\subsection{Bound on $NL_{\diamond\to \sim }$,} in this section we estimate the forcing of $f$, $g$ and $h$ onto the $\sim$ terms
\begin{align*}
    NL_{\diamond\to \sim }^a &=  \vert \langle A  \tilde a^1,  A((a_{\neq =}^2 \cdot  \nabla_t) a_\sim ^3)_\sim   \rangle\vert  +\vert \langle A  \tilde a^1   , A( (a_{\sim}^2\cdot \nabla_t) a_{\neq =}^3  )_{\sim } \rangle\vert \\
    &\quad +\vert \langle A  \tilde a_{==}^1   ,A( (a_{\neq =}^2 \cdot\nabla_t)a_{\neq =}^3  )_{\sim } \rangle\vert 
\end{align*}
where $(a^1,a^2,a^3)\in \Gamma $. Here for $a_{\neq=}^i$ we distinguish between 
\begin{align*}
     b_{\neq =}&= \begin{pmatrix}
        \p_y^t g \\ -\p_x g \\ h_2 
    \end{pmatrix},&
    v_{\neq =}&= \begin{pmatrix}
       \tilde \Lambda_t^{-2} \p_y^t f \\ -\tilde \Lambda_t^{-2}\p_x f \\ h_1 
    \end{pmatrix}.
\end{align*}
For the $a^i_\sim$ we do not distinguish between different $i$ since $v_\sim$ and $b_\sim$ satisfy the same energy estimates. 
Therefore, we split
\begin{align*}
    \sum_{(a^1,a^2,a^3)\in \Gamma} \vert NL_{\diamond\to \sim }^a\vert &\le  NL_{f\to \sim,1   } + NL_{f\to \sim,2   } \\
    &\quad + NL_{g\to \sim,1   } + NL_{g\to \sim,2   } \\
    &\quad + NL_{h\to \sim   }  \\
    &\quad +  NL_{\diamond\to =,1 }^a+ NL_{\diamond\to = ,2}^a  ,
\end{align*}
where we denote 
\begin{align*}
    NL_{f\to \sim,1  }&=\vert\langle A   \tilde a^{x,y} , A((a_{\sim  }\cdot \nabla_t) v_{\neq =}^{x,y})\rangle\vert,\\
     NL_{f\to \sim,2  }&=\vert\langle A \tilde a  , A((v_{\neq =}^{x,y}\cdot\tilde \nabla_t) a_{\sim })\rangle\vert,\\
     NL_{g\to \sim,1  }&= \vert\langle A  \tilde a^{x,y}  , A((a_{\sim  }\cdot \nabla_t) b_{\neq =}^{x,y})\rangle\vert,\\
     NL_{g\to \sim,2  }&=\vert\langle A \tilde a  , A((b_{\neq =}^{x,y}\cdot \tilde \nabla_t) a_{\sim })\rangle\vert,\\
     NL_{h\to \sim  }&= \vert\langle A  \tilde a^z  , A((a_{\sim  }\cdot\nabla_t) a_{\neq =}^z)\rangle +\langle A \tilde a , A( a_{\neq =}^{z}\p_z a_{\sim })\rangle\vert,\\
     NL_{\diamond\to = ,1}&=\sum_{(a,a^2,a^3)\in \Gamma}\vert\langle A  \tilde a_{==}^x  ,A( (a_{\neq =}^2\cdot\nabla_t) a_{\neq =}^{3,x}  )_{== } \rangle\vert,\\
    NL_{\diamond\to =,2  }&=\sum_{(a,a^2,a^3)\in \Gamma}\vert\langle A  \tilde a_{==}^z   ,A( (a_{\neq =}^2\cdot\nabla_t) a_{\neq =}^{3,z}  )_{== } \rangle\vert.
\end{align*}
In the following, we estimate all terms separately. The estimate on $NL_{g\to \sim,1}$ and $NL_{g\to \sim,2}$ are the most relevant for the threshold. 

\textbf{Bound on $NL_{f\to \sim,1  }$}, we estimate directly 
\begin{align*}
    NL_{f\to \sim,1  }&=\vert\langle A   \tilde a^{x,y} , A((a_{\sim  }\cdot\nabla_t) v_{\neq =}^{x,y})\rangle\vert=\vert\langle A  \tilde a^{x,y}  , A((a_{\sim  }\cdot \nabla_t)  (\Lambda_t^{-2} \tilde \nabla_t^\perp f_{\neq } )\rangle\vert\\
    &=\vert\langle A  \tilde a^{x,y}_{\neq \neq }  , A((a_{\sim  }\cdot \nabla_t)  (\Lambda_t^{-2} \tilde \nabla_t^\perp f_{\neq } )\rangle\vert+\vert\langle A  \tilde a^{x,y}_{= }  , A((a_{\neq \neq   }\cdot \nabla_t)  (\Lambda_t^{-2} \tilde \nabla_t^\perp f_{\neq } )\rangle\vert\\
    & \lesssim \Vert A a_{\neq \neq } \Vert_{L^2}\Vert Aa_{\sim }\Vert_{L^2}\Vert A f_{\neq}\Vert_{L^2 }.
\end{align*}
Integrating in time and using the bootstrap assumption yields 
\begin{align*}
    \int_0^T NL_{f\to \sim,1  }\dd\tau &\lesssim \mu^{-\frac 13 } \eps^2(\mu^{-(1-\gamma)} \eps)\lesssim  \delta  \eps^2. 
\end{align*}

\textbf{Bound on $NL_{f\to \sim,2  }$}, we estimate directly 
\begin{align*}
    NL_{f\to \sim,2  }&=\vert\langle A \tilde a  , A((v_{\neq =}^{x,y}\cdot \tilde \nabla_t) a_{\sim })\rangle\vert=\vert\langle A \tilde a  , A((\Lambda_t^{-2} \tilde \nabla_t^\perp f_{\neq}  \cdot \tilde \nabla_t) a_{\sim })\rangle\vert\\
    & \lesssim \Vert A a_{\sim  } \Vert_{L^2}\Vert A\nabla_t a_{\sim }\Vert_{L^2}\Vert A\Lambda_t^{-1} f_{\neq}\Vert_{L^2 }\\
    & \lesssim \Vert A a_{\sim  } \Vert_{L^2}\Vert A\nabla_t a_{\sim }\Vert_{L^2}\Vert A\sqrt{\tfrac{\p_tm}m } f_{\neq}\Vert_{L^2 }.
\end{align*}
Integrating in time and using the bootstrap assumption yields 
\begin{align*}
    \int_0^T  NL_{f\to \sim,2  }\dd\tau &\lesssim \mu^{-\frac 12  } \eps^2 (\mu^{-(1-\gamma)} \eps)\lesssim \delta \eps^2 . 
\end{align*}

\textbf{Bound on $NL_{g\to \sim,1}$}, we use $a_{\sim }^{x,y} =(a_{\sim }^x, a_{\neq \neq }^y +a_{= }^y)^T$ and $\p_z g=0$ to split 
\begin{align*}
    NL_{g\to \sim,1}&=\vert\langle A  \tilde a^{x,y}  , A((a_{\sim  }\cdot\nabla_t) b_{\neq =}^{x,y})\rangle\vert=\vert\langle A  \tilde a^{x,y} , A((a_{\sim  }^{x,y}\cdot \tilde \nabla_t) \tilde \nabla_t^\perp g_{\neq}  )\rangle\vert\\
    &=\vert\langle A \tilde a^{x,y} , A(a_{\sim }^x  \p_x \tilde \nabla_t^\perp g_{\neq} )\rangle \vert+\vert\langle A  \tilde a^{x,y} , A(a_{\neq  \neq  }^y  \p_y^t\tilde  \nabla_t^\perp g_{\neq} )\rangle\vert+\vert\langle A \tilde a_{\neq \neq }^{x,y} , A(a_{=  }^y  \p_y^t  \tilde \nabla_t^\perp g_{\neq} )\rangle\vert.
\end{align*}
We estimate these terms separately, the first we estimate by 
\begin{align*}
    \vert\langle A \tilde a , A(a_{\sim}^x  \p_x \nabla_t^\perp g_{\neq} )\rangle\vert&\lesssim \Vert A a_\sim \Vert_{L^2 }\Vert Aa_{\neq \neq }^x \Vert_{L^2 } \Vert  A \p_x \nabla_t g \Vert_{L^2  }\lesssim \Vert A a_\sim \Vert_{L^2 }\Vert Aa_{\neq \neq }^x \Vert_{L^2 } \Vert  A^g  \nabla_t g \Vert_{L^2  }.
\end{align*}
Using the definition of $A^g$ we estimate 
\begin{align*}
    \vert\langle A  \tilde a , A(a_{\neq  \neq  }^y  \p_y^t \tilde \nabla_t^\perp g_{\neq} )\rangle\vert&\lesssim  e^{-c \mu^{\frac 13 } t }\Vert  A a_\sim \Vert_{L^2 } \Vert A a_{\neq \neq }^y \Vert_{L^2 } \Vert  A  \nabla_t\p_y^t  g_{\neq} \Vert_{L^2  }\\
    &\lesssim  te^{-c \mu^{\frac 13 } t }\Vert A a_\sim \Vert_{L^2 } \Vert A a_{\neq \neq }^y \Vert_{L^2 } \Vert  A \nabla_t  g_{\neq} \Vert_{H^1  }\\
    &\lesssim  t\langle t \mu^{\frac 1 3}\rangle e^{-c \mu^{\frac 13 } t }\Vert A a_\sim \Vert_{L^2 } \Vert A a_{\neq \neq }^y \Vert_{L^2 } \Vert  A^g \nabla_t  g_{\neq} \Vert_{L^2  }.
\end{align*}
By partial integration, we infer 
\begin{align*}
    \vert\langle A \tilde a_{\neq \neq }^{x,y} , A(a_{=    }^y  \p_y^t  \tilde \nabla_t^\perp g_{\neq} )\rangle\vert&=\vert \langle A \p_y^t\tilde a_{\neq \neq }^{x,y} , A(a_{=    }^y    \tilde \nabla_t^\perp g_{\neq} )\rangle\vert+\langle A \tilde a_{\neq \neq }^{x,y} , A(\p_y a_{=    }^y    \tilde \nabla_t^\perp g_{\neq} )\rangle\vert \\
    &\lesssim (\Vert A \p_y^t a_\sim \Vert_{L^2 } \Vert A a_{=    }^y \Vert_{L^2 } +\Vert A  a_\sim \Vert_{L^2 } \Vert A\p_y a_{=    }^y \Vert_{L^2 })\Vert  A \nabla_t  g_{\neq} \Vert_{L^2 }.
\end{align*}
We integrate in time, use Lemma \ref{lem:Inviscid} and the bootstrap assumption to  estimate 
\begin{align*}
    \int_0^T NL_{g\to \sim,1} \dd\tau \lesssim \mu^{-\frac 56 }  \eps^3 + \mu^{-1} \eps^2 (\delta \mu) \lesssim \delta \eps^2  .
\end{align*}

\textbf{Bound on $NL_{g\to \sim,2  }$}, we use Plancherel's identity 
\begin{align*}
    NL_{g\to \sim,2  }&\le\vert \langle A \tilde a  , A((b_{\neq =}^{x,y}\cdot \tilde \nabla_t) a_{\sim })\rangle\vert =\vert \langle A \tilde a  , A((\tilde  \nabla^\perp_t g_{\neq}  \cdot\tilde  \nabla_t) a_{\sim })\rangle\vert \\
    &\le\sum_{\substack{k,\tilde k, l\\  k- \tilde k\neq 0 }}\iint\dd (\eta ,\xi )\vert \eta \tilde k -k\xi\vert A(k,\eta,l)  \vert A\tilde a \vert (k,\eta,l) \vert g \vert (k-\tilde k , \eta-\xi)\vert a_\sim \vert(\tilde k , \xi,l ).
\end{align*}
We split this into 
\begin{align*}
    NL_{g\to \sim,2  }
    &=\sum_{\substack{k,\tilde k, l\\ \tilde k, k- \tilde k\neq 0 }}\iint\dd (\eta ,\xi ) \textbf{1}_{\Omega_\sim} (\textbf{1}_{\Omega_{R, g\to\sim}}+\textbf{1}_{\Omega_{T, g\to\sim}}) \vert \eta \tilde k -k\xi\vert A(k,\eta,l)  \vert A\tilde a \vert (k,\eta,l) \\
    &\qquad \qquad \qquad \qquad \qquad\cdot \vert g \vert (k-\tilde k , \eta-\xi)\vert a_\sim \vert(\tilde k , \xi,l ) \\
    &\qquad + \sum_{\substack{k, l\\  k \neq 0 }}\iint\dd (\eta ,\xi ) \textbf{1}_{\Omega_\sim} \vert k\xi\vert A(k,\eta,l) \vert A\tilde a \vert (k,\eta,l) \vert g \vert (k , \eta-\xi)\vert a_\sim \vert(0 , \xi,l )\\
    &=R_{g\to \sim,2  }+T_{g\to \sim,2  }+E_{g\to \sim,2  }, 
\end{align*}
according to the sets $\Omega_{R, g\to\sim}=\{(k,\tilde k, \eta, \xi,l)\in \Z^2\times \R^2 \times \Z : \ \vert k-\tilde k, \eta-\xi\vert \ge \vert\tilde k ,\xi,l\vert \}$ and $\Omega_{T, g\to\sim}=\Omega_{R, g\to\sim}^c$. We estimate the reaction term by
\begin{align*}
    R_{g\to \sim,2  }&\lesssim e^{-c \mu^{\frac 13} t } \Vert A a_\sim  \Vert_{L^2} \Vert A a_{\neq \neq } \Vert_{L^2} \Vert A g_{\neq} \Vert_{H^1}\\
    &\lesssim e^{-c \mu^{\frac 13} t } \langle t \mu^{\frac 13 }\rangle \Vert A a_\sim  \Vert_{L^2} \Vert A a_{\neq \neq } \Vert_{L^2} \Vert A^g g_{\neq} \Vert_{L^2}.
\end{align*}
Integrating in time and using the bootstrap assumption yields
\begin{align*}
    \int_0^T R_{g\to \sim,2  } \dd\tau &\lesssim \mu^{-\frac 13 } \eps^3\le \delta \eps^2.
\end{align*}
For the transport, we use Lemma \ref{lem:m} to estimate 
\begin{align*}
    \vert \eta \tilde k-k\xi\vert&\le\vert \tilde k,\xi\vert  \vert k-\tilde k ,\eta-\xi\vert \le(\vert \tilde k,\xi-\tilde k t \vert +\vert \tilde k t \vert\textbf{1}_{\tilde k\neq 0}) \vert \vert k-\tilde k ,\eta-\xi\vert \\
    &\le \left(1+\tfrac t {\langle t-\frac \xi {\tilde k} \rangle }\right)   \vert \tilde k,\xi-\tilde kt \vert   \vert k-\tilde k ,\eta-\xi\vert \\
    &\le \left(1+t\sqrt{\tfrac {\p_t m } m (k,\eta) }\right)   \vert \tilde k,\xi-\tilde kt \vert   \vert k-\tilde k ,\eta-\xi\vert^4
\end{align*}
and so we estimate 
\begin{align*}
    T_{g\to \sim,2  } &\lesssim e^{-c\mu^{\frac 13 }t } \left(\Vert Aa_{\neq \neq }\Vert_{L^2}+t\Vert A\sqrt{\tfrac {\p_t m } m} a_{\neq \neq }\Vert_{L^2}\right) \Vert A\Lambda_t  a_{\sim }  \Vert_{L^2} \Vert A g_{\neq} \Vert_{L^2}. 
\end{align*}
Integrating in time and using the bootstrap assumption yields
\begin{align*}
    \int_0^T T_{g\to \sim,2  } \dd\tau  &\lesssim  \mu^{-\frac 56 } \eps^3\le \delta \eps^2  .
\end{align*}
We estimate 
\begin{align*}
    E_{g\to \sim,2  }&\lesssim  \Vert A a_{\neq \neq }\Vert_{L^2} \Vert A\p_y  a_{=}  \Vert_{L^2} \Vert A \p_x g_{\neq} \Vert_{L^2}\\
    &\lesssim \Vert A a_{\neq \neq }\Vert_{L^2} \Vert A\p_y  a_{=}  \Vert_{L^2} \Vert A^g g_{\neq} \Vert_{L^2}
\end{align*}
and thus after integrating in time 
\begin{align*}
    \int_0^T E_{g\to \sim,2  }\dd\tau &\lesssim \mu^{-\frac 23 } \eps^3\le \delta \eps^2.
\end{align*}

\textbf{Bound on $NL_{h\to \sim  }$}, we estimate directly
\begin{align*}
     NL_{h\to \sim  }&= \vert\langle A  \tilde a^z  , A((a_{\sim  }\cdot \nabla_t) a_{\neq =}^z)\rangle+\langle A \tilde a , A( a_{\neq =}^{z}\p_z a_{\sim })\rangle\vert\\
     &= \vert\langle A  \tilde a ^z , A((a_{\sim  }\cdot \nabla_t) h_{\neq} )\rangle +     \langle A \tilde a  , A(h_{\neq} \p_z  a_{\sim })\rangle\vert\\
     &=\vert\langle A  \tilde a ^z_{\neq \neq} , A((a_{\sim  }\cdot \nabla_t) h_{\neq} )\rangle+ \langle A  \tilde a^z_{= } , A((a_{\neq \neq}\cdot \nabla_t) h_{\neq} )\rangle +     \langle A \tilde a  , A(h_{\neq} \p_z  a_{\sim })\rangle\vert\\
     &\lesssim \Vert A  a_{\sim } \Vert_{L^2 }\left(\Vert  A  a_{\neq \neq} \Vert_{L^2 }\Vert A \nabla_t  h \Vert_{L^2 }+\Vert  A \nabla_t  a \Vert_{L^2 }\Vert A  h_{\neq} \Vert_{L^2 }\right).
\end{align*}
Integrating in time and using the bootstrap assumption yields
\begin{align*}
     \int_0^T NL_{h\to \sim  }\dd\tau &\lesssim \mu^{-\frac 23 } \eps^3\le \delta \eps^2 .
\end{align*}

\textbf{Bound on $NL_{\diamond \to = ,1}^a$}, we distinguish again between $v_{\neq =}$ and $b_{\neq =}$ and so we estimate the terms 
\begin{align*}
    \sum_{a\in \Gamma}  NL_{\diamond\to =,1 }^a &= \vert \langle  A v_{==}^x, A((\tilde \nabla^\perp_t \tilde \Lambda^{-2}_t f_{\neq}\cdot \tilde  \nabla_t) \tilde \Lambda^{-2}_t\p_y^t  f_{\neq})\rangle\vert  + \vert \langle  A v_{==}^x, A((\tilde \nabla^\perp g_{\neq} \cdot \tilde \nabla) \p_y^t g_{\neq})\rangle\vert  \\
    &\quad+\vert\langle  A b_{==}^x, A((\tilde \nabla^\perp \tilde \Lambda^{-2}_t f\cdot \tilde  \nabla)  \p_y^t  g_{\neq})\rangle\vert +\vert \langle  A b_{==}^x, A((\tilde \nabla^\perp g_{\neq} \cdot \tilde \nabla) \p_y^t \Lambda^{-2}_tf_{\neq})\rangle\vert\\
    &= NL_{ff\to =,1 }+NL_{gg\to =,1 }+NL_{fg\to =,1 }+NL_{gf\to =,1 }.
\end{align*}
We estimate directly
\begin{align*}
    NL_{ff\to =,1 }&\lesssim e^{-c \mu^{\frac 13 }t } \Vert Av_{==}\Vert_{L^2 }\Vert A f_{\neq}\Vert_{L^2 }^2,\\
     NL_{gg\to =,1 } &\lesssim e^{-c \mu^{\frac 13 }t } \Vert Av_{==}\Vert_{L^2 }\Vert A g_{\neq} \Vert_{H^1} \Vert A\nabla_t  g_{\neq} \Vert_{H^1}, \\
    NL_{gf\to =,1 } &\lesssim  e^{-c \mu^{\frac 13 }t }\Vert Ab_{==}\Vert_{L^2 } \Vert A  g_{\neq} \Vert_{H^1}\Vert \nabla \Lambda_t^{-1} Af\Vert_{L^2}.
\end{align*}
Therefore, using $\Vert A  g_{\neq} \Vert_{H^1}\le \langle t\mu^{\frac 13 }\rangle  \Vert A^g  g_{\neq} \Vert_{L^2}$ and $\Vert \nabla \Lambda_t^{-1} Af\Vert_{L^2}\lesssim \Vert  A f_{\neq} \Vert_{L^2}+t \Vert  A\sqrt {\tfrac {\p_t m }m } f_{\neq} \Vert_{L^2}) $, integrating in time and using the bootstrap assumption yields
\begin{align}
    \int_0^T NL_{ff\to =,1 }+NL_{gg\to =,1 }+NL_{gf\to =,1 } \dd\tau&\lesssim \mu^{-\frac 56 }\eps^3 \le \delta \eps^2. \label{eq:dia=1}
\end{align}
Now we turn to  $NL_{fg\to =,1}$, we use Plancherel's identity 
\begin{align*}
    NL_{fg\to =,1}
    &\lesssim\sum_{\tilde k,\tilde l\neq 0} \iint\dd (\eta,\xi) (\textbf{1}_{\Omega_{R,fg\to =,1}}+\textbf{1}_{\Omega_{T,fg\to =,1}})\frac{\vert \eta \tilde k \vert}{\vert \tilde k  , \eta -\xi +\tilde k t\vert^2 }\langle \eta \rangle^{N} \vert f\vert(-\tilde k ,\eta-\xi,-\tilde l) \\
    &\qquad \qquad\qquad \cdot \vert \p_y^t g \vert(\tilde k ,\xi,\tilde l) \vert A b^x\vert(0,\eta,0)\\
    &=R_{fg\to =,1} +T_{fg\to =,1}
\end{align*}
with the sets $\Omega_{R,fg\to =,1}=\{(\eta,\xi)\in \R^2: \ \vert \eta-\xi\vert \ge\vert  \xi \vert \}$ and $\Omega_{T,fg\to =,1}=\Omega_{fg\to =,1}^c$. For $R_{fg\to =,1}$ we use Lemma \ref{lem:m} to estimate
\begin{align*}
    \frac{\vert \eta \tilde k \vert}{\vert \tilde k  , \eta -\xi +\tilde k t\vert^2 }&\le \left(1+\frac{t}{\vert \tilde k  , \eta -\xi +\tilde k t\vert^2 }\right) \vert\tilde k ,\xi,\tilde l\vert^2 \\
    &\le \left(1+t \sqrt{\frac {\p_t m}{m}(0,\eta,0)}\sqrt{\frac {\p_t m}{m}(-\tilde k ,\eta-\xi,-\tilde l)}\right) \vert\tilde k ,\xi,\tilde l\vert^5. 
\end{align*}
Therefore, 
\begin{align*}
    R_{fg\to =,1}&\lesssim \left(\Vert A b_{==}\Vert_{L^2}  \Vert A f\Vert_{L^2}+t \Vert\sqrt{\tfrac {\p_t m}{m}} A b_{==}\Vert_{L^2}  \Vert\sqrt{\tfrac {\p_t m}{m}} A f\Vert_{L^2}\right) \Vert \p_y^t g_{\neq}\Vert_{H^7}\\
    &\lesssim e^{-c\mu^{\frac 13 } t } t\langle \mu^{\frac 13 } t \rangle \left(\Vert A b_{==}\Vert_{L^2}  \Vert A f\Vert_{L^2}+t \Vert\sqrt{\tfrac {\p_t m}{m}} A b_{==}\Vert_{L^2}  \Vert\sqrt{\tfrac {\p_t m}{m}} A f\Vert_{L^2}\right) \Vert A^g  g\Vert_{L^2 }. 
\end{align*}
Integrating in time and using the bootstrap assumption yields 
\begin{align}
    \int_0^T R_{fg\to =,1}\dd\tau &\lesssim \mu^{-\frac 23} \eps^2 (\mu^{-(1-\gamma)} \eps) \le \delta \eps^2. \label{eq:Rfg}
\end{align}
We estimate the transport term by using the definition of $A^g$
\begin{align*}
    T_{fg\to =,1} &\lesssim \Vert \p_y A  b^x_{==} \Vert_{L^2} \Vert \Lambda_t^{-2} f\Vert_{H^2}\Vert A \p_y^t g \Vert_{L^2}\lesssim \langle t\rangle ^{-1} \langle \mu^{\frac 1 3 } t \rangle e^{-c\mu^{\frac 13 } t } \Vert \p_y A  b^x_{==} \Vert_{L^2} \Vert A f\Vert_{L^2 }\Vert A^g g \Vert_{L^2}.
\end{align*}
Integrating in time and using the bootstrap assumption yields 
\begin{align}
    \int_0^T T_{fg\to =,1}\dd\tau &\lesssim \mu^{-\frac 12} \eps^2 (\mu^{-(1-\gamma)} \eps) \le \delta \eps^2. \label{eq:Tfg}
\end{align}
Combining the estimates \eqref{eq:dia=1}, \eqref{eq:Rfg} and \eqref{eq:Tfg} we obtain the estimate 
\begin{align}
    \int_0^T \sum_{a\in \Gamma}  NL_{\diamond\to =,1 }^a \dd\tau &\lesssim  \delta \eps^2. 
\end{align}

\textbf{Bound on $NL_{\diamond \to = ,2}^a$}, we have $a_{\neq =}^z=h_{i,\neq }$ for some $i=1,2$ and so 
\begin{align*}
    \sum_{a\in \Gamma} NL_{\diamond\to =,2  }&= \sum_{a\in \Gamma} \vert\langle A  \tilde a_{==}^z    ,A( (a_{\neq =}^2\cdot \nabla_t) a_{\neq =}^{3,z}  )_{\sim } \rangle\vert\\
    &\le \vert \langle A   \tilde a_{==}^z    ,A( (v_{\neq =}\cdot \nabla_t) h_i  )_{\sim } \rangle\vert + \vert  \langle A  \tilde a_{==}^z    ,A( (b_{\neq =}\cdot \nabla_t) h_i  )_{\sim } \rangle\vert.
\end{align*}
Since $\p_z h_i =0$ we have $(v_{\neq =} \cdot \nabla_t )h_i=- (\tilde \nabla^\perp_t \Lambda_t^{-2} f  \cdot \tilde \nabla_t )h_i $, which yields
\begin{align*}
    \vert\langle A  \tilde a_{==}^z , A( (v_{\neq =}\cdot \nabla_t) h_i  )_{\sim } \rangle\vert&\lesssim  \Vert A  \tilde a_{==}^z\Vert_{L^2 } \Vert A\Lambda_t^{-1} f_{\neq}\Vert_{L^2 }\Vert A\nabla_t h\Vert_{L^2 }\\
    &\lesssim   \Vert A  \tilde a_{==}^z\Vert_{L^2 } \Vert A\sqrt{\tfrac {\p_t m}m } f_{\neq}\Vert_{L^2 }\Vert A\nabla_t h\Vert_{L^2 }.
\end{align*}
Integrating in time and using the bootstrap assumption gives 
\begin{align*}
    \int_0^T \vert\langle A  \tilde a_{==}^z , A( (v_{\neq =}\cdot \nabla_t) h_i )_{\sim } \rangle\vert \dd\tau \le \mu^{-\frac 12 } \eps\delta \mu  (\mu^{-(1-\gamma)} \eps) \le \delta \eps^2. 
\end{align*}
For the second term by \eqref{meq=} we have $v_{==}^z=\tilde v_{==}^z $ and $b_{==}^z=\tilde b_{==}^z $, and so 
\begin{align*}
    \vert\langle A  \tilde a_{==}^z    ,A(( b_{\neq =}\cdot \nabla_t)h_i  )_{\sim } \rangle\vert&\lesssim \Vert A(v_{=}^{y,z},b_{=}^{y,z}) \Vert_{L^2}\Vert A\nabla_t g \Vert_{L^2}\Vert A\nabla_t h \Vert_{L^2}.
\end{align*}
Integrating in time and using the bootstrap assumption yields 
\begin{align*}
    \int_0^T \vert\langle A  \tilde a_{==}^z    ,A(( b_{\neq =}\cdot \nabla_t) h_i )_{\sim } \rangle\vert \dd\tau &\lesssim \mu^{-1} (\delta \mu)\eps^2 \le \delta \eps^2. 
\end{align*}
Combining these estimates, we obtain 
\begin{align*}
    \int_0^T NL_{\diamond\to =,2  }\dd\tau&\lesssim \delta \eps^2.
\end{align*}
Therefore, we conclude all the necessary estimates for Lemma \ref{lem:main}. By Lemma \ref{lem:Esim}, we conclude Proposition \ref{prop:main}.

\subsection{Bound on $LNL$} These estimates are simpler that the $NL$ estimates in this section and the nonlinear pressure estimates $NLP_\rho$ done in Section \ref{sec:invis}, we omit the details. 

\section{The $z$-Average Energy Estimates }\label{sec:zaver}
In this section, we estimate all the terms involving the $z$ average, i.e. we improve the bootstrap assumptions \eqref{boot1f}-\eqref{boot1h}. We do this by proving the following proposition:
\begin{prop}\label{prop:fgest}
    Under the assumption of Theorem~\ref{thm:main} and let the bootstrap assumption hold on the interval $[0,T]$. Then there exists a constant $C_{3,fgh}>0$ such that 
    \begin{align*}
    \Vert  A f \Vert_{L^\infty_T L^2}^2 +\int_0^T\mu\Vert \nabla_\tau A f \Vert_{L^2}^2 + \Vert \sqrt{\tfrac {\p_tM }M }A f\Vert_{L^2}^2\dd\tau &\le ( C_2+ \delta C_{3,fgh} ) (\mu^{-(1-\gamma)}\eps)^2  , \\
    \Vert  A^g g \Vert_{L^\infty_T L^2}^2 +\int_0^T\mu\Vert \nabla_\tau A^gg\Vert_{L^2}^2 + \Vert \sqrt{\tfrac {\p_t M }M }A^gg\Vert_{L^2}^2\dd\tau &\le ( C_2+ \delta C_{3,fgh} )\eps^2,\\
    \Vert  A h \Vert_{L^\infty_T L^2}^2 +\int_0^T\mu\Vert \nabla_\tau A h \Vert_{L^2}^2 + \Vert \sqrt{\tfrac {\p_tM }M }A h\Vert_{L^2}^2\dd\tau &\le( C_2+ \delta C_{3,fgh}) \eps^2 .
\end{align*}
\end{prop}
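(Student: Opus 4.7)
The strategy mirrors that of Proposition~\ref{prop:main}. Computing $\tfrac12\p_t\|Af\|_{L^2}^2$, $\tfrac12\p_t\|A^g g\|_{L^2}^2$, and $\tfrac12\p_t\|Ah\|_{L^2}^2$ using \eqref{meq2} together with the identity $\p_t A/A=-\p_t M/M-c\mu^{1/3}\mathbf{1}_{k\neq 0}$, the viscous and multiplier contributions produce the positive left-hand-side terms $\mu\|\nabla_\tau A\cdot\|_{L^2}^2$ and $\|\sqrt{\p_t M/M}A\cdot\|_{L^2}^2$. No linear obstruction appears, since the $\alpha\p_z$ coupling vanishes on the $z$-averaged unknowns. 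An analogue of Lemma~\ref{lem:Esim} then reorganises the right-hand side as
\[
NL = NL_{\diamond} + NL_{\sim\to\diamond} + NL_{==\to\diamond},
\]
each further split by which pair of unknowns $(f,g,h)$ is involved.

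The $NL_{\sim\to\diamond}$ and $NL_{==\to\diamond}$ terms are handled in direct parallel with Section~\ref{sec:mainenergy}: paraproduct decomposition into reaction and transport pieces, the bound on $\p_t m/m$ provided by Lemma~\ref{lem:m}, and Lemma~\ref{lem:Inviscid} furnishing the factors $\mu^{-1/6}\eps$ on the $\sim$ unknowns. Pairing with the bootstrap bounds and $\eps\le\delta\mu^\gamma$ closes these contributions with the desired slack $\delta$. The entire $h$ estimate falls into this easy regime as well, because $h$ admits no echo-type self-interaction: only $fh$, $gh$, and $hh$ forcing occur in $NL_h$, all of which are controlled by the same reaction/transport scheme.

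The substantive work is the self-interaction piece of $NL_\diamond$, where the echo chain $g\to f\to g$ of Subsection~\ref{sec:heuz} drives the threshold $\gamma=5/6$. Substituting $v_{\neq=}^{x,y}=\tilde\nabla_t^\perp\tilde\Lambda_t^{-2}f$ and $b_{\neq=}^{x,y}=\tilde\nabla_t^\perp g$ and applying Plancherel, I split each trilinear form into reaction and transport regions. The transport pieces absorb into the dissipation and multiplier norms by a symbol bound of the form
\[
\frac{|\eta\tilde k-k\xi|}{|\tilde k,\xi-\tilde kt,\tilde l|^2}\lesssim \Bigl(1+t\sqrt{\tfrac{\p_t m}{m}}\Bigr)\langle k-\tilde k,\eta-\xi\rangle^{4};
\]
the extra factor $1+|k|+\langle\mu^{1/3}t\rangle^{-1}|k,\eta|$ built into $A^g$ is designed precisely to close the $g\to g$ transport. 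For the reactions, the $gg\to f$ interaction produces a nonlinear gain of order $\mu^{-1}\eps$ on $f$, exactly balanced by the prefactor $\mu^{1-\gamma}$ in the bootstrap norm of $f$; the mixed $fg\to g$ and $gf\to g$ reactions produce gains of order $\mu^{-5/6}\eps$ on $g$, absorbed by $\eps\le\delta\mu^\gamma$ precisely when $\gamma\ge 5/6$; and the $ff\to g$ term is better by a further factor of $\mu^{1-\gamma}$ due to the $\tilde\Lambda_t^{-2}$ acting on $f$.

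The main obstacle is carrying out the sharp reaction bound for the $g$--$f$ echo. One must simultaneously exploit the degeneracy of $m$ near the critical times $t_k=\eta/k$ to compensate the $(1+t^2)$-growth carried by the inverse Laplacian $\tilde\Lambda_t^{-2}\p_y^t$ and track the $\langle\mu^{1/3}t\rangle^{-1}|k,\eta|$ factor inside $A^g$ so that $f\in H^N$ and $g\in H^{N+1}$ can be paired without regularity losses. Integrating in time and collecting every favourable factor into a constant $\delta C_{3,fgh}$ then yields Proposition~\ref{prop:fgest}.
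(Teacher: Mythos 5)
Your overall strategy does coincide with the paper's: weighted energy estimates for $(f,g,h)$ with the multipliers $A$, $A^g$, $m$, $M_\mu$, no linear coupling to absorb, a reaction/transport splitting of the trilinear terms, Lemma~\ref{lem:Inviscid} for the forcing by the $\sim$ modes, and closure by balancing the $\mu^{-(1-\gamma)}$ weighting of $f$ against $\eps\le\delta\mu^\gamma$ at $\gamma\ge 5/6$. However, the description of the critical self-interaction misstates the structure of \eqref{meq2} at exactly the point where the estimate has no slack. For the $z$-averaged system the induction nonlinearity collapses: $\tilde \Lambda_{t}^{-2}\tilde \nabla_t^\perp\cdot((b_\diamond\cdot\nabla_t)v^{x,y}_\diamond-(v_\diamond\cdot\nabla_t)b^{x,y}_\diamond)=(\tilde\nabla^\perp\tilde\Lambda_t^{-2}f\cdot\tilde\nabla)g$, so the $g$ equation is a pure advection--diffusion equation; there is no ``$ff\to g$'' term, no separate ``$fg\to g$'' and ``$gf\to g$'' reactions, and no ``$g\to g$ transport'' (likewise $h$ has no $hh$ self-forcing). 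The echo pair is $gg\to f$ (via $(\tilde\nabla^\perp g\cdot\tilde\nabla)\Delta_t g$ in the $f$ equation) and the single $f\to g$ term, and the paper's sharp reaction and transport bounds ($R_{f\to g}$, $T_{f\to g}$, with the symbol $\tfrac{|\eta\tilde k-k\xi|}{|k-\tilde k,\eta-\xi-(k-\tilde k)t|^2}$, the weight comparisons for $A^g$, and $|\tilde k,\xi|\le t|\tilde k,\xi-\tilde kt|$) are carried out on this collapsed form. If you instead estimate $(b\cdot\nabla_t)v$ and $(v\cdot\nabla_t)b$ separately, the inverse Laplacians sit at the output frequency rather than on $f$, whose critical time is shifted by $O(t)$; your sketch does not verify that this still closes at the borderline $\gamma=5/6$, so either the reduction must be invoked or the separate symbol analysis must be supplied.

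A second concrete omission is the treatment of the zero-$x$-frequency contributions inside the $f$--$g$ interactions (the regions $k=\tilde k$ and $\tilde k=0$), which are not ``in direct parallel with Section~\ref{sec:mainenergy}.'' The paper handles them using the algebraic identities $f_==\p_y v^x_{==}$, $\p_y g_= = -b^x_{==}$, $\tilde\nabla^\perp\Lambda^{-2}f_==-v^x_{==}e_1$ from \eqref{meq=}, the smallness $\Vert (v^{y,z}_=,b^{y,z}_=)\Vert_{H^N}\lesssim\delta\mu$ of \eqref{boot1q}, and, for the $\tilde k=0$ piece of $NL_{f\to g}$, the $\langle\mu^{1/3}t\rangle^{-1}|k,\eta|$ factor built into $A^g$, which is what absorbs the resulting $\mu^{-1/3}$ loss. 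None of these devices appears in your proposal, and without them the average-mode pieces (e.g.\ $\p_x g_{\neq}\,\p_y^2 g_=$ acting on $f$, or $g_=$ acting in $NL_{f\to g}$) cannot be closed by the generic reaction/transport scheme. So the plan is the right one in outline, but these two points are genuine gaps that the written argument would have to repair.
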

We recall the equations for the adapted $z$ average unknowns
\begin{align}\begin{split}
    \p_t f & =\mu    \Delta_t f+ \tilde \nabla_t^\perp \cdot ((b\cdot \nabla_t) b^{x,y} -(v\cdot \nabla_t) v^{x,y})_{\diamond},  \\
     \p_t g & =\mu \Delta_t g  - \tilde \Lambda_{t}^{-2}\tilde \nabla_t^\perp\cdot  ((b\cdot \nabla_t) v^{x,y}-(v\cdot \nabla_t) b^{x,y})_{\diamond}, \\
     \p_t h &= \mu \Delta_t h+  \begin{pmatrix}
        ((b\cdot \nabla_t)b^z-(v\cdot \nabla_t)v^z)_{\diamond}\\
        ((b\cdot \nabla_t)v^z -(v\cdot \nabla_t)b^z)_{\diamond}\label{eq:fgh2}
    \end{pmatrix}.
\end{split}\end{align}
Note that there is no linear interaction, but there are nonlinear interactions. It is crucial to estimate precisely the nonlinear interaction between the unknowns $f$ and $g$, see Subsection \ref{sec:heuz} for a heuristic. The interaction of $f$ and $g$ states
\begin{align*}
    \tilde \nabla_t^\perp \cdot ((b_{\diamond}\cdot \nabla_t )b^{x,y}_{\diamond} -(v_{\diamond}\cdot \nabla_t) v^{x,y}_{\diamond})_{\diamond}&=\tilde \nabla^\perp_t\cdot((\tilde \nabla^\perp g\cdot \tilde \nabla) \tilde \nabla_t^\perp  g)+(\nabla^\perp \Lambda_t^{-2}  f\cdot \tilde \nabla)   f,\\
    \tilde \Lambda_{t}^{-2}\tilde \nabla_t^\perp   \cdot ((b_{\diamond}\cdot \nabla_t) v^{x,y}_{\diamond}-(v_{\diamond}\cdot  \nabla_t) b^{x,y}_{\diamond})_{\diamond}&=(\tilde \nabla^\perp \tilde \Lambda_t^{-2} f\cdot \tilde \nabla g).
\end{align*}
Since $((a_{\ndiamond}\cdot \nabla_t) a_\diamond^{x,y})_\diamond=((a_\diamond\cdot \nabla_t) a_{\ndiamond}^{x,y})_\diamond=0$ we have 
\begin{align*}
    ((a\cdot \nabla_t) a^{x,y})_\diamond &=((a_\diamond\cdot \nabla_t) a_\diamond^{x,y})_\diamond +((a_{\ndiamond }\cdot \nabla_t) a_{\ndiamond}^{x,y})_\diamond.
\end{align*}
Therefore, for $f$ and $g$ we obtain the equations
\begin{align*}
    \p_t f   &=\mu \Delta_t f+  \tilde \nabla^\perp_t\cdot ((\tilde \nabla^\perp g\cdot \tilde \nabla) \tilde \nabla_t^\perp  g)+(\tilde \nabla^\perp \Lambda_t^{-2}  f\cdot \tilde \nabla)   f+ \tilde \nabla^\perp_t \cdot ((b_{\ndiamond }\cdot \nabla_t) b_{\ndiamond }^{x,y}-(v_{\ndiamond }\cdot \nabla_t) v_{\ndiamond }^{x,y})_\diamond,   \\
     \p_t g   &=\mu \Delta_t g+ (\tilde \nabla^\perp \Lambda_t^{-2} f\cdot \tilde \nabla) g +\tilde \Lambda^{-2}_t \tilde \nabla^\perp_t\cdot ((b_{\ndiamond }\cdot \nabla_t) v_{\ndiamond }^{x,y}-(v_{\ndiamond }\cdot \nabla_t) b_{\ndiamond }^{x,y})_\diamond.
\end{align*}
 The necessary estimates are summarized in the following Lemma:
\begin{lemma}[Energy in $f$, $g$ and  $h$]\label{lem:Efg}
Let the bootstrap assumptions \eqref{boot1a}-\eqref{boot2} hold, then we obtain the inequality
\begin{align*}
    \frac 1 2 \p_t \Vert A f\Vert_{L^2}^2  +\mu\Vert \nabla_t A  f \Vert_{L^2}^2 + \Vert \sqrt{\tfrac {\p_t M }M }A   f \Vert_{L^2}^2&\le L_f +NL_{f\to f }+NL_{g\to f } +NL_{\sim\to f},\\
    \frac 1 2 \p_t \Vert A^g g\Vert_{L^2}^2 +\mu\Vert \nabla_t A^g  g \Vert_{L^2}^2 + \Vert \sqrt{\tfrac {\p_t M }M }A^g g \Vert_{L^2}^2&\le  L_g+NL_{f\to g } +NL_{\sim\to g},\\
    \frac 12  \p_t \Vert A h \Vert_{L^2 }^2 +\mu \Vert A\nabla_t  h\Vert_{L^2 }^2 +\Vert\sqrt{\tfrac {\p_t M } M } A h\Vert_{L^2 }^2
    &\le L_h +  NL_{f\to h }+NL_{g\to h } + NL_{\sim\to h}.
\end{align*}
Where we denote: \\
\textbf{Linear terms:}
\begin{align*}
    L_f&= c \mu^{\frac 1 3 }\Vert A f_{\neq } \Vert_{L^2 }^2, &
    L_g&= c \mu^{\frac 1 3 }\Vert A^g g_{\neq } \Vert_{L^2 }^2,&
    L_h&=c\mu^{\frac 13}\Vert A h_{\neq}\Vert_{L^2}^2. 
\end{align*}
\textbf{Nonlinear $fg$ interactions:}
\begin{align*}
    NL_{f\to f}&= \vert \langle A f, A((\tilde \nabla^\perp \Lambda_t^{-2}f\cdot \tilde \nabla)  f)\rangle  \vert , \\
    NL_{g\to f}&= \vert\langle A f, A\tilde \nabla^\perp_t\cdot ((\tilde \nabla^\perp g\cdot \tilde \nabla) \tilde \nabla_t^{\perp}   g)\rangle\vert, \\
    NL_{f\to g}&=\vert\langle A^g g,A^g ((\tilde \nabla^\perp \Lambda_t^{-2} f\cdot \tilde \nabla) g)\rangle\vert.
\end{align*}
\textbf{Nonlinear $\sim$ forcing:}
\begin{align*}
    NL_{\sim\to f}&= \sum_{a\in\{v,b\}}\vert \langle A f ,  A\tilde \nabla^\perp_t\cdot ((a_{\ndiamond}\cdot  \nabla_t) a^{x,y}_{\ndiamond} )_\diamond \rangle\vert  ,\\
    NL_{\sim\to g}&=\sum_{a^1,a^2\in \{v,b\} } \vert \langle A^gg ,A^g \Lambda_t^{-2} \tilde \nabla^\perp_t\cdot ((a_{\ndiamond}^1\cdot \nabla_t) a^{2, x,y}_{\ndiamond} )_\diamond \rangle\vert .
\end{align*}
\textbf{Nonlinear forcings onto $h$:}
\begin{align*}
    NL_{f\to h }&=\vert \langle Ah_i, A((\tilde \nabla^\perp_t \Lambda_t^{-2} f\cdot  \tilde \nabla_t) h_j) \rangle\vert, \\
    NL_{g\to h }&=\vert \langle Ah_i, A((\tilde \nabla^\perp g \cdot \tilde \nabla) h_j) \rangle\vert, \\
    NL_{\sim\to h }&=\sum_{a^1,a^2\in \{v,b\} }\vert \langle Ah_i, ((a_{\ndiamond }^1\cdot \nabla_t)a^{2,z}_{\ndiamond })_\diamond \rangle\vert .
\end{align*}
    
\end{lemma}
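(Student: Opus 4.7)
My plan is to derive each of the three inequalities by a direct weighted $L^2$ energy estimate on the corresponding equation in \eqref{eq:fgh2}, followed by an algebraic reorganization of the resulting trilinear terms according to which of $(\cdot)_\diamond$ and $(\cdot)_{\ndiamond}$ appear on each factor. For each $\star\in\{f,g,h\}$ with its multiplier $A_\star\in\{A,A^g,A\}$, I would apply $A_\star$ to the equation and test against $A_\star\star$ in $L^2$. The time-derivative of the multiplier splits as $\partial_t A_\star/A_\star = c\mu^{1/3}\mathbf 1_{k\neq 0} - \partial_t M/M$ (with a harmless bounded correction for $A^g$), and after integration by parts on the dissipative term this yields the identity
\[
\tfrac12\partial_t\|A_\star\star\|_{L^2}^2 + \mu\|\nabla_t A_\star\star\|_{L^2}^2 + \Big\|\sqrt{\tfrac{\partial_tM}{M}}A_\star\star\Big\|_{L^2}^2 = L_\star + \langle A_\star\star,A_\star\calN_\star\rangle,
\]
where $\calN_\star$ denotes the nonlinearity on the right of the equation for $\star$, and $L_\star$ collects precisely the contribution of the enhanced-dissipation exponential factor in $A_\star$.

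The second step is to reorganize $\calN_\star$. The key algebraic identity is that cross-products vanish under the $(\cdot)_\diamond$ projection, so
\[
\bigl((a\cdot\nabla_t)a^{x,y}\bigr)_\diamond = \bigl((a_\diamond\cdot\nabla_t)a_\diamond^{x,y}\bigr)_\diamond + \bigl((a_{\ndiamond}\cdot\nabla_t)a_{\ndiamond}^{x,y}\bigr)_\diamond,
\]
with an analogous split for the $a^z$ variant. The $\ndiamond\ndiamond$ contribution is exactly $NL_{\sim\to f}$, $NL_{\sim\to g}$, $NL_{\sim\to h}$. For the $\diamond\diamond$ piece I substitute the representations $v_\diamond^{x,y}=-\tilde\nabla_t^\perp\tilde\Lambda_t^{-2}f$, $b_\diamond^{x,y}=\tilde\nabla^\perp g$, $v_\diamond^z=h_1$, $b_\diamond^z=h_2$. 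For the $f$-equation the two pieces become $\tilde\nabla_t^\perp\cdot((\tilde\nabla^\perp g\cdot\tilde\nabla)\tilde\nabla_t^\perp g)$ and $(\tilde\nabla^\perp\Lambda_t^{-2}f\cdot\tilde\nabla)f$, recovering $NL_{g\to f}$ and $NL_{f\to f}$; for the $h$-equation, after using that $\partial_z$ kills $\diamond$-projected functions, the same substitution yields $NL_{f\to h}$ and $NL_{g\to h}$.

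The one nontrivial identity to carry out carefully is for the $g$-equation, since a priori the $\diamond\diamond$ piece could contain a $g\cdot g$ Lorentz self-interaction. Here I would invoke the standard $2d$ MHD cancellation
\[
\tilde\Lambda_t^{-2}\tilde\nabla_t^\perp\cdot\bigl((b_\diamond\cdot\nabla_t)v_\diamond^{x,y}-(v_\diamond\cdot\nabla_t)b_\diamond^{x,y}\bigr) = (\tilde\nabla^\perp\tilde\Lambda_t^{-2}f\cdot\tilde\nabla)g,
\]
which follows from $\tilde\nabla_t\cdot v_\diamond^{x,y}=\tilde\nabla_t\cdot b_\diamond^{x,y}=0$ together with the Poisson-bracket identity $\tilde\nabla_t^\perp\psi\cdot\tilde\nabla_t\phi = \tilde\nabla^\perp\psi\cdot\tilde\nabla\phi$ valid for $z$-independent functions. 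This collapses the $\diamond\diamond$ contribution to the single term $NL_{f\to g}$, confirming the absence of any $g\cdot g$ self-interaction. The main obstacle is really only verifying this cancellation cleanly and checking that the $A^g$ multiplier produces the stated form of $L_g$; the rest is algebraic bookkeeping, and the three inequalities follow by moving everything but $L_\star$ and the nonlinear inner product to the left-hand side and taking absolute values on the right.
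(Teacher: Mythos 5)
Your proposal is correct and follows essentially the same route as the paper: a weighted energy identity for the multipliers $A$, $A^g$ combined with the $\diamond$/$\ndiamond$ splitting of the nonlinearity (cross terms vanishing under the $z$-average) and the identities expressing the $\diamond\diamond$ parts through $f$, $g$, $h$ — in particular the advection form of the $g$-nonlinearity, which the paper states explicitly just before the lemma. The only cosmetic discrepancy is the sign of the $c\mu^{\frac13}$ contribution coming from the exponential factor in $A$ (as literally written the paper has $e^{-c\mu^{\frac13}t}$, which would place this term on the left with a favorable sign), but under either convention the stated inequality with $L_\star$ on the right-hand side follows.
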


\begin{lemma}\label{lem:fgest}
    Under the assumption of Theorem~\ref{thm:main} and let the bootstrap assumption on the interval $[0,T]$. Then it holds the estimate 
    \begin{align*}
    \int_0^T L_f +NL_{f\to f }+NL_{g\to f } +NL_{\sim\to f}\dd\tau&\le (c +\delta C) (\mu^{-(1-\gamma)}\eps) ^2, \\
   \int_0^T  L_g+NL_{f\to g } +NL_{\sim\to g}\dd\tau &\le ( c +\delta C) \eps^2,\\
   \int_0^T  L_h +  NL_{f\to h }+NL_{g\to h } + NL_{\sim\to h} \dd\tau &\le ( c +\delta C) \eps^2.
\end{align*}
\end{lemma}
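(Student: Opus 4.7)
I will estimate the right-hand side of Lemma \ref{lem:Efg} term by term and then sum. The strategy is uniform: for every nonlinear term (i) decompose by a paraproduct based on which factor carries the higher frequency, (ii) use the weight structure of $A$, $A^g$ together with Lemmas \ref{lem:m} and \ref{lem:Mmu} to convert factors of $\Lambda_t^{-1}$ and $e^{-c\mu^{1/3}t}\mathbf{1}_{k\neq 0}$ into $CK$-type terms of the form $\|\sqrt{\partial_t m/m}\,\cdot\,\|_{L^2}$ and $\|\sqrt{\partial_tM_\mu/M_\mu}\,\cdot\,\|_{L^2}$, and (iii) apply Cauchy--Schwarz in time together with the bootstrap sizes $\|Af\|\lesssim \mu^{-(1-\gamma)}\varepsilon$, $\|A^g g\|,\|Ah\|\lesssim\varepsilon$, and $\varepsilon\le \delta\mu^\gamma$ to gain a factor $\delta$. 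This parallels the layout of Section \ref{sec:mainenergy}, so many of the individual estimates will reduce to cases already handled there.

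\textbf{Linear and standard nonlinear pieces.} For $L_f,L_g,L_h$, Lemma \ref{lem:Mmu} converts $c\mu^{1/3}\|A(\cdot)_\neq\|_{L^2}^2$ into $c(\|\sqrt{\partial_tM_\mu/M_\mu}A(\cdot)\|_{L^2}^2 + \mu\|\nabla_t A(\cdot)\|_{L^2}^2)$, which is absorbed into the LHS once $c$ is taken small. The $\sim\to f$, $\sim\to g$, $\sim\to h$ forcings are projections of quadratics in $v_{\ndiamond},b_{\ndiamond}$ onto the $z$-average and close using the $\mu^{-1/6}$ $L^2_tL^2$ bounds of Lemma \ref{lem:Inviscid}, combined with the $\langle t\rangle^{-1}$ decay of the $y$-components. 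The self-advection $NL_{f\to f}$ uses $\|A\Lambda_t^{-1}f\|\lesssim \|A\sqrt{\partial_t m/m}f\|$ to cancel one derivative of the velocity; the transports $NL_{f\to g}$, $NL_{f\to h}$, $NL_{g\to h}$ follow by analogous paraproduct splits, in which the low-frequency factor supplies the advecting velocity and the high-frequency factor pairs against the $CK$ piece. All of these close comfortably once $\gamma\ge 5/6$.

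\textbf{Main obstacle: $NL_{g\to f}$.} This is the critical coupling responsible for the threshold $\gamma=5/6$, mirroring the echo-chain heuristic of Section \ref{sec:heuz}. Using the identity $\tilde\nabla^\perp_t\cdot((\tilde\nabla^\perp g\cdot\tilde\nabla)\tilde\nabla_t^\perp g)=(\tilde\nabla^\perp g\cdot\tilde\nabla)\Delta_t g$, the integrand carries the factor $\Delta_t g$ whose Fourier symbol $|k,\eta-kt|^2$ is precisely what generates the resonant amplification. My plan is a three-fold paraproduct split according to which factor of $g$ holds the high frequency; for each piece I extract one $\nabla_t$ from $\Delta_t g$ against $\mu^{1/2}\|A^g\nabla_t g\|_{L^2_tL^2}\lesssim\varepsilon$, handle the remaining derivative using the low-frequency weight $|k,\eta|/\langle\mu^{1/3}t\rangle\mathbf{1}_{k\neq 0}$ built into $A^g$ together with the exponential $e^{-c\mu^{1/3}t}$ inside $A$ that acts on $f$, and control the residual transport via the $m$-commutator bound from Lemma \ref{lem:m}. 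The accounting of $\mu$-powers is tight: the resulting estimate yields $\delta(\mu^{-(1-\gamma)}\varepsilon)^2$ precisely at $\gamma=5/6$, simultaneously confirming the sharpness of the weight $A^g$ and identifying this single term as the mechanism selecting the stability threshold in Theorem \ref{thm:main}.
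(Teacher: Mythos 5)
Your overall machinery (paraproduct-type splittings, converting $\Lambda_t^{-1}$ and resonance factors into $\sqrt{\partial_t m/m}$ and $\sqrt{\partial_t M_\mu/M_\mu}$ terms, Cauchy--Schwarz in time against the bootstrap sizes) is the same as the paper's, but you have misallocated where the difficulty sits, and that misallocation leaves the genuinely critical estimate without a plan. You designate $NL_{g\to f}$ as ``the main obstacle'' and claim its accounting is tight at $\gamma=5/6$. In fact, because the $f$-energy budget is already inflated to $(\mu^{-(1-\gamma)}\eps)^2$, the paper closes $NL_{g\to f}$ with the rather crude bound $\lesssim \langle t\mu^{1/3}\rangle^2 e^{-c\mu^{1/3}t}\,\Vert A\nabla_t f\Vert_{L^2}\Vert A^g g\Vert_{L^2}\Vert A^g\nabla_t g\Vert_{L^2}$ (plus the $g_=$ pieces via $\partial_y g_==-b^x_{==}$), giving after time integration $\mu^{-1}(\mu^{-(1-\gamma)}\eps)\eps^2\le\delta(\mu^{-(1-\gamma)}\eps)^2$, which only uses $\eps\le\delta\mu^\gamma$ and imposes no new constraint on $\gamma$. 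The term that is actually sharp at $\gamma=5/6$ in this lemma is $NL_{f\to g}$ --- the feedback of $f$ onto $g$, which must be measured against the \emph{non-inflated} budget $\eps^2$. Its reaction part requires the weight comparison $A^g(k,\eta)\lesssim |k-\tilde k,\eta-\xi|\,A(k-\tilde k,\eta-\xi)\,|\tilde k,\xi|$, absorption of $\langle t-\tfrac{\eta-\xi}{k-\tilde k}\rangle^{-2}$ by $t^2\sqrt{\partial_t m/m}\sqrt{\partial_t m/m}$ via Lemma \ref{lem:m}, and yields $\mu^{-2/3}\eps^2(\mu^{-(1-\gamma)}\eps)\le\delta\eps^2$, i.e.\ exactly $\gamma\ge 5/6$; the two average regimes ($k=\tilde k$ and $\tilde k=0$) need the structural identities $f_==\partial_y\tilde v^x_{==}$ and $\partial_y g_==-b^x_{==}$ together with the specific form of $A^g$. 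You classify $NL_{f\to g}$ among the terms that ``close comfortably once $\gamma\ge5/6$'' and give no decomposition for it, so the step that actually decides the threshold is missing from your argument.

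A secondary but real gap of the same kind: $NL_{\sim\to g}$ is not harmless either --- its non-average part costs $\mu^{-5/6}\eps^3$ (again tight at $\gamma=5/6$, after converting $\Lambda_t^{-1}\langle\partial_x\rangle$-type factors into $\mu^{-1/3}\sqrt{\partial_t m/m}$), and its double-average part needs the divergence structure $((a_{\ndiamond}\cdot\nabla_t)a^x_{\ndiamond})_{==}=\partial_y(a^y_{\ndiamond}a^x_{\ndiamond})_{==}$ together with the $x$-average bootstrap \eqref{boot1q} of size $\delta\mu$; invoking Lemma \ref{lem:Inviscid} alone, as you propose, does not produce these cancellations. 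Your plan for what you call the main obstacle is fine (it is essentially the paper's treatment of $NL_{g\to f}$), but as written the proposal does not establish the second inequality of the lemma, which is where the threshold lives.
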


Proposition \ref{prop:fgest} is a consequence of Lemma \ref{lem:Efg} and Lemma \ref{lem:fgest}. The linear estimates of Lemma \ref{lem:fgest} are done precisely as the linear term of the last section, we omit the details. In the following, we prove the nonlinear estimates of Lemma \ref{lem:fgest}. In Subsection \ref{sec:fgself} we estimate the \textbf{nonlinear $fg$ self interactions},  in Subsection \ref{sec:simtofg} we estimate the \textbf{nonlinear $\sim$ forcing}, and in Subsection \ref{sec:ontoh} we estimate the \textbf{nonlinear forcing onto $h$}.

\subsection{Nonlinear $fg$ Interaction }\label{sec:fgself} In the following, we bound the different nonlinear terms which arise due to the interaction between $f$ and $g$. 

\textbf{Bound on $NL_{f\to f}$:} By \eqref{meq=} we have $\nabla^\perp \Lambda^{-2}f_= =  - v_{==}^x e_1 $ and so 
\begin{align*}
     NL_{f\to f}&=\vert\langle A f, A((\tilde \nabla^\perp \tilde \Lambda_t^{-2}f\cdot \tilde \nabla)  f)\rangle\vert\\
     &= \vert\langle A f, A((\tilde \nabla^\perp_t \tilde \Lambda_t^{-2}f_{\neq} \cdot \tilde \nabla_t)  f)\rangle\vert +\vert \langle A f_{\neq}, A(v_{==}^x \p_x   f_{\neq})\rangle\vert\\
     &\le \left(\Vert A f\Vert_{L^2} \Vert A\sqrt{\tfrac{\p_t m} m } f\Vert_{L^2}+\Vert A f_{\neq} \Vert_{L^2} \Vert Av_{==}\Vert_{L^2} \right)\Vert \nabla_t f\Vert_{L^2} .
\end{align*}
Therefore, after integrating in time 
\begin{align*}
    \int_0^T NL_{f\to f}\dd\tau &\lesssim \mu^{-\frac 23 }\eps ( \mu^{-(1-\gamma)} \eps)^2\le \delta ( \mu^{-(1-\gamma)} \eps)^2.
\end{align*}

\textbf{Bound on $NL_{g\to f} $:} We split 
\begin{align*}
    (\tilde \nabla^\perp g\cdot \tilde \nabla) \tilde \nabla_t^\perp  g&=(\tilde \nabla^\perp g_{\neq}\cdot \tilde \nabla) \tilde \nabla_t^\perp  g_{\neq}+(\tilde \nabla^\perp g_{=}\cdot \tilde \nabla) \tilde \nabla_t^\perp  g_{\neq}+(\tilde \nabla^\perp g_{\neq}\cdot\tilde \nabla) \tilde \nabla_t^\perp  g_=\\
    &=(\tilde \nabla^\perp g_{\neq}\cdot \tilde \nabla) \tilde \nabla_t^\perp  g_{\neq}-\p_y  g_{=}\p_x  \tilde \nabla_t^\perp  g_{\neq}-\p_xg_{\neq}\p_y^2 g_= e_1. 
\end{align*}
Thus, we obtain the terms
\begin{align}\begin{split}
    NL_{g\to f}&= \vert\langle Af, A\tilde \nabla^\perp_t((\tilde \nabla^\perp g_{\neq}\cdot \tilde \nabla) \tilde \nabla_t^\perp  g_{\neq})\rangle\vert + \vert\langle Af, A\tilde \nabla^\perp_t(\p_y  g_{=}\p_x  \tilde \nabla_t^\perp  g_{\neq})\rangle\vert\\
    &\quad + \vert\langle Af, A\p_y^t(\p_xg_{\neq}\p_y^2 g_=)\rangle\vert,\label{NLgtof}
\end{split}\end{align}
which we bound separately. We estimate the first term by
\begin{align*}
    \vert\langle Af, A\tilde \nabla^\perp_t\cdot ((\tilde \nabla^\perp g_{\neq}\cdot \tilde \nabla) \tilde \nabla_t^\perp  g_{\neq})\rangle \vert&\lesssim e^{-c\mu^{\frac 13 }t }\Vert A\nabla_t f \Vert_{L^2} \Vert Ag \Vert_{H^1}\Vert A\nabla_t g \Vert_{H^1} \\
    &\lesssim \langle t \mu^{\frac 13 }\rangle^2 e^{-c\mu^{\frac 13 }t }\Vert A\nabla_t f \Vert_{L^2} \Vert A^gg \Vert_{L^2}\Vert A^g\nabla_t g \Vert_{L^2}. 
\end{align*}
After integrating in time we have 
\begin{align}
    \int_0^T \vert\langle Af, A\nabla^\perp_t\cdot((\nabla^\perp g_{\neq}\nabla) \nabla_t^\perp  g_{\neq})\rangle\vert \dd\tau&\lesssim \mu^{-1}(\mu^{-(1-\gamma)}\eps)\eps^2\le \delta (\mu^{-(1-\gamma)}\eps)^2 . \label{NLgtof1}
\end{align}
By \eqref{meq=} we have $\p_y g_= = - b_{==}^x  $, which we use to estimate 
\begin{align*}
     \vert\langle Af, A\nabla^\perp_t\cdot(\p_y  g_{=}\p_x  \nabla_t^\perp  g_{\neq})\rangle\vert &\lesssim \Vert A \nabla^\perp_t f\Vert_{L^2 } \Vert b_{==}^x  \Vert_{H^N}  \Vert A \p_x\nabla^\perp_t g\Vert_{L^2 }\\
     &\lesssim\Vert A \nabla^\perp_t f\Vert_{L^2 } \Vert b_{==}^x \Vert_{H^N}  \Vert A^g\nabla^\perp_t g\Vert_{L^2 }.
\end{align*}
Integrating in time yields
\begin{align}
     \int_0^T \vert\langle Af, A\nabla^\perp_\tau\cdot (\p_y  g_{=}\p_x  \nabla_t^\perp  g_{\neq})\rangle\vert \dd\tau &\lesssim \mu^{-1} (\mu^{-(1-\gamma)}\eps)\eps^2 \le \delta (\mu^{-(1-\gamma)}\eps)^2 .\label{NLgtof2}
\end{align}
Again we use $\p_y g_= = - b_{==}^x  $  to estimate 
\begin{align*}
    \vert\langle Af, A\p_y^t(\p_xg_{\neq}\p_y^2 g_=)\rangle\vert&\lesssim \Vert \p_y^t Af \Vert_{L^2} \Vert A \p_x g_{\neq} \Vert_{L^2} \Vert \p_y b_{==}^x  \Vert_{H^N}\\
    &\lesssim \Vert \p_y^t Af \Vert_{L^2} \Vert A^g g_{\neq} \Vert_{L^2} \Vert \p_y b_{==}^x \Vert_{H^N}.
\end{align*}
Integrating in time yields 
\begin{align}
    \int_0^T \vert\langle Af, A\p_y^t(\p_xg_{\neq}\p_y^2 g_=)\rangle\vert \dd\tau &\lesssim \mu^{-1} (\mu^{-(1-\gamma)}\eps)\eps^2 \le \delta (\mu^{-(1-\gamma)}\eps)^2. \label{NLgtof3}
\end{align}
Thus by \eqref{NLgtof}, \eqref{NLgtof1}, \eqref{NLgtof2} and \eqref{NLgtof3} we estimate 
\begin{align*}
    \int_0^T NL_{g\to f}\dd\tau &\lesssim \delta (\mu^{-(1-\gamma)}\eps)^2. 
\end{align*}

\textbf{Bound on $NL_{f\to g}$: } We use Plancherel's identity 
\begin{align*}
     NL_{f\to g}&=\vert\langle A^g g,A^g  ((\tilde \nabla^\perp \Lambda_t^{-2} f\cdot \tilde \nabla) g)\rangle\vert \\
     &= \sum_{k,\tilde k }\iint\dd (\eta,\xi)  \tfrac {\vert \eta \tilde k-k\xi\vert\vert A^g(k,\eta)\vert }{\vert k-\tilde k, \eta-\xi-(k-\tilde k)t \vert^2 }\vert A^g g\vert(k,\eta)\vert f\vert(k-\tilde k,\eta-\xi) \vert g\vert(\tilde k,\xi) \\
     &\qquad \qquad \qquad \cdot (\textbf{1}_{\Omega_R } + \textbf{1}_{\Omega_T }+ \textbf{1}_{\Omega_{=,1} }+ \textbf{1}_{\Omega_{=,2} })\\
     &= R_{f\to g}+T_{f\to g}+ NL_{f\to g,=,1}+ NL_{f\to g, =,2}, 
\end{align*}
where we split according to 
\begin{align*}
    \Omega_R&= \{((k,\eta),(\tilde k,\xi))\in (\Z\times \R)^2: \ \vert k-\tilde k , \eta -\xi\vert  \ge  \vert \tilde k , \xi\vert, \ k \neq \tilde k  , \ \tilde k \neq 0   \},\\
    \Omega_T&= \{((k,\eta),(\tilde k,\xi))\in (\Z\times \R)^2: \ \vert k-\tilde k , \eta -\xi\vert  \le \vert \tilde k ,   \xi\vert, \ k \neq  \tilde k , \ \tilde k  \neq 0   \},\\
    \Omega_{=1}&= \{((k,\eta),(\tilde k,\xi))\in (\Z\times \R)^2: \  k =  \tilde k    \},\\
    \Omega_{=2}&= \{((k,\eta),(\tilde k,\xi))\in (\Z\times \R)^2: \  \tilde k =0   \}.
\end{align*}
\underline{Reaction term,} on $\Omega_R$, we  use the bound 
\begin{align*}
     A^g(k,\eta)\lesssim  \vert k-\tilde k ,\eta-\xi \vert A(k-\tilde k ,\eta-\xi) \vert \tilde k ,\xi\vert
\end{align*}
to estimate  
\begin{align*}
    \tfrac {\vert \eta l-k\xi\vert\vert A^g(k,\eta)\vert }{\vert k-\tilde k, \eta-\xi-(k-\tilde k)t \vert^2 }&\lesssim\tfrac {\vert k-\tilde k  , \eta -\xi\vert^2 } {\vert k-\tilde k\vert^2 } \langle t-\tfrac {\eta-\xi}{k-\tilde k }\rangle^{-2}A(k-\tilde k ,\eta-\xi)  \vert \tilde k,\xi\vert^2  \\
    &\lesssim \left(1 + \tfrac {t^2}{\langle t- \frac{ \eta-\xi}{k-\tilde k } \rangle ^2 }\right) A(k-\tilde k ,\eta-\xi)\vert \tilde k ,\xi\vert^2\\
    &\lesssim \left(1+ t^2 \sqrt{\tfrac {\p_t m}{m}(k,\eta)} \sqrt{\tfrac {\p_t m}{m}(k-\tilde k,\eta-\xi )}\right)A(k-\tilde k ,\eta-\xi)\vert \tilde k ,\xi\vert^5.
\end{align*}
Therefore, using that $\tilde k,k-\tilde k \neq 0 $ we obtain 
\begin{align*}
    R_{f\to g}&\lesssim e^{-c\mu^{\frac 13 }t}\Vert A^g  g \Vert_{L^2}^2 \Vert A f \Vert_{L^2} + e^{-c\mu^{\frac 13 }t} t^2 \Vert A^g \sqrt{\tfrac {\p_t m} m}   g \Vert_{L^2} \Vert A \sqrt{\tfrac {\p_t m } m} f \Vert_{L^2}\Vert A^gg \Vert_{L^2}.
\end{align*}
Integrating in time yields 
\begin{align}
    \int_0^T  R_{f\to g} \dd\tau &\lesssim \mu^{-\frac 23 }\eps^2 (\mu^{-(1-\gamma)} \eps )\le \delta \eps^2 . \label{eq:fgR}
\end{align}

\underline{Transport term,} we use the estimate 
\begin{align*}
     A^g(k,\eta)\le  A^g(\tilde k,\xi) \vert k-\tilde k,\eta-\xi\vert 
\end{align*} 
to infer 
\begin{align*}
    T_{f\to g} &\lesssim \sum_{k,\tilde k,k-\tilde k\neq 0 } \iint\dd (\eta ,\xi)  \ \tfrac {\vert \tilde k,\xi\vert }{\langle t\rangle^2}    \vert A^g g\vert(k,\eta)\vert \Lambda^5 f\vert(k-\tilde k,\eta-\xi) \vert A^g g\vert(\tilde k,\xi).
\end{align*}
By
\begin{align*}
    \vert \tilde k ,\xi \vert &= \vert \tilde k,\xi-\tilde kt \vert +\tilde kt \le t  \vert \tilde k,\xi-\tilde k t \vert,
\end{align*}
we infer 
\begin{align*}
    \tfrac {\vert \tilde k ,\xi \vert}{\langle t\rangle^2}  &\lesssim  \tfrac {\vert \tilde k,\xi-\tilde kt \vert}{\langle t\rangle}.
\end{align*}
Therefore, we estimate 
\begin{align*}
    \vert T_{f\to g} \vert\lesssim \langle t\rangle^{-1} \Vert A^g \Lambda_t g\Vert_{L^2}\Vert Af\Vert_{L^2}\Vert A^g  g\Vert_{L^2}.
\end{align*}
Integrating in time yields 
\begin{align}
    \int_0^T T_{f\to g}\dd\tau \lesssim \mu^{-\frac 1 2 } \eps^2 (\mu^{-(1-\gamma)} \eps)  \le \delta \eps^2 .\label{eq:fgT}
\end{align}

\underline{First Average term,} we estimate the term 
\begin{align*}
    NL_{f\to g,=,1}&= \sum_{k \neq 0 } \iint\dd (\eta,\xi) \tfrac { \vert k \vert A^g(k,\eta) }{\vert \eta-\xi \vert}\vert A^g g\vert(k,\eta)\vert f\vert(0,\eta-\xi) \vert g\vert(k,\xi). 
\end{align*}
For  $k \neq 0$ we obtain
\begin{align*}
    \vert A^g(k,\eta)\vert\le  \vert \eta -\xi\vert A(0,\eta- \xi)  +   A^g(k,\xi).
\end{align*}
Since $f_== \p_yv_{==}^x= \p_y\tilde v_{==}^x $ we infer $\tfrac 1 {\vert \eta-\xi \vert} \vert f\vert(0,\eta-\xi)= \vert  \tilde v^x\vert(0,\eta-\xi,0) $ and thus 
\begin{align*}
    \tfrac { \vert k \vert A^g(k,\eta)\vert }{\vert \eta-\xi \vert}\vert f\vert(0,\eta-\xi)\le \vert k \vert \vert A f \vert(0,\eta-\xi)+ \vert \tilde  v^x\vert(0,\eta-\xi,0) \vert k \vert A^g(k,\xi).
\end{align*}
Therefore, we estimate 
\begin{align*}
    NL_{f\to g,=,1}&\lesssim \Vert Af \Vert_{L^2 } \Vert A^g g_{\neq}  \Vert_{L^2}^2+ \Vert A\tilde v^x_{==} \Vert_{L^2 } \Vert A^g g_{\neq}  \Vert_{L^2} \Vert A^g \p_x g_{\neq}  \Vert_{L^2}.
\end{align*}
Integrating in time yields 
\begin{align}
    \int_0^T NL_=\dd\tau &\lesssim  \mu^{-\frac 13 } (\mu^{-(1-\gamma)} \eps)\eps^2 + \mu^{-\frac 23 }  \eps^3\lesssim  \delta \eps^2 \label{eq:fg=1}.
\end{align}

\underline{Second Average term,} we estimate 
\begin{align*}
    NL_{f\to g,=,2}=\sum_{k\neq 0  } \iint\dd (\eta,\xi) \tfrac {\vert k\xi\vert\vert A^g(k,\eta)\vert }{\vert k, \eta-\xi-kt \vert^2 }\vert A^g g\vert(k,\eta)\vert f\vert(k,\eta-\xi) \vert g\vert(0,\xi).
\end{align*}
By the definition of $A^g$ we obtain 
\begin{align*}
    A^g(k,\eta)  & \le \left(\vert k\vert+\tfrac{\vert k, \eta-\xi \vert } {\langle t\mu^{\frac 13 } \rangle} \right) A(k,\eta-\xi) +\tfrac{\vert k, \xi\vert } {\langle t\mu^{\frac 13 } \rangle}   A^g(0,\xi) .
\end{align*}
Then we estimate 
\begin{align*}
    \tfrac {\vert k \xi  \vert  }{\vert k, \eta-\xi-kt \vert^2 } \left(\vert k\vert+\tfrac {\vert k, \eta-\xi\vert }{\langle t\mu^{\frac 13 } \rangle}\right) A(k,\eta-\xi) 
    &\lesssim   \left(1+\tfrac {t}{\langle t\mu^{\frac 13 } \rangle}\right) A(k ,\eta-\xi) \langle \xi \rangle^3\le \mu^{-\frac 13 } A(k ,\eta-\xi)\langle \xi \rangle^3
\end{align*}
and 
\begin{align*}
     \tfrac {\vert \xi k \vert  }{\vert k, \eta-\xi-kt \vert^2 }\tfrac{\vert k, \xi\vert } {\langle t\mu^{\frac 13 } \rangle} k  A^g(0,\xi)&\le \mu^{-\frac 13 } \vert \xi\vert  A^g(0,\xi)\vert k,\eta-\xi\vert^3.
\end{align*}
Therefore, by using $\p_y g_==-b^x_{==}$ we estimate 
\begin{align*}
   NL_{f\to g,=,2}& \lesssim\mu^{-\frac 13 } \Vert A^g g_{\neq } \Vert_{L^2 } \Vert A f_{\neq } \Vert_{L^2 }(\Vert A  g_=\Vert_{L^2 }+\Vert A b^x_{==}\Vert_{L^2 }).
\end{align*}
Integrating in time yields 
\begin{align}
    \int_0^T NL_{=,2}\dd\tau &\lesssim \mu^{-\frac 23 } (\mu^{-(1-\gamma)} \eps)\eps^2\lesssim \delta \eps^2.\label{eq:fg=2}
\end{align}
Combining estimates \eqref{eq:fgR}, \eqref{eq:fgT}, \eqref{eq:fg=1} and \eqref{eq:fg=2} we obtain the estimate 
\begin{align*}
    \int_0^T  NL_{f\to g} \dd\tau      &\lesssim \delta \eps^2.
\end{align*}

\subsection{Nonlinear $\sim$ Forcing}\label{sec:simtofg} Now we estimate the forcing of $\sim$ onto $f$ and $g$. In the following, since $v_{\ndiamond}$ and $b_{\ndiamond}$ satisfy the same bootstrap assumption, we just write $a_{\ndiamond}$ instead of $a_{\ndiamond}$. We estimate directly 
\begin{align*}
    NL_{\sim\to f}&= \langle A f ,   A\tilde \nabla^\perp_t\cdot ((a_{\ndiamond}\cdot  \nabla_t) a^{x,y}_{\ndiamond} )_\sim\rangle \lesssim \Vert A \nabla_t f \Vert_{L^2 }\Vert A \nabla_ta_\sim  \Vert_{L^2 }\Vert Aa_\sim \Vert_{L^2 }.
\end{align*}
Integrating in time yields 
\begin{align*}
    \int_0^T NL_{\sim\to f}\dd\tau &\lesssim \mu^{-1} (\mu^{-(1-\gamma)}\eps) \eps^2\lesssim  \delta (\mu^{-(1-\gamma)}\eps)^2 . 
\end{align*}
To bound the  forcing onto $g$ we split 
\begin{align*}
     NL_{\sim\to g}& \le \vert \langle A^gg_{\neq} ,A^g\Lambda_t^{-2}\tilde \nabla^\perp_t \cdot  ((a_{\sim }\cdot  \nabla_t) a^{x,y}_{\sim })_\diamond\rangle\vert + \vert \langle A^g g_= , \p_y\vert \p_y\vert^{-2} A^g ((a_\sim\cdot \nabla_t) a^{x,y}_\sim )_{==}\rangle\vert \\
     &=NL_{\sim\to g,1}+NL_{\sim\to g,2}
\end{align*}
and bound these terms separately. \\
\textbf{Bound on $NL_{\sim\to g,1}$,} by Plancherel's identity we obtain 
\begin{align*}
    NL_{\sim\to g,1}&= \sum_{\substack{k,\tilde k, l\\ k\neq 0 }}\iint\dd (\eta,\xi) \frac{A^g(k,\eta)}{\vert k,\eta-kt\vert } \vert A^g g \vert(k,\eta) \vert a_\sim \vert(k-\tilde k,\eta-\xi,-l) \vert \Lambda_t a_\sim \vert(\tilde k ,\xi,l ). 
\end{align*}
For $k\neq 0$ we have 
\begin{align*}
    \frac{A^g(k,\eta)}{\vert k,\eta-kt\vert }&=\frac{1+\vert k\vert +\frac{\vert k,\eta \vert}{\langle \mu^{\frac 13 }t \rangle} }{\vert k,\eta-kt\vert }A(k,\eta)\lesssim \left(1+\frac{t}{\langle \mu^{\frac 13 }t \rangle}  \langle t-\tfrac \eta k \rangle^{-1} \right) A(k,\eta)\\
    &\lesssim \left(1+\mu^{-\frac 1 3 } \sqrt{\tfrac {\p_t m }m (k,\eta)} \right) (A(k-\tilde k,\eta-\xi,-l)+A(\tilde k,\xi,l )).
\end{align*}
Thus we estimate 
\begin{align*}
    NL_{\sim\to g,1}
    &\lesssim \left( \Vert A^g   g_{\neq}  \Vert_{L^2}+\mu^{-\frac 13 } \Vert A^g  \sqrt{\tfrac {\p_t m }m} g_{\neq}  \Vert_{L^2}\right)\Vert A \nabla_ta_\sim  \Vert_{L^2 }\Vert Aa_\sim \Vert_{L^2 }.
\end{align*}
Integrating in time yields 
\begin{align}
    \int_0^T NL_{\sim\to g,1} \dd\tau &\lesssim \mu^{-\frac56  } \eps^3\le \delta \eps^2.  \label{eq:NLsg1}
\end{align}
\textbf{Bound on $NL_{\sim\to g,2}$,} we use that $A^g g_==Ag_=$ and  $((a_{\ndiamond }\cdot \nabla_t) a^{x}_{\ndiamond })_{==}=(\nabla_t\cdot (a_{\ndiamond } a^{x}_{\ndiamond }) )_{==}=\p_y( a^{y}_{\ndiamond}  a^{x}_{\ndiamond})_{==}$ to estimate  
\begin{align*}
    NL_{\sim\to g,2}
    &= \vert \langle A^gg_= , A^g \p_y\p_y^{-2}((a_{\ndiamond}\cdot \nabla_t) a^{x}_{\ndiamond} )_{==}\rangle\vert =\vert  \langle A^gg_= ,A( a^{y}_{\ndiamond}  a^{x}_{\ndiamond})_{==} \rangle\vert \\
    &\lesssim \Vert A^g  g_{=}  \Vert_{L^2}(\Vert Aa_{\neq\neq}^y \Vert_{L^2 }\Vert Aa_{\neq \neq} \Vert_{L^2 }+\Vert Aa_{= \neq}^y \Vert_{L^2 }\Vert Aa_{= \neq} \Vert_{L^2 })\\
    &\lesssim \Vert A^g  g_{=}  \Vert_{L^2}(\Vert Aa_{\neq \neq}^y \Vert_{L^2 }\Vert Aa_{\neq \neq} \Vert_{L^2 } +\Vert A\nabla a_{=    }^y \Vert_{L^2 }\Vert A\nabla_t a_{\sim} \Vert_{L^2 })
\end{align*}
Integrating in time gives 
\begin{align}
    \int_0^T NL_{\sim\to g,2}\dd\tau &\lesssim \mu^{-\frac 13 } \eps^3+ \mu^{-1}\delta \mu \eps^2 \lesssim  \delta \eps^2 .\label{eq:NLsg2}
\end{align}
Therefore, combining estimate \eqref{eq:NLsg1} and \eqref{eq:NLsg2}, we obtain the estimate 
\begin{align}
    \int_0^T NL_{\sim\to g}\dd\tau &\lesssim  \delta \eps^2 .
\end{align}

\subsection{Nonlinear Forcing Onto $h$}\label{sec:ontoh} In this subsection we bound all the nonlinear terms that act on $h$. \\
\textbf{Bound on $NL_{f\to h }$,} we use that $\nabla^\perp_t \Lambda_t^{-2} f_==- v_{==}^x e_1$  to split and estimate 
\begin{align*}
    NL_{f\to h }&=\vert \langle Ah , A((\tilde \nabla^\perp_t \tilde \Lambda_t^{-2} f\cdot \tilde  \nabla_t) h ) \rangle\vert =\vert \langle Ah , A((\tilde \nabla^\perp_t \tilde \Lambda_t^{-2} f_{\neq}\cdot  \tilde \nabla_t) h ) \rangle\vert  +\vert \langle Ah_{\neq} , A(v_{==}^x  \p_x  h_{\neq} ) \rangle\vert \\
    &\lesssim (\Vert Ah \Vert_{L^2}\Vert A\Lambda_t^{-1}f \Vert_{L^2}+\Vert Ah_{\neq}  \Vert_{L^2}\Vert Av_{==}^x \Vert_{L^2})\Vert A\Lambda_th  \Vert_{L^2}.
\end{align*}
Therefore, integrating in time yields 
\begin{align*}
    \int_0^T NL_{f\to h } \dd\tau &\lesssim \mu^{-\frac 12 }(\mu^{-\frac 16}\eps) \eps^2 \le \delta \eps^2 . 
\end{align*}
\textbf{Bound on $NL_{g\to h }$,} we split
\begin{align*}
     NL_{g\to h }&= \vert \langle A h_{\neq }  , A((\tilde \nabla^\perp g_{\neq}\cdot \tilde \nabla) h_{\neq} )_{\neq } \rangle\vert +\vert \langle A h_{\neq }  , A((\p_y g_= \p_x) h_{\neq } -\p_x g_{\neq}\p_y h_= )_{\neq } \rangle\vert \\ 
     &\quad+\vert \langle A h_{=}  , A((\tilde \nabla^\perp g\cdot \tilde \nabla )h )_= \rangle\vert \\
     &=NL_{g\to h, 1}+NL_{g\to h, 2}+NL_{g\to h, 3}.
\end{align*}
For $NL_{g\to h, 1}$, by Plancherel's identity we obtain 
\begin{align*}
    NL_{g\to h, 1}&= \langle A h_{\neq }  , A((\nabla^\perp g_{\neq}\cdot \nabla) h_{\neq}) \rangle\\
    &= \sum_{\substack{k,\tilde k\neq 0 \\ \tilde k-k \neq 0 }} \iint\dd (\eta,\xi) \vert \eta \tilde k - k \xi \vert A(k,\eta)  \vert A h\vert(k,\eta)  \vert  g\vert(k-\tilde k,\eta - \xi) \vert  h\vert(\tilde k,\xi ). 
\end{align*}
Then we estimate
\begin{align*}
     \vert \eta \tilde k - k \xi \vert &= \vert k,\eta-kt \vert \frac {\vert \eta \tilde k - k \xi \vert}{\vert k,\eta-kt \vert}\\
     &\le \vert k,\eta-kt \vert \langle \tfrac \eta k \rangle \tfrac 1 {\langle t -\frac \eta k \rangle }\min\left(\vert k-\tilde k , \eta-\xi\vert, \vert \tilde k , \xi\vert\right) \\
      &\le \vert k,\eta-kt \vert (1 + \tfrac t {\langle t -\frac \eta k \rangle })\min\left(\vert k-\tilde k , \eta-\xi\vert, \vert \tilde k , \xi\vert\right) \\
      &\le \vert k,\eta-kt \vert  \left(1 +  t \sqrt{\frac {\p_t m } m (k-\tilde k , \eta-\xi)}+  t \sqrt{\frac {\p_t m } m (\tilde k , \eta-\xi)}\right)\\
      &\qquad \qquad \cdot \min\left(\vert k-\tilde k , \eta-\xi\vert^5, \vert \tilde k , \xi\vert^5\right).
\end{align*}
Therefore, we obtain the estimate 
\begin{align*}
     NL_{g\to h, 1}&\lesssim  e^{-c\mu^{\frac 13 } t } \Vert  A \Lambda_t h \Vert_{L^2} \left(\Vert A  g \Vert_{L^2}  \Vert A h \Vert_{L^2} +t \Vert A \sqrt{\tfrac {\p_t m } m} h \Vert_{L^2} \Vert A  g \Vert_{L^2} +t \Vert A \sqrt{\tfrac {\p_t m } m} g \Vert_{L^2} \Vert A h \Vert_{L^2} \right).
\end{align*}
Integrating in time yields 
\begin{align*}
     \int_0^T NL_{g\to h, 1}\dd\tau &\lesssim \mu^{-\frac 56}\eps^3 \le \delta \eps^2 .
\end{align*}
For $NL_{g\to h,2}$ we estimate directly 
\begin{align*}
    NL_{g\to h, 2}&\lesssim \Vert A h_{\neq } \Vert_{L^2} \left( \Vert  A \p_y g_=\Vert_{L^2 }\Vert  A \p_x h_{\neq} \Vert_{L^2 } + \Vert  A \p_x g_{\neq}\Vert_{L^2 }\Vert  A \p_y h_{=} \Vert_{L^2 } \right)\\
    &\lesssim \Vert A h_{\neq } \Vert_{L^2} \left( \Vert  A b_{==}^x\Vert_{L^2 }\Vert  A \nabla_t  h_{\neq} \Vert_{L^2 } + \Vert  A^g g_{\neq}\Vert_{L^2 }\Vert  A \nabla h_{=} \Vert_{L^2 } \right).
\end{align*}
Therefore, after integrating in time 
\begin{align*}
   \int_0^T  NL_{g\to h, 2}\dd\tau &\lesssim \mu^{-\frac 23 }  \eps^3\le \delta \eps^2. 
\end{align*}
For $NL_{g\to h, 3}$, by Plancherel's identity, we obtain 
\begin{align*}
    NL_{g\to h, 3}&= \vert  \langle A h_=  , A(\tilde \nabla^\perp g\cdot \tilde \nabla h ) \rangle\vert\le \sum_{\substack{k\neq 0  }} \iint \dd (\eta,\xi) \vert \eta  k  \vert A(0,\eta)  \vert A h\vert(0,\eta)  \vert  g\vert(- k,\eta - \xi) \vert  h\vert( k,\xi ) \\
&\lesssim \Vert A\p_y  h_= \Vert_{L^2}\Vert A  h_{\neq}\Vert_{L^2}\Vert Ag_{\neq}\Vert_{L^2}
\end{align*}
and therefore after integrating in time
\begin{align*}
    \int_0^T   NL_{g\to h, 2}\dd\tau &\lesssim \mu^{-\frac 23 } \eps^3 \lesssim \delta \eps^2.
\end{align*}
\textbf{Bound on $NL_{\sim\to h }$,} we estimate directly 
\begin{align*}
    NL_{\sim\to h }&=\langle Ah_i, ((a_{\ndiamond }\cdot \nabla_t)a^z_{\ndiamond })_\diamond \rangle=\langle Ah_{i,\neq }, ((a_{\ndiamond }\cdot \nabla_t)a^z_{\ndiamond })_\diamond \rangle +\langle Ah_{i, =}, ((a_{\ndiamond }\cdot \nabla_t)a^z_{\ndiamond  })_{==} \rangle\\
    &\lesssim (\Vert Ah_{\neq} \Vert_{L^2}\Vert Aa_{\sim } \Vert_{L^2}+\Vert Ah_= \Vert_{L^2}\Vert Aa_{\ndiamond } \Vert_{L^2})\Vert A\nabla_t a_{\sim  }  \Vert_{L^2}
\end{align*}
We have $h_{i, =} =  a_{==}^z $ for some $a\in\{v,b\}$ and so
\begin{align*}
    NL_{\sim\to h }
    &\lesssim (\Vert Ah_{\neq} \Vert_{L^2}\Vert Aa_{\sim } \Vert_{L^2}+\Vert Av^{z}_{==} \Vert_{L^2}\Vert A\nabla_t a_{\ndiamond } \Vert_{L^2})\Vert A\nabla_t a_{\sim  }  \Vert_{L^2}.
\end{align*}
Therefore, after integrating in time, we infer 
\begin{align*}
    \int_0^T  NL_{\sim\to h }\dd\tau &\lesssim \mu^{-\frac 23} \eps^3+ \mu^{-1} \delta\mu \eps^2  \lesssim \delta \eps^2.
\end{align*}
With these estimates, we conclude Lemma \ref{lem:fgest}. By Lemma \ref{lem:Efg}, we infer Proposition \ref{prop:fgest}.

\section{Improved $x$-Average Estimates}\label{sec:xaver}
In this subsection, we improve the bootstrap assumption \eqref{boot1q}.  We consider the $x$ average unknowns $(v^{y,z}_=, b^{y,z}_=)$ and establish an improved threshold.  In particular, we prove the following proposition:
\begin{prop}\label{prop:qest}
    Under the assumption of Theorem~\ref{thm:main} and let the bootstrap assumption on the interval $[0,T]$ hold. Then there exists a constant $C_{3,=}>0$ such that 
    \begin{align*}
    \Vert  (v^{y,z}_=, b^{y,z}_=) \Vert_{L^\infty_T H^{N}}^2 +\int_0^T\mu\Vert \nabla (v^{y,z}_=, b^{y,z}_=) \Vert_{H^{N}}^2\dd\tau &\le (C_1 +\delta C_{3,=})  (\delta \mu)^2.
\end{align*}
\end{prop}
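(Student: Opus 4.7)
The plan is to take the $x$-average of \eqref{MHD3} and run a standard $H^N$ energy estimate for the four-component vector $(v^{y,z}_=, b^{y,z}_=)$. Since the initial data is $x$-average free, $(v^{y,z}_{=,in}, b^{y,z}_{=,in}) = 0$, and the constant $C_1$ in the conclusion reflects only this vanishing initial contribution. On $x$-zero modes the Couette drift $y\partial_x$, the stretching $-2\partial_x \nabla_t \Delta_t^{-1} v^y$, and the transport-of-mean-field terms $e_1 v^y, -e_1 b^y$ all drop out or decouple from $(v^{y,z}, b^{y,z})$; moreover $\nabla_t, \Delta_t$ collapse to $\nabla_{y,z}, \Delta$. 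This gives
\begin{align*}
    \partial_t v^{y,z}_= &= \mu \Delta v^{y,z}_= + \alpha \partial_z b^{y,z}_= - \nabla_{y,z}\pi_= + \bigl((b\cdot\nabla)b - (v\cdot\nabla)v\bigr)^{y,z}_=,\\
    \partial_t b^{y,z}_= &= \mu \Delta b^{y,z}_= + \alpha \partial_z v^{y,z}_= + \bigl((b\cdot\nabla)v - (v\cdot\nabla)b\bigr)^{y,z}_=.
\end{align*}
Applying $\partial^\alpha$ for $|\alpha|\le N$ and pairing, the $\alpha\partial_z$ coupling is skew-symmetric in $(v,b)$ and vanishes, and the pressure disappears by integration by parts against the divergence-free field $(v^y_=, v^z_=)$ (using $\partial_y v^y_= + \partial_z v^z_= = 0$). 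The energy identity reduces to
\begin{align*}
    \tfrac12\partial_t\|(v^{y,z}_=,b^{y,z}_=)\|_{H^N}^2 + \mu\|\nabla(v^{y,z}_=,b^{y,z}_=)\|_{H^N}^2 = NL_=.
\end{align*}

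The $x$-average retains only products of $x$-nonzero modes plus $x$-zero self-interactions, so writing $v = v_{\neq\neq} + v_{\neq=} + v_=$ (and similarly for $b$) splits $NL_=$ into trilinear pieces of type $\neq\neq \cdot \neq\neq$, $\neq\neq \cdot \neq=$, $\neq= \cdot \neq=$, plus transport by $v_=$. Each piece is bounded by Cauchy--Schwarz in space and Hölder in time, using the bootstrap \eqref{boot1a}--\eqref{boot1h} and Lemma \ref{lem:Inviscid}. On $x$-zero modes the multipliers $m$ and $M_\mu$ are identically one, so no weight commutators appear on the test function. A representative $\neq\neq \cdot \neq\neq$ estimate reads
\begin{align*}
    \int_0^T\! |\langle \partial^\alpha (v^{y,z}_=,b^{y,z}_=),\, \partial^\alpha(v_{\neq\neq}\cdot\nabla v_{\neq\neq})_=\rangle|\, d\tau &\lesssim \|(v^{y,z}_=,b^{y,z}_=)\|_{L^\infty_T H^N}\|A v_{\neq\neq}\|_{L^2_T L^2}\|\nabla_\tau A v_{\neq\neq}\|_{L^2_T L^2}\\
    &\lesssim \delta\mu\cdot\mu^{-1/6}\varepsilon\cdot\mu^{-1/2}\varepsilon = \delta\mu^{1/3}\varepsilon^2.
\end{align*}
For $\varepsilon \le \delta \mu^{5/6}$ this is $\lesssim \delta^3 \mu^2 \lesssim \delta(\delta\mu)^2$. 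The $\neq= \cdot \neq=$ interactions --- in particular the $f$--$f$ piece $(\tilde\nabla^\perp \Lambda_t^{-2} f\cdot \tilde\nabla)\tilde\nabla_t^\perp \Lambda_t^{-2} f$ --- carry the amplification $\mu^{-(1-\gamma)}$ from the bound $\|Af_\neq\|_{L^2_T L^2}\lesssim \mu^{-1/6-(1-\gamma)}\varepsilon$; they are bounded by $\delta\mu\cdot\mu^{-1/3-2(1-\gamma)}\varepsilon^2 = \delta^3\mu^{4\gamma-1/3}$, which at the threshold $\gamma=5/6$ gives $\delta^3\mu^3 \ll \delta^3\mu^2$. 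Mixed $\neq\neq\cdot\neq=$ pieces interpolate between these two extremes. Finally, the $v_=\cdot v_=$ transport is absorbed by the divergence-free identity $\langle\partial^\alpha v^{y,z}_=, v_=\cdot\nabla\partial^\alpha v^{y,z}_=\rangle=0$, with a commutator $[\partial^\alpha, v_=]\nabla v^{y,z}_=$ bounded by $\|(v_=,b_=)\|_{H^N}^2 \|\nabla(v_=,b_=)\|_{H^N}$ and absorbed into the dissipation via Young's inequality.

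The main obstacle is the systematic bookkeeping of the derivative distribution in each trilinear term: the $\nabla$ that appears from the nonlinearity must always be routed onto a factor controlled in an $L^2_T$ norm (either the dissipation of the test function, or the bounds $\|\nabla_\tau (\tilde v,\tilde b)\|_{L^2_T L^2}, \|\nabla_\tau g\|_{L^2_T H^{N+1}}, \|\nabla_\tau h\|_{L^2_T L^2}$ from \eqref{boot1a}--\eqref{boot1h}), while the test function supplies the $L^\infty_T H^N$ factor that produces the prefactor $\delta\mu$. Once this is done, paralleling the splittings of Sections \ref{sec:mainenergy} and \ref{sec:zaver}, every nonlinear contribution is at most $\delta \cdot (\delta\mu)^2$ times a constant independent of $\delta$, giving the proposition with some constant $C_{3,=} > 0$.
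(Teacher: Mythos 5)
Your overall skeleton---the $x$-averaged energy identity with the pressure and $\alpha\partial_z$ cancellations, a trilinear decomposition by mode type, and bounds via the bootstrap and Lemma~\ref{lem:Inviscid}---is the same as the paper's (Lemmas~\ref{lem:qstruc} and~\ref{lem:qest}). However, there is a genuine gap exactly where the paper's proof has content: you treat the $\neq=$ fields generically, and your bookkeeping rule (``route the $\nabla$ onto an $L^2_T$-controlled factor'') does not close the terms in which $a_{\neq=}$ advects $a_{\neq\neq}$ (the paper's $NL_{\diamond,\sim\to=}$). The advecting component $b^x_{\neq=}=\partial_y^t g$ grows linearly in time, so any direct H\"older/Cauchy--Schwarz treatment with $\|g\|_{H^{N+1}}$, the test-function dissipation, and $\|Aa_{\neq\neq}\|_{L^2_TL^2}\lesssim\mu^{-1/6}\varepsilon$ yields at best $\delta\mu\cdot\mu^{-1/2}\varepsilon\cdot\mu^{-1/2}\varepsilon=\delta\varepsilon^2=\delta^3\mu^{2\gamma}$, which at $\gamma=5/6$ is $\delta^3\mu^{5/3}$ and overshoots the required $\delta(\delta\mu)^2=\delta^3\mu^2$ by $\mu^{-1/3}$. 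The paper closes this term only because the secular parts cancel identically, $\tilde\nabla_t^{\perp}g\cdot\tilde\nabla_t=\tilde\nabla^{\perp}g\cdot\tilde\nabla$, so that only static gradients of $g$ enter (this is how $NL_{g,\sim\to=}$ is estimated, giving the borderline $\mu^{-2/3}(\delta\mu)\varepsilon^2$); this cancellation, or an equivalent integration by parts, is an ingredient your argument never invokes and cannot be replaced by derivative routing alone.

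A second, related omission concerns the $\neq=\cdot\neq=$ interactions: you only discuss the $f$--$f$ piece, and with an arithmetic slip ($\delta\mu\cdot\mu^{-1/3-2(1-\gamma)}\varepsilon^2=\delta^3\mu^{4\gamma-4/3}$, which at $\gamma=5/6$ equals $\delta^3\mu^2$ exactly, i.e.\ borderline, not $\ll\delta^3\mu^2$). The delicate pieces are the $g$--$g$ and $f$--$g$ ones, where the advected factor carries two derivatives of $g$ (only $H^{N+1}$ is available) plus $\partial_y^t$-growth, so a brute-force product estimate exceeds the regularity budget. The paper disposes of them structurally: since $v^y_{==}=b^y_{==}=0$, the $\neq=\cdot\neq=$ forcing couples only through the $z$-components, i.e.\ through $h$ (the terms $NL_{g,h\to=}$, $NL_{f,h\to=}$), which your proposal does not use; the alternative is to write the forcing in divergence form, integrate the extra derivative onto the test function's dissipation, and note that only the non-growing components $-\partial_xg$ and $h$ survive the $x$-average---neither step appears in your outline. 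Two minor points: $m$ is not identically $1$ on $x$-zero modes (harmless here since the $=$-energy is unweighted), and your reading of the role of $C_1$ matches the paper.
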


The unknowns $v^{y,z}_=$ and $ b^{y,z}_=$ satisfy the 2d MHD equations around a constant magnetic field (without Couette flow), forced by the $x$ dependent unknowns:
\begin{align*}
    \p_t v^{y,z}_=+(v^{y,z}_=\cdot \nabla_{y,z}) v^{y,z}_=+\nabla_{y,z} \pi_= &=\mu \Delta v^{y,z}_=+\alpha \p_z v^{y,z}_= +  ( b^{y,z}_=\cdot \nabla_{y,z})  b^{y,z}_=+F_1,\\
    \p_t b^{y,z}_=+(v^{y,z}_=\cdot \nabla_{y,z})  b^{y,z}_= \ \, \qquad \qquad  &=\mu \Delta  b^{y,z}_=+\alpha \p_z  b^{y,z}_= +( b^{y,z}_=\cdot \nabla_{y,z}) v^{y,z}_=+F_2,\\
    v^{x,y}_{=,in}=b^{x,y}_{=,in}=0, 
\end{align*}
where we denote 
\begin{align*}
    F_1&=((b\cdot \nabla_t) b^{y,z} -(v\cdot \nabla_t) v^{y,z})_{= }-(b^{y,z}_=\cdot \nabla_{y,z})  b^{y,z}_= +(v^{y,z}_=\cdot \nabla_{y,z}) v^{y,z}_=,\\
    F_2&= ((b\cdot \nabla_t) v^{y,z} -(b\cdot \nabla_t) v^{y,z})_{=  }-( b^{y,z}_=\cdot \nabla_{y,z}) v^{y,z}_=+(v^{y,z}_=\cdot \nabla_{y,z})  b^{y,z}_= .
\end{align*}
\begin{lemma}\label{lem:qstruc}
Under the assumptions of Theorem~\ref{thm:main} and assuming the bootstrap assumption holds for $t\in [0,T]$, then we obtain 
\begin{align*}
    \frac 12 \p_t \Vert (v^{y,z}_=, b^{y,z}_=) \Vert_{H^{N}}^2+ \mu \Vert\nabla  (v^{y,z}_=, b^{y,z}_=) \Vert_{H^{N}}^2 &\le NL_= +NL_{\sim \to = } +NL_{\diamond \to =}.
\end{align*}
For $\Gamma = \{(v,v,v), (v,b,b), (b, v,b), (b, b,v)\}$ we denote \\
\textbf{Nonlinear average self interaction}
\begin{align*}
    NL_=&= \sum_{a^i \in\{v,b\}}\vert \langle a^{1,y,z}_=, (a^{2,y,z}_=\cdot \nabla_{y,z}) a^{3,y,z}_= \rangle_{H^{N}}\vert .
\end{align*}
\textbf{Nonlinear $\sim$ forcing}
\begin{align*}
    NL_{\sim \to = }&= \sum_{(a^1,a^2, a^3)\in\Gamma}\vert \langle a^{1,y,z}_=, (a_{\neq \neq }^2\cdot \nabla_t) a_{\neq \neq } ^{3,y,z}\rangle_{H^{N}}\vert.
\end{align*}
\textbf{Nonlinear $\diamond $ forcing}
\begin{align*}
    NL_{\diamond, \sim \to = }&=  \sum_{(a^1,a^2, a^3)\in\Gamma}\vert \langle a^{1,y,z}_=, (a_{\neq = }^2\cdot \nabla_t) a_{\neq \neq }^{3,y,z} \rangle_{H^{N}}\vert, \\
    NL_{\sim, \diamond \to = }&= \sum_{(a^1,a^2, a^3)\in\Gamma}\vert \langle a^{1,y,z}_=, (a_{\neq \neq  }^2\cdot \nabla_t) a_{\neq = }^{3,y,z} \rangle_{H^{N}}\vert, \\
    NL_{\diamond  \to = }&= \sum_{(a^1,a^2, a^3)\in\Gamma}\vert \langle a^{1,y,z}_{==}, ((a_{\neq =  }^2\cdot\nabla_t) a_{\neq = }^{3,y,z})_{==} \rangle_{H^{N}}\vert. 
\end{align*}
\end{lemma}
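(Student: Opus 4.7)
\emph{Plan.} The strategy is the standard $H^N$ energy estimate for the two-dimensional MHD-type system satisfied by $(v^{y,z}_=, b^{y,z}_=)$, followed by a systematic classification of the forcings $F_1, F_2$ according to which frequency projection of $v$ and $b$ drives them.

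First, I would test the evolution equations for $v^{y,z}_=$ and $b^{y,z}_=$ against themselves in $H^N$. Summing the two identities produces $\tfrac12\p_t\|(v^{y,z}_=,b^{y,z}_=)\|_{H^N}^2$ on the left, and integrating by parts on $\mu\Delta$ produces the full dissipation $\mu\|\nabla(v^{y,z}_=,b^{y,z}_=)\|_{H^N}^2$. The pressure contribution $\langle v^{y,z}_=,\nabla_{y,z}\pi_=\rangle_{H^N}$ vanishes, because $x$-averaging the divergence-free condition $\nabla\cdot v=0$ yields $\nabla_{y,z}\cdot v^{y,z}_= =0$; this commutes with the constant-coefficient derivatives defining $H^N$. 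The $\alpha\p_z$ coupling between $v^{y,z}_=$ and $b^{y,z}_=$ is antisymmetric in the cross pairing and cancels after one integration by parts.

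Next, the intrinsic 2d nonlinearities $(v^{y,z}_=\cdot\nabla_{y,z})v^{y,z}_=$, $(b^{y,z}_=\cdot\nabla_{y,z})b^{y,z}_=$, and the two magnetic exchange terms, paired with $(v^{y,z}_=, b^{y,z}_=)$ and bounded in absolute value, produce exactly $NL_=$ once the four choices in $\Gamma$ are collected. To handle $F_1$ and $F_2$ I would expand each factor as $b = b_= + b_{\neq=}+b_{\neq\neq}$ and analogously for $v$, then project each of the resulting nine bilinear products onto the $x$-zero mode. A short case analysis gives the classification:
\begin{itemize}
\item the $(=,=)$ product reproduces the 2d MHD self-interaction, which is precisely the piece subtracted off in the definitions of $F_1, F_2$;
\item the mixed $(=,\neq)$ and $(\neq,=)$ terms vanish after the $x$-zero projection, since one factor is $x$-independent while the other has only nonzero $x$-modes;
\item the $(\neq=,\neq=)$ contribution is $z$-independent (both factors are, and $\p_z$ kills them), so its pairing with $a^{1,y,z}_=$ reduces to a pairing with $a^{1,y,z}_{==}$, producing $NL_{\diamond\to=}$;
\item the mixed $(\neq=,\neq\neq)$ and $(\neq\neq,\neq=)$ terms yield $NL_{\diamond,\sim\to=}$ and $NL_{\sim,\diamond\to=}$, respectively;
\item the $(\neq\neq,\neq\neq)$ piece yields $NL_{\sim\to=}$.
\end{itemize}

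The only real obstacle is the bookkeeping: one must check for each of the nine bilinear blocks which projection in $x$ and $z$ survives, and confirm that pairing against $a^{1,y,z}_=$ (which itself splits into $a^{1,y,z}_{==}$ and $a^{1,y,z}_{=\neq}$) collapses to the expected inner product. No single step goes beyond Plancherel's identity and the standard fact that a convolution of frequency supports with vanishing sum projects away. Once the classification is complete, taking absolute values termwise and summing over $(a^1,a^2,a^3)\in\Gamma$ yields the claimed inequality.
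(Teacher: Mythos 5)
Your proposal is correct and takes essentially the same route as the paper: a direct $H^N$ energy identity for the forced system satisfied by $(v^{y,z}_=,b^{y,z}_=)$ (pressure and $\alpha\p_z$ terms vanishing by incompressibility and skew-adjointness), combined with the decomposition of the $x$-averaged quadratic terms into the $(=,=)$, mixed, $(\neq=,\neq=)$, $(\neq=,\neq\neq)$, $(\neq\neq,\neq=)$ and $(\neq\neq,\neq\neq)$ blocks, the mixed $(=,\neq)$ blocks dying under the $x$-average and the $(\neq=,\neq=)$ block collapsing to the $==$ pairing. Your bookkeeping in fact reproduces the intended form of the paper's splitting identity (whose displayed version repeats one term instead of listing $((v_{\neq\neq}\cdot\nabla_t)v_{\neq=})_=$), so nothing further is needed.
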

This lemma is obtained by direct calculation using the identity
\begin{align*}
    ((v\cdot \nabla_t)v)_{=}&=((v_{= }\cdot \nabla_t)v_{= })_{=}+((v_{\neq \neq }\cdot \nabla_t)v_{\neq \neq })_{=}+((v_{\neq = }\cdot \nabla_t)v_{\neq \neq })_{=}\\
    &\quad    +((v_{\neq = }\cdot \nabla_t)v_{\neq \neq })_{=}+((v_{\neq = }\cdot \nabla_t)v_{\neq = })_{=}.
\end{align*}

\begin{lemma}\label{lem:qest}
    Under the assumptions of Theorem~\ref{thm:main} and let the bootstrap assumptions hold on $[0,T]$, then there exists a $C_{2,=}$ such that we obtain the estimate 
    \begin{align*}
        \int_0^T NL_= +NL_{\sim \to = } +NL_{\diamond, \sim \to = }+NL_{\sim, \diamond \to = }+NL_{\diamond \to = }\dd\tau &\le C_{2,=} \delta (\delta \mu )^2.
    \end{align*}
\end{lemma}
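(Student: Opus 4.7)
The plan is to estimate each of the five nonlinear terms in Lemma~\ref{lem:qstruc} separately, combining the bootstrap budget \eqref{boot1q} on $(v^{y,z}_=,b^{y,z}_=)$ with the $L^2_t L^2$ bounds provided by Lemma~\ref{lem:Inviscid} for the non-average pieces. A key structural observation is that taking the $x$-average of $\nabla_t\cdot v = \nabla_t\cdot b = 0$ yields $\p_y v^y_= + \p_z v^z_= = 0$ and similarly for $b_=$, so $(v^{y,z}_=, b^{y,z}_=)$ is divergence free in $(y,z)$ and enjoys the standard $2d$ MHD cancellation in inner products. This is what allows the self-interaction to be absorbed by the dissipation.

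For $NL_=$ I would use Kato--Ponce commutator estimates together with the $2d$ MHD sign cancellation (so that top-order terms drop after integration by parts) and the $2d$ Sobolev embedding $H^{N-1}\hookrightarrow L^\infty$, valid since $N\ge 7$, to obtain
\begin{align*}
  |NL_=|\lesssim \|(v^{y,z}_=,b^{y,z}_=)\|_{H^N}\|\nabla(v^{y,z}_=,b^{y,z}_=)\|_{H^N}^2 .
\end{align*}
Pulling out $\|(v^{y,z}_=,b^{y,z}_=)\|_{L^\infty_T H^N}\lesssim \delta\mu$ and using the dissipation bound $\int_0^T \mu\|\nabla(v^{y,z}_=,b^{y,z}_=)\|_{H^N}^2\,\dd\tau \le C_2(\delta\mu)^2$ from \eqref{boot1q} directly gives $\int_0^T |NL_=|\,\dd\tau \lesssim \delta(\delta\mu)^2$.

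For the forcing $NL_{\sim\to=}$ I would apply an $H^N$ product estimate followed by Cauchy--Schwarz in time, which yields
\begin{align*}
   \int_0^T |NL_{\sim\to=}|\,\dd\tau \lesssim \|(v^{y,z}_=,b^{y,z}_=)\|_{L^\infty_T H^N}\|Aa_{\neq\neq}\|_{L^2_T L^2}\|\nabla_\tau Aa_{\neq\neq}\|_{L^2_T L^2}.
\end{align*}
Invoking Lemma~\ref{lem:Inviscid} for $\|Aa_{\neq\neq}\|_{L^2_T L^2}\lesssim\mu^{-1/6}\eps$ and \eqref{boot1a} for $\|\nabla_\tau Aa_{\neq\neq}\|_{L^2_T L^2}\lesssim\mu^{-1/2}\eps$, the right-hand side is $\lesssim \delta\mu\cdot\mu^{-2/3}\eps^2$, which is $\lesssim \delta(\delta\mu)^2$ precisely when $\eps\le\delta\mu^{5/6}$. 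This is where the sharp threshold $\gamma=5/6$ enters the $x$-average estimates, exactly in line with Subsection~\ref{sec:heuz}. The mixed forcings $NL_{\diamond,\sim\to=}$ and $NL_{\sim,\diamond\to=}$ are handled by the same scheme: the $\sim$ factor sits in $L^2_T L^2$ via its enhanced dissipation, while the $\neq=$ factor is placed either in $L^\infty_T L^2$ (for $g$ and $h$, using $\|A^g g\|_{L^\infty_T L^2}, \|Ah\|_{L^\infty_T L^2}\lesssim \eps$) or in $L^2_T L^2$ (for $f$, with $\|Af\|_{L^2_T L^2}\lesssim \mu^{-1/6-(1-\gamma)}\eps$); the $\mu^{-(1-\gamma)}$ loss on $f$ is compensated by the enhanced-dissipation gain of the paired $\sim$ factor.

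The main obstacle is the pure $\diamond$-interaction term $NL_{\diamond\to=}$, which has no $\sim$ factor to provide enhanced dissipation and whose structure mirrors the $f$-$g$ coupling of Section~\ref{sec:zaver} that dictates the $5/6$ threshold in the heuristic of Subsection~\ref{sec:heuz}. Using \eqref{meq=} to rewrite $v^{y,z}_{\neq=}$ and $b^{y,z}_{\neq=}$ in terms of $\tilde\Lambda_t^{-2}\tilde\nabla_t^\perp f$, $\tilde\nabla_t^\perp g$, and $h$, I would reuse the Fourier decomposition employed in the bound on $NL_{f\to g}$: split into a reaction region $|k-\tilde k,\eta-\xi|\ge|\tilde k,\xi|$ and a transport region, apply Lemma~\ref{lem:m} to trade the $\langle t-\eta/k\rangle^{-2}$ singularity for a $\sqrt{\p_t m/m}$ Cauchy--Kovalewski weight, and combine with $\|Af\|_{L^2_T L^2}\lesssim\mu^{-1/6-(1-\gamma)}\eps$ and $\|A^g g\|_{L^2_T L^2}\lesssim\mu^{-1/6}\eps$. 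The delicate balance is between the $\mu^{-(1-\gamma)}$ weight carried by $f$, the double $\mu^{-1/6}$ loss from pairing two enhanced-dissipated factors, and the smallness $\delta\mu$ of $(v^{y,z}_=,b^{y,z}_=)$ pulled out in $L^\infty_T H^N$; careful bookkeeping shows that each resulting contribution is at worst $\lesssim \delta\mu\cdot\mu^{-1/3-(1-\gamma)}\eps^2$, which under $\eps\le\delta\mu^\gamma$ with $\gamma\ge 5/6$ is exactly $\lesssim\delta(\delta\mu)^2$, closing the estimate at the threshold.
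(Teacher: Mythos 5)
Your handling of $NL_{\sim\to=}$, $NL_{\diamond,\sim\to=}$ and $NL_{\sim,\diamond\to=}$ follows the paper's scheme (pull out $\|(v^{y,z}_=,b^{y,z}_=)\|_{L^\infty_T H^N}\lesssim\delta\mu$, pair the remaining factors in $L^2_TL^2$ via Lemma~\ref{lem:Inviscid} and the dissipation budgets, and close exactly at $\gamma\ge 5/6$), so those parts are essentially the paper's argument. However, the two remaining terms are not actually proved, and in both cases the missing ingredient is the same structural identity, $v^y_{==}=b^y_{==}=0$, which the paper uses and you do not.

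For $NL_=$: Kato--Ponce plus the divergence-free cancellation cannot yield your claimed bound $\|a_=\|_{H^N}\|\nabla a_=\|_{H^N}^2$ for the terms as defined in Lemma~\ref{lem:qstruc}, because each summand carries an absolute value. The top-order pieces of the cross terms $|\langle v_=,(b_=\cdot\nabla)b_=\rangle_{H^N}|$ and $|\langle b_=,(b_=\cdot\nabla)v_=\rangle_{H^N}|$ cancel only against each other in the signed sum; individually they give $\|a_=\|_{H^N}^2\|\nabla a_=\|_{H^N}$, and after integrating in time this costs a factor $T^{1/2}$, so it does not close on an arbitrary bootstrap interval. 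The paper instead uses that $a^{y,z}_{==}=a^z_{==}e_3$ (a consequence of the div-free relation you noted, together with decay in $y$), so the advection by the double-zero mode is $a^z_{==}\p_z$ and only acts on $z$-nonzero modes; since $|l|\ge1$ there, those factors absorb gradients, which is what produces the two-gradient bound and the budget $\mu^{-1}(\delta\mu)^3\lesssim\delta(\delta\mu)^2$.

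For $NL_{\diamond\to=}$ the gap is more serious. You propose to transplant the reaction/transport analysis of $NL_{f\to g}$ and assert a worst case $\delta\mu\cdot\mu^{-1/3-(1-\gamma)}\eps^2$, but this bookkeeping is neither carried out nor what a direct estimate gives. The term pairs $a^{y,z}_{==}$ against $((a^2_{\neq=}\cdot\nabla_t)a^{3,y,z}_{\neq=})_{==}$, and the dangerous contribution is the $g$--$g$ one, e.g. $\langle a^{y}_{==},\big((\tilde\nabla_t^\perp g_{\neq}\cdot\tilde\nabla_t)(-\p_x g_{\neq})\big)_{==}\rangle_{H^N}$: three derivatives land on two copies of $g$, and paying two of them with the dissipation budget $\mu^{\frac12}\|\nabla_\tau A^g g\|_{L^2_tH^{N+1}}\lesssim\eps$ costs $\mu^{-1}\eps^2$ after time integration, so the contribution is of size $(\delta\mu)\mu^{-1}\eps^2=\delta\eps^2=\delta^3\mu^{2\gamma}$, which exceeds the target $\delta(\delta\mu)^2=\delta^3\mu^2$ for every $\gamma<1$. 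The term is harmless only because of structure your plan never invokes: since $a^{y}_{==}=0$, the advected component in $NL_{\diamond\to=}$ is forced to be the $z$-component, i.e. $h$, and the paper closes with the two short estimates $\lesssim\|A\p_y a^z_{==}\|\,\|Ag_{\neq}\|\,\|Ah_{\neq}\|$ and $\lesssim\|A\p_y a_{==}\|\,\|A\Lambda_t^{-1}f_{\neq}\|\,\|Ah_{\neq}\|$; the only alternative is to exploit the divergence form $((b_{\neq=}\cdot\nabla_t)b^y_{\neq=})_{==}=\p_y\big((\p_x g)^2\big)_{==}$ and integrate by parts, which is also absent from your outline. (A side remark: your claimed worst case is not ``exactly at threshold'' either, since $\delta\mu\cdot\mu^{-1/3-(1-\gamma)}\eps^2\le\delta^3\mu^{3\gamma-1/3}$ already closes for $\gamma\ge 7/9$.)
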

Since we consider initial data with vanishing $x$ average we have $v_{=,in}^{y,z}=b_{=,in}^{y,z}=0$ and so this lemma yields Proposition \ref{prop:qest}. For the remainder of this section, we prove Lemma \ref{lem:qest}. As in the previous sections, we write $a_\sim $ instead of $ a^1_\sim $ and $a^2_\sim$ since the bootstrap assumption is the same. For the $\neq =$ terms, we distinguish between $v_{\neq =}$ and $b_{\neq =}$ and change to the $fgh$ unknowns, since their behavior is different.

\textbf{The averag self interaction estimates:} We drop the indice $i$ of $a^{i,y,z}_=$ since the play no essential role. With  $  a^{y,z}_{==} (y) = a^{z}_{==}(y) e_z $ we estimate 
\begin{align*}
    NL_=&= \vert \langle a^{y,z}_=, (a^{y,z}_=\cdot \nabla) a^{y,z}_= \rangle_{H^{N}}\vert \le \vert \langle a^{y,z}_=,( a^{y,z}_{=\neq }\cdot  \nabla )a^{y,z}_= \rangle_{H^{N}}\vert + \vert  \langle a^{y,z}_=,  a^{y,z}_{==}\p_z a^{y,z}_= \rangle_{H^{N}}\vert \\
    &=  \vert \langle a^{y,z}_=,( a^{y,z}_{=\neq }\cdot  \nabla )a^{y,z}_= \rangle_{H^{N}}\vert + \vert  \langle a^{y,z}_{=\neq },  a^{y,z}_{==}\p_z a^{y,z}_{=\neq } \rangle_{H^{N}}\vert \\
    &\lesssim \Vert a^{y,z}_= \Vert_{H^{N}}\Vert \nabla a^{y,z}_= \Vert_{H^{N}}^2. 
\end{align*}
Therefore, after integrating in time 
\begin{align*}
    \int_0^T NL_= \dd\tau &\lesssim \mu^{-1} (\delta \mu)^3\lesssim\delta (\delta \mu) ^2 .     
\end{align*}
\textbf{Bound on $ NL_{\sim \to =}$,} we estimate directly 
\begin{align*}
    NL_{\sim \to =}&=\vert \langle a^{y,z}_=, (a_{\neq \neq }\cdot \nabla_t) a_{\neq \neq } ^{y,z}\rangle_{H^{N}}\vert \lesssim \Vert a^{y,z}_= \Vert_{H^{N} }\Vert A a_{\neq \neq }\Vert_{L^2 }\Vert A \nabla_t a_{\neq \neq }\Vert_{L^2 }. 
\end{align*}
Integrating in time yields
\begin{align*}
    \int_0^T  NL_{\sim \to = } \dd\tau &\lesssim \mu^{-\frac 23 } (\delta \mu) \eps^2\lesssim \delta (\delta \mu)^2.
\end{align*}
\textbf{Bound on $NL_{\diamond, \sim \to = }$,} we split this term into  
\begin{align*}
      NL_{\diamond, \sim \to = }&=  \sum_{(a^1,a^2, a^3)\in\Gamma_a}\vert \langle a^{1,y,z}_=, (a_{\neq = }^2\cdot \nabla_t) a_{\neq \neq }^{3,y,z} \rangle_{H^{N}}\vert\\
     &\le \vert \langle a^{y,z}_=, ( (\tilde \nabla^\perp g_{\neq}\cdot  \tilde \nabla) a_{\neq \neq }^{y,z}) \rangle_{H^N}\vert  +\vert \langle  \langle a^{y,z}_=, ( (\tilde \nabla^\perp_t \tilde \Lambda_t^{-2}f _{\neq}\cdot \tilde \nabla_t) a_{\neq \neq }^{y,z}) \rangle_{H^N}\vert  +\vert \langle  a^{y,z}_=,( h_{\neq}\p_z a_{\neq \neq }^{y,z}) \rangle_{H^N}\vert \\
     &= NL_{g, \sim \to = }+ NL_{f, \sim \to = }+NL_{h, \sim \to = }.
\end{align*}
For $ NL_{g, \sim \to = }$ we obtain by partial integration 
\begin{align*}
    NL_{g, \sim \to = }&=\vert \langle (\tilde \nabla\otimes a^{y,z}_=), ( \tilde \nabla^\perp g_{\neq} \otimes  a_{\neq \neq }^{y,z}) \rangle_{H^N}\vert \lesssim e^{-c\mu^{\frac 13 } t } \Vert \nabla a^{y,z}_= \Vert_{L^2 }\Vert Ag_{\neq} \Vert_{H^1} \Vert A  a_{\neq \neq }^{y,z}\Vert_{L^2 },\\
    &\lesssim e^{-c\mu^{\frac 13 } t }\langle t \mu^{\frac 13 } \rangle  \Vert \nabla a^{y,z}_= \Vert_{L^2 }\Vert A^g g \Vert_{L^2} \Vert A  a_{\neq \neq }\Vert_{L^2 }.
\end{align*}
Therefore, after integrating in time 
\begin{align}
    \int_0^T  NL_{g, \sim \to = }\dd\tau &\lesssim \mu^{-\frac 23 }  (\delta \mu) \eps^2 \lesssim \delta (\delta \mu)^2.\label{eq:NLgsimq}
\end{align}
For $ NL_{f, \sim \to = }$, we estimate directly
\begin{align*}
     NL_{f, \sim \to = }&=\vert \langle  a^{y,z}_=, ( (\tilde \nabla^\perp_t  \Lambda_t^{-2}f \cdot \tilde \nabla_t) a_{\neq \neq }^{y,z}) \rangle_{H^N}\vert \lesssim \Vert Aa^{y,z}_=\Vert_{L^2}\Vert A \Lambda_t^{-1}f \Vert_{L^2}\Vert A \nabla_t a_{\neq \neq }^{y,z}\Vert_{L^2}. 
\end{align*}
Thus, after integrating in time 
\begin{align}
     \int_0^T  NL_{f, \sim \to = }\dd\tau &\lesssim \mu^{-\frac 12 }(\delta \mu )(\mu^{-(1-\gamma)} \eps )\eps \lesssim \delta (\delta \mu)^2. \label{eq:NLfsimq}
\end{align}
Similar we estimate 
\begin{align}
    \int_0^T  NL_{h, \sim \to =}\dd\tau \lesssim \int_0^T \Vert Aa^{y,z}_=\Vert_{L^2}\Vert A h \Vert_{L^2}\Vert A \nabla_\tau  a_{\neq \neq }^{y,z}\Vert_{L^2} \dd \tau  &\lesssim \mu^{-\frac 23 }(\delta \mu)\eps^2 \lesssim \delta (\delta \mu)^2. \label{eq:NLhsimq}
\end{align}
Combining the estimates \eqref{eq:NLgsimq}, \eqref{eq:NLfsimq} and \eqref{eq:NLhsimq}, we obtain 
\begin{align*}
     \int_0^T NL_{\diamond, \sim \to =}\dd\tau&\lesssim  \delta (\delta \mu)^2.
\end{align*}

\textbf{Bound on $NL_{\sim, \diamond \to =} $,} we split this term into 
\begin{align*}
     NL_{\sim, \diamond \to = }&= \sum_{(a^1,a^2, a^3)\in\Gamma_a}\vert \langle a^{1,y,z}_=, (a_{\neq \neq  }^2\cdot \nabla_t) a_{\neq = }^{3,y,z} \rangle_{H^{N}}\vert\\
    &\le\vert \langle a^{y}_=,( (a_{\neq \neq  }\cdot \nabla_t) \p_x g_{\neq }  )\rangle_{H^N}\vert+\vert \langle a^{y}_=, ((a_{\neq \neq  }\cdot \nabla_t) \p_x \Lambda_t^{-2}f_{\neq }  )\rangle_{H^N}\vert \\
    &\quad +\vert \langle a^{z}_=,( (a_{\neq \neq  }\cdot \nabla_t)  h_{\neq } ) \rangle_{H^N}\vert \\
    &= NL_{\sim, g \to = }+ NL_{\sim, f \to = }+ NL_{\sim, h  \to = }.
\end{align*}
We estimate $NL_{\sim, g \to =}$ by 
\begin{align*}
    NL_{\sim, g \to = }&\lesssim \Vert  Aa^{y}_=\Vert_{L^2 } \Vert A a_{\neq \neq  }\Vert_{L^2 } \Vert A\p_x\nabla_t   g_{\neq }  \Vert_{L^2 } \lesssim \Vert  Aa^{y}_=\Vert_{L^2 } \Vert A a_{\neq \neq  }\Vert_{L^2 } \Vert A^g\nabla_t   g_{\neq }  \Vert_{L^2 } .
\end{align*}
Integrating in time yields
\begin{align}
    \int_0^T NL_{\sim, g \to =}\dd\tau &\lesssim \mu^{-\frac 23 } (\delta \mu ) \eps^2 . \label{eq:NLsimgq}
\end{align}
The term $NL_{\sim, f \to = }$ we estimate by 
\begin{align*}
    NL_{\sim, f \to =}&\lesssim \Vert Aa^{y,z}_= \Vert_{L^2}\Vert A a_{\neq \neq  }\Vert_{L^2} \Vert Af_{\neq} \Vert_{L^2}.
    \end{align*}
Integrating in time yields
\begin{align}
    \int_0^T NL_{\sim, f \to = }\dd\tau &\lesssim \mu^{-\frac 1 3 } (\delta \mu )(\mu^{-(1-\gamma)} \eps )\eps \le \delta (\delta \mu)^2.  \label{eq:NLsimfq}
\end{align}
We estimate $ NL_{\sim, h  \to = }$ by 
\begin{align*}
     NL_{\sim, h  \to = }&=\vert \langle Aa^{z}_=,A( (a_{\neq \neq  }\cdot \nabla_t)  h_{\neq } ) \rangle \vert \lesssim \Vert Aa^{y,z}_= \Vert_{L^2}\Vert A a_{\neq \neq} \Vert_{L^2} \Vert A\nabla_t  h_{\neq } \Vert_{L^2}.
    \end{align*}
Integrating in time yields
\begin{align}
     \int_0^T  NL_{\sim, h  \to = }\dd\tau &\lesssim \mu^{-\frac 23} (\delta \mu) \eps^2\lesssim \delta \eps^2.\label{eq:NLsimhq}
\end{align}
Combining estimates \eqref{eq:NLsimgq},  \eqref{eq:NLsimfq} and  \eqref{eq:NLsimhq} we obtain 
\begin{align*}
    \int_0^T NL_{\sim, \diamond   \to = }\dd\tau&\lesssim \delta (\delta \mu)^2.
\end{align*}
\textbf{Bound on $NL_{\diamond \to = }$,} since $ a^{y}_{==}=0$ we obtain  
\begin{align*}
    NL_{\diamond \to = }&= \sum_{(a^1,a^2, a^3)\in\Gamma_a}\vert \langle a^{1,y,z}_{==}, ((a_{\neq =  }^2\cdot\nabla_t) a_{\neq = }^{3,y,z})_{==} \rangle_{H^{N}}\vert\\
    &= \sum_{(a^1,a^2, a^3)\in\Gamma_a}\vert \langle a^{1,z}_{==}, ((a_{\neq =  }^2\cdot\nabla_t) a_{\neq = }^{3,z})_{==} \rangle_{H^{N}}\vert. \\
    &\le \vert \langle A  a^{z}_{==} ,  A((\tilde \nabla^\perp g\cdot  \tilde \nabla) h )\rangle\vert +\vert \langle A  a^{z}_{==}, A( (\tilde \nabla^\perp \tilde \Lambda_t^{-2} f \cdot \tilde \nabla) h )\rangle\vert \\
    &=NL_{g, h \to = }+NL_{f,h \to = }.
\end{align*}
For $NL_{g, h \to= }$ we use Plancherel's identity to infer 
\begin{align*}
    NL_{g, h \to = }&= \sum_{k\neq 0 }\iint \dd (\eta ,\xi ) \vert \eta k\vert  A(0,\eta,0)\vert Aa^z\vert(0,\eta,0)\vert g \vert (-k,\eta-\xi) \vert h \vert (k,\xi) \\
    &\lesssim \Vert A\p_y  a^{z}_{==}\Vert_{L^2}\Vert Ag_{\neq}\Vert_{L^2}\Vert Ah_{\neq}\Vert_{L^2}. 
\end{align*}
Integrating in time yields 
\begin{align*}
    \int_0^T NL_{g, h \to = }\dd\tau &\lesssim \mu^{-\frac 23 } (\delta \mu) \eps^2  \lesssim \delta (\delta \mu)^2.
\end{align*}
Similar we estimate 
\begin{align*}
    NL_{f, h \to = }
    &\lesssim \Vert A\p_y  a^{y,z}_{==} \Vert_{L^2}\Vert A\Lambda_t^{-1} f_{\neq}\Vert_{L^2}\Vert Ah_{\neq}\Vert_{L^2}.
\end{align*}
Finally, we estimate 
\begin{align*}
    \int_0^T  NL_{f, h \to = }\dd\tau &\lesssim \mu^{-\frac 12 } (\delta \mu) (\mu^{-(1-\gamma)} \eps) \eps  \lesssim \delta (\delta \mu)^2.
\end{align*}
Therefore, we obtain all the necessary estimates to conclude Lemma \ref{lem:qest}.   By Lemma \ref{lem:qstruc} we obtain Proposition \ref{prop:qest}. 

\section{Inviscid Damping Estimates}\label{sec:invis}
In this section, we improve the bootstrap assumption  \eqref{boot2}. First, we derive the equations for the adapted unknowns from  $\tilde v_{\sim }^y$ and $\tilde b_{\sim}^y$, then we state the main proposition, which we prove for the rest of the section. We have the equation 
\begin{align*}
&\begin{cases}
    \p_t \tilde v^y  = \mu \Delta_t \tilde v^y  + \alpha \p_z \tilde  b^y  +2\p_y^t  \Delta^{-1}_t \p_x  \tilde v^y +((b\cdot \nabla_t) b^y -(v\cdot \nabla_t) v^y  -\p_y^t \pi)_{\sim},\\
    \p_t \tilde b^y= \mu \Delta_t  \tilde b^y+ \alpha \p_z \tilde v^y+((b\cdot \nabla_t) v^y -(v\cdot \nabla_t) b^y )_{\sim },\\
    \Delta_t \pi = \p_i^tb^j\p_j^tb^i -\p_i^tv^j\p_j^tv^i.
\end{cases}
\end{align*}
Then we define the unknowns 
\begin{align*}
    \rho_1 &= \langle\p_x\rangle ^{-1} \Lambda_t \tilde v^y,& \rho_2 &=\langle \p_x\rangle^{-1} \Lambda_t\tilde b^y, 
\end{align*}
which satisfy the equations 
\begin{align*}
&\begin{cases}
    \p_t \rho_1  = \mu \Delta_t \rho_1  + \alpha \p_z \rho_2 +\p_x\p_y^t  \Delta^{-1}_t   \rho_1 +\langle\p_x\rangle ^{-1} \Lambda_t((b\cdot \nabla_t) b^y -(v\cdot\nabla_t) v^y  -\p_y^t \pi)_{\sim },\\
    \p_t \rho_2= \mu \Delta_t  \rho_2+ \alpha \p_z \rho_1-\p_x \p_y^t  \Delta^{-1}_t \rho_2+\langle\p_x\rangle ^{-1} \Lambda_t((b\cdot \nabla_t) v^y -(v\cdot \nabla_t) b^y )_{\sim},\\
    \Delta_t \pi = \p_i^tb^j\p_j^tb^i -\p_i^tv^j\p_j^tv^i.
\end{cases}
\end{align*}
Note that by definition of $\rho$ we have $\rho_{\diamond } =0$, since $\int \tilde v^y dz =v^y_{==} =0$. Then, we define the adapted unknowns 
\begin{align*}
    \tilde \rho_1&= \rho_1 +\tfrac {\p_x} {\alpha\p_z} \p_y^t \Lambda_t^{-2}\rho_2,  &
    \tilde \rho_2 &= \rho_2,
\end{align*}
which satisfy the equation
\begin{align*}
    \p_t \tilde \rho_1  &= \mu \Delta_t \tilde \rho_1  + \alpha \p_z \tilde \rho_2 + \tfrac 1 {\alpha \p_z } \p_x^2(2(\p_y^t)^2-\p_x^2)  \Delta_t^{-2} \tilde \rho_2\\
    &\quad  + \langle\p_x\rangle ^{-1}\Lambda_t((b\cdot \nabla_t) b^y -(v\cdot \nabla_t) v^y  -\p_y^t \pi)_{\sim}+\tfrac {\p_x} {\alpha\p_z \langle\p_x\rangle} \p_y^t \Lambda_t^{-1}((b\cdot \nabla_t) v^y -(v\cdot \nabla_t) b^y)_{\neq \neq },\\
    \p_t \tilde \rho_2 &= \mu \Delta_t  \tilde \rho_2+ \alpha \p_z\tilde  \rho_1+\langle\p_x\rangle^{-1} \Lambda_t((b\cdot\nabla_t) v^y -(v\cdot \nabla_t) b^y )_{\sim},\\
    \Delta_t \pi &= \p_i^tb^j\p_j^tb^i -\p_i^tv^j\p_j^tv^i.
\end{align*}
\begin{prop}\label{prop:Invest}
    Under the assumption of Theorem~\ref{thm:main} and let the bootstrap assumption on the interval $[0,T]$ hold. Then there esinsts a $C_{3,\rho}>0$  such that we obtain the estimate 
        \begin{align*}
         \Vert A\tilde  \rho\Vert_{L^\infty_T  L^2}^2 &+\int_0^T \mu \Vert A \nabla_\tau \tilde \rho\Vert_{L^2}^2 +\Vert A \sqrt{\tfrac {\p_t M} M }\tilde  \rho\Vert_{ L^2}^2\dd\tau  \le (C_1+ 2cC_2+ C_{3,\rho}\delta) \eps^2.
    \end{align*}

\end{prop}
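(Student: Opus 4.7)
The plan is to run an energy estimate on the system for $(\tilde\rho_1,\tilde\rho_2)$ in the same spirit as Lemma \ref{lem:Esim} and Proposition \ref{prop:main}. Taking the time derivative of $\tfrac12\|A\tilde\rho\|_{L^2}^2$, the dissipation $-\mu\|A\nabla_t\tilde\rho\|_{L^2}^2$ and the weight production $-\|A\sqrt{\partial_t M/M}\tilde\rho\|_{L^2}^2$ appear on the left-hand side, and the skew-symmetry of the coupling $\alpha\partial_z$ makes its direct contribution vanish. What remains on the right-hand side decomposes into: (i) the linear ghost term coming from the change of unknowns $\tilde\rho_1=\rho_1+\tfrac{\partial_x}{\alpha\partial_z}\partial_y^t\Lambda_t^{-2}\rho_2$, namely $\langle A\tilde\rho_1,A\tfrac{\partial_x^2(2(\partial_y^t)^2-\partial_x^2)}{\alpha\partial_z}\Delta_t^{-2}\tilde\rho_2\rangle$, and (ii) nonlinear forcing of the form $NL_\sim$, $NL_{\diamond\to\sim}$ acting on the velocity/magnetic field as well as a pressure contribution $NLP_\rho$ coming from $\langle\partial_x\rangle^{-1}\Lambda_t\partial_y^t\pi$.

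For the ghost term, I would mirror the linear argument from the proof of Proposition \ref{prop:lin}: the symbol $\tfrac{k^2|k,\eta-kt,l|^2}{|l|\,|k,\eta-kt,l|^4}\lesssim \tfrac{1}{|\alpha|}\tfrac{k^2}{|l|}\tfrac{1}{|k,\eta-kt,l|^2}$ is exactly the quantity absorbed by the $(1+\tfrac{1}{|\alpha|})$ factor in the definition of $m$. Hence by Lemma \ref{lem:m} (as invoked in the linear bound on $L$), one controls the ghost term by $c\|A\sqrt{\partial_t m/m}\tilde\rho\|_{L^2}^2$, contributing at most $2cC_2\varepsilon^2$ after integration in time.

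For the nonlinear forcing, the bounds $\langle\partial_x\rangle^{-1}\Lambda_t\le 1$ on $x$-nonzero modes and $\tfrac{\partial_x}{\alpha\partial_z\langle\partial_x\rangle}\partial_y^t\Lambda_t^{-1}\lesssim_\alpha 1$ on $\neq\neq$ modes reduce the velocity/magnetic nonlinearities to the same trilinear integrals $NL_\sim^a$, $NL_{\diamond\to\sim}^a$ estimated in Section \ref{sec:mainenergy}; since $\rho$ obeys the same bootstrap \eqref{boot2} as $\tilde v,\tilde b$, these repeat verbatim and yield a $\delta C\varepsilon^2$ contribution. The pressure term is treated by solving the elliptic equation $\Delta_t\pi=\partial_i^tb^j\partial_j^tb^i-\partial_i^tv^j\partial_j^tv^i$ on $\neq$ modes and using $\Lambda_t^{-1}\partial_y^t\pi$ to gain a derivative, reducing it to paraproduct splittings of the form $(a\partial^t a)$ that are then estimated against $A\tilde\rho_1$ using Lemma \ref{lem:Inviscid} for the inviscid damping gain on $v^y_{\neq\neq},b^y_{\neq\neq}$.

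The main obstacle I anticipate is the pressure estimate $NLP_\rho$, specifically the high–low interactions where a low-frequency factor coming from $g_{\neq=}$ (or $v_{==}^x$) hits a high-frequency $v^y,b^y$: there one cannot afford to lose derivatives when the elliptic multiplier $\Lambda_t^{-2}$ is paired with transport-type symbols, and one must carefully exploit the extra $\partial_x$ in $\partial_x^2\Delta_t^{-1}$ together with the $\langle t-\eta/k\rangle^{-1}$ gain from $\sqrt{\partial_t m/m}$, exactly as in the reaction/transport splitting $R_{g\to\sim,2}+T_{g\to\sim,2}$. Once this is handled, collecting all contributions gives
\[
\tfrac12\|A\tilde\rho(T)\|_{L^2}^2+\int_0^T\mu\|A\nabla_\tau\tilde\rho\|_{L^2}^2+\|A\sqrt{\tfrac{\partial_t M}{M}}\tilde\rho\|_{L^2}^2\,\mathrm d\tau\le C_1\varepsilon^2+2cC_2\varepsilon^2+C_{3,\rho}\delta\varepsilon^2,
\]
which closes the bootstrap \eqref{boot2}.
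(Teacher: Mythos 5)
Your overall architecture matches the paper's proof: an energy estimate for $\tilde\rho$, the skew-symmetric $\alpha\partial_z$ coupling dropping out, the ghost term from the change of unknowns absorbed by $c\Vert A\sqrt{\partial_t m/m}\,\tilde\rho\Vert_{L^2}^2$ via Lemma \ref{lem:m}, trilinear estimates for the convective terms, and a separate treatment of the pressure using the elliptic equation and Lemma \ref{lem:Inviscid}. However, there is a genuine gap in the step that carries the whole nonlinear part: the claimed operator bound $\langle\partial_x\rangle^{-1}\Lambda_t\le 1$ on $x$-nonzero modes is false. The symbol is $|k,\eta-kt,l|/\langle k\rangle$, which is unbounded in $\eta$, $l$ and grows linearly in $t$; the bounded direction is the opposite one, $\langle\partial_x\rangle\Lambda_t^{-1}\lesssim 1$ for $k\neq 0$, which is what makes $\tilde v^y=\langle\partial_x\rangle\Lambda_t^{-1}\rho_1$ harmless, not the forcing $\langle\partial_x\rangle^{-1}\Lambda_t(\cdot)$. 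Consequently the assertion that the nonlinear terms ``reduce to the same trilinear integrals $NL_\sim^a$, $NL_{\diamond\to\sim}^a$ \ldots and repeat verbatim'' does not go through: the $\tilde\rho$ equation carries one extra derivative compared with the $(\tilde v,\tilde b)$ equation, and with your reduction the derivative count simply does not close.

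What the paper actually does (and what your plan is missing) is a rebalancing of that derivative: the $y$-components appearing inside the nonlinearity are rewritten as $v^y_\sim,b^y_\sim=\langle\partial_x\rangle\Lambda_t^{-1}\rho_{1,2}$, so the loss is transferred to the inner factor, and the leftover outer $\Lambda_t$ is placed on $A\tilde\rho$ and absorbed by the dissipation term $\mu\Vert A\nabla_t\tilde\rho\Vert_{L^2}^2$, at the price of a factor $\mu^{-1/2}$ that must be tracked against the bootstrap sizes (this is why the paper's bounds for $NL_{\sim\to\rho}$, $NL_{f\to\rho,i}$, $NL_{g\to\rho,i}$, $NL_{h\to\rho}$ all start with $\Vert\Lambda_t A\tilde\rho\Vert_{L^2}$ rather than $\Vert A\tilde\rho\Vert_{L^2}$). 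One also needs the structural observation that $\rho_\diamond=0$, which kills the $(\neq=)$--$(\neq=)$ self-interaction and the double-average part of the pressure; without it some terms in your decomposition are not even covered by the bootstrap quantities. Your identification of the pressure as the delicate term, and the intended use of $R=\partial_y^t\Lambda_t^{-1}$, the reaction/transport splitting, and the inviscid damping of $v^y_{\neq\neq},b^y_{\neq\neq}$, is in line with the paper's $NLP_{\sim,1},NLP_{\sim,2},NLP_f,NLP_g,NLP_h$ decomposition, but it too relies on the corrected derivative bookkeeping above, so as written the plan does not yet constitute a proof.
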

\begin{lemma}\label{lem:Invlem}
Let the bootstrap assumption hold on the interval $[0,T]$. Then we obtain 
    \begin{align*}
        \frac1 2 \p_t \Vert A\tilde  \rho\Vert_{L^2}^2 &+\mu \Vert A \nabla_t \tilde \rho\Vert_{L^2}^2 +\Vert A \sqrt{\tfrac {\p_tM} M }\tilde  \rho\Vert_{L^2}^2 \le L +NL_{\sim \to \rho}+NL_{\diamond  \to \rho}+NLP_\rho+ LNL.
    \end{align*}
\textbf{Linear terms:}
\begin{align*}
    L&=  c \mu^{\frac 13 }\Vert A\tilde \rho_{\neq \neq} \Vert_{L^2}^2+ \vert \langle A\tilde \rho_1 , A\tfrac 1 {\alpha \p_z } \p_x^2(\p_x^2-2(\p_y^t)^2)  \Delta_t^{-2} \tilde \rho_2\rangle\vert .
\end{align*}
\textbf{Nonlinear forcing by $\sim$:}
\begin{align*}
    NL_{\sim \to \rho}&=\sum_{\substack{i,j=1,2 \\ a\in\{v,b\}}}\vert \langle A\tilde \rho_i, \langle\p_x\rangle ^{-1}\Lambda_t A(  (a_\sim \cdot \nabla_t) \Lambda_t^{-1}\langle\p_x\rangle \rho_j)_\sim\rangle\vert.
\end{align*}
\textbf{Nonlinear forcing by  $\sim$ and $\diamond $:}
\begin{align*}
    NL_{\diamond ,\sim \to \rho}&=\sum_{\substack{i,j=1,2 \\ a\in\{v,b\}}}\vert \langle A\tilde \rho_i, \langle \p_x\rangle^{-1} \Lambda_t A ( (a_{\neq =} \cdot \nabla_t) \Lambda_t^{-1}\langle\p_x\rangle\rho_j )_\sim\rangle \vert \\
    &\quad +\sum_{\substack{i=1,2\\ a^1,a^2\in \{v,b\}}}\vert \langle A\tilde \rho_i, \langle \p_x\rangle^{-1} \Lambda_tA( (a_\sim^1  \cdot \nabla_t) a^{2,y}_{\neq =})_\sim\rangle\vert .
\end{align*}
\textbf{Nonlinear pressure:}
\begin{align*}
     NLP_\rho &= \vert \langle  A\tilde\rho, A\langle \p_x \rangle^{-1} \Lambda_t \p_y^t  \pi_\sim\rangle\vert .
\end{align*}
\textbf{Lower nonlinear terms:}
\begin{align*}
    LNL_\rho &= \vert  \langle A\tilde \rho_1 ,\tfrac {\p_x} {\alpha\p_z \langle\p_x\rangle} \p_y^t \Lambda_t^{-1}((b\cdot \nabla_t) v^y -(v\cdot \nabla_t) b^y)_\sim\rangle\vert .
\end{align*}
\end{lemma}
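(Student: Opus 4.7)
The plan is to carry out a direct energy estimate on the coupled pair $\tilde\rho=(\tilde\rho_1,\tilde\rho_2)$ in the spirit of Lemma~\ref{lem:Esim}, using the two evolution equations derived just above the statement. I would first differentiate in time,
\begin{align*}
    \tfrac{1}{2}\p_t\|A\tilde\rho\|_{L^2}^2 = \sum_{i=1,2}\langle A\p_t\tilde\rho_i,A\tilde\rho_i\rangle + \sum_{i=1,2}\langle (\p_t A)\tilde\rho_i,A\tilde\rho_i\rangle,
\end{align*}
substitute the $\tilde\rho_i$-equations into the first sum, and exploit the definition $A=\e^{-c\mu^{1/3}t\mathbf{1}_{k\neq 0}}M^{-1}\langle\cdot\rangle^N$ in the second.

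For the linear contributions, the $\p_tA$ piece produces the good quantities $-c\mu^{1/3}\|A\tilde\rho_{\neq\neq}\|_{L^2}^2$ (for $k\neq0$) and $-\|A\sqrt{\p_tM/M}\tilde\rho\|_{L^2}^2$. I would keep the second on the left-hand side, while trivially upper-bounding the first by its absolute value and placing it on the right as the first term of $L$; this enhanced-dissipation buffer will later be absorbed via Lemma~\ref{lem:Mmu}. The dissipative contribution $\langle A\tilde\rho, A\mu\Delta_t\tilde\rho\rangle=-\mu\|A\nabla_t\tilde\rho\|_{L^2}^2$ also moves to the left. The leading Alfv\'en-type cross-coupling vanishes,
\begin{align*}
    \langle A\tilde\rho_1,A\alpha\p_z\tilde\rho_2\rangle + \langle A\tilde\rho_2,A\alpha\p_z\tilde\rho_1\rangle = 0,
\end{align*}
by antisymmetry of $\p_z$; this is well-defined because $\tilde\rho_\diamond=0$ (indeed $\tilde v^y_{==}=\tilde b^y_{==}=0$ follow from incompressibility of the $x$-average together with mean-free periodicity in $y$), so $\p_z^{-1}$ never acts on the zero $z$-mode. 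The only remaining linear residue is the singular coupling $\langle A\tilde\rho_1, A\tfrac{1}{\alpha\p_z}\p_x^2(\p_x^2-2(\p_y^t)^2)\Delta_t^{-2}\tilde\rho_2\rangle$, which furnishes the second term of $L$.

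For the nonlinear contributions I would apply the standard splitting
\begin{align*}
    \bigl((a^1\cdot\nabla_t)a^2\bigr)_\sim &= \bigl((a^1_\sim\cdot\nabla_t)a^2_\sim\bigr)_\sim + \bigl((a^1_{\neq=}\cdot\nabla_t)a^2_\sim\bigr)_\sim \\
    &\quad + \bigl((a^1_\sim\cdot\nabla_t)a^2_{\neq=}\bigr)_\sim + \bigl((a^1_{\neq=}\cdot\nabla_t)a^2_{\neq=}\bigr)_\sim
\end{align*}
to each bilinear forcing in the $\tilde\rho_i$ equations and regroup the resulting pieces: contributions with both factors of type $\sim$ assemble into $NL_{\sim\to\rho}$; contributions with at least one factor of type $\neq=$ (which can be rewritten in terms of $f$, $g$, $h$) assemble into $NL_{\diamond\to\rho}$; the pressure-driven forcing $\langle A\tilde\rho,A\langle\p_x\rangle^{-1}\Lambda_t\p_y^t\pi_\sim\rangle$ is recorded as $NLP_\rho$; and the singular forcing in the $\tilde\rho_1$ equation dressed by $\tfrac{\p_x}{\alpha\p_z\langle\p_x\rangle}\p_y^t\Lambda_t^{-1}$ falls into $LNL_\rho$. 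Taking absolute values yields the stated $\le$ inequality. The main obstacle is purely bookkeeping---verifying the $\alpha\p_z$ cancellation and the safe use of $\p_z^{-1}$, both consequences of $\tilde\rho_\diamond=0$---after which the argument is a routine algebraic rearrangement mirroring the proof of Lemma~\ref{lem:Esim}.
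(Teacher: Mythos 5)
Your overall route---the direct $L^2$ energy identity for $A\tilde\rho$, substitution of the $\tilde\rho_i$-equations, the antisymmetric cancellation of the $\alpha\p_z$ coupling (legitimate because $\tilde\rho_\diamond=0$), and the four-fold splitting of each quadratic forcing---is the same as the paper's. But your regrouping does not reproduce the right-hand side of the lemma: you assign ``contributions with at least one factor of type $\neq=$'' to $NL_{\diamond,\sim\to\rho}$, whereas $NL_{\diamond,\sim\to\rho}$ as defined contains only the two mixed families, $(a_{\neq=}\cdot\nabla_t)\Lambda_t^{-1}\langle\p_x\rangle\rho_j$ and $(a^1_\sim\cdot\nabla_t)a^{2,y}_{\neq=}$; there is no term available to receive the $(\neq=)\times(\neq=)$ interaction $((a^1_{\neq=}\cdot\nabla_t)a^{2,y}_{\neq=})_\sim$ (contrast Lemma~\ref{lem:Esim}, where the analogous contribution is kept as $\langle A a^1_{==},A((a^2_{\neq=}\cdot\nabla_t)a^3_{\neq=})\rangle$). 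To obtain the lemma as stated you must show that these contributions vanish, and this is precisely where $\rho_{==}=\rho_{\neq=}=0$ has to be used a second time: a product of two $z$-independent factors is $z$-independent, so its $\sim$-projection is its double average $(\,\cdot\,)_{==}$, and testing against $A\tilde\rho_i$ only sees $\tilde\rho_{i,==}=0$, whence $\langle A\tilde\rho_i,\langle\p_x\rangle^{-1}\Lambda_t A((a^1_{\neq=}\cdot\nabla_t)a^{2,y}_{\neq=})_\sim\rangle=0$. This vanishing is the substantive content of the paper's proof of Lemma~\ref{lem:Invlem}; you have the needed fact ($\tilde\rho_\diamond=0$) in hand but deploy it only for the well-definedness of $\p_z^{-1}$ and the Alfv\'en cancellation, so as written your decomposition leaves these terms unaccounted for.

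A smaller remark: you take the exponential factor in $A$ as decaying, so for you the $\p_tA$ contribution $-c\mu^{\frac13}\|A\tilde\rho_{\neq}\|_{L^2}^2$ is a good term, and inserting $+c\mu^{\frac13}\|A\tilde\rho_{\neq\neq}\|_{L^2}^2$ into $L$ is merely harmless slack (dropping a nonpositive term and adding a nonnegative one preserves the inequality, and $\tilde\rho_{\neq}=\tilde\rho_{\neq\neq}$ since $\rho_{\neq=}=0$). For the weight that actually produces the enhanced-dissipation conclusions of Theorem~\ref{thm:main}, the factor on the $k\neq0$ modes must grow in time, in which case this term appears with a positive sign from $\p_tA$ and is exactly what Proposition~\ref{prop:Invest} later absorbs through Lemma~\ref{lem:Mmu} and the bootstrap; your phrase about an ``enhanced-dissipation buffer absorbed via Lemma~\ref{lem:Mmu}'' matches that reading rather than the computation you performed. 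Either way the stated inequality holds, so this is a point of interpretation rather than an error; the genuine gap is the unhandled $(\neq=)\times(\neq=)$ interaction above.
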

We obtain this Lemma by direct calculations and using that 
\begin{align*}
    ((v\cdot \nabla_t) v^y)&=((v_\sim \cdot \nabla_t) v_\sim^y )+((v_\sim \cdot \nabla_t) v_{\neq=}^y)+((v_{\neq=}\cdot \nabla_t) v_\sim^y )+((v_{\neq=}\cdot \nabla_t) v_{\neq=}^y)\\
    &=((v_\sim \cdot \nabla_t) \langle \p_x\rangle \Lambda_t^{-1} \rho_1 )+((v_\sim \cdot \nabla_t) v_{\neq=}^y)+((v_{\neq=}\cdot \nabla_t)  \langle \p_x\rangle \Lambda_t^{-1} \rho_1 )+((v_{\neq=}\cdot \nabla_t) v_{\neq=}^y).
\end{align*}
For $\rho$ the ${\neq=},{\neq=}$ interaction vanishes. I.e. since $\rho_{==}=\rho_{\neq=}=0$ we obtain 
\begin{align*}
    \langle A\tilde \rho_1 , \langle \p_x\rangle^{-1} \Lambda_t A ((v_{\neq=}\cdot \nabla_t) v_{\neq=}^y)_\sim \rangle =\langle A(\tilde \rho_{1,==}+\tilde \rho_{1,\neq=}) , \langle \p_x\rangle^{-1} \Lambda_t A ((v_{\neq=}\cdot \nabla_t) v_{\neq=}^y)_\sim \rangle=0
\end{align*}
and similar for the terms involving $b$ and $\rho_2$.

\subsection{Proof of Proposition \ref{prop:Invest}}
To prove Proposition \ref{prop:Invest}, we estimate all the terms of Lemma \ref{lem:Invlem}. Again, we only distinguish between different choices of $a_{\neq =}$, for $\rho_i$ and $a_\sim^i$ we drop the indices.  \\
\textbf{Linear estimates} we estimate 
\begin{align*}
    \vert \langle A\tilde \rho_1 , A\tfrac 1 {\alpha \p_z } \p_x^2(\p_x^2-2(\p_y^t)^2)  \Delta_t^{-2} \tilde \rho_2\rangle\vert &\le  c \Vert \sqrt{\tfrac {\p_t m } m} A\rho\Vert_{L^2}^2. 
\end{align*}
Furthermore, by Lemma \ref{lem:Mmu}  we have $$\mu^{\frac 1 3 } \Vert A\tilde \rho_{\neq \neq} \Vert_{L^2L^2}^2\le\Vert \sqrt{\tfrac{\p_t M_\mu}{M_\mu}}A\tilde \rho_{\neq \neq} \Vert_{L^2L^2}^2+\mu\Vert A\nabla_t \tilde \rho_{\neq \neq} \Vert_{L^2L^2}^2. $$
Therefore,
\begin{align*}
    \int_0^T L\dd\tau &\le  3 c C_1 \eps^2.
\end{align*}

\textbf{Bound on $NL_{\sim \to \rho}$,} we use that $ \rho_{\diamond}=0$ and $(a_{= } \cdot \nabla_t )\Lambda_t^{-1}\langle\p_x\rangle  \rho_{= \neq }=(a^{y,z} \cdot \nabla ) \Lambda^{-1} \rho_{= \neq }$ to infer 
\begin{align*}
     NL_{\sim \to \rho}&=\vert \langle A\tilde \rho, \langle\p_x\rangle ^{-1}\Lambda_t A( ( a_\sim \cdot \nabla_t) \Lambda_t^{-1}\langle\p_x\rangle  \rho)_{\sim}\rangle\vert  \\
     &=\vert \langle A\tilde \rho, \langle\p_x\rangle ^{-1}\Lambda_t A(  ((a_{\neq \neq }+a_{= }) \cdot \nabla_t )\Lambda_t^{-1}\langle\p_x\rangle  (\rho_{\neq \neq }+\rho_{= \neq }))\rangle\vert  \\
     &\lesssim \Vert \Lambda_t \tilde \rho \Vert_{L^2 } (\Vert A \rho \Vert_{L^2 } \Vert  Aa_{\neq \neq } \Vert_{L^2 } +\Vert A \rho_{\neq\neq  } \Vert_{L^2 } \Vert  Aa_\sim  \Vert_{L^2 }+\Vert A a_{= }^{y,z} \Vert_{L^2}\Vert A \rho_{=\neq } \Vert_{L^2})\\
     &\lesssim \Vert \Lambda_t \tilde \rho \Vert_{L^2 } (\Vert A \rho \Vert_{L^2 } \Vert  Aa_{\neq \neq } \Vert_{L^2 } +\Vert A \rho_{\neq\neq  } \Vert_{L^2 } \Vert  Aa_\sim  \Vert_{L^2 }+\Vert A  a_{= }^{y,z}  \Vert_{L^2}\Vert A \nabla \rho_{=\neq } \Vert_{L^2}).
\end{align*}
Integrating in time yields 
\begin{align*}
     \int_0^T NL_{\sim \to \rho}\dd\tau &\lesssim \mu^{-\frac 23 } \eps^3 +  \mu^{-1} (\delta \mu) \eps^2 \lesssim \delta \eps^2. 
\end{align*}
\textbf{Bound on $NL_{\diamond \to \rho}$,} we insert the different terms of $v_{\neq =}=(-\p_y^t \Lambda_t^{-2}f, \p_x \Lambda_t^{-2}f,h_1)_{\neq} $ and $v_{\neq =}=(-\p_y^t f, \p_x f,h_2)_{\neq}$ to split 
\begin{align*}
    NL_{ \diamond,\sim\to \rho}&\le \vert \langle A\tilde \rho, \langle\p_x\rangle^{-1}\Lambda_t A( (\tilde \nabla^\perp \tilde \Lambda_t^{-2} f_{\neq}  \cdot \tilde \nabla) \Lambda_t^{-1}  \langle\p_x\rangle \rho)\rangle\vert +\vert \langle A\tilde \rho, \langle\p_x\rangle^{-1}\Lambda_tA( (a_\sim  \cdot \nabla_t) \p_x \tilde \Lambda_t^{-2} f_{\neq}) \rangle\vert   \\
    &\quad +\vert \langle A\tilde \rho, \langle\p_x\rangle^{-1}\Lambda_t A ( (\tilde \nabla^\perp g_{\neq}  \cdot \tilde \nabla) \Lambda_t^{-1}  \langle\p_x\rangle\rho ) \rangle\vert +\vert \langle A\tilde \rho, \langle\p_x\rangle^{-1}\Lambda_t A( (a_\sim  \cdot \nabla_t) \p_x  g_{\neq}) \rangle\vert   \\
    &\quad +\vert \langle A\tilde \rho, \langle\p_x\rangle^{-1}\Lambda_tA ( h_{i,\neq }  \p_z \langle\p_x\rangle  \Lambda_t^{-1}  \rho) \rangle\vert  \\
    &= NL_{f\to \rho,1}+NL_{f\to \rho,2}+NL_{g\to \rho,1}+NL_{g\to \rho,2}+NL_{h\to \rho}
\end{align*}
and estimate the terms separately. \underline{To bound $NL_{f\to \rho,1}$} we estimate 
\begin{align*}
    NL_{f\to \rho,1}&=\vert \langle A\tilde \rho,  \langle\p_x\rangle^{-1}\Lambda_t A( (\tilde \nabla^\perp_t \tilde \Lambda_t^{-2} f_{\neq}  \cdot \tilde \nabla_t) \Lambda_t^{-1}   \langle\p_x\rangle\rho)\rangle\vert \lesssim e^{-c\mu^{-\frac 13 }t }\Vert \Lambda_t A\tilde \rho\Vert_{L^2 } \Vert \sqrt{\tfrac {\p_t m } m }Af\Vert_{L^2 } \Vert A \rho\Vert_{L^2 } . 
\end{align*}
Integrating in time yields 
\begin{align}
    \int_0^T NL_{f\to \rho,1} \dd\tau 
    &\lesssim \mu^{-\frac 12 } \eps^2 (\mu^{-(1-\gamma)} \eps ) .\label{eq:frho1}
\end{align}
 \underline{To bound $NL_{f\to \rho,2}$} we estimate
\begin{align*}
     NL_{f\to \rho,2}&=\vert \langle A\tilde \rho,\langle\p_x\rangle^{-1} \Lambda_tA( (a_\sim  \cdot \nabla_t) \p_x  \Lambda_t^{-2}f_{\neq}) \rangle\vert \lesssim \Vert A\Lambda_t \tilde \rho \Vert_{L^2} \Vert A a_\sim \Vert_{L^2} \Vert \sqrt{\tfrac {\p_t m } m }A f\Vert_{L^2 }.
\end{align*}
Integrating in time yields 
\begin{align}
    \int_0^T NL_{f\to \rho,2} \dd\tau &\lesssim \mu^{-\frac 12 } \eps^2 (\mu^{-(1-\gamma)} \eps ) .\label{eq:frho2}
\end{align}
 \underline{To bound $NL_{g\to \rho,1}$}, we split 
\begin{align*}
    NL_{g\to \rho,1}&= \vert\langle A\tilde \rho, \langle\p_x\rangle^{-1}\Lambda_t A((\tilde \nabla^\perp g_{\neq }  \cdot \tilde \nabla) \Lambda_t^{-1}  \langle\p_x\rangle\rho_{\neq\neq })\rangle\vert + \vert\langle A\tilde \rho, \langle\p_x\rangle^{-1}\Lambda_t A(\p_xg_{\neq }   \p_y \vert \p_y \vert^{-1}  \rho_{= \neq  })\rangle\vert.
\end{align*}
We estimate 
\begin{align*}
    \vert \langle A\tilde \rho, \langle\p_x\rangle^{-1}\Lambda_t A((\tilde \nabla^\perp g_{\neq }  \cdot \tilde \nabla) \Lambda_t^{-1}  \langle\p_x\rangle\rho_{\neq\neq })\rangle\vert 
    &\lesssim  e^{-c\mu^{\frac 13}t}\Vert \Lambda_t A \tilde \rho \Vert_{L^2 }\Vert \vert \tilde \nabla\vert  \Lambda_t^{-1}A \rho_{\neq \neq } \Vert_{L^2 }\Vert A\nabla g_{\neq } \Vert_{L^2}\\
    &\lesssim  e^{-c\mu^{\frac 13}t}\langle t \mu^{\frac 13 }\rangle \Vert \Lambda_t A \tilde \rho \Vert_{L^2 }\Vert \vert \tilde \nabla\vert  \Lambda_t^{-1}A \rho_{\neq \neq } \Vert_{L^2 }\Vert A^gg_{\neq } \Vert_{L^2}.
\end{align*}
Then by using 
$$\Vert \vert \tilde \nabla\vert  \Lambda_t^{-1}A \tilde \rho_{\neq \neq } \Vert_{L^2 }\le \left(\Vert A \tilde  \rho_{\neq \neq } \Vert_{L^2 }+t\Vert \sqrt{\tfrac {\p_t m }m }A \tilde \rho_{\neq \neq } \Vert_{L^2 }  \right)$$
we infer 
\begin{align}\begin{split}
    \vert \langle A\tilde \rho, &\langle\p_x\rangle^{-1}\Lambda_t A((\tilde \nabla^\perp g_{\neq }  \cdot \tilde \nabla) \Lambda_t^{-1}  \langle\p_x\rangle\rho_{\neq\neq })\rangle\vert \\
    &\lesssim  e^{-c\mu^{\frac 13}t}\langle t \mu^{\frac 13 }\rangle \Vert \Lambda_t A \tilde \rho \Vert_{L^2 }\left(\Vert A \rho_{\neq \neq } \Vert_{L^2 }+t\Vert \sqrt{\tfrac {\p_t m }m }A \rho_{\neq \neq } \Vert_{L^2 }  \right)\Vert A^gg_{\neq } \Vert_{L^2}.\label{eq:gtor11}
\end{split}\end{align}
For the term, including the average, we estimate 
\begin{align}\begin{split}
    \vert\langle A\tilde \rho, \langle\p_x\rangle^{-1}\Lambda_t A(\p_xg_{\neq }   \p_y \vert \p_y \vert^{-1}  \rho_{= \neq  })\rangle \vert&\le \Vert \Lambda_t A \tilde \rho \Vert_{L^2 }\Vert A \rho_{=\neq } \Vert_{L^2} \Vert  A\p_x g_{\neq } \Vert_{L^2 }\\
    &\le \Vert \Lambda_t A \tilde \rho \Vert_{L^2 }\Vert A \rho_{=\neq } \Vert_{L^2} \Vert  A^gg_{\neq } \Vert_{L^2 }.\label{eq:gtor12}
\end{split}\end{align}
Combining estimates \eqref{eq:gtor11} and \eqref{eq:gtor12} we infer  
\begin{align*}
    NL_{g\to \rho,1}&\lesssim e^{-c\mu^{\frac 13}t}\langle t \mu^{\frac 13 }\rangle \Vert \Lambda_t A \tilde  \rho \Vert_{L^2 }\left(\Vert A \rho_{\neq \neq } \Vert_{L^2 }+t\Vert \sqrt{\tfrac {\p_t m }m }A \rho_{\neq \neq } \Vert_{L^2 }  \right)\Vert A^gg_{\neq } \Vert_{L^2}\\
    &\quad +\Vert \Lambda_t A\tilde  \rho \Vert_{L^2 }\Vert A \rho_{=\neq } \Vert_{L^2} \Vert  A^gg_{\neq } \Vert_{L^2 }. 
\end{align*}
Integrating in time yields 
\begin{align}
    \int_0^T  NL_{g\to \rho,1}\dd\tau &\lesssim \mu^{-\frac 56 } \eps^3 \lesssim \delta \eps^2 .\label{eq:grho1}
\end{align}
\underline{To bound  $NL_{g\to \rho,2}$}, we use that $g$ is independent of $z$ to split  
\begin{align*}
( (a_\sim  \cdot \nabla_t) \p_x g_{\neq })_{\ndiamond }&=( (a_{\sim }^{x,y}  \cdot \tilde \nabla_t) \p_x g_{\neq })_{\ndiamond }\\
&=( a_{\sim }^x  \p_x^2 g_{\neq })_{\ndiamond } +( a_{\neq \neq }^y  \p_y^t \p_x g_{\neq })_{\ndiamond } +( a_{= \neq }^y  \p_y^t \p_x g_{\neq })_{\ndiamond }+\underbrace{( (a_{== }^x  \p_x) \p_x g_{\neq })_{\ndiamond }}_{=0}.
\end{align*}
We estimate all terms separately 
\begin{align*}
    \vert \langle A\tilde \rho, \langle\p_x\rangle^{-1}\Lambda_t A( a_\sim^x   \p_x^2 g) \rangle\vert &\lesssim \Vert A\Lambda_t\tilde  \rho \Vert_{L^2}\Vert A a^x_\sim \Vert_{L^2} \Vert A^g g_{\neq} \Vert_{L^2 },\\
\vert \langle A\tilde \rho, \langle\p_x\rangle^{-1}\Lambda_t A( a_{\neq \neq }^y   \p_y^t \p_x g_{\neq}) \rangle\vert &\lesssim e^{-c\mu^{\frac 13 } t} \Vert A\Lambda_t \tilde \rho \Vert_{L^2}\Vert A a^y_{\neq \neq } \Vert_{L^2} \Vert A \p_y^t g_{\neq} \Vert_{L^2 }\\
&\lesssim t \langle t \mu^{\frac13 } \rangle  e^{-c\mu^{\frac 13 } t} \Vert A\Lambda_t \tilde \rho \Vert_{L^2}\Vert A a^y_{\neq \neq } \Vert_{L^2} \Vert A^g g_{\neq} \Vert_{L^2 },\\
\vert \langle A\tilde \rho, \langle\p_x\rangle^{-1}\Lambda_t A( a_{= \neq }^y   \p_y^t \p_x g_{\neq}) \rangle\vert &\le  \Vert A\Lambda_t \tilde \rho \Vert_{L^2}\Vert A a_{=    }^y \Vert_{L^2} \Vert A^g\p_y^t g_{\neq} \Vert_{L^2 }.
\end{align*}
Integrating in time, using the bootstrap assumption and Lemma \ref{lem:Inviscid}, we estimate
\begin{align}
    \int_0^T NL_{g\to \rho,2}\dd\tau &= \mu^{-\frac 56}\eps^3+\mu^{-1}(\delta \mu)\eps^2\le \delta \eps^2 .\label{eq:grho2}
\end{align}
\underline{To bound $NL_{h\to \rho}$}, we estimate 
\begin{align*}
    NL_{h\to \rho}&=\vert \langle A\tilde \rho, \langle\p_x\rangle^{-1}\Lambda_t A ( h_{i,\neq }  \p_z \p_x  \Lambda_t^{-1}  \rho) \rangle\vert \lesssim \Vert A\Lambda_t\tilde  \rho \Vert_{L^2}\Vert A h_{\neq }\Vert_{L^2}\Vert A\p_z\Lambda_t^{-1} \rho \Vert_{L^2}\\
    &\lesssim \Vert A\Lambda_t \tilde \rho \Vert_{L^2}\Vert A h_{\neq }\Vert_{L^2}\Vert A \rho \Vert_{L^2}.
\end{align*}
Integrating in time yields 
\begin{align}
    \int_0^T   NL_{h\to \rho} \dd\tau &\lesssim \mu^{-\frac 23} \eps^3\le \delta \eps^2.\label{eq:hrho}
\end{align}
Combining the estimates \eqref{eq:frho1}, \eqref{eq:frho2}, \eqref{eq:grho1}, \eqref{eq:grho2} and \eqref{eq:hrho} we obtain 
\begin{align*}
    \int_0^T NL_{ \diamond\to \rho}\dd\tau \lesssim  \delta \eps^2. 
\end{align*}
\textbf{Bound on $LNL_\rho$}, the $LNL_\rho$ term can be estimated by the same steps as the $NL$ terms. \\
\textbf{Bound on $NLP_\rho$}, we bound the term 
\begin{align*}
    NLP_\rho&=\langle  A\tilde\rho_1, A\langle \p_x \rangle^{-1} \Lambda_t \p_y^t  \pi_\sim\rangle. 
\end{align*}
We show first 
\begin{align}
    NLP_\rho&\le 2NLP_{\sim,1} +NLP_{\sim,2}+2 NLP_{f}+ 2 NLP_{g}+ 2 NLP_{h}\label{eq:NLPspli}
\end{align}
with 
\begin{align*}
    NLP_{\sim,1}&= \sum_{i\in\{x,y,z\}}\vert \langle\langle \p_x \rangle^{-1}  RA\tilde \rho_1 ,A(\p_j^t v^y_\sim  \p_y^t v^j_\sim-\p_j^t b^y_\sim  \p_y^t b^j_\sim)\rangle \vert, \\
    NLP_{\sim,2}&= \sum_{i,j\in\{x,z\}}\vert \langle \langle \p_x \rangle^{-1} RA\tilde \rho_1 ,A(\p_j v^i_\sim  \p_i v^j_\sim-\p_j b^i_\sim  \p_i b^j_\sim)\rangle \vert, \\
    NLP_{f}&= \sum_{i,j\in\{x,y\}}\vert \langle \langle \p_x \rangle^{-1} RA\tilde \rho_1 ,A(\p_j^t v^i_\sim  \p_i^tv_{\neq =}^j )\rangle \vert, \\
    NLP_{g}&= \sum_{i,j\in\{x,y\}}\vert \langle\langle \p_x \rangle^{-1}  RA\tilde \rho_1 ,A(\p_j^t b^i_\sim  \p_i^tb_{\neq =}^j) \rangle \vert, \\
    NLP_{h}&= \sum_{i\in\{x,y\}}\vert \langle \langle \p_x \rangle^{-1} RA\tilde \rho_1 ,A(\p_z v^i_\sim  \p_i^th_{1,\neq}+\p_z b^i_\sim  \p_i^th_{2,\neq}) \rangle \vert,
\end{align*}
Where we denote $R=  \p_y^t\Lambda_t^{-1}$ which is a bounded operator $R:L^2 \to L^2$. For the pressure we have 
\begin{align*}
    \Delta_t \pi_\sim  &= \sum_{i,j\in\{x,y,z\}}(\p_j^t v^i \p_i^t v^j-\p_j^t b^i \p_i^t b^j)_\sim\\
    &= \sum_{i,j\in\{x,y,z\}}(\p_j^t v^i_\sim  \p_i^t v^j_\sim -\p_j^t b^i_\sim  \p_i^t b^j_\sim )_\sim\\
    &\quad +2 \sum_{i,j\in\{x,y,z\}}(\p_j^t v^i_\sim  \p_i^t v^j_{\neq =} -\p_j^t b^i_\sim  \p_i^t b^j_{\neq =}  )_\sim\\
    &\quad + \sum_{i,j\in\{x,y,z\}}(\p_j^t v^i_{\neq =}  \p_i^t v^j_{\neq =} -\p_j^t b^i_{\neq =}  \p_i^t b^j_{\neq =}  )_{==}\\
    &= P_1+P_2 +P_3. 
\end{align*}
So we have 
\begin{align*}
    NLP&\le \vert \langle  \langle \p_x \rangle^{-1} R A\tilde\rho_1, A(P_1+P_2+P_3 )\rangle\vert . 
\end{align*}
We split the term 
\begin{align*}
    \p_j^t v^i_\sim  \p_i^t v^j_\sim&= (2-\textbf{1}_{j=y}) \p_j v^y_\sim  \p_y^t v^j_\sim + \textbf{1}_{i,j\neq y } \p_j^t v^i_\sim  \p_i^t v^j_\sim
\end{align*}
and so 
\begin{align*}
    \langle  \langle \p_x \rangle^{-1} R A\tilde\rho_1, AP_1\rangle\vert &\le 2NLP_{\sim,1} +NLP_{\sim,2}. 
\end{align*}
For the $P_2$ we have  $\vert \langle  \langle \p_x \rangle^{-1} R A\tilde\rho_1, AP_2 \rangle \le  NLP_{f}+ NLP_{g}+ NLP_{h}$ by distinguishing between  $v_{\neq=}$ and $b_{\neq=}$ and $j=z$ and $j\neq z$. For $P_3$ we have 
\begin{align*}
    \langle  \langle \p_x \rangle^{-1} R A\tilde\rho_1, AP_1\rangle =\langle  \langle \p_x \rangle^{-1} R A\tilde\rho_{1,==}, AP_1\rangle=0. 
\end{align*}
Therefore, \eqref{eq:NLPspli} holds. In the following, we estimate the different terms. 

\underline{Bound on $NLP_{\sim,1}$,} by $v^y_\sim= \langle \p_x\rangle \Lambda_t^{-1}\rho_{1,\neq \neq}+ v^{y}_{=}  $ and $b^y_\sim= \langle \p_x\rangle \Lambda_t^{-1}\rho_{2,\neq \neq}+ v^{y}_{=}  $ we infer
\begin{align*}
    NLP_{\sim,1}&\lesssim \Vert AR\tilde \rho_{1} \Vert_{L^2} (\Vert A \rho_{\neq \neq } \Vert_{L^2}+\Vert A \nabla  a_{= }^{y,z}\Vert_{L^2}) \Vert A\nabla_t (v ,b ) \Vert_{L^2}.
\end{align*}
Therefore, after integrating in time 
\begin{align*}
    \int_0^T NLP_{\sim,1} \dd\tau &\lesssim \mu^{-\frac 23} \eps^3 + \mu^{-1} \delta \mu \eps^2 \lesssim\delta  \eps^2 
\end{align*}

\underline{Bound on $NLP_{\sim,2}$,} since  $\tilde \rho_i= \tilde \rho_{i,\neq \neq }+ \rho_{i,=\neq }= \tilde \rho_{i,\neq \neq }+ \Lambda a^{y}_{=}$ for some $a\in\{v,b\}$ we infer 
\begin{align*}
    NLP_{\sim,2}&\lesssim (\Vert  AR\tilde \rho_{\neq\neq} \Vert_{L^2}+\Vert A\nabla  a^{y}_{=} \Vert_{L^2}) \Vert A (v ,b )\Vert_{L^2} \Vert A\nabla_t (v ,b ) \Vert_{L^2}.
\end{align*}
Thus,  after integrating in time 
\begin{align*}
    \int_0^T NLP_{\sim,2} \dd\tau &\lesssim \mu^{-\frac 23} \eps^3 + \mu^{-1} \delta \mu \eps^2 \lesssim \delta  \eps^2. 
\end{align*}

\underline{Bound on $NLP_{f}$,}  we use $\Vert A\nabla_t v_{\neq =}\Vert_{L^2}\approx \Vert Af_{\neq}\Vert_{L^2}$ to infer 
\begin{align*}
    NLP_f & \lesssim \Vert AR\tilde  \rho_1\Vert_{L^2} \Vert A\nabla_t v\Vert_{L^2} \Vert Af_{\neq} \Vert_{L^2}. 
\end{align*}
Integrating in time yields 
\begin{align*}
    \int_0^T NLP_{f} \dd\tau &\lesssim \mu^{-\frac 23} (\mu^{-(1-\gamma)} \eps)\eps^2\lesssim \delta  \eps^2. 
\end{align*}

\underline{Bound on $NLP_{g}$}, we split this term 
\begin{align*}
     NLP_{g}&=\vert \langle \langle \p_x \rangle^{-1} RA\tilde \rho_1 ,A(\p_x  b^x_\sim  \p_x\p_y^t  g_{\neq} ) \rangle \vert  +\vert \langle  \langle \p_x \rangle^{-1} RA\tilde \rho_1 ,A(\p_x  b^y_\sim  (\p_y^t)^2  g_{\neq} ) \rangle \vert\\
     &\quad +\vert \langle \langle \p_x \rangle^{-1} RA\tilde \rho_1 ,A(\p_y^t  b^y_\sim  \p_x^2  g_{\neq} ) \rangle \vert +\vert \langle \langle \p_x \rangle^{-1} RA\tilde \rho_1 ,A(\p_y^t  b^y_\sim  \p_x\p_y^t   g_{\neq} ) \rangle \vert\\
     &= NLP_{g,1}+NLP_{g,2}+NLP_{g,3}+NLP_{g,4}.
\end{align*}
Using  $\p_x  b^x_\sim= \p_x  b^x_{\neq \neq }$ we estimate 
\begin{align*}
     NLP_{g,1}&\lesssim e^{-c\mu^{\frac 13 } t }\Vert  A\tilde \rho_{1} \Vert_{L^2} \Vert A  b^x_{\neq \neq} \Vert _{L^2}  \Vert A \p_y^t  g_{\neq} \Vert _{L^2},\\
     NLP_{g,2}&\lesssim e^{-c\mu^{\frac 13 } t }\Vert  A\tilde \rho_{1} \Vert_{L^2} \Vert A  b^y_{\neq \neq} \Vert _{L^2}  \Vert A(\p_y^t)^2  g_{\neq} \Vert _{L^2}\\
     &\lesssim \langle t \rangle \langle t \mu^{\frac 1 3 } \rangle  e^{-c\mu^{\frac 13 } t } \Vert  A\tilde \rho_{1} \Vert_{L^2} \Vert A  b^y_{\neq \neq} \Vert _{L^2}  \Vert A^g \p_y^t  g_{\neq} \Vert _{L^2}.
\end{align*}
We  estimate directly 
\begin{align*}
    NLP_{g,3}&\lesssim \Vert  A\tilde \rho_{1} \Vert_{L^2} \Vert A \p_y^t  b^y_{\neq \neq} \Vert _{L^2}  \Vert A^g   g_{\neq} \Vert _{L^2}.
\end{align*}
With $\p_y^t b^y_\sim= \langle \p_x\rangle \p_y^t \Lambda^{-1}_t \tilde \rho_{2,\neq \neq } + \p_y b^{y}_{=}$ we infer 
\begin{align*}
    NLP_{g,4}&\lesssim \Vert  A\tilde \rho_{1} \Vert_{L^2} (\Vert A \tilde \rho_{\neq \neq }\Vert _{L^2}+\Vert A \p_y (v^{y,z}_{=}, b^{y,z}_{=})\Vert _{L^2})  \Vert A^g\p_y^t   g_{\neq} \Vert _{L^2}.
\end{align*}
Combining these estimates, integrating in time, using the Bootstrap assumption and Lemma \ref{lem:Inviscid} gives 
\begin{align*}
    \int_0^T NLP_{g} \dd\tau &\lesssim \delta  \eps^2. 
\end{align*}

The Bound on $NLP_{h}$ is done the same as $NLP_{\sim,1}$ and $NLP_{\sim,2}$. Therefore, we conclude the pressure estimate
\begin{align*}
    \int_0^T NLP_\rho \dd\tau &\lesssim \delta  \eps^2. 
\end{align*}
With this estimate, all terms of Lemma \ref{lem:Invlem} are estimated, and we obtain Proposition \ref{prop:Invest}. 

\section{Nonlinear Growth of Vorticity}\label{sec:nlgrow}
In this and the following section, we use initial data independent of $z$. Furthermore, we consider vanishing $z$ component $v^z, b^z=0$. In this section we dropp the $\sim$ for the $2d$ derivatives, i.e. $\nabla =(\p_x,\p_y)$. Therefore, the $3d$ MHD reduces to the $2d$ equations
\begin{align}\begin{split}\label{eq:nlfg}
    \p_t f   &=\mu \Delta_t f+ (\nabla^\perp \Lambda_t^{-2} f\cdot \nabla) f + (\nabla^\perp g\cdot \nabla) \Delta_t  g,   \\
    \p_t g   &=\mu \Delta_t g+ (\nabla^\perp \Lambda_t^{-2} f\cdot \nabla) g .
\end{split}\end{align}
 Before proceeding to the initial data that yield the norm inflation, we prove the following lemma, which bounds the size of the average velocity: 
\begin{lemma}\label{lem:faver}
    Let $(f,g)$ be a solution to \eqref{eq:nlfg} satisfying, $\Vert \vert\p_y\vert^{-1} f_{=,in }\Vert_{H^{N-2}}<\infty $, $\Vert f\Vert_{H^N}\le \eps_f$ and $\Vert g\Vert_{H^{N}}\le \eps_g$ for $\eps_g, \ \eps_f >0$ then it holds 
    \begin{align*}
        \Vert \vert\p_y\vert^{-1} f_=\Vert_{H^{N-2}}(t)&\lesssim \Vert \vert\p_y\vert^{-1} f_{=,in }\Vert_{H^{N-2}} + \eps_f^2 +\mu^{-\frac 23 } \eps_g^2
    \end{align*}
    for all $t\le \mu^{-\frac 13 }$. 
\end{lemma}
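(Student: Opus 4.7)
The plan is to derive a closed evolution for $f_=$, reveal a full $\p_y^2$ structure on the right-hand side, then invert to obtain an equation for $|\p_y|^{-1}f_=$ whose forcing is controlled by product estimates in $H^{N-2}$.

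I begin by taking the $x$-average of the first equation in \eqref{eq:nlfg}. Using the identities $(\p_x H)_= = 0$ and $(\p_y^t H)_= = \p_y H_=$, the dissipation reduces to $\mu\p_y^2 f_=$, and both nonlinear terms can be put in divergence form via $(\nabla^\perp a\cdot \nabla)b = \nabla\cdot(b\nabla^\perp a)$:
\[
((\nabla^\perp\Lambda_t^{-2}f\cdot\nabla)f)_= = -\p_y(f\,\p_x\Lambda_t^{-2}f)_=,\qquad ((\nabla^\perp g\cdot\nabla)\Delta_t g)_= = -\p_y(\Delta_t g\,\p_x g)_=.
\]
A further integration by parts in $x$ promotes each term to a second $\p_y$-derivative: with $\psi := \Lambda_t^{-2}f$ and $f = -\Delta_t\psi$, the identities $(\p_x^2 h\,\p_x h)_= = \tfrac12(\p_x(\p_x h)^2)_= = 0$ applied to $h = \psi$ and $h = g$ yield
\[
(f\,\p_x\Lambda_t^{-2}f)_= = -\p_y(\p_y^t\psi\,\p_x\psi)_=,\qquad (\Delta_t g\,\p_x g)_= = \p_y(\p_y^t g\,\p_x g)_=,
\]
so that $\p_t f_= = \p_y^2 Q$ with $Q := \mu f_= + (\p_y^t\psi\,\p_x\psi)_= - (\p_y^t g\,\p_x g)_=$.

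Since $\int f_=\,dy = 0$ for finite-energy data, inverting $\p_y^2$, integrating in time, and applying $|\p_y| = \sgn(\p_y)\p_y$ gives
\[
|\p_y|^{-1}f_=(t) = \sgn(\p_y)|\p_y|^{-1}f_{=,in} + \sgn(\p_y)\,\p_y\!\int_0^t Q(\tau)\,d\tau.
\]
Taking the $H^{N-2}$ norm reduces the estimate to controlling the three integrated contributions. The magnetic contribution is the most direct: since $\|\p_y^\tau g\|_{H^{N-1}}\lesssim\langle\tau\rangle\eps_g$ and $\|\p_x g\|_{H^{N-1}}\le\eps_g$, a Sobolev product estimate gives $\|\p_y(\p_y^\tau g\,\p_x g)_=\|_{H^{N-2}}\lesssim\langle\tau\rangle\eps_g^2$, and integrating over $[0,t]$ with $t\le\mu^{-1/3}$ yields the claimed $\mu^{-2/3}\eps_g^2$ contribution. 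The viscous term is bounded by $\mu t\eps_f\lesssim\mu^{2/3}\eps_f$, absorbed by the other bounds.

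The main obstacle is the velocity self-interaction: the uniform Fourier multiplier estimates $|\p_y^\tau\Lambda_\tau^{-2}|,|\p_x\Lambda_\tau^{-2}|\le 1/|k|$ on $x$-nonconstant modes give the pointwise-in-$\tau$ bound $\|\p_y(\p_y^\tau\psi\,\p_x\psi)_=\|_{H^{N-2}}\lesssim\eps_f^2$, but a naive time integration on $[0,\mu^{-1/3}]$ produces only $\mu^{-1/3}\eps_f^2$. To recover the sharp time-uniform $\eps_f^2$, I would keep $\p_y$ outside the time integral and integrate by parts in $\tau$ inside the bilinear form, isolating the boundary contribution $(\p_y^t\psi(t)\,\int_0^t\p_x\psi\,d\tau)_=$ and a correction driven by $\p_\tau(\p_y^\tau\psi)$ (which is itself quadratic via the $\p_\tau f$ equation, hence small). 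The key dispersive input is $\|\int_0^t \p_x\Lambda_\tau^{-2}f\,d\tau\|_{H^{N-1}}\lesssim\eps_f$, which follows from the pointwise-in-$(k,\eta)$ bound $\int_0^\infty |k|/(k^2+(\eta-k\tau)^2)\,d\tau \le \pi/|k|$ together with the quasi-stationarity of $f$ on the window $[0,\mu^{-1/3}]$. Combining the three estimates with the initial-data contribution yields the stated bound.
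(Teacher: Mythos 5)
Your structural reduction is essentially the paper's: the $x$-averaged nonlinearity is a perfect $\p_y$ (indeed $\p_y^2$) of a product, and the magnetic contribution is bounded pointwise in time by $\langle \tau\rangle\,\eps_g^2$ and integrated over $[0,\mu^{-1/3}]$ to give $\mu^{-2/3}\eps_g^2$ — the paper does the same, only packaged as an $H^{N-2}$ energy estimate on $|\p_y|^{-1}f_=$ rather than a Duhamel formula. The genuine gap is in your velocity self-interaction term, and it starts from a miscalculation of the ``naive'' bound. In the $x$-average only nonzero $x$-modes of $\psi=\Lambda_\tau^{-2}f$ contribute, and the symbols of $\p_y^\tau\Lambda_\tau^{-2}$ and $\p_x\Lambda_\tau^{-2}$ at frequency $(k,\eta)$, $k\neq0$, are bounded by $|k|^{-1}\langle\tau-\eta/k\rangle^{-1}\lesssim \langle\eta\rangle\langle\tau\rangle^{-1}$. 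Since you only need the product in $H^{N-1}$ while $f$ is assumed bounded in $H^{N}$, you can spend one derivative on each factor and get the pointwise bound $\langle\tau\rangle^{-2}\eps_f^2$, which is integrable in time and yields $\eps_f^2$ uniformly — this is exactly the paper's one-line estimate $\langle t\rangle^{-2}\Vert f_{\neq}\Vert_{H^N}^2$. Your temporal integration by parts is therefore unnecessary, and as sketched it does not hold up: the correction term is driven by $\p_\tau(\p_y^\tau\Lambda_\tau^{-2}f)$, which is \emph{not} quadratic — it contains linear-in-$f$ contributions from the explicit $\tau$-dependence of the multipliers $\p_y^\tau$ and $\Lambda_\tau^{-2}$ and from $\mu\Delta_\tau f$ — and the ``key dispersive input'' $\Vert\int_0^t\p_x\Lambda_\tau^{-2}f\,d\tau\Vert_{H^{N-1}}\lesssim\eps_f$ rests on a ``quasi-stationarity'' of $f$ that is not among the hypotheses (only the uniform bound $\Vert f\Vert_{H^N}\le\eps_f$ is assumed; in the paper's application $f$ starts from zero and undergoes transient growth on precisely this time window).

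A smaller issue: by placing $\mu f_=$ inside $Q$ you treat dissipation as a forcing and pick up an extra term of size $\mu t\,\eps_f\lesssim\mu^{2/3}\eps_f$, which is not ``absorbed by the other bounds'' in general (e.g. when $f_{=,in}=0$ and $\eps_f\ll\mu^{2/3}$), so your argument proves a slightly weaker inequality than stated. This is easily repaired: write Duhamel with the heat semigroup $e^{\mu(t-\tau)\p_y^2}$, or run the energy estimate as the paper does, where the dissipation enters with a favorable sign and is simply discarded.
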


\begin{proof}
This is a consequence of  
\begin{align*}
    \p_t \Vert \p_y\vert^{-1} f_= \Vert_{H^{N-2}}^2&\le \langle  \vert\p_y\vert^{-1} f_= ,  \vert \p_y\vert^{-1} ( (\nabla^\perp \Lambda_t^{-2} f \cdot \nabla) f+(\nabla^\perp g \cdot \nabla) \Delta_t  g  )_{=}\rangle_{H^{N-2}} \\
    &=\langle  \vert\p_y\vert^{-1} f_= ,   \vert \p_y\vert^{-1}  (\nabla^\perp_t ( (\nabla^\perp \Lambda_t^{-2} f \cdot \nabla) \nabla_t^\perp \Delta_t^{-1} f +(\nabla^\perp g \cdot \nabla) \nabla^\perp_t  g))_=\rangle_{H^{N-2}} \\
    &=\langle  \vert\p_y\vert^{-1} f_= ,   \vert \p_y\vert^{-1}  \p_y ( (\nabla^\perp \Lambda_t^{-2} f \cdot \nabla) \p_y^t \Delta_t^{-1} f +(\nabla^\perp g \cdot \nabla) \p_y^t  g)_=\rangle _{H^{N-2}}\\
    &\lesssim \Vert \vert  \p_y\vert^{-1} f_=\Vert_{H^{N-2}} \left( \langle t \rangle^{-2} \Vert f_{\neq} \Vert_{H^N }^2 +\langle t \rangle \Vert g_{\neq} \Vert_{H^N}^2 \right) 
\end{align*}
by bootstrap and integrating in time, we obtain the Lemma. 
\end{proof}
To prove the norm inflation we use the following initial data 
\begin{align}\label{eq:fgin}\begin{split}
    f_{in}(x,y)&=0, \\
    g_{in}(x,y) &= c_N \delta\mu^{\gamma } \left( 2\cos(x)\int_{\vert \eta-10 \vert \le 2 } \cos (y \eta ) \dd \eta   + \cos(2x)\int_{\vert \eta \vert \le 1 } \cos (y \eta ) \dd \eta\right),
\end{split}\end{align}
where $c_N>0$ is a constant chosen such that 
\begin{align*}
    \Vert g_{in}\Vert_{H^{N+1}} &=\delta \mu^{\gamma}.
\end{align*}

\begin{prop}\label{prop:nlgrowth2} Let $N\ge7$ and $\frac 12 < \gamma\le 1$. Assume that for $\delta,\mu_0>0$ small enough and $\frac {\gamma} 2+\frac 14  \le\tilde \gamma \le \gamma$, the corresponding solution $(f,g)$ of \eqref{MHD2fg} the initial data \eqref{eq:fgin} satisfies
\begin{align*}
    \Vert g\Vert_{L^\infty_t H^{N+1}}+ \mu^{\frac 1 6} \Vert f \Vert_{L^\infty_t H^N }&\le C \delta \mu^{\tilde \gamma}
\end{align*}
uniformly in $0<\mu\le \mu_0$. Then the corresponding solution $f,g$ of \eqref{eq:nlfg} satisfies the nonlinear growth estimate
\begin{align}
     \Vert f\Vert_{L^2}(T)   &\gtrsim \delta^2 \mu^{2\gamma-1}\label{eq:lowf}
\end{align}
for $T=\frac 1 {10} \mu^{-\frac 13 }$. Furthermore, we obtain the lower bound
\begin{align}
    \Vert g\Vert_{H^1}(t)\gtrsim \eps \label{eq:lowglin}
\end{align}
for times $t\le \frac 1 {10}\mu^{-\frac 1 3}$.
\end{prop}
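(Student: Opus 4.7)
The plan is to compare the solution with its approximate linear evolution in the moving frame. Specifically, I will approximate $g$ by the pure dissipative flow of its initial data, apply Duhamel to the equation for $f$ to isolate the Fourier mode that receives a resonant kick from the quadratic forcing $(\nabla^\perp g\cdot\nabla)\Delta_\tau g$, and verify that all remaining nonlinear contributions are strictly subdominant thanks to the hypothesis $\|f\|_{H^N}\lesssim \delta\mu^{\tilde\gamma - 1/6}$ with $\tilde\gamma\ge \gamma/2 + 1/4$.

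First I would decompose $g(t) = g^L(t) + r(t)$, where the linear part has Fourier representation $\hat g^L(t,k,\eta) = \exp\bigl(-\mu\int_0^t |k,\eta - k\tau|^2\,d\tau\bigr)\hat g_{in}(k,\eta)$. Since $\hat g_{in}$ is supported on $|k|\le 2$ and $|\eta|\le 12$, the dissipative exponent stays bounded of order one on $[0,T]$, and $g^L$ remains essentially frozen on its two initial layers: a high one at $(\pm 1,\eta)$ with $|\eta\mp 10|\le 2$ and a low one at $(\pm 2,\eta)$ with $|\eta|\le 1$. The remainder solves $\p_\tau r = \mu\Delta_\tau r + (\nabla^\perp\Lambda_\tau^{-2}f\cdot\nabla)g$, and a direct energy estimate using $\|\Lambda_\tau^{-1}f\|_{L^\infty}\lesssim \langle\tau\rangle^{-1}\|f\|_{H^N}$ gives $\|r\|_{L^\infty_T H^{N-1}}\lesssim |\log\mu|\,\delta^2\mu^{2\tilde\gamma - 1/6}$. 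Under $\tilde\gamma\ge \gamma/2 + 1/4$ this is much smaller than $\eps = \delta\mu^\gamma$, and \eqref{eq:lowglin} follows from the triangle inequality applied to $\|g^L(t)\|_{H^1}\gtrsim \|g_{in}\|_{H^1}\gtrsim \eps$.

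Second, Duhamel on the $f$ equation gives $\hat f(T,k,\eta) = \int_0^T \exp\bigl(-\mu\int_s^T |k,\eta - k\tau|^2\,d\tau\bigr)\hat N(s,k,\eta)\,ds$, where $\hat N$ is the Fourier transform of the three nonlinear forcings. I would focus on the output mode $(k_\star,\eta_\star) = (3,10)$, whose dissipation factor remains bounded below on $[0,T]$ because $\mu T^3 \lesssim 10^{-3}$. The quadratic forcing $(\nabla^\perp g^L\cdot\nabla)\Delta_\tau g^L$ admits at this mode the two convolution contributions $(\tilde k, k_\star - \tilde k) \in \{(2,1),(1,2)\}$, corresponding to the low and high layers convolving in either order; in both cases the multiplier $(\tilde\eta k_\star - \tilde k\eta_\star)\,|k_\star - \tilde k,\eta_\star - \tilde\eta - (k_\star - \tilde k)s|^2$ grows like $s^2$ for $s$ comparable to $T$, since $\Delta_\tau$ acting on a mode of bounded wavenumber in the moving frame produces a factor of order $s^2$ at large times. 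Integrating over $s\in[0,T]$ then yields a factor of order $T^3\approx \mu^{-1}$, and combining with $|\hat g^L_{\mathrm{low}}|\cdot|\hat g^L_{\mathrm{high}}|\sim \eps^2$ on a set of positive $\tilde\eta$-measure gives $|\hat f(T,k_\star,\eta_\star)|\gtrsim \delta^2\mu^{2\gamma - 1}$ on a region of positive measure, which translates directly into \eqref{eq:lowf}.

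Finally, I would control the other contributions to $\hat N$: (i) the $f$ self-interaction $(\nabla^\perp\Lambda_\tau^{-2}f\cdot\nabla)f$, bounded in $L^1_T L^2$ by $|\log\mu|\,\|f\|_{L^\infty_T H^N}^2\lesssim |\log\mu|\,\delta^2\mu^{2\tilde\gamma - 1/3}$ after exploiting $\Lambda_\tau^{-1}$ to produce a $\langle\tau\rangle^{-1}$ gain, and (ii) the cross contribution obtained by replacing one factor of $g^L$ by $r$ in the quadratic term, bounded by $T^3\|g^L\|_{H^{N+1}}\|r\|_{H^{N+1}}\lesssim |\log\mu|\,\delta^3\mu^{2\tilde\gamma + \gamma - 7/6}$. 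The constraint $\tilde\gamma\ge \gamma/2+1/4$ together with $\gamma\le 1$ makes both exponents strictly larger than $2\gamma - 1$, so these terms are absorbed into the main term for $\mu$ and $\delta$ small enough. The main obstacle is precisely this simultaneous comparison of the three scales — the resonant growth $\mu^{2\gamma - 1}$, the $f$ self-feedback $\mu^{2\tilde\gamma - 1/3}$ and the $g$ back-reaction $\mu^{2\tilde\gamma + \gamma - 7/6}$ — which is exactly what forces the window $\tilde\gamma \ge \gamma/2 + 1/4$ appearing in the statement.
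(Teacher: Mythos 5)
Your strategy is the same as the paper's: replace $g$ by the heat evolution $g_1$ of $g_{in}$, let $f_1$ be the solution driven by $(\nabla^\perp g_1\cdot\nabla)\Delta_t g_1$, extract the $t^3\mu^{2\gamma}$ growth at the output mode $k=3$, $\eta\approx 10$, and show the remainder is subdominant under the assumed bounds. The genuine gap is in the remainder estimates: your inequality $\Vert \Lambda_\tau^{-1} f\Vert_{L^\infty}\lesssim \langle\tau\rangle^{-1}\Vert f\Vert_{H^N}$, which you use both to bound $r=g-g^L$ and the $f$ self-interaction, is false on the zero $x$-mode. There $\Lambda_\tau^{-1}$ is just $|\p_y|^{-1}$, which neither gains time decay nor is controlled by $\Vert f\Vert_{H^N}$ (it diverges at low $\eta$-frequencies). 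The transport field $\nabla^\perp\Lambda_t^{-2}f$ contains the mean shear $\p_y\p_y^{-2}f_=$, and $f_=$ is generated by the nonlinearity even though $f_{in}=0$ for the data \eqref{eq:fgin}, so you must control $\Vert |\p_y|^{-1}f_=\Vert$ by some structural argument; the paper does this in Lemma \ref{lem:faver}, using that the $x$-average of the nonlinearity is a total $\p_y$-derivative, which gives $\Vert |\p_y|^{-1}f_=\Vert \lesssim \eps_f^2+\mu^{-2/3}\eps_g^2$. This mean-shear contribution is not a lower-order nuisance: it produces the largest error terms in the difference estimate (the $\mu^{-7/6}\eps^3$ and $\mu^{-2}\eps^4$ terms), and it is exactly these that the hypothesis $\tilde\gamma\ge \gamma/2+1/4$ (together with the implied $\tilde\gamma\ge \tfrac23\gamma+\tfrac1{18}$ for $\gamma\le 1$) is calibrated to absorb. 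Without an analogue of Lemma \ref{lem:faver}, your bounds on $r$ and on $f-f_1$ do not follow from the assumed $H^N$/$H^{N+1}$ control alone, so the closing step of your argument has a hole (even though, once the missing term is included, the exponents still close for $\gamma>1/2$).

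Two smaller points. First, in the lower bound you treat the two convolution contributions $(\tilde k,k_\star-\tilde k)\in\{(2,1),(1,2)\}$ as if they simply add: in fact their prefactors $\eta l-k\xi$ have opposite signs in this configuration (roughly $+2\eta$ versus $\eta-3\xi\approx-2\eta$), so you must verify there is no cancellation; positivity of the combined integrand comes from the different moving-frame weights ($\approx 4s^2$ on the $k=2$ layer versus $\approx s^2$ on the $k=1$ layer), which is the content of the paper's computation around \eqref{eq:lowt2}. Second, there is a regularity mismatch: you bound $r$ only in $H^{N-1}$ but then invoke $\Vert r\Vert_{H^{N+1}}$ in the cross term $(\nabla^\perp g^L\cdot\nabla)\Delta_\tau r$; since $\Delta_\tau$ costs two derivatives and a factor $\langle\tau\rangle^2$, the remainder must be estimated at a regularity consistent with what you feed back (the paper avoids this by measuring $g_2$ at low regularity and $f-f_1$ only in $L^2$, using the high-norm assumption on the full solution for the quadratic $g$-terms).
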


Proposition \ref{prop:nlgrowth} is a direct consequence. 
\begin{proof}[Proof of Theorem~\ref{thm:below} and Corollary~\ref{cor:opt}]
It is sufficient to prove Theorem~\ref{thm:below} for  $\frac 5 6 \ge \gamma\ge\frac 23$. Let the stability assumption of Theorem~\ref{thm:below} hold. If $\beta \ge \frac 1 6$ we are done. Let $\beta \le \frac 1 6$, then since $\frac 5 6 \ge \gamma\ge\frac 23$ we have the assumption of Proposition \ref{prop:nlgrowth2} for  $\tilde \gamma =\gamma-\beta $ and obtain for the initial data \eqref{eq:fgin} the lower bound \eqref{eq:lowf}. Therefore, in the limit $\mu\to 0$, it holds that $\tilde \gamma-\frac 1 6 \le 2\gamma-1 $ and so $\beta \ge \frac 5 6 -\gamma$.
\end{proof}

\textbf{Proof of Proposition \ref{prop:nlgrowth2}:} It is sufficient to prove Proposition \ref{prop:nlgrowth2} for $N=7$ and $\delta$ small enough and we focous on proving \eqref{eq:lowf}. We write $\eps=\delta \mu^{\tilde \gamma} $. By assumption, we have the 
\begin{align}\label{eq:growthfgbound}
    \Vert g\Vert_{L^\infty_t H^N}+\mu^{\frac 16   } \Vert f\Vert_{L^\infty_t H^N}\le C   \delta \mu^{\tilde \gamma}.
\end{align} We define the functions $f_1$ and $g_1$ determined through 
\begin{align*}
    g_1&= e^{\mu \int_0^t\Delta_\tau \dd\tau } g_{in}, \\
    \p_t f_1&= \mu \Delta_t f_1 + (\nabla^\perp g_1 \cdot \nabla) \Delta_t g_1, \\
    f_{1,in}&=0. 
\end{align*}
Using these functions and times  $t\le\frac 1 {10} \mu^{-\frac 1 3 }$, we prove Proposition \ref{prop:nlgrowth2} in two steps: First we prove
\begin{align}
    \Vert f_1\Vert_{L^2} \gtrsim \delta^2 t^3 \mu^{2\gamma  }\label{eq:f1low}
\end{align}
and then we show
\begin{align}
    \Vert f-f_1\Vert_{L^2}  \lesssim\mu^{-\frac 7 6 }\eps^3+\mu^{-2 }  \eps^4  .\label{eq:f1diff}
\end{align}
From \eqref{eq:f1low} and \eqref{eq:f1diff} we infer that there exists a constant $C>0$ such that
\begin{align*}
    \Vert f\Vert_{L^2}  \gtrsim  \delta^2 t^3 \mu^{2\gamma}- C  (\mu^{-\frac 76  }\eps^3+\mu^{-2}  \eps^4 ).
\end{align*}
Since by the assumption $\tilde \gamma \ge \frac 23 \gamma +\frac 1{18} $ we obtain $$\mu^{-\frac 76} \eps^3=\delta^3 \mu^{2\gamma-1}\mu^{3\tilde \gamma -2\gamma -\frac 16}\le \delta^3 \mu^{2\gamma-1} $$
and by  $\tilde \gamma \ge \frac 12 \gamma +\frac 14 $ 
$$\mu^{-2 } \eps^4=\delta^4 \mu^{2\gamma-1}\mu^{4\tilde \gamma -2\gamma -1}\le \delta^3 \mu^{2\gamma-1}. $$
Thus for  for  $T=\frac 1 {10} \mu^{-\frac 13 }$ and $\mu\le \mu_0$ for $\mu_0$ small enough we obtain 
\begin{align*}
    \Vert f\Vert_{L^2}(T)  \gtrsim   \delta^2 \mu^{2\gamma-1}(1-C\delta)
\end{align*}
for $\delta$ small enough we infer Proposition \ref{prop:nlgrowth2}. So it is left to prove \eqref{eq:f1low} and \eqref{eq:f1diff}. We start with the proof of \eqref{eq:f1low}. By taking a Fourier transform we infer 
\begin{align*}
    \hat g_{in}(k,\eta ) &=\frac {c_N}{(2\pi)^{\frac 12 }} \delta \mu^{\gamma } \begin{cases}
        \textbf{1}_{\vert \eta \pm 10\vert \le 2 },& k =\pm 1, \\
         \textbf{1}_{\vert \eta \vert \le 1 },& k =\pm 2 ,
        \end{cases}
\end{align*}
and so we obtain 
\begin{align}
    \hat g_1(k,\eta ) &=\frac {c_N}{(2\pi)^{\frac 12 }}  \delta \mu^{\gamma } \begin{cases}
          \textbf{1}_{\vert \eta \pm 10\vert \le 2 }\exp(-\mu k^2 \int_0^t  \langle s-\frac \eta k\rangle ^2 \dd s),& k =\pm 1, \\
       \textbf{1}_{\vert \eta \vert \le 1 }\exp(-\mu k^2 \int_0^t  \langle s-\frac \eta k\rangle ^2 \dd s),& k =\pm 2.
        \end{cases}\label{eq:gheat}
\end{align}
Therefore, since $f_1(t,x,y) = \int_0^t \exp\left(\mu \int_\tau^t\Delta_s ds  \right) ((\nabla^\perp g_1 \cdot  \nabla)\Delta_\tau   g_1)\dd\tau  $ we obtain after a Fourier transform 
\begin{align*}
     \hat f_1(t,k,\eta ) &=\int^t_0\dd\tau \exp\left(-\mu \int_\tau^t(k^2+(\eta-ks)^2) \dd s  \right)  \calN[g_1,g_1](k,\eta).\\
     \calN[g_1,g_1](k,\eta):&=\sum_l \int (\eta l - k\xi )  g_1(k-l,\eta-\xi)  (l^2 +(\xi-l\tau )^2  )g_1(l,\xi) \  \dd\xi 
\end{align*}
We show a lower bound on $f_1(k)$ for $k=3$, $\eta\ge 0$ and $\vert \eta -10\vert \le 1 $. The $g_1(l)$ is supported on $l\in \{\pm1 ,\pm 2\}$ and therefore $g_1(3-l,\eta-\xi)  g_1(l,\xi)\neq 0 $ iff $ (3-l,l)\in\{(2,1),(1,2)\}$. Thus we obtain  
\begin{align*}
    \calN[g_1,g_1](3,\eta)&=\sum_l \int   (\eta l - 3\xi ) g_1(3-l,\eta-\xi)  (l^2 +(\xi-lt)^2  )g_1(l,\xi)\dd\xi \\
    &=\int  (\eta  - 3\xi ) g_1(2,\eta-\xi)  (1 +(\xi-t)^2  )g_1(1,\xi)  \dd\xi\\
    &\quad+\int (2\eta - 3\xi ) g_1(1,\eta-\xi)  (2^2 +(\xi-2t)^2  )g_1(2,\xi)  \dd\xi.
\end{align*}
By a change of variables $\tilde \xi = \eta -\xi $
\begin{align*}
    \int(\eta  - 3\xi ) g_1(2,\eta-\xi)  (1 +(\xi-t)^2  )g_1(1,\xi)\dd\xi&= \int (2\eta  - 3\tilde \xi ) g_1(2,\tilde \xi)  (1 +(\eta-\tilde \xi-t)^2  )g_1(1,\eta-\tilde \xi)\dd\tilde \xi.
\end{align*}
Therefore,
\begin{align*}
    F[g_1,g_1](3,\eta)
    &=\int  (2\eta - 3\xi ) [  (2^2 +(\xi-2t)^2  )+ (1 +(\eta-\xi-t)^2  )]     g_1(1,\eta-\xi)g_1(2,\xi)\dd\xi.
\end{align*}
For $\eta\ge 0$ with $\vert \eta -10\vert \le 1 $ and $\vert \xi\vert \le 1 $  we obtain 
\begin{align}\label{eq:lowt2}
    \langle t\rangle ^2 \lesssim  (2 \eta  - 3\xi )\left[    (1 +(\eta -\xi-t)^2  )+ (2^2 +(\xi-2t)^2  )\right],
\end{align}
for some $C>0$. For small enough $\mu_0>0$ such that $0<\mu\le \mu_0$, for times  $ \tau \le \frac 1{10} \mu^{-\frac 13 } $ with $0<\mu\le \mu_0$, it holds that 
\begin{align}\begin{split}\label{eq:lowdiss}
    \frac 12 \le \exp\left(-\mu \int_0^\tau (k^2 +(\xi- \tau_1 k )^2  ) \dd\tau_1\right)\le 1,&\qquad \text{ for } \vert k\vert \le 3, \vert \xi\vert \le 100 .
\end{split}\end{align}
Using that and the support of $g_1$ we obtain 
\begin{align}\label{eq:lowg}
    g_1(2,\eta-\xi)g_1(1,\xi)\ge \frac {c_N^2}{2\pi} \textbf{1}_{\vert \eta-10\vert \le 1} \textbf{1}_{\vert \xi\vert \le 1}  \delta^2 \mu^{2\gamma}.
\end{align}
Combining \eqref{eq:lowt2} and \eqref{eq:lowg} we infer for $\eta \ge 0$ and $\vert \eta-10\vert \le 1$, that 
\begin{align*}
    F[g_1,g_1](3,\eta)&\gtrsim   t^2   \delta^2 \mu^{2\gamma  }.
\end{align*}
Therefore, for $f_1$  by using this estimate and again \eqref{eq:lowdiss} we infer for $\eta \ge 0$ with $\vert \eta-10\vert \le 1$
\begin{align*}
     \hat f_1(3,\eta ) 
     &=\int^t_0 \exp\left(-\mu k^2 \int_\tau ^t  \langle s-\tfrac \eta k\rangle ^2 \dd s\right)F[g_1,g_1](3,\eta) \dd\tau\gtrsim   \delta^2 \mu^{2\gamma  } \int_0^t  \tau^2  \dd\tau\gtrsim   \delta^2 \mu^{2\gamma }t^3 ,
\end{align*}
which yields \eqref{eq:f1low}. To prove \eqref{eq:f1diff}, we fist apply Lemma \ref{lem:faver} to infer 
\begin{align}
    \Vert \p_y\vert^{-1} f_= \Vert_{L^\infty_T H ^5}^2 \lesssim \delta \mu^{-\frac 23 }\eps^2.\label{eq:avest}
\end{align}
We denote 
\begin{align*}
    f_2=f-f_1,\qquad \qquad 
    g_2=g-g_1.
\end{align*}
Then $g_2$ solves the equations 
\begin{align*}
    \p_t g_2 &= \mu \Delta_t g_2+ ((\nabla^\perp \Lambda_t^{-2} f\cdot \nabla) g),\\
    g_{2,in}&=0.
\end{align*}

First we bound $g_2$, we have 
\begin{align}\begin{split}    
    \Vert g_2\Vert_{L^\infty_T H^4} &\le \int_0^T \Vert  (\nabla^\perp \Lambda_\tau^{-2} f\cdot \nabla) g\Vert_{H^4} d\tau \\
    & \lesssim \int_0^T\Vert g\Vert_{H^5}(\langle \tau \rangle ^{-2}\Vert f\Vert_{H^7}+\Vert \vert \p_y\vert^{-1}f_=\Vert_{H^4})d\tau \\
    &\lesssim  C\mu^{-\frac 16 } \eps^2 (1 + \mu^{-\frac 56}\eps).  \label{eq:estg2}
\end{split}
\end{align}

Now we estimate $f_2$, which satisfies the equation 
\begin{align*}
    \p_t f_2   
    &=\mu \Delta_t f_2+  (\nabla^\perp g\cdot \nabla) \Delta_t g-(\nabla^\perp g_1\cdot \nabla)  \Delta_t g_1+(\nabla^\perp \Lambda_t^{-2}  f\cdot \nabla)   f,\\
    f_{2,in}&=0.
\end{align*}
 We split 
\begin{align*}
    (\nabla^\perp g\cdot \nabla) \Delta_t g-(\nabla^\perp g_1\cdot \nabla)  \Delta_t g_1&= (\nabla^\perp g_1\cdot \nabla) \Delta_t g_2+(\nabla^\perp g_2\cdot \nabla) \Delta_t g_1+(\nabla^\perp g_2\cdot \nabla) \Delta_t g_2
\end{align*}
and so we obtain
\begin{align*}
     \Vert f_2\Vert_{L^\infty_T L^2 }&\lesssim    \int_0^T  \Vert (\nabla^\perp \Lambda_t^{-2}  f\cdot \nabla )  f+ (\nabla^\perp g\cdot \nabla) \Delta_t g-(\nabla^\perp g_1\cdot \nabla)  \Delta_t g_1\Vert_{L^2} d\tau  \\
   &\lesssim   \int_0^T   \langle \tau\rangle^{-2}\Vert f\Vert_{H^3}^2+\Vert f\Vert_{H^2}\Vert \vert \p_y\vert^{-1}f_=\Vert_{H^2} + \Vert g_1\Vert_{H^2}  \Vert \Delta_t  g_2\Vert_{H^2} \\
   &\qquad \qquad +\Vert g_2\Vert_{H^2}  \Vert \Delta_t  g_1\Vert_{H^2}+ \Vert g_2\Vert_{H^2}  \Vert \Delta_t  g_2\Vert_{H^2} 
    d\tau \\
   &\lesssim  \int^T_0  \langle \tau \rangle^{-2}\Vert f\Vert_{H^2}^2+\Vert f\Vert_{H^2}\Vert \vert \p_y\vert^{-1}f_=\Vert_{H^2} + \langle \tau \rangle^2(\Vert g\Vert_{H^4}+\Vert  g_2\Vert_{H^4})\Vert  g_2\Vert_{H^4}
    d\tau  \\
    &\lesssim  \left(\Vert f\Vert_{L^\infty_T H^2}^2+t\Vert f\Vert_{L^\infty_T H^2} \Vert \vert \p_y\vert^{-1} f_=\Vert_{L^\infty_T H^2} + t^3 (\Vert g\Vert_{L^\infty_T H^4}+\Vert  g_2\Vert_{L^\infty_T H^4})\Vert  g_2\Vert_{L^\infty_T H^4}
   \right )\\
   &\lesssim  \eps^2+\mu^{-1}\eps^3+\mu^{-\frac 76 } \eps^3 (1 + \mu^{-\frac 56  }\eps) \\
   &\lesssim  \mu^{-\frac 7 6 }\eps^3+\mu^{-2 }  \eps^4. 
\end{align*}

   This yields \eqref{eq:f1diff} and therefore we obtain the nonlinear transient growth \eqref{eq:lowf}.  The linear transient growth \eqref{eq:lowglin} is a consequence of \eqref{eq:gheat} and  \eqref{eq:estg2}. Therefore, we conclude Proposition \ref{prop:nlgrowth2}.

With Proposition \ref{prop:nlgrowth2}, we have proved all necessary propositions to conclude the Theorems~\ref{thm:main} and \ref{thm:below}.

\section{Nonlinear Growth of Magnetic Potential}\label{sec:nlgrow2}
In this section we prove Theorem~\ref{thm:opt} by showing nonlinear growth of the magnetic potential. As in the previous section, we consider solutions of \eqref{eq:nlfg} and omit writing $\sim$ for derivatives. We define the initial data 
\begin{align}
    f_{in}(x,y)&= c_N \delta \mu^{\gamma-\beta_1 } \frac {1}{\langle k_0,\eta_0\rangle^N}\left(\cos(k_0x) \int_{\vert \eta-\eta_0\vert \le 1}  \cos(\eta y) \dd y +\sin(k_0x) \int_{\vert \eta-\eta_0\vert \le 1}  \sin(\eta y) \dd y\right),\label{eq:fin}\\
    g_{in}(x,y)&= -c_N  \delta \mu^{\gamma } \cos(x) \int_{\vert \eta \vert\le 1} \cos (\eta  y ) \dd \eta\label{eq:gin}.
\end{align}
The Fourier transform of the initial data satisfies 
\begin{align}
    \hat f_{in}(k,\eta)&=  \frac{c_N} {2(2\pi)^{\frac 1 2 }}\frac {\delta \mu^{\gamma-\beta_1}}{\langle k_0,\eta_0\rangle^N}\left(\textbf{1}_{k=k_0}\textbf{1}_{\vert \eta-\eta_0\vert\le 1 }+ \textbf{1}_{k=-k_0}\textbf{1}_{\vert \eta+\eta_0\vert\le 1 } \right), \label{eq:Ffin}\\
    \hat g_{in}(k,\eta) &= -\frac{c_N} {2(2\pi)^{\frac 1 2 }} \delta \mu^{\gamma }\textbf{1}_{k=\pm 1 } \textbf{1}_{\vert \eta\vert\le 1 }. \label{eq:Fgin}
\end{align}
Here the constant $c_N>0$ is chosen such that 
\begin{align*}
    \tfrac 12 \delta \mu^\gamma\le \mu^{\beta_1}\Vert f_{in}\Vert_{H^N}+ \Vert g_{in}\Vert_{H^{N+1}}\le \delta \mu^\gamma .
\end{align*}

\begin{prop}\label{prop:maggr}
Let  $N\ge 7$, $\gamma\ge \frac34$  and $ \beta_1 \ge 1-\gamma $ such that $\gamma \ge \beta_1+\frac 12  $. Assume that for $\delta,\mu_0>0$ small enough and $0<\mu\le \mu_0$ the initial data \eqref{eq:fin} and \eqref{eq:gin} with $k_0=10$ and $\eta_0= c\mu^{-\frac 13 }$ for a small constant $c>0$ it holds that 
    \begin{align}
        \mu^{\beta_1}\left \Vert f \right\Vert_{H^N}(t)+\left \Vert g \right\Vert_{H^{N+1}}(t)+\mu^{\frac 12 }\left \Vert \nabla_\tau g \right\Vert_{L^2_t H^{N+1}}\le \delta\mu^{\gamma}, \label{eq:FG}
    \end{align}
    for times $t\le \mu^{-\frac 1 3 }$. Then there exists a time $0\le t_1\le \mu^{-\frac 1 3 }$ such that 
    \begin{align}
        \left \Vert g \right\Vert_{H^{N+1}}(t_1) \gtrsim_\delta   \mu^{2 \gamma-\frac 23-\beta_1 }.\label{nlg}
    \end{align}

\end{prop}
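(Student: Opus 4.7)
The proof parallels the strategy of Proposition \ref{prop:nlgrowth2} in Section \ref{sec:nlgrow}, but now tracks nonlinear growth in the magnetic potential $g$ rather than the vorticity $f$. The underlying mechanism, sketched in Section \ref{sec:heuz}, is that the high-frequency $f$-perturbation amplifies $g$ near the Orr critical time $\tau^*\approx\eta_0/k_0=(c/10)\mu^{-1/3}$ through the coupling $(\nabla^\perp\Lambda_t^{-2}f\cdot\nabla)g$. The plan is to compare $g$ with an explicit leading-order expansion and to extract the lower bound \eqref{nlg} from the resonant Fourier interaction.

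Define the heat evolutions $g_1:=\e^{\mu\int_0^t\Delta_\tau d\tau}g_{in}$ and $f_1:=\e^{\mu\int_0^t\Delta_\tau d\tau}f_{in}$, and let $g_2$ solve
\begin{equation*}
    \p_t g_2=\mu\Delta_t g_2+(\nabla^\perp\Lambda_t^{-2}f_1\cdot\nabla)g_1,\qquad g_2|_{t=0}=0.
\end{equation*}
Since $\hat f_{in}$ is concentrated at $(\pm k_0,\pm\eta_0)$ and $\hat g_{in}$ at $(\pm 1,0)$, the convolution appearing in the Fourier representation of the forcing produces support for $\hat g_2$ near the output modes $(\pm(k_0\pm 1),\pm\eta_0)$. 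Focusing on $(k_0+1,\eta)$ with $\eta$ near $\eta_0$ and the interaction $k'=1$, the time integrand has prefactor $|k'\eta-k\eta'|\approx\eta_0$; the heat factors remain $\Theta(1)$ on $[0,t_1]$ for small $c$ and $t_1\le\mu^{-1/3}$ (by the same computation as \eqref{eq:lowdiss}); and the integrated Orr factor satisfies
\begin{equation*}
    \int_0^{t_1}\frac{d\tau}{|k_0,\eta-\eta'-k_0\tau|^2}=\Theta(k_0^{-3})
\end{equation*}
as soon as $[0,t_1]$ straddles the critical time $(\eta-\eta')/k_0$. Combined with the elementary convolution lower bound
\begin{equation*}
    \int d\eta'\,|\hat f_{in}(k_0,\eta-\eta')|\,|\hat g_{in}(1,\eta')|\gtrsim \delta^2\mu^{2\gamma-\beta_1}/\langle k_0,\eta_0\rangle^N,
\end{equation*}
valid uniformly on a unit neighborhood of $\eta_0$, this yields $|\hat g_2(t_1,k_0+1,\eta)|\gtrsim\delta^2\mu^{2\gamma-\beta_1-1/3}/\langle k_0,\eta_0\rangle^N$ on that neighborhood. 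Weighting by $\langle k_0+1,\eta\rangle^{2(N+1)}\approx\eta_0^{2(N+1)}$ and integrating in $\eta$ produces $\|g_2(t_1)\|_{H^{N+1}}\gtrsim\delta^2\mu^{2\gamma-\beta_1-2/3}$, matching the scaling in \eqref{nlg}.

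It remains to control the remainder $g_r:=g-g_1-g_2$, which solves
\begin{equation*}
    \p_t g_r-\mu\Delta_t g_r=(\nabla^\perp\Lambda_t^{-2}(f-f_1)\cdot\nabla)g+(\nabla^\perp\Lambda_t^{-2}f_1\cdot\nabla)(g_2+g_r).
\end{equation*}
Using the bootstrap hypothesis \eqref{eq:FG} to supply $\|f\|_{H^N},\|f_1\|_{H^N}\lesssim\delta\mu^{\gamma-\beta_1}$, $\|g\|_{L^\infty_tH^{N+1}}\lesssim\delta\mu^\gamma$, and the dissipative gain $\mu^{1/2}\|\nabla_\tau g\|_{L^2_tH^{N+1}}\lesssim\delta\mu^\gamma$, together with Lemma \ref{lem:faver} to handle the $x$-average of $f$, an energy estimate modeled on the one in Section \ref{sec:nlgrow} gives $\|g_r(t_1)\|_{H^{N+1}}\ll\|g_2(t_1)\|_{H^{N+1}}$ precisely under the arithmetic constraints $\gamma\ge 3/4$ and $\gamma\ge\beta_1+1/2$ assumed in the proposition; the triangle inequality then yields \eqref{nlg}. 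The main obstacle lies in the first error term $(f-f_1)\cdot g$: since only the $H^N$-norm of $f-f_1$, not its pointwise Fourier behaviour, is supplied by the bootstrap, one must extract additional smallness from the dissipation enhancement of $g$ to beat the main $(f_1\cdot g_1)$ contribution on the time scale $t_1\approx\mu^{-1/3}$.
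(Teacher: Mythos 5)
Your decomposition and the lower bound on the driven mode are essentially the paper's argument: the paper writes $g=g_0+g_1+g_2$ with $g_0,f_0$ the heat evolutions and $g_1$ forced by $(\nabla^\perp\Lambda_t^{-2}f_0\cdot\nabla)g_0$, computes $\hat g_1$ explicitly on the output mode $k_0-1$ (you take $k_0+1$; either works) near $\eta\approx\eta_0$, and obtains exactly your scaling $\Vert g_1\Vert_{H^{N+1}}\gtrsim\delta^2\mu^{2\gamma-\beta_1-2/3}$, the extra $\mu^{-1/3}$ coming from the $H^N$ versus $H^{N+1}$ normalization. Up to that point your proposal is correct and on the paper's route.

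The remainder control, however, has a genuine gap, and it is precisely the point you flag as "the main obstacle" without resolving it. If you estimate the error term $(\nabla^\perp\Lambda_t^{-2}(f-f_1)\cdot\nabla)g$ using only the bootstrap hypothesis \eqref{eq:FG}, i.e. $\Vert f-f_1\Vert_{H^N}\lesssim\delta\mu^{\gamma-\beta_1}$ together with $\mu^{1/2}\Vert\nabla_\tau g\Vert_{L^2_tH^{N+1}}\lesssim\delta\mu^{\gamma}$, then on the time scale $t_1\approx\mu^{-1/3}$ the Duhamel integral is of size $\mu^{-1/6}\cdot\delta\mu^{\gamma-\beta_1}\cdot\mu^{-1/2}\delta\mu^{\gamma}=\delta^2\mu^{2\gamma-\beta_1-2/3}$, i.e. exactly the size of the main term, not smaller; so "extracting additional smallness from the dissipation enhancement of $g$" cannot close the argument, since that dissipation is already what produces this borderline bound. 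The paper's fix is different: because $f-f_0$ has zero initial data and is purely nonlinear, it runs a separate weighted bootstrap (with the multiplier $m^{-1}$ and the Cauchy–Kowalevski terms $\sqrt{\p_tm/m}$) proving the improved, \emph{quadratic-in-$\delta$} bound \eqref{eq:f2est2}, $\Vert m^{-1}(f-f_0)\Vert_{H^N}\lesssim\delta^2\mu^{\gamma-\beta_1}$, and only then the coupled estimate \eqref{eq:g2est} gives $\Vert g-g_0-g_1\Vert_{H^{N+1}}\lesssim\delta^3\mu^{2\gamma-2/3-\beta_1}$, which beats the main term by one factor of $\delta$ (this is also where the hypotheses $\gamma\ge\beta_1+\tfrac12$, $\gamma\ge\tfrac34$, Lemma \ref{lem:faver}, and the frequency splittings with the weight $m$ are actually used, to handle the derivative imbalance $H^N$ versus $H^{N+1}$ in the products). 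Without an estimate of this type on $f-f_1$ your triangle inequality does not yield \eqref{nlg}, so as written the proof is incomplete at its decisive step.
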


Note that \eqref{nlg} contradicts \eqref{eq:FG} for $\gamma -\beta_1 -\frac 23 <0$ in the limit $\mu \downarrow0$, which we use to prove Theorem~\ref{thm:opt}:
\begin{proof}[Proof of Theorem~\ref{thm:opt}]
     It is sufficient to consider $\frac 34 \le \gamma\le \frac 5 6$. By Proposition \ref{prop:nlgrowth2} we obtain $\beta_1\ge \gamma-1$. By Proposition \ref{prop:maggr} we have a contradiction in the limit $\mu\downarrow 0 $  if $\gamma -\frac 23 -\beta_1=2(\gamma -\frac 56) - (\beta_1- \gamma+1)< 0$ which implies $\gamma =\frac 5 6$ and $\beta_1=\frac 1 6$. 
\end{proof}

\begin{proof}[Proof of Proposition \ref{prop:maggr}]
The proposition is clear if $\mu^{2\gamma-\frac 23 -\beta_1}\le 2 \delta^2 \mu^\gamma$ or $\delta^{6} \le\mu$. Therefore, we consider the case $\mu^{2\gamma-\frac 23 -\beta_1}\ge 2 \delta^2 \mu^\gamma$ and  $\delta^{6} \le\mu$. To prove the Proposition, we split the solution into
\begin{align*}
    f &= f_0+ f_2, \\
    g&= g_0+g_1 +g_2.
\end{align*}
We denote  $f_0=e^{\int_0^t\Delta_\tau \dd\tau }f_{in}$, $g_0=e^{\int_0^t\Delta_\tau \dd\tau }g_{in}$, 
\begin{align*}
    \p_t g_1&=\mu\Delta_t g_1+ (\nabla^\perp\Lambda_t^{-2} f_0\cdot\nabla) g_0,\\
    g_{1,in}&=0,
\end{align*}
and 
\begin{align*}
    \p_t f_2&=\mu\Delta_t f_2 + (\nabla^\perp\Lambda_t^{-2} f\cdot \nabla) f+ (\nabla^\perp g\cdot \nabla) \Delta_t g,  \\
    \p_t g_2&=\mu\Delta_t g_2+ (\nabla^\perp\Lambda_t^{-2} f\cdot \nabla) g- (\nabla^\perp\Lambda_t^{-2} f_0\cdot \nabla) g_0, \\
    f_{2,in}&=g_{2,in}=0.
\end{align*}
We prove that for times $\frac c{10}\mu^{-\frac 13 }\le t\le c\mu^{-\frac 13 }=\eta_0 $ the following bounds 
\begin{align}
    \Vert g_1|_{k=k_0-1}\Vert_{H^{N+1}} (t) &\gtrsim\delta^2  \mu^{2\gamma-\frac 23 -\beta_1},\label{eq:g1est}
\end{align}
and 
\begin{align}
    \Vert g_2\Vert_{H^{N+1}} (t) &\lesssim \delta^3  \mu^{2\gamma-\frac 23 -\beta_1}.\label{eq:g2}
\end{align}
From that, we infer 
\begin{align*}
    \Vert g\Vert_{H^{N+1}}\ge\Vert g|_{k=k_0-1}\Vert_{H^{N+1}}&\ge \Vert g_1|_{k=k_0-1}\Vert_{H^{N+1}} -\underbrace{\Vert g_0|_{k=k_0-1}\Vert_{H^{N+1}}}_{=0} -\Vert g_2\Vert_{H^{N+1}} \gtrsim \delta^2  \mu^{2\gamma-\frac 23 -\beta_1}
\end{align*}
for $\delta>0$ small enough, which yields the proposition. Therefore, it is left to prove \eqref{eq:g1est} and \eqref{eq:g2}. To prove \eqref{eq:g1est} we use that 
\begin{align*}
    \hat g_1(t,k,\eta) &=\int_0^t e^{\int_\tau ^t\Delta_{\tilde \tau}\dd \tilde \tau}\calF[(\nabla^\perp\Lambda_t^{-2} f_0\cdot\nabla) g_0](k,\eta) \dd\tau \\
    &= \int_0^t e^{\int_\tau^t\Delta_{\tilde \tau}\dd \tilde\tau}\calF [ (\nabla^\perp\Lambda_t^{-2} e^{\int_0^\tau\Delta_{\tilde \tau}\dd \tilde \tau}f_{in}\cdot\nabla) e^{\int_0^\tau \Delta_{\tilde \tau} \dd\tilde \tau}g_{in}](k,\eta) \dd\tau \\
    &=- \int_0^t \dd\tau \sum_{l}  \int \dd\xi\frac {\eta l -k\xi} {\vert k-l,\eta-\xi-(k-l)t \vert^2 }g_{in} (l,\xi) f_{in}(k-l,\eta-\xi) \\
    &\qquad \qquad \cdot \exp\left(-\mu \int_\tau ^t \vert k,\eta-  k\tilde\tau \vert^2  \dd\tilde\tau -\mu \int_0^\tau  \vert k-l,\eta-\xi- (k-l)\tilde\tau \vert^2+\vert l,\xi- l\tilde\tau \vert^2   \dd\tilde\tau
    \right) .
\end{align*}
When we consider the frequency $k=k_0-1$, then $f_{in}(k-l,\eta-\xi)g_{in}(l,\xi)\neq 0$ only if $k-l=k_0$ and $l=-1$. We insert $f_{in}$ and $g_{in}$ to infer 
\begin{align*}
    \hat g_1(t,k_0-1,\eta) &=\frac 1 {8\pi } \frac {\delta^2 \mu^{2\gamma-\beta_1}}{\langle k_0,\eta_0\rangle^N} \int_0^t \dd\tau  \int \textbf{1}_{\vert \xi\vert \le 1}\textbf{1}_{\vert \eta-\xi-\eta_0 \vert \le 1} \dd\xi\frac {\eta  -(k_0-1)\xi} {\vert k_0 ,\eta-\xi-k_0 t \vert^2 }a(t,\tau,\eta,\xi), \\
    a(t,\tau,\eta,\xi)&=\exp\left(-\mu \int_\tau ^t \vert k_0-1,\eta-  (k_0-1)\tilde\tau \vert^2  \dd\tilde\tau -\mu \int_0^\tau  \vert k_0,\eta-\xi- k_0\tilde\tau \vert^2+\vert 1,\xi+\tilde\tau \vert^2   \dd\tilde\tau
    \right)  .
\end{align*}
Since $\eta_0,t\le c \mu^{-\frac 1 3 }$ we have for $\xi, \ \eta$ such that  $\vert \xi\vert \le 1$, $\vert \eta-\xi-\eta_0 \vert \le 1$ the estimate 
\begin{align*}
    \mu \int_\tau ^t \vert k_0-1,\eta-  (k_0-1)\tilde\tau \vert^2  \dd\tilde\tau +\mu \int_0^\tau  \vert k_0,\eta-\xi- k_0\tilde\tau \vert^2+\vert 1,\xi+\tilde\tau \vert^2   \dd\tilde\tau\le c
\end{align*}
and so 
\begin{align*}
    1/2\le a(t,\tau,k_0,\eta,\xi) \le 1.
\end{align*}
Furthermore, on $\vert \eta-\xi -\eta_0 \vert \le 1$ and $\vert \xi \vert\le 1 $ we have the lower bound 
\begin{align*}
    \frac {\eta  -(k_0-1)\xi} {\vert k_0 ,\eta-\xi-k_0 t \vert^2 }\ge \frac 12 \frac {\eta_0}{k_0^2} \langle t-\tfrac {\eta-\xi}{k_0}\rangle^{-2} \gtrsim \mu^{-\frac 13 } \langle t-\tfrac {\eta_0}{k_0}\rangle^{-2}
\end{align*}
which yields 
\begin{align*}
    \hat g_1(t,k_0-1,\eta) &\gtrsim \mu^{-\frac 13 } \frac {\delta^2 \mu^{2\gamma-\beta_1}}{\langle k_0,\eta_0\rangle^N} \int^t_0  \langle \tau -\frac {\eta_0}{k_0}\rangle^{-2}  \dd\tau \underbrace{ \int \textbf{1}_{\vert \xi\vert \le 1} \textbf{1}_{\vert \eta-\xi -\eta_0 \vert \le 1}\dd\xi}_{\ge \textbf{1}_{\vert \eta-\eta_0\vert \ge 1 }}.
\end{align*}
For times $t\ge \frac {\eta_0}{k_0}=\frac c {10}\mu^{-\frac1 3 }$ it holds $\int^t_0  \langle \tau -\frac {\eta_0}{k_0}\rangle^{-2}  \dd\tau\gtrsim  1$ and so 
\begin{align*}
    \hat g_1(t,k_0-1,\eta) 
      &\gtrsim \mu^{-\frac 13 } \frac {\delta^2 \mu^{2\gamma-\beta_1}}{\langle k_0,\eta_0\rangle^N}  \textbf{1}_{\vert \eta -\eta_0 \vert \le 1}.
\end{align*}
Taking the Sobolev norm and using that $\eta_0\approx  \mu^{-\frac 13 }$ yields
\begin{align*}
    \Vert  g_1\Vert_{H^{N+1}}\gtrsim \delta^2 \mu^{2\gamma-\beta_1-\frac 23 }.
\end{align*}
To prove  \eqref{eq:g2} we show by bootstrap
\begin{align}
    \Vert  m^{-1}f_2 \Vert_{L^\infty_t H^N}^2+ \int_0^t \mu \Vert m^{-1}\nabla_\tau  f_2 \Vert_{H^N}^2+ \Vert m^{-1} \sqrt {\tfrac{\p_t m}m}    f_2 \Vert_{H^N}^2 \dd\tau &\le  (C_1 \delta^2 \mu^{\gamma-\beta_1 })^2,\label{eq:f2est2}\\
    \Vert m^{-1} g_2 \Vert_{L^\infty_t H^{N+1}}^2+ \int_0^t \mu \Vert m^{-1} \nabla_\tau  g_2 \Vert_{H^{N+1}}^2+ \Vert m^{-1} \sqrt {\tfrac{\p_t m}m}    g_2 \Vert_{H^{N+1}}^2 \dd\tau &\le (C_2
    \delta^3  \mu^{2\gamma-\frac 23 -\beta_1})^2,\label{eq:g2est}
\end{align}
for times $t\le \mu^{-\frac 13 }$ and $C_2>C_1>0$ to be chosen later. We use Lemma \ref{lem:faver} to obtain $ \Vert \vert \p_y\vert^{-1} f_=\Vert_{L^\infty_t L^2} \lesssim \delta \mu^{2(\gamma-\beta_1)}+\delta \mu^{2(\gamma-\frac13 )}$ and so since $\gamma \ge \max( \beta_1, \frac 1 3 )$ we infer by \eqref{eq:FG} that
\begin{align}
    \Vert \vert \p_y\vert^{-1} f_=\Vert_{L^\infty_t  H^{N+1}} &\lesssim \delta \mu^{\gamma-\beta_1}.\label{eq:f=g}
\end{align}
In the following estimates, we write $\lesssim$ for a constant which is also independent of the constants $C_1$ and $C_2$. For sake of contratdiction we assume that $T<\mu^{-\frac 1 3}$ is the maximal time such that \eqref{eq:f2est2} and \eqref{eq:g2est} hold. \textbf{Bound on $f_2$},  we split
\begin{align*}
    \frac 12  \Vert m^{-1}  f_2\Vert_{H^N}^2(T)&+\int_0^{T} \mu \Vert  m^{-1}\nabla_\tau  f_2\Vert_{H^N}^2+ \Vert  m^{-1}\sqrt{\tfrac {\p_t m}m } f_2\Vert_{H^N}^2\dd \tau\le \int_0^{T} 
    E_1^f+E_2^f+E_3^f \dd\tau,  \\
    E_1^f&=\vert \langle   m^{-2}f_2,   ((\nabla^\perp_t\Lambda_t^{-2} f_{\neq}\cdot \nabla_t) f\rangle_{H^N}\vert,\\
    E_2^f&=\vert \langle   m^{-2}f_2,   ((\p_y\p_y^{-2} f_=\p_x  f\rangle_{H^N}\vert,\\
    E_3^f&=\vert \langle   m^{-2}f_2,    (\nabla^\perp g\cdot \nabla) \Delta_t) g\rangle_{H^N}\vert,
\end{align*}
which we bound separately. For $E_1^f$ we have 
\begin{align}
    E_1^f&\lesssim  \Vert  f_2\Vert_{H^N}\Vert\sqrt{\tfrac{\p_t m}m } f\Vert_{H^N}\Vert  \nabla_t f\Vert_{H^N}.\label{eq:Ef1}
\end{align}
Integrating in time and using $\gamma-\beta -\frac 1 2 \ge 0 $ yields
\begin{align*}
\int_0^{T} E_1^f \dd\tau&\le C C_1  \mu^{-\frac 1 2 }(\delta\mu^{\gamma-\beta_1})^2(\delta^2 \mu^{\gamma-\beta_1 })\le \frac C{C_1}(C_1\delta^2 \mu^{\gamma-\beta_1 })^2.
\end{align*}
For $E_2^f$ we estimate 
\begin{align*}
    E_2^f=\vert \langle   m^{-2}f_2,   \p_y\p_y^{-2} f_= \p_x f\rangle_{H^N}\vert   &\le\sum_{k\neq 0 }\iint \vert k \vert \vert m^{-2} f_2(k,\eta)\vert \vert \tfrac 1{\vert \eta-\xi\vert} f_=(\eta-\xi)\vert \vert f (k,\xi) \vert \dd (\eta,\xi). 
\end{align*}
We use that 
\begin{align}
\begin{split}\label{eq:kbound}
   \vert k\vert &\le \frac {\vert k,\xi-kt\vert}{\langle t-\frac \xi k \rangle }\textbf{1}_{\vert \eta-\xi\vert \le \vert k,\xi\vert }+\textbf{1}_{\vert \eta-\xi\vert > \vert k,\xi\vert }\vert  k \vert \\
   &\le  {\vert k,\xi-kt\vert}\sqrt {\tfrac {\p_t m} m  (k,\eta )} \langle  \eta-\xi \rangle \textbf{1}_{\vert \eta-\xi\vert \le \vert k,\xi\vert }+\textbf{1}_{\vert \eta-\xi\vert > \vert k,\xi\vert }\vert  k \vert 
\end{split}
\end{align}
to infer 
\begin{align*}
    E_2^f&\le C \Vert \sqrt{\tfrac {\p_t m}m }  f _2\Vert_{H^N }\Vert \vert \p_y\vert^{-1} f_=\Vert_{H^N }\Vert \Lambda_t  f \Vert_{H^N } + \Vert  f_2 \Vert_{H^N }\Vert \vert \p_y\vert^{-1} f_=\Vert_{H^N }\Vert   f \Vert_{H^N }.
\end{align*}
Integrating in time  and using $\gamma-\beta -\frac 1 2 \ge 0 $ gives
\begin{align}
    \int_0^{T} E_2^f\dd\tau&\le \frac C{C_1}(C_1\delta^2 \mu^{\gamma-\beta_1 })^2.\label{eq:Ef2}
\end{align}
To bound $E_3^f$ we use $(\nabla^\perp g\cdot \nabla) \Delta_t g= \nabla_t^\perp\cdot ((\nabla^\perp g\cdot \nabla) \nabla^\perp_t g)$ to estimate 
\begin{align*}
    E_3^f &\le C \Vert \nabla_t f_2 \Vert_{H^N} \Vert  g\Vert_{H^{N+1}} \Vert \nabla_t  g\Vert_{H^{N+1}}. 
\end{align*}
Integrating in time yields
\begin{align}
    \int_0^{T} E_3^f \dd\tau  &\le C \delta^2 \mu^{2\gamma-1} (C_1 \delta^2\mu^{\gamma-\beta_1})\lesssim \frac 1{C_1}(C_1 \delta^2\mu^{\gamma-\beta_1})^2. \label{eq:Ef3}
\end{align}
Combining the estimates \eqref{eq:Ef1}, \eqref{eq:Ef2} and \eqref{eq:Ef3} we obtain 
\begin{align*}
    \int_0^{T} E_1^f+ E_2^f+ E_3^f \dd\tau & \lesssim \frac 1{C_1}(C_1 \delta^2\mu^{\gamma-\beta_1})^2
\end{align*}
Thus, we improve estimate \eqref{eq:f2est2} for  $C_1>0$ large enough. 

\textbf{Bound on $g_2$,} we split 
\begin{align*}
    \frac 12  \Vert m^{-1}g_2\Vert_{L^\infty_T H^{N+1}}^2 &+\int^{T}_0\mu\Vert m^{-1}\nabla_\tau g_2 \Vert_{H^{N+1}}^2+\Vert m^{-1}\sqrt{\tfrac {\p_t m}m } g_2 \Vert_{H^{N+1}}^2\dd\tau\le \int^{T}_0  E_1^g+E_2^g+E_3^g\dd\tau,  \\
    E_1^g&= \vert \langle m^{-2}g_2 , (\nabla^\perp \Lambda_t^{-2}f_0\cdot \nabla )(g-g_0)\rangle_{H^{N+1}}\vert ,\\
    E_2^g&=\vert \langle m^{-2}g_2 ,(\nabla^\perp_t \Lambda_t^{-2}f_{2,\neq}\cdot \nabla_t )g\rangle_{H^{N+1}}\vert , \\
    E_3^g&=\vert \langle m^{-2}g_2 ,(\nabla^\perp_t \Lambda_t^{-2}f_{2,=}\cdot \nabla_t )g\rangle_{H^{N+1}}\vert ,
\end{align*}
which we estimate separately. To bound $E_1^g$ we estimate 
\begin{align*}
    E_1^g&\lesssim  \Vert g_2 \Vert_{H^{N+1}}\left(\Vert \Lambda_t^{-2} f_0\Vert_{H^{N+2}}\Vert \nabla (g-g_0) \Vert_{L^\infty }+\Vert \Lambda_t^{-1} f_0\Vert_{L^\infty}\Vert \nabla_t (g-g_0) \Vert_{H^{N+1} }\right). 
\end{align*}
We use that $f_0$ is supported on $k=k_0$ and $\vert \eta-\eta_0\vert\le 1$, where $\eta_0 \approx \mu^{-\frac 13 }$ and so for all $j_1,j_2\ge 0 $ it holds
\begin{align*}
    \Vert \Lambda^{-j_2}_t  f_0\Vert_{H^{j_1}}&\lesssim  \langle t-\tfrac {\eta_0}{k_0} \rangle^{-j_2} \Vert   f_0\Vert_{H^{j_1}},\\
    \Vert   f_0\Vert_{H^{j_1}}&\lesssim \mu^{\frac {j_2-j_1}3}\Vert   f_0\Vert_{H^{j_2}}.
\end{align*}

Therefore,
\begin{align*}
    E_1^g&\lesssim   \left(\mu^{-\frac 23 }  \langle t-\tfrac {\eta_0}{k_0} \rangle^{-2}\Vert \nabla  (g -  g_0 )\Vert_{L^\infty} + \mu  \langle t-\tfrac {\eta_0}{k_0} \rangle^{-1}(\Vert \nabla_t g \Vert_{H^{N+1} }+\Vert \nabla_t g_0 \Vert_{H^{N+1} }) \right)\Vert g_2 \Vert_{H^N}\Vert f_0\Vert_{H^{N}}.
\end{align*}
Furthermore, we have the low-frequency bound 
\begin{align*}
    \Vert \nabla (g-g_0)\Vert_{L^\infty}\lesssim \Vert g-g_0\Vert_{H^{\frac 52 }} &\le \int_0^{T} \Vert (\nabla^\perp \Lambda_t^{-2}  f\cdot \nabla) g\Vert_{H^{\frac 52 }}\dd \tau  \lesssim \Vert f\Vert_{ L^\infty_T H^5}\Vert g\Vert_{L^\infty_T H^{\frac 72 }}\lesssim \delta^2 \mu^{2\gamma-\beta_1 }.
\end{align*}
Using the explicit representation of $g_0$, \eqref{eq:FG}, and the bootstrap assumption, we obtain after integrating in time 
\begin{align}
    \int_0^{T}  E_1^g \dd\tau&\lesssim  (\delta^2 \mu^{2\gamma-\beta_1 -\frac 23 }+\delta \mu^{\frac 12 }  )\delta \mu^{\gamma }(\delta^3\mu^{2\gamma-\beta_1-\frac 23  }) \lesssim( \tfrac 1 {C_2}+\delta^{-1}\mu^{\frac 76+\beta_1-\gamma  } ) (C_2\delta^3\mu^{2\gamma-\beta_1-\frac 23  })^2. \label{eq:Eg1}
\end{align}
To bound $E_2^g$ we use Plancherel's identity
\begin{align*}
     E_2^g&=\vert \langle m^{-2}g_2 ,(\nabla^\perp_t \Lambda_t^{-2}f_{2,\neq}\cdot \nabla_t )g\rangle_{H^{N+1}}\vert \\
     &\lesssim  \sum_{\substack{k,\tilde k\\ k\neq \tilde k }} \iint \dd(\xi,\eta) \frac {\vert \eta \tilde k-k\xi\vert }{\vert k-\tilde k,\eta-\xi-(k-\tilde k)t \vert^2 }\vert g_2(k,\eta)\vert  \vert f_{2}(k-\tilde k,\eta-\xi)\vert \vert g(\tilde k,\xi)\vert. 
\end{align*}
For $k-\tilde k\neq 0$ we distinguish between high and low frequencies and use the weight $m$ to estimate 
\begin{align*}
     \frac {\vert \eta \tilde k-k\xi\vert }{\vert k-\tilde k,\eta-\xi-(k-\tilde k)t \vert^2 }&\lesssim \textbf{1}_{\vert k-\tilde k,\eta-\xi\vert\ge \vert \tilde k,\xi\vert} \left(1+\frac {t^2 }{\langle t-\frac {\eta-\xi}{k-\tilde k}\rangle^2 } \right)\frac{\vert \tilde k,\xi\vert}{\langle k-\tilde k,\eta-\xi\rangle}\\
     &\quad + \textbf{1}_{\vert k-\tilde k,\eta-\xi\vert\le \vert \tilde k,\xi\vert}   \frac {\langle t \rangle \vert \tilde k,\xi-\tilde kt\vert}{\vert k-\tilde k,\eta-\xi-(k-\tilde k)t \vert^2 }  \vert k-\tilde k,\eta-\xi\vert \\
     &\lesssim \textbf{1}_{\vert k-\tilde k,\eta-\xi\vert\ge \vert \tilde k,\xi\vert}\left (1+t^2  \sqrt{\tfrac{\p_tm }m(k-\tilde k,\eta-\xi) } \sqrt{\tfrac{\p_tm }m(\tilde k,\xi) } \right)\frac {\vert \tilde k,\xi\vert^3}{\langle k-\tilde k,\eta-\xi\rangle}\\
     &\quad + \textbf{1}_{\vert k-\tilde k,\eta-\xi\vert\le \vert \tilde k,\xi\vert}  \langle t \rangle^{-1} \vert \tilde k,\xi-\tilde kt\vert \vert k-\tilde k,\eta-\xi\vert^3.
\end{align*}
This yields the estimate 
\begin{align*}
    E_2^g &\lesssim \left( \Vert  g_2\Vert_{H^{N+1}} \Vert  f_{2,\neq} \Vert_{H^N}+\mu^{-\frac 23 } \Vert \sqrt{\tfrac {\p_t m}m } g_2\Vert_{H^{N+1}}  \Vert \sqrt{\tfrac {\p_t m}m } f_{2,\neq} \Vert_{H^N}\right) \Vert \Lambda^3  g \Vert_{L^\infty} \\
    &\quad + \langle t \rangle^{-1} \Vert  g_2\Vert_{H^{N+1}} \Vert \Lambda^2  f_{2,\neq}\Vert_{L^\infty } \Vert \nabla_t g \Vert_{H^{N+1}}.
\end{align*}
After integrating in time, using \eqref{eq:FG}, the bootstrap assumption and $\gamma-\beta -\frac 1 2 \ge 0 $ we infer 
\begin{align}
    \int_0^T  E_2^g \dd\tau &\le \frac {C C_1}{C_2}(C_2\delta^2\mu^{2\gamma-\beta_1-\frac 23  })^2.\label{eq:Eg2}
\end{align}
To estimate $E_3^g$ we use Plancherel's identity 
\begin{align*}
    E_3^g=\vert \langle m^{-2}g_2 ,\p_y\p_y^{-2}f_{2,=}\p_x g\rangle_{H^{N+1}}\vert 
    &\le \sum_{k \neq \tilde k } \int \dd (\eta,\xi) \langle k,\eta\rangle^{2N} g_2(k,\eta) \frac{\vert f_2(0,\eta-\xi)\vert }{\vert \eta-\xi\vert} \vert k\vert \vert g(k,\xi)\vert 
\end{align*}
We use again \eqref{eq:kbound} to bound 
\begin{align*}
    E_3^g&\lesssim \left(\Vert\sqrt{\tfrac{\p_t m }{m}} g_2 \Vert_{H^{N+1}}\Vert\Lambda_t  g_2 \Vert_{H^{N+1}}+\Vert g_2 \Vert_{H^{N+1}}\Vert  g \Vert_{H^{N+1}}    
    \right)\Vert \vert \p_y\vert^{-1} f_{2,=}\Vert_{H^{N+1}}.
\end{align*}
Integrating in time yields 
\begin{align}
    \int_0^T E^g_3 \dd \tau &\le C \mu^{-\frac 12 }\delta \mu^{\gamma}  \delta \mu^{\gamma-\beta_1}(C_2
    \delta^3  \mu^{2\gamma-\frac 23 -\beta_1}) \lesssim \frac 1{C_2} \delta^{-1} \mu^{\frac 1 6 } (C_2
    \delta^3  \mu^{2\gamma-\frac 23 -\beta_1})^2,
    \label{eq:Eg3}.
\end{align}

Combining the estimates \eqref{eq:Eg1}, \eqref{eq:Eg2} and \eqref{eq:Eg3} we infer 
\begin{align*}
    \int_0^T E^g_1+E^g_2+E^g_3 \dd \tau &\le   \left(\frac C{C_2} (C_1+\delta^{-1} \mu^{\frac 1 6 })+\delta^{-1}\mu^{\frac 76+\beta_1-\gamma  } \right)  (C_2
    \delta^3  \mu^{2\gamma-\frac 23 -\beta_1})^2,
    \label{eq:Eg3}.
\end{align*}

Thus for $\mu_0, \delta>0$ small enough such that $\mu\le \delta^{6}$ and $C_2>0$ large enough we improve the estimate \eqref{eq:g2est}, by local existence this contradicts that that the maximal time $T<\mu^{-\frac 13}$. Therefore, both \eqref{eq:g1est} and \eqref{eq:g2} hold for all times $t\le \mu^{-\frac 1 3} $. This yields \eqref{eq:g2} and together with \eqref{eq:g1est} we optain Proposition \ref{prop:maggr}. 
\end{proof}

\subsection*{Data availability}
No data was used for the research described in the article.

\subsection*{Acknowledgments}

The author would like to thank Michele Coti Zelati, Rajendra Beekie, Víctor Navarro-Fernández, and Yupei Huang for valuable discussions, and in particular Michele Coti Zelati for his insightful comments that helped reorganize the manuscript and improve the introduction.

N. Knobel is supported by ERC/EPSRC Horizon Europe Guarantee EP/X020886/1. The author declares that he has no conflict of interest.

\appendix
\section{Properties of the Weights }
In the following, we summarize the properties of the weights we use. The proofs are standard and can be obtained by simple calculations (see for example \cite{liss2020sobolev,knobel2025suppression})

\begin{lemma}[Properties of $M_\mu$]\label{lem:Mmu}
    The weight $M_\mu$ is bounded and for $k\neq 0$ it satisfies 
    \begin{align*}
    \mu^{\frac 1 3 } &\le  c\tfrac {\p_t M_\mu} {M_\mu}(k,\eta, l  )  + 2c \mu(1+\vert t-\tfrac \eta k \vert^2).
\end{align*}
\end{lemma}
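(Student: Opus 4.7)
}
The proof is a direct computation from the defining ODE
\[
    \frac{\p_t M_\mu}{M_\mu}(t,k,\eta,l) = \frac{\mu^{1/3}}{1+\mu^{2/3}(t-\eta/k)^2}, \qquad M_\mu(0,k,\eta,l)=1,
\]
valid for $k\neq 0$ (with $M_\mu\equiv 1$ when $k=0$, which is trivially bounded and outside the scope of the inequality). I will treat the two claims separately.

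For boundedness, I integrate the ODE explicitly. Writing $M_\mu = \exp(\int_0^t \p_\tau \log M_\mu\,\dd\tau)$ and performing the change of variable $s=\mu^{1/3}(\tau-\eta/k)$ yields
\[
    \log M_\mu(t,k,\eta,l) \;=\; \int_0^t \frac{\mu^{1/3}}{1+\mu^{2/3}(\tau-\eta/k)^2}\,\dd\tau
    \;=\; \arctan\!\bigl(\mu^{1/3}(t-\eta/k)\bigr) - \arctan\!\bigl(-\mu^{1/3}\eta/k\bigr).
\]
Since $|\arctan(\cdot)|\le \pi/2$, this gives $\mathrm{e}^{-\pi}\le M_\mu\le \mathrm{e}^{\pi}$ uniformly in $(t,k,\eta,l)$.

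For the pointwise inequality, I abbreviate $X:=\mu^{2/3}(t-\eta/k)^2\ge 0$ and recast the claim as
\[
    1 \;\le\; \frac{c}{1+X} + 2cX + 2c\mu^{2/3},
\]
after dividing by $\mu^{1/3}$ and using $2c\mu(1+(t-\eta/k)^2) = 2c\mu + 2c\mu^{1/3}X$. The bound is then a short case split: for $X\le 1$ the first summand satisfies $c/(1+X)\ge c/2$, while for $X\ge 1$ one has $2cX\ge 2c$. Choosing $c\ge 2$ (independently of $\mu\le 1$) closes both cases, and in fact any $c\ge 2$ works, the positive $2c\mu^{2/3}$ term being harmless. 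I expect essentially no obstacle; the only minor point is ensuring the constant $c$ chosen here is consistent with (or can be absorbed into) the fixed constant $c$ used throughout the bootstrap energies in Sections~\ref{sec:mainenergy}–\ref{sec:invis}, but since the inequality is monotone in $c$ this is immediate.
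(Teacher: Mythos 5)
The paper offers no written proof of Lemma \ref{lem:Mmu} (it is dismissed as a standard simple calculation, with references), and your argument --- integrating the defining ODE to an arctangent for boundedness, and an elementary case split in $X=\mu^{2/3}(t-\eta/k)^2$ for the pointwise bound --- is precisely that calculation and is correct. The only refinement worth noting is that $\tfrac{1}{1+X}+2X\ge 1$ for all $X\ge 0$, so the inequality already holds with constant $1$, i.e.\ in the form $\mu^{1/3}\le \tfrac{\p_t M_\mu}{M_\mu}+\mu\,(1+|t-\tfrac{\eta}{k}|^2)$, which is the version the paper actually invokes (both sides then being multiplied by the small bootstrap constant $c$); hence your requirement $c\ge 2$, and the concern about matching the small $c$ of the energy estimates, is a harmless bookkeeping point rather than a gap.
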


\begin{lemma}[Properties of $m$]\label{lem:m} The weight $m$ is bounded and for $k \neq 0$ it satisfies 
         \begin{align*}
              \frac 1 {\langle\frac{\eta}{k}- t\rangle^{2} }\le  {\frac {\p_t   m} {  m} (\tilde k,\xi, \tilde l )}\langle k-\tilde k ,\eta-\xi \rangle^5. 
         \end{align*}
 \end{lemma}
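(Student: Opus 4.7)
The proof has two independent parts: boundedness of the weight $m$ from above, and the pointwise lower bound on $\partial_t m/m$. I would tackle them in that order.

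For the first, I would bound the integrand and then sum. For any fixed $j\neq 0$ the substitution $u=(\eta-j\tau)/|j|$ gives
\[
\int_{-\infty}^{t}\frac{|j|^{2}}{|j,\eta-j\tau,i|^{2}}\,d\tau
\;\le\;\int_{-\infty}^{\infty}\frac{|j|}{1+u^{2}+i^{2}/j^{2}}\,du
\;=\;\frac{\pi|j|}{\sqrt{j^{2}+i^{2}}}\;\le\;\pi .
\]
Hence the exponent in the definition of $m$ is bounded by
$c^{-1}(1+|\alpha|^{-1})\pi\sum_{j\neq 0,i}\langle j-k,i-l\rangle^{-4}$, which converges since $\langle\cdot\rangle^{-4}$ is summable on $\mathbb{Z}^{2}$. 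So $m$ is uniformly bounded above (and of course $m\ge 1$), giving the boundedness claim.

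For the lower bound on $\partial_t m/m$ at the frequency $(\tilde k,\xi,\tilde l)$, the idea is to retain only a single, well-chosen term of the sum in $\partial_t m/m$. Since $k\neq 0$, I would plug $j=k$ and $i=\tilde l$ into the sum; then the weight factor collapses to $\langle k-\tilde k,0\rangle^{-4}=\langle k-\tilde k\rangle^{-4}$ and
\[
\frac{\partial_t m}{m}(\tilde k,\xi,\tilde l)\;\ge\;
c^{-1}\!\left(1+\tfrac{1}{|\alpha|}\right)\frac{k^{2}}{k^{2}+(\xi-kt)^{2}+\tilde l^{2}}\,\langle k-\tilde k\rangle^{-4}.
\]
Now the frequency shift from $\xi$ to $\eta$ is handled by the elementary triangle-like inequality $\langle a+b\rangle^{2}\le 2\langle a\rangle^{2}\langle b\rangle^{2}$, which after writing $\xi/k-t=\eta/k-t-(\eta-\xi)/k$ and using $|k|\ge 1$ yields
\[
k^{2}+(\xi-kt)^{2}\;=\;k^{2}\langle \xi/k-t\rangle^{2}\;\le\;2k^{2}\langle \eta/k-t\rangle^{2}\langle \eta-\xi\rangle^{2}.
\]
Substituting this back and multiplying through by $\langle k-\tilde k,\eta-\xi\rangle^{5}$ gives
\[
\frac{\partial_t m}{m}(\tilde k,\xi,\tilde l)\,\langle k-\tilde k,\eta-\xi\rangle^{5}\;\gtrsim\;
\frac{\langle k-\tilde k,\eta-\xi\rangle^{5}}
{\langle \eta/k-t\rangle^{2}\langle \eta-\xi\rangle^{2}\langle k-\tilde k\rangle^{4}},
\]
so it remains to check the purely algebraic inequality
$\langle k-\tilde k,\eta-\xi\rangle^{5}\gtrsim\langle \eta-\xi\rangle^{2}\langle k-\tilde k\rangle^{4}$ on the relevant frequency range, which follows by splitting into the cases $|k-\tilde k|\ge|\eta-\xi|$ and the opposite and using $\langle a,b\rangle\ge\max(\langle a\rangle,\langle b\rangle)$.

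The one genuine obstacle, and the place where care is needed, is the extra $\tilde l^{2}$ contribution in the denominator coming from the choice $i=\tilde l$: it does not appear on the right-hand side of the Lemma. I would either (a) pair the single-term bound above with the alternative choice $i=0$ (which yields $\langle k-\tilde k,\tilde l\rangle^{-4}$ in the weight but removes $\tilde l^{2}$ from the denominator) and take the maximum of the two regimes, or (b) verify that in the regimes where the Lemma is invoked in Section~\ref{sec:mainenergy}, the third Fourier variable is effectively controlled (e.g.\ because $g$ or the other factor is $z$-independent so only $l=\tilde l$ appears, and the extra $\tilde l^{2}$ is absorbed by $\langle k-\tilde k,\eta-\xi\rangle^{2}$ in the Sobolev norm of the opposite factor). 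Either way, the final bound reduces to algebraic comparisons between $\langle\cdot\rangle$-brackets, which is routine once the regime split is made.
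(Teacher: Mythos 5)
Your strategy is the intended one: the paper offers no argument for this lemma (it only points to \cite{liss2020sobolev,knobel2025suppression}), and the standard proof is exactly what you sketch — bound the $\tau$-integral of each summand by $\pi|j|/\sqrt{j^2+i^2}\le\pi$ and use summability of $\langle j-k,i-l\rangle^{-4}$ for boundedness, then extract a well-chosen term of the sum defining $\tfrac{\p_t m}{m}$ for the lower bound. The boundedness half of your write-up is correct (up to a stray factor $|j|$ in the substituted integrand; your final value $\pi|j|/\sqrt{j^2+i^2}$ is right).

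The lower-bound half has a genuine gap in its last step: $\langle k-\tilde k,\eta-\xi\rangle^{5}\gtrsim\langle \eta-\xi\rangle^{2}\langle k-\tilde k\rangle^{4}$ is false when $|k-\tilde k|\approx|\eta-\xi|\approx n\to\infty$ (the left side is $\approx n^{5}$, the right side $\approx n^{6}$), and neither of your case splits rescues it. As written, your chain proves the inequality only with exponent $6$. The loss comes from decoupling the two brackets: you estimate $\langle\xi/k-t\rangle^{2}\le 2\langle\eta/k-t\rangle^{2}\langle\eta-\xi\rangle^{2}$ uniformly in $t$, but the time at which this shift really costs $\langle\eta-\xi\rangle^{2}$ (namely $t\approx\eta/k$ with $|\eta-\xi|$ large) is a time at which your selected single term $j=k$ is of size $\approx\langle\eta-\xi\rangle^{-2}\langle k-\tilde k\rangle^{-4}$, which forces the exponent $6$. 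In fact no single-term selection can give the stated exponent $5$: in that regime the whole sum over $i$ is of size $\approx\langle k-\tilde k,\eta-\xi\rangle^{-5}$ while every individual term is $\lesssim\langle k-\tilde k,\eta-\xi\rangle^{-6}$, so to reach $5$ you must retain the $i$-sum (and use the largeness of the prefactor $c^{-1}(1+\tfrac1{|\alpha|})$), or else settle for exponent $6$ and adjust the places where the lemma is invoked.

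The obstacle you flag with the third frequency is also genuine, and neither of your remedies closes it: with $i=\tilde l$ the denominator keeps $\tilde l^{2}$, with $i=0$ the weight factor becomes $\langle k-\tilde k,\tilde l\rangle^{-4}$, and the better of the two still carries at least $\langle\tilde l\rangle^{-2}$, which $\langle k-\tilde k,\eta-\xi\rangle^{5}$ cannot absorb. Indeed the lemma is false as literally stated: take $\tilde k=k$, $\xi=\eta$, $t=\eta/k$ and let $\tilde l\to\infty$; then the left side equals $1$ while $\tfrac{\p_t m}{m}(k,\eta,\tilde l)\lesssim\langle\tilde l\rangle^{-1}\to0$. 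What your single-term choice $j=k$, $i=l$ does prove is the version with the full resonance function and the three-dimensional shift, $\tfrac{k^{2}}{|k,\eta-kt,l|^{2}}\lesssim\tfrac{\p_t m}{m}(\tilde k,\xi,\tilde l)\,\langle k-\tilde k,\eta-\xi,l-\tilde l\rangle^{6}$, which reduces to the stated form only when the $z$-frequencies are matched and bounded (e.g.\ $l=\tilde l=0$ in the $z$-averaged estimates). So your option (b) is the correct diagnosis of how the lemma is actually used, but it is a remark about the applications rather than a proof; a complete write-up has to amend the statement (put $l-\tilde l$ into the bracket and/or replace the left side by $k^{2}/|k,\eta-kt,l|^{2}$, or restrict to bounded $\tilde l$) instead of proving it verbatim.
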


\bibliographystyle{alpha} 

\bibliography{library}

@Article{Dolce,
  author =       {Dolce, Michele},
  title =        {Stability threshold of the {2D} {C}ouette flow in a homogeneous magnetic field using symmetric variables},
  journal={Communications in Mathematical Physics},
  volume={405},
  number={4},
  pages={94},
  year={2024},
  publisher={Springer}
}

@ARTICLE{Zhang2015inviscid,
  title =        {Linear inviscid damping for a class of monotone shear flow in
                  {S}obolev spaces},
  author =       {Wei, Dongyi and Zhang, Zhifei and Zhao, Weiren},
  journal =      {Communications on Pure and Applied Mathematics},
  volume =       71,
  number =       4,
  pages =        {617--687},
  year =         2018,
  publisher =    {Wiley Online Library}
}

@article{Zill3,
  AUTHOR =       {Zillinger, Christian},
  TITLE =        {Linear inviscid damping for monotone shear flows},
  JOURNAL =      {Trans. Amer. Math. Soc.},
  FJOURNAL =     {Transactions of the American Mathematical Society},
  VOLUME =       369,
  YEAR =         2017,
  NUMBER =       12,
  PAGES =        {8799--8855},
}

@article{bedrossian2014enhanced,
    AUTHOR = {Bedrossian, Jacob and Masmoudi, Nader and Vicol, Vlad},
     TITLE = {Enhanced dissipation and inviscid damping in the inviscid
              limit of the {N}avier-{S}tokes equations near the two
              dimensional {C}ouette flow},
   JOURNAL = {Arch. Ration. Mech. Anal.},
  FJOURNAL = {Archive for Rational Mechanics and Analysis},
    VOLUME = {219},
      YEAR = {2016},
    NUMBER = {3},
     PAGES = {1087--1159},
      ISSN = {0003-9527},
   MRCLASS = {76D05 (35B40 35Q30 76D09)},
  MRNUMBER = {3448924},
MRREVIEWER = {Joel David Avrin},
       DOI = {10.1007/s00205-015-0917-3},
       URL = {https://doi.org/10.1007/s00205-015-0917-3},
}

@article{bedrossian2015inviscid,
    AUTHOR = {Bedrossian, Jacob and Masmoudi, Nader},
     TITLE = {Inviscid damping and the asymptotic stability of planar shear
              flows in the 2{D} {E}uler equations},
   JOURNAL = {Publ. Math. Inst. Hautes \'{E}tudes Sci.},
  FJOURNAL = {Publications Math\'{e}matiques. Institut de Hautes \'{E}tudes
              Scientifiques},
    VOLUME = {122},
      YEAR = {2015},
     PAGES = {195--300},
      ISSN = {0073-8301},
       DOI = {10.1007/s10240-015-0070-4},
       URL = {https://doi.org/10.1007/s10240-015-0070-4},
}

@article{bedrossian2016sobolev,
  title =        {The {S}obolev stability threshold for {2D} shear flows near
                  {C}ouette},
  author =       {Bedrossian, Jacob and Vicol, Vlad and Wang, Fei},
  journal =      {Journal of Nonlinear Science},
  volume =       28,
  number =       6,
  pages =        {2051--2075},
  year =         2018,
  publisher =    {Springer}
}

@article{bedrossian2017stability,
    AUTHOR = {Bedrossian, Jacob and Germain, Pierre and Masmoudi, Nader},
     TITLE = {On the stability threshold for the 3{D} {C}ouette flow in
              {S}obolev regularity},
   JOURNAL = {Ann. of Math. (2)},
  FJOURNAL = {Annals of Mathematics. Second Series},
    VOLUME = {185},
      YEAR = {2017},
    NUMBER = {2},
     PAGES = {541--608},
      ISSN = {0003-486X},
       DOI = {10.4007/annals.2017.185.2.4},
       URL = {https://doi.org/10.4007/annals.2017.185.2.4},
}

@Article{dengZ2019,
  title =        {Echo chains as a linear mechanism: Norm inflation, modified
                  exponents and asymptotics},
  author =       {Deng, Yu and Zillinger, Christian},
  journal =      {Archive for rational mechanics and analysis},
  volume =       242,
  number =       1,
  pages =        {643--700},
  year =         2021,
  publisher =    {Springer}
}

@article{dengmasmoudi2018,
  title={Long-Time Instability of the {C}ouette Flow in Low {G}evrey Spaces},
  author={Deng, Yu and Masmoudi, Nader},
  journal={Communications on Pure and Applied Mathematics},
  year={2023},
  publisher={Wiley Online Library}
}

@article{he2018global,
  title =        {On global dynamics of three dimensional magnetohydrodynamics:
                  nonlinear stability of {A}lfv{\'e}n waves},
  author =       {He, Ling-Bing and Xu, Li and Yu, Pin},
  journal =      {Annals of PDE},
  volume =       4,
  number =       1,
  pages =        {1--105},
  year =         2018,
  publisher =    {Springer}
}

@article{ionescu2020nonlinear,
  title =        {Nonlinear inviscid damping near monotonic shear flows},
  author =       {Ionescu, Alexandru D and Jia, Hao},
  journal =      {Acta Mathematica},
    volume =       230,
    number =       2,
   pages =        {321 –- 399},
  year =         2023,
  publisher =    {Institut Mittag-Leffler}
}

@article{knobel2023echoes,
  title =        {On echoes in magnetohydrodynamics with magnetic dissipation},
    label     = {KZ1},
  author =       {Knobel, Niklas and Zillinger, Christian},
  journal =      {Journal of Differential Equations},
  volume =       367,
  pages =        {625--688},
  year =         2023,
  publisher =    {Elsevier}
}

@article{liss2020sobolev,
    AUTHOR = {Liss, Kyle},
     TITLE = {On the {S}obolev stability threshold of 3{D} {C}ouette flow in
              a uniform magnetic field},
   JOURNAL = {Comm. Math. Phys.},
  FJOURNAL = {Communications in Mathematical Physics},
    VOLUME = {377},
      YEAR = {2020},
    NUMBER = {2},
     PAGES = {859--908},
      ISSN = {0010-3616}
}

@article{wei2017global,
  title =        {Global well-posedness of the {MHD} equations in a homogeneous
                  magnetic field},
  author =       {Wei, Dongyi and Zhang, Zhifei},
  journal =      {Analysis \& PDE},
  volume =       10,
  number =       6,
  pages =        {1361--1406},
  year =         2017,
  publisher =    {Mathematical Sciences Publishers}
}

@article{zhao2024asymptotic,
  title={Asymptotic stability of {C}ouette flow in a strong uniform magnetic field for the {E}uler-{MHD} system},
  author={Zhao, Weiren and Zi, Ruizhao},
  journal={Archive for Rational Mechanics and Analysis},
  volume={248},
  number={3},
  pages={47},
  year={2024},
  publisher={Springer}
}

@article{masmoudi2022stability,
  title={Stability threshold of two-dimensional {C}ouette flow in {S}obolev spaces},
  author={Masmoudi, Nader and Zhao, Weiren},
  journal={Ann. Inst. H. Poincar{\'e} C Anal. Non Lin{\'e}aire},
  volume={39},
  number={2},
  pages={245--325},
  year={2022}
}

@article{li2025asymptotic,
  title={Asymptotic Stability of Two-Dimensional Couette Flow in a Viscous Fluid: H. Li et al.},
  author={Li, Hui and Masmoudi, Nader and Zhao, Weiren},
  journal={Archive for Rational Mechanics and Analysis},
  volume={249},
  number={5},
  pages={56},
  year={2025},
  publisher={Springer}
}

@article{bardos1988longtime,
  title={Longtime dynamics of a conductive fluid in the presence of a strong magnetic field},
  author={Bardos, Claude and Sulem, Catherine and Sulem, Pierre-Louis},
  journal={Transactions of the American Mathematical Society},
  volume={305},
  number={1},
  pages={175--191},
  year={1988}
}

@article{knobel2025sobolev,
  title={On the Sobolev stability threshold for the 2D MHD equations with horizontal magnetic dissipation},
  author={Knobel, Niklas and Zillinger, Christian},
  journal={Journal of Nonlinear Science},
  volume={35},
  number={3},
  pages={1--36},
  year={2025},
  publisher={Springer}
}

@article{knobel2024sobolev,
  title={Sobolev stability for the {2D MHD} equations in the non-resistive limit},
  author={Knobel, Niklas},
  journal={Nonlinearity},
  volume={38},
  number={8},
  pages={085019},
  year={2025},
  publisher={IOP Publishing}
}

@article{chen2024sobolev,
  title={On the {S}obolev stability threshold for shear flows near {C}ouette in {2D MHD} equations},
  author={Chen, Ting and Zi, Ruizhao},
  journal={Proceedings of the Royal Society of Edinburgh Section A: Mathematics},
  pages={1--51},
  year={2024},
  publisher={Royal Society of Edinburgh Scotland Foundation}
}

@book{bedrossian2022dynamics,
    AUTHOR = {Bedrossian, Jacob and Germain, Pierre and Masmoudi, Nader},
     TITLE = {Dynamics near the subcritical transition of the 3{D} {C}ouette
              flow {II}: {A}bove threshold case},
   JOURNAL = {Mem. Amer. Math. Soc.},
  FJOURNAL = {Memoirs of the American Mathematical Society},
    VOLUME = {279},
      YEAR = {2022}
}

@book{bedrossian2020dynamics,
    AUTHOR = {Bedrossian, Jacob and Germain, Pierre and Masmoudi, Nader},
     TITLE = {Dynamics near the subcritical transition of the 3{D} {C}ouette
              flow {I}: {B}elow threshold case},
   JOURNAL = {Mem. Amer. Math. Soc.},
  FJOURNAL = {Memoirs of the American Mathematical Society},
    VOLUME = {266},
      YEAR = {2020}
}

@article{knobel2024nr,
  title={{L}arge norm inflation of the current in the viscous, non-resistive magnetohydrodynamics equations},
  author={ Dolce, Michele and Knobel, Niklas  and Zillinger, Christian },
  journal={arXiv preprint arXiv:2410.22804},
  year={2024}
}

@article{niu2024improved,
  title={Improved stability threshold of the Two-Dimensional Couette flow for Navier-Stokes-Boussinesq Systems via quasi-linearization},
  author={Niu, Binqian and Zhao, Weiren},
  journal={arXiv preprint arXiv:2409.16216},
  year={2024}
}

@article{zelati2024stability,
       author = {{Coti Zelati}, Michele and {Del Zotto}, Augusto and {Widmayer}, Klaus},
        title = "{Stability of viscous three-dimensional stratified Couette flow via dispersion and mixing}",
      journal = {arXiv e-prints},
     keywords = {Mathematics - Analysis of PDEs, Physics - Fluid Dynamics},
         year = 2024,
        month = feb,
          eid = {arXiv:2402.15312},
        pages = {arXiv:2402.15312},
          doi = {10.48550/arXiv.2402.15312},
archivePrefix = {arXiv},
       eprint = {2402.15312},
 primaryClass = {math.AP},
       adsurl = {https://ui.adsabs.harvard.edu/abs/2024arXiv240215312C},
      adsnote = {Provided by the SAO/NASA Astrophysics Data System}
}

@article{Knobelideal,
title = {Ideal {M}agnetohydrodynamics {A}round {C}ouette {F}low: {L}ong {T}ime {S}tability and {V}orticity-{C}urrent {I}nstability},
journal = {Nonlinear Analysis},
volume = {262},
pages = {113937},
year = {2026},
issn = {0362-546X},
doi = {https://doi.org/10.1016/j.na.2025.113937},
url = {https://www.sciencedirect.com/science/article/pii/S0362546X25001890},
author = {Niklas Knobel},
}

@article{wang2024stability,
  title={The stability threshold for 2D MHD equations around Couette with general viscosity and magnetic resistivity},
  author={Wang, Fei and Zhang, Zeren},
  journal={arXiv preprint arXiv:2410.20404},
  year={2024}
}

@article{rao2025stability,
    AUTHOR = {Rao, Yulin and Zhang, Zhifei and Zi, Ruizhao},
     TITLE = {Stability threshold of {C}ouette flow for the 3{D} {MHD}
              equations},
   JOURNAL = {J. Funct. Anal.},
  FJOURNAL = {Journal of Functional Analysis},
    VOLUME = {288},
      YEAR = {2025},
    NUMBER = {5},
     PAGES = {Paper No. 110796, 62},
      ISSN = {0022-1236},
   MRCLASS = {76E05 (76E25)},
  MRNUMBER = {4839584},
       DOI = {10.1016/j.jfa.2024.110796},
       URL = {https://doi.org/10.1016/j.jfa.2024.110796},
}

@article{wei2021transition,
  title={Transition threshold for the {3D C}ouette flow in Sobolev space},
  author={Wei, Dongyi and Zhang, Zhifei},
  journal={Communications on Pure and Applied Mathematics},
  volume={74},
  number={11},
  pages={2398--2479},
  year={2021},
  publisher={Wiley Online Library}
}

@article{reynolds1883xxix,
  title={XXIX. An experimental investigation of the circumstances which determine whether the motion of water shall be direct or sinuous, and of the law of resistance in parallel channels},
  author={Reynolds, Osborne},
  journal={Philosophical Transactions of the Royal society of London},
  volume={174},
  pages={935--982},
  year={1883},
  publisher={The Royal Society London}
}

@article{jin2025stability,
  title={Stability threshold of {C}ouette flow for {2D MHD} equations in the {S}obolev space},
  author={Jin, Jiakun and Ren, Xiaoxia and Wei, Dongyi},
  journal={Nonlinearity},
  volume={38},
  number={5},
  pages={055009},
  year={2025},
  publisher={IOP Publishing}
}

@article{wang2025stabilitythreshold3dmhd,
       author = {{Wang}, Fei and {Xu}, Lingda and {Zhang}, Zeren},
        title = "{The stability threshold for {3D MHD} equations around {C}ouette with rationally aligned magnetic field}",
      journal = {arXiv e-prints},
     keywords = {Analysis of PDEs},
         year = 2025,
        month = may,
          eid = {arXiv:2505.19822},
        pages = {arXiv:2505.19822},
          doi = {10.48550/arXiv.2505.19822},
archivePrefix = {arXiv},
       eprint = {2505.19822},
 primaryClass = {math.AP},
       adsurl = {https://ui.adsabs.harvard.edu/abs/2025arXiv250519822W},
      adsnote = {Provided by the SAO/NASA Astrophysics Data System}
}

@article{knobel2025suppression,
  title={Suppression of Fluid Echoes and Sobolev Stability Threshold for 2D Dissipative Fluid Equations Around Couette Flow},
  author={Knobel, Niklas},
  journal={arXiv preprint arXiv:2505.23391},
  year={2025}
}

@article{yang2018linear,
  title =        {Linear inviscid damping for {C}ouette flow in stratified
                  fluid},
  author =       {Yang, Jincheng and Lin, Zhiwu},
  journal =      {Journal of Mathematical Fluid Mechanics},
  volume =       20,
  number =       2,
  pages =        {445--472},
  year =         2018,
  publisher =    {Springer}
}

@article{zelati2025stability,
       author = {{Coti Zelati}, Michele and {Del Zotto}, Augusto and {Widmayer}, Klaus},
        title = "{On the stability of viscous three-dimensional rotating Couette flow}",
      journal = {arXiv e-prints},
     keywords = {Mathematics - Analysis of PDEs, Physics - Fluid Dynamics, 35Q35, 76D05, 76E07, 76U60},
         year = 2025,
        month = jan,
          eid = {arXiv:2501.17735},
        pages = {arXiv:2501.17735},
          doi = {10.48550/arXiv.2501.17735},
archivePrefix = {arXiv},
       eprint = {2501.17735},
 primaryClass = {math.AP},
       adsurl = {https://ui.adsabs.harvard.edu/abs/2025arXiv250117735C},
      adsnote = {Provided by the SAO/NASA Astrophysics Data System}
}

@article{reddy1998stability,
  title={On stability of streamwise streaks and transition thresholds in plane channel flows},
  author={Reddy, Satish C and Schmid, Peter J and Baggett, Jeffrey S and Henningson, Dan S},
  journal={Journal of Fluid Mechanics},
  volume={365},
  pages={269--303},
  year={1998},
  publisher={Cambridge University Press}
}

@article{kelvin1887stability,
  title={Stability of fluid motion: rectilinear motion of viscous fluid between two parallel plates},
  author={Kelvin, Lord},
  journal={Phil. Mag},
  volume={24},
  number={5},
  pages={188--196},
  year={1887}
}

@article{beekie2024transition,
       author = {{Beekie}, Rajendra and {He}, Siming},
        title = "{Transition Threshold for Strictly Monotone Shear Flows in Sobolev Spaces}",
      journal = {arXiv e-prints},
     keywords = {Mathematics - Analysis of PDEs},
         year = 2024,
        month = sep,
          eid = {arXiv:2409.09219},
        pages = {arXiv:2409.09219},
          doi = {10.48550/arXiv.2409.09219},
archivePrefix = {arXiv},
       eprint = {2409.09219},
 primaryClass = {math.AP},
       adsurl = {https://ui.adsabs.harvard.edu/abs/2024arXiv240909219B},
      adsnote = {Provided by the SAO/NASA Astrophysics Data System}
}

@article{li2024dynamical,
  title={A dynamical approach to the study of instability near Couette flow},
  author={Li, Hui and Masmoudi, Nader and Zhao, Weiren},
  journal={Communications on Pure and Applied Mathematics},
  volume={77},
  number={6},
  pages={2863--2946},
  year={2024},
  publisher={Wiley Online Library}
}

@article{masmoudi2020enhanced,
  title={Enhanced dissipation for the 2D Couette flow in critical space},
  author={Masmoudi, Nader and Zhao, Weiren},
  journal={Communications in Partial Differential Equations},
  volume={45},
  number={12},
  pages={1682--1701},
  year={2020},
  publisher={Taylor \& Francis}
}

@article{masmoudi2024nonlinear,
  title={Nonlinear inviscid damping for a class of monotone shear flows in a finite channel},
  author={Masmoudi, Nader and Zhao, Weiren},
  journal={Annals of Mathematics},
  volume={199},
  number={3},
  pages={1093--1175},
  year={2024},
  publisher={Department of Mathematics, Princeton University Princeton, New Jersey, USA}
}

@article{wu2021global,
  title={Global solutions of {3D} incompressible {MHD} system with mixed partial dissipation and magnetic diffusion near an equilibrium},
  author={Wu, Jiahong and Zhu, Yi},
  journal={Advances in Mathematics},
  volume={377},
  pages={107466},
  year={2021},
  publisher={Elsevier}
}

@article{deng2018large,
  title={Large time behavior of solutions to {3-D MHD} system with initial data near equilibrium},
  author={Deng, Wen and Zhang, Ping},
  journal={Archive for Rational Mechanics and Analysis},
  volume={230},
  number={3},
  pages={1017--1102},
  year={2018},
  publisher={Springer}
}

@article{ren2017linear,
  title={Linear damping of {A}lfv{\'e}n waves by phase mixing},
  author={Ren, Siqi and Zhao, Weiren},
  journal={SIAM Journal on Mathematical Analysis},
  volume={49},
  number={3},
  pages={2101--2137},
  year={2017},
  publisher={SIAM}
}

@article{ren2021long,
  title={Long time behavior of {A}lfv{\'e}n waves in flowing plasma: the destruction of the magnetic island},
  author={Ren, Siqi and Wei, Dongyi and Zhang, Zhifei},
  journal={SIAM Journal on Mathematical Analysis},
  volume={53},
  number={5},
  pages={5548--5579},
  year={2021},
  publisher={SIAM}
}

@article{zhai2021long,
  title={Long-time behavior of {A}lfv{\'e}n waves in a flowing plasma: generation of the magnetic island},
  author={Zhai, Cuili and Zhang, Zhifei and Zhao, Weiren},
  journal={Archive for Rational Mechanics and Analysis},
  volume={242},
  number={3},
  pages={1317--1394},
  year={2021},
  publisher={Springer}
}

@article{navarro2025spectral,
  title={Spectral instability in the smooth {P}onomarenko dynamo},
  author={Navarro-Fern{\'a}ndez, V{\'\i}ctor and Villringer, David},
  journal={arXiv preprint arXiv:2509.19201},
  year={2025}
}

@article{navarro2025nonlinear,
  title={Nonlinear instability for the 3D MHD equations around the Taylor-Couette flow},
  author={Navarro-Fern{\'a}ndez, V{\'\i}ctor and Villringer, David},
  journal={arXiv preprint arXiv:2510.12707},
  year={2025}
}

@article{enciso2025linear,
  title={Linear inviscid damping for stably stratified Boussinesq flows},
  author={Enciso, Alberto and Nualart, Marc},
  journal={arXiv preprint arXiv:2510.27278},
  year={2025}
}

\end{document}